\newtheorem{theorem}{Theorem}[section]
\newtheorem{proposition}[theorem]{Proposition}
\newtheorem{lemma}[theorem]{Lemma}
\newtheorem{definition}[theorem]{Definition}
\newtheorem{remark}[theorem]{Remark}
\newtheorem{corollary}[theorem]{Corollary}
\newcommand{\N}{\mathbb N}
\newcommand{\R}{\mathbb R}
\newcommand{\Z}{\mathbb Z}
\newcommand{\T}{\mathbb T}
\newcommand{\eps}{\varepsilon}
\newcommand{\dd}{\, \mathrm{d}}
\newcommand{\vv}{\langle v\rangle}
\DeclareMathOperator{\supp}{supp}
\newcommand{\TMeps}{{\T^3_{M_\eps}}}
\newcommand{\TM}{{\T^3_M}}
\newcommand{\Qs}{Q_{\rm s}}
\newcommand{\Qns}{Q_{\rm ns}}
\newcommand{\dom}{\R^3\times\R^3}
\newcommand{\vvo}{\langle v_0\rangle}
\newcommand{\vvone}{\langle v_1\rangle}
\newcommand{\vvp}{\langle v'\rangle}
\newcommand{\vvt}{\langle v_\rmcr\rangle}
\newcommand{\rmcr}{{\rm cr}}
\newcommand{\cK}{\mathcal{K}}
\newcommand{\cL}{\mathcal{L}}
\newcommand{\be}{\begin{equation}}
\newcommand{\ee}{\end{equation}}
\numberwithin{equation}{section}
\numberwithin{theorem}{section}
\title[Classical solutions of the Boltzmann equation]{Classical solutions of the Boltzmann equation with irregular initial data}
\author{Christopher Henderson}
\address{Department of Mathematics, University of Arizona, Tucson, AZ 85721}
\email{ckhenderson@math.arizona.edu}
\author{Stanley Snelson}
\address{Department of Mathematical Sciences, Florida Institute of Technology, Melbourne, FL 32901}
\email{ssnelson@fit.edu}
\author{Andrei Tarfulea}
\address{Department of Mathematics, Louisiana State University, Baton Rouge, LA 70803}
\email{tarfulea@lsu.edu}
\thanks{CH was supported by NSF grants DMS-2003110 and DMS-2204615 and acknowledges support of the Institut Henri Poincaré (UAR 839 CNRS-Sorbonne Université) and LabEx CARMIN (ANR-10-LABX-59-01). SS was supported by a Simons Foundation Collaboration Grant, Award \#855061 and NSF grant DMS-2213407. AT was supported by NSF grants DMS-2012333 and DMS-2108209.}
\begin{document}

\maketitle

\begin{abstract}
This article considers the spatially inhomogeneous, non-cutoff Boltzmann equation. We construct a large-data classical solution given bounded, measurable initial data with uniform polynomial decay of mild order in the velocity variable. Our result requires no assumption of strict positivity for the initial data, except locally in some small ball in phase space. We also obtain existence results for weak solutions when our decay and positivity assumptions for the initial data are relaxed.

Because the regularity of our solutions may degenerate as $t$ tends to $0$, uniqueness is a challenging issue. We establish weak-strong uniqueness under the additional assumption that the initial data possesses no vacuum regions and is H\"older continuous.

As an application of our short-time existence theorem, we prove global existence near equilibrium for bounded, measurable initial data that decays at a finite polynomial rate in velocity.\\
\\
\noindent{}\textsc{Titre. Solutions classiques de l'\'equation de Boltzmann avec donn\'ee initiale irr\'eguli\`ere}
\\
\noindent{}\textsc{R\'esum\'e}. Cette article \'etudie l'\'equation de Boltzmann inhomog\`ene en espace sans troncature angulaire. En supposant la donn\'ee initiale mesurable et born\'ee \`a d\'ecroissance  polynomiale d'ordre limit\'e en la variable de vitesse, on construit une solution classique. Aucune hypoth\`ese de positivit\'e stricte de la donn\'ee initiale n'est n\'ec\'essaire, mais le r\'esultat repose sur une hypoth\`ese locale de positivit\'e stricte sur une petite boule dans l'espace des phases. On obtient l'existence de solutions faibles en rel\^achant les hypoth\`eses de d\'ecroissance et de positivit\'e. 

La question d'unicit\'e est rendue difficile car la r\'egularit\'e des solutions peut d\'eg\'en\'erer quand $t$ tend vers $0$. On \'etablit  l'unicit\'e faible-forte sous l'hypoth\`ese suppl\'ementaire pour la donn\'ee initiale: absence de r\'egion vide et continuit\'e de H\"older.

En application du r\'esultat d'existence en temps court, on prouve l'existence globale pr\`es d'un \'equilibre pour une donn\'ee initiale mesurable qui d\'ecroit polynomialement en la vitesse.
\end{abstract}

\tableofcontents

\section{Introduction}
We consider the Boltzmann equation, a fundamental kinetic integro-differential equation from statistical physics \cite{cercignani1969kinetic, truesdell-muncaster, cercignani1988book,  chapmancowling, villani2002review}. The unknown function $f(t,x,v)\geq 0$ models the particle density of a diffuse gas in phase space at time $t \geq 0$, location $x \in \R^3$, and velocity $v\in \R^3$. 
The equation reads
\begin{equation}\label{e.boltzmann}
 (\partial_t + v\cdot \nabla_x) f = Q(f,f),
\end{equation}
where the left-hand side is a transport term, and $Q(f,g)$ is the Boltzmann collision operator with {\it non-cutoff} collision kernel, which we describe in detail below.

The purpose of this article is to develop a well-posedness theory for \eqref{e.boltzmann} on a time interval $[0,T]$, making minimal assumptions on the initial data $f_{\rm in}(x,v)\geq 0$. 
In particular, we would like our local existence theory to properly encapsulate the {\it regularizing effect} of the non-cutoff Boltzmann equation. This effect comes from the nonlocal diffusion produced by $Q(f,f)$ in the velocity variable, and has been studied extensively, as we survey below in Section \ref{s:related}. In light of this regularizing effect, it is natural and desirable to construct a solution $f$ with initial data in a low-regularity (ideally zeroth-order) space, such that $f$ has at least enough regularity for positive times to evaluate the equation in a pointwise sense. However, so far this has only been achieved in the close-to-equilibrium \cite{alonso2020giorgi,silvestre2022nearequilibrium} and space homogeneous (i.e $x$-independent) \cite{desvillettes2004homogeneous} regimes. For the general case, essentially all of the local existence results for classical solutions in the literature \cite{amuxy2010regularizing, amuxy2011bounded, amuxy2013mild, morimoto2015polynomial, HST2020boltzmann, henderson2021existence} require $f_{\rm in}$ to lie in a weighted Sobolev space of order at least $4$. The current article fills this gap by constructing a solution with initial data in a weighted $L^\infty$-space. 

Another goal of our analysis is to optimize the requirement on the decay of $f_{\rm in}$ for large velocities.  Because of the nonlocality of $Q$, decay of solutions is intimately tied to regularity, and since we work in the physical regime $\gamma \leq 0$ (see \eqref{e.B-def}), the decay of $f$ for positive times is limited by the decay of $f_{\rm in}$. 
In our main existence result, we require $f_{\rm in}$ to have pointwise polynomial decay of order $3 + 2s$, where $2s\in (0,2)$ is the order of the diffusion (see \eqref{e.b-bounds}). In particular, the energy density $\int_{\R^3} |v|^2 f(t,x,v) \dd v$ of our solutions may be infinite, which places them outside the regime where the conditional regularity estimates of Imbert-Silvestre \cite{imbert2020review} may be applied out of the box.

The possible presence of vacuum regions in the initial data is a key source of difficulty. The regularization coming from $Q(f,f)$ relies on positivity properties of $f$, in a complex way that reflects the nonlocality of $Q$. 
In the space homogeneous setting, conservation of mass provides sufficient positivity of $f$ for free.  
The close-to-equilibrium assumption would also ensure that $f$ has regions of strict positivity at all times. By contrast, in the case of general initial data, any lower bounds for $f$ must degenerate at a severe rate as $t\searrow 0$, which impacts the regularity of the solution for small times and causes complications for the well-posedness theory. Our main existence theorem requires a weak positivity assumption, namely that $f_{\rm in}$ is uniformly positive in some small ball in phase space.

We consider solutions posed on the spatial domain $\R^3$, with no assumption that the solution or the initial data decay for large $|x|$. This regime includes the physically important example of a localized disturbance away from a Maxwellian equilibrium $M(v) = c_1 e^{-c_2|v|^2}$; that is $f = M(v) + g(t,x,v)$, where $g(t,x,v) \to 0$ as $|x|\to\infty$ but $g$ is not necessarily small. Our regime also includes spatially periodic solutions as a special case. The lack of integrability in $x$ is a nontrivial source of difficulty and in particular makes energy methods much less convenient. Also, the total mass, energy, and entropy of the solution could be infinite, so we do not have access to the usual bounds coming from conservation of mass and energy and monotonicity of entropy.

In Section \ref{s:prior-existence}, we give a more complete bibliography of well-posedness results for \eqref{e.boltzmann}.

\subsection{The collision operator}

Boltzmann's collision operator is a bilinear integro-differential operator defined for functions $f,g:\R^3\to\R$ by
\begin{equation}\label{e.Q}
 Q(f,g) := \int_{\R^3} \int_{\mathbb S^2} B(v-v_*,\sigma) [f(v_*')g(v') - f(v_*) g(v)] \dd \sigma \dd v_*.
 \end{equation}
 Because collisions are assumed to be elastic, the pre- and post-collisional velocities all lie on a sphere of diameter $|v-v_*|$ parameterized by $\sigma\in\mathbb S^2$, and are related by the formulas
\[ 
v' = \frac{v+v_*} 2 + \frac{|v-v_*|} 2 \sigma, \quad v_*' = \frac{v+v_*} 2 - \frac{|v-v_*|} 2 \sigma.\]
We take the standard \emph{non-cutoff} collision kernel $B$ defined by
\begin{equation}\label{e.B-def}
B(v-v_*,\sigma) = b(\cos\theta) |v-v_*|^\gamma, \quad \cos\theta = \sigma \cdot \frac{v-v_*}{|v-v_*|},
\end{equation}
for some $\gamma > -3$. The angular cross-section $b$ is singular as $\theta$ (the angle between pre- and post-collisional velocities) approaches $0$ and satisfies the bounds
\begin{equation}\label{e.b-bounds}
c_b \theta^{-1-2s} \leq b(\cos\theta) \sin \theta \leq \frac 1 {c_b} \theta^{-1-2s},
\end{equation}
for some $c_b>0$ and $s\in (0,1)$. This implies $b$ has the asymptotics 
$b(\cos\theta) \approx \theta^{-2-2s}$
as $\theta \to 0$. The parameters $\gamma$ and $s$ reflect the modeling choices made in defining $Q(f,g)$. When electrostatic interactions between particles are governed by an inverse-square-law potential of the form $\phi(x) = c|x|^{1-p}$ for some $p>2$, then one has $\gamma = (p-5)/(p-1)$ and $s = 1/(p-1)$. As is common in the literature, we consider arbitrary pairs $(\gamma,s)$ and disregard the parameter $p$.

For our main results, we assume 
\[
	\gamma < 0,
\]
but otherwise, we do not place any restriction on $\gamma$ and $s$. 
The integral in \eqref{e.Q} has two singularities: as $\theta\to 0$, and as $v_*\to v$. The non-integrable singularity at $\theta \approx 0$ (grazing collisions), which is related to the long-range interactions taken into account by the physical model, is the source of the regularizing properties of the operator $Q$.

\subsection{Main results}

For $1\leq p\leq \infty$, define the velocity-weighted $L^p$ norms 
\[
	\|f\|_{L^p_q(\R^3)} = \|\vv^q f\|_{L^p(\R^3)},
		\quad
	\|f\|_{L^p_q(\Omega\times\R^3)} = \|\vv^q f\|_{L^p(\Omega\times\R^3_v)}
\]
where 
$\Omega$ is any subset of $\R^3_x$ or $[0,\infty)\times\R^3_x$.

Our results involve kinetic H\"older spaces $C^{\beta}_{\rm \ell}$ that are defined precisely in Section \ref{s:holder-spaces} below. These spaces are based on a distance $d_\ell$ that is adapted to the scaling and translation symmetries of the Boltzmann equation. Roughly speaking, a function in $C^\beta_\ell$ for some $\beta>0$ is $C^\beta$ in $v$, $C^{\beta/(1+2s)}$ in $x$, and $C^{\beta/(2s)}$ in $t$. 

Note that the subscript $q$ in $L^p_q$ refers to a decay exponent, while the subscript $\ell$ in $C^\beta_\ell$ refers to the distance $d_\ell$. 

In the statement of our main results, for brevity's sake, we make the convention that all constants may depend on the parameters $\gamma$, $s$, and $c_b$ from the collision kernel, even if they are not specifically mentioned.

Our first main result is about the existence of classical solutions:

\begin{theorem}\label{t:existence}
Let $\gamma\in (-3,0)$ and $s\in (0,1)$. 
Assume that $f_{\rm in} \geq 0$ lies in $L^\infty_q(\R^6)$ with $q > 3 + 2s$, and that for some $x_m,v_m \in \R^3$ and $\delta, r>0$,
\begin{equation}\label{e.mass-core}
 f_{\rm in}(x,v) \geq \delta, \quad \text{ for } (x,v) \in B_r(x_m,v_m).
 \end{equation}
Then there exists $T>0$ depending on $q$ and $\|f_{\rm in}\|_{L^\infty_q}$ and a solution $f(t,x,v) \geq 0$ to the Boltzmann equation \eqref{e.boltzmann} in $L^\infty_q([0,T]\times \R^6)$. This solution is locally of class $C^{2s}_\ell$. More precisely, for each compact $\Omega\subset (0,T]\times \R^6$, there exist $C, \alpha>0$ depending on $q$, $\Omega$, $x_m$, $v_m$, $r$, $\delta$, and $\|f_{\rm in}\|_{L^\infty_q(\R^6)}$, such that
\[ \|f\|_{C^{2s+\alpha}_\ell(\Omega)} \leq C.\]

Furthermore, for any $m\geq 0$ and partial derivative $D^k f$, where $k$ is a multi-index in $(t,x,v)$ variables, there exists $q(k,m)>0$ such that for any compact $\Omega\subset (0,T]\times\R^3$, 
\be\label{e.c070502}
	f_{\rm in} \in L^\infty_{q(k,m)}(\R^6) \quad \Rightarrow \quad \|D^k f\|_{L^\infty_{m}(\Omega\times\R^3)} \leq C,
\ee
with $C$ depending on $k$, $m$, $\Omega$, and the initial data.  
 If $f_{\rm in}$ decays faster than any polynomial, i.e. $f_{\rm in} \in L^\infty_q(\R^6)$ for all $q>0$, then the solution $f$ is $C^\infty$ in all three variables for positive times, and $D^k f\in L^\infty_{m}(\Omega\times\R^3)$ for all $m\geq 0$, multi-indices $k$, and compact sets $\Omega$. 

At $t=0$, the solution agrees with $f_{\rm in}$ in the following weak sense: for all $\varphi \in C^1_{t,x} C^2_v$ with compact support in $[0,T)\times \R^6$, 
\begin{equation}\label{e.weak-initial-data}
 \int_{\R^6} f_{\rm in}(x,v) \varphi(0,x,v) \dd v \dd x = \int_0^T \int_{\R^6} [f(\partial_t + v\cdot \nabla_x)\varphi + Q(f,f) \varphi] \dd v \dd x \dd t.
 \end{equation}
\end{theorem}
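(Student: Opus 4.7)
The plan is to construct $f$ as the limit of classical solutions $f^\eps$ corresponding to smooth, rapidly decaying approximations $f_{\rm in}^\eps$ of $f_{\rm in}$, and then extract a subsequential limit by compactness in a kinetic H\"older space. I would start by mollifying and truncating so that $f_{\rm in}^\eps \in C^\infty_c$, $f_{\rm in}^\eps \to f_{\rm in}$ pointwise a.e., $\|f_{\rm in}^\eps\|_{L^\infty_q(\R^6)} \leq 2\|f_{\rm in}\|_{L^\infty_q(\R^6)}$, and $f_{\rm in}^\eps \geq \delta/2$ on $B_{r/2}(x_m,v_m)$ for small $\eps$. Existence of a smooth classical solution $f^\eps$ on some interval $[0,T^\eps]$ then follows from prior short-time results in weighted Sobolev spaces, e.g.\ \cite{HST2020boltzmann}. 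The remainder of the argument is to show $T^\eps$ is bounded below by some $T>0$ depending only on $q$ and $\|f_{\rm in}\|_{L^\infty_q}$ and to prove estimates uniform in $\eps$ that are strong enough for passage to the limit.

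The uniform estimates come in three layers. First, one propagates the $L^\infty_q$ norm of $f^\eps$ on $[0,T]$ using pointwise upper bounds for $Q(f^\eps,f^\eps)$ (available precisely when $q>3+2s$) together with a weighted maximum principle along transport characteristics; this produces the time $T$ and a bound $\|f^\eps\|_{L^\infty_q([0,T]\times\R^6)} \leq C$. Second, transporting the positivity core \eqref{e.mass-core} of $f^\eps_{\rm in}$ forward in time gives a small ball of positivity at each later time, and iterating the self-generating spreading-of-mass produced by the non-cutoff collision operator (in the spirit of the Imbert-Mouhot-Silvestre strategy) upgrades this to a pointwise lower bound $f^\eps(t,x,v) \geq \ell(t,x,v)>0$ on $(0,T]\times\R^6$ that degenerates as $t\to 0$ but is uniform in $\eps$. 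Third, with the upper and lower bounds in hand, the linearization associated to $f^\eps$ is uniformly coercive in the Imbert-Silvestre sense on every compact $\Omega\Subset(0,T]\times\R^6$, so the conditional regularity theory (De Giorgi-Nash-Moser together with the kinetic Schauder estimates) yields a uniform $C^{2s+\alpha}_\ell(\Omega)$ bound on $f^\eps$.

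Arzela-Ascoli then produces a subsequential limit $f\in C^{2s}_\ell(\Omega)$ on every such $\Omega$, which by the standard convergence properties of $Q$ against $C^{2s+\alpha}_\ell$ functions with uniform $L^\infty_q$ decay is a pointwise classical solution of \eqref{e.boltzmann}, inheriting the uniform upper bound, the lower bound, and the local $C^{2s+\alpha}_\ell$ regularity. To verify the weak initial condition \eqref{e.weak-initial-data}, I would integrate \eqref{e.boltzmann} against a test function $\varphi$ for each $f^\eps$ and pass to the limit: the left-hand side converges by dominated convergence using $f^\eps_{\rm in}\to f_{\rm in}$ pointwise together with the $L^\infty_q$ bound, while the right-hand side converges using the $C^{2s}_\ell$ convergence of $f^\eps$ to $f$ on compact subsets of $(0,T]\times\R^6$ together with the uniform decay to control the velocity tails. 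The higher regularity in \eqref{e.c070502} and the $C^\infty$ case then follow by a standard bootstrap: each differentiated equation is of kinetic Fokker-Planck type with coefficients built from lower-order derivatives of $f$, so kinetic Schauder estimates apply, and each additional derivative or weight costs a fixed amount of velocity decay in the data, producing the monotone sequence $q(k,m)$.

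The main obstacle is the second step, and specifically the lower bound. In this large-data setting on $\R^3$ we have no access to global mass, energy, or entropy conservation, so the only mechanism available for coercivity is to transport the local positivity \eqref{e.mass-core} and then invoke the self-generating spreading of the non-cutoff collision operator; this delicate argument must be carried out uniformly in the regularization $\eps$, uniformly on a full ball $B_R\subset\R^3_x$ that is far from $(x_m,v_m)$, and for a time $T$ that depends only on $\|f_{\rm in}\|_{L^\infty_q}$. The necessary degeneration of this lower bound as $t\to 0$ is exactly what prevents closing compactness up to $t=0$ and forces the initial data to be matched only in the weak sense \eqref{e.weak-initial-data} rather than in $L^\infty_q$.
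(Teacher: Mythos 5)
Your skeleton matches the paper's: regularize the data, propagate $L^\infty_q$ decay by a comparison/barrier argument, spread the local positivity \eqref{e.mass-core} with the self-generating lower bounds, run De Giorgi plus kinetic Schauder to get uniform $C^{2s+\alpha}_\ell$ bounds on compacts, pass to the limit, and match the data weakly. However, the step where you invoke ``the conditional regularity theory (De Giorgi--Nash--Moser together with the kinetic Schauder estimates)'' as if it were off the shelf is a genuine gap. The Imbert--Silvestre global estimates are proved for $\gamma+2s\in[0,2]$ and under hydrodynamic hypotheses (mass bounded below, energy and entropy densities bounded above); here $\gamma+2s$ may be negative and the energy density may be infinite, since only $q>3+2s$ is assumed. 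Worse, even for a fixed compact $\Omega$, the Schauder estimate requires H\"older continuity of the kernel $K_{f^\eps}(t,x,v,\cdot)$ in the base point, and since $K_{f^\eps}$ at bounded $v$ integrates $f^\eps$ over arbitrarily large velocities, this forces a \emph{global-in-$v$, polynomially weighted} H\"older estimate on $f^\eps$ with constants quantified in $|v|$ — local coercivity on compacts does not give it. Supplying this is a main technical contribution of the paper (a new change of variables and global De Giorgi/Schauder estimates for $\gamma+2s<0$, with pointwise rather than hydrodynamic hypotheses), and your proposal has no substitute for it.

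Two further gaps. First, the uniform time $T$: the short-time theory for the smooth approximations gives an existence time controlled by high Sobolev norms of $f^\eps_{\rm in}$, which blow up as $\eps\to0$, so one must continue each $f^\eps$ up to a common $T$ depending only on $\|f_{\rm in}\|_{L^\infty_q}$; the continuation criterion (and the bootstrap behind \eqref{e.c070502}) consumes very high velocity moments, and these must themselves be propagated on all of $[0,T]$. A maximum-principle bound in $L^\infty_{q'}$ run naively yields a time depending on the $L^\infty_{q'}$ norm, which shrinks as $q'$ grows; the paper closes this with the sharpened estimate $Q(f,\langle\cdot\rangle^{-q})\lesssim\|f\|_{L^\infty_{q_0}}\vv^{-q}$ valid for all $q\in[q_0,q_0-\gamma]$, iterated in steps of $|\gamma|$ so that every higher moment is controlled on the same interval — your proposal has no mechanism for this (and, incidentally, the first-crossing-point argument needs compactness in $x$, which the paper arranges by periodizing onto tori of growing side length). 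Second, the initial condition \eqref{e.weak-initial-data}: passing to the limit in $\int_0^T\iint \varphi\, Q(f^\eps,f^\eps)\,\dd v\,\dd x\,\dd t$ using local $C^{2s}_\ell$ convergence plus velocity decay does not work near $t=0$, because every pointwise bound on $Q(f^\eps,f^\eps)$ degenerates with the lower bounds (like $e^{-c/t}$) as $t\to0$ and furnishes no time-integrable majorant. The paper instead rewrites the collision term in Maxwell's symmetrized weak form and proves $\bigl|\int_{\mathbb S^2}B(v-v_*,\sigma)[\varphi'+\varphi_*'-\varphi-\varphi_*]\,\dd\sigma\bigr|\lesssim\|\varphi\|_{C^2}|v-v_*|^{\gamma}(1+|v-v_*|^{2s})$, which together with $q>3+2s$ gives the dominating function needed for the time integral; some device of this kind is required in your argument as well.
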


Some comments on the theorem statement are in order:

\begin{itemize}
	
	\item The local regularity of $f$ of order $2s+\alpha$, where $\alpha>0$ is uniform on compact sets, is enough to make pointwise sense of $Q(f,f)$, as we prove in Lemma \ref{l:Q-makes-sense}. The norm $C^{2s+\alpha}_\ell$ also controls the material derivative $(\partial_t + v\cdot \nabla_x)f$ (see \cite[Lemma 2.7]{imbert2018schauder}). Therefore, although $\partial_t f$ and $\nabla_x f$ do not necessarily exist classically, the two sides of equation \eqref{e.boltzmann} have pointwise values and are equal at every $(t,x,v)\in (0,T]\times \R^6$.
 
	\item In general, our solutions may have a discontinuity at $t=0$. If we make the additional assumption that $f_{\rm in}$ is continuous, then $f$ is continuous as $t\to 0$ and agrees with $f_{\rm in}$ pointwise. This is proven in Proposition \ref{p:cont-match}.
	
	\item It is not \emph{a priori} obvious that the time integral on the right in \eqref{e.weak-initial-data} converges, since the regularity required to make pointwise sense of $Q(f,f)$ degenerates as $t\to 0$. Using the weak formulation of the collision operator (see \eqref{e.weak-collision} below) one can bound $\int_{\R^3} Q(f,f) \varphi \dd v$ from above using only bounds for $\varphi$ in $W_v^{2,\infty}$ and $f$ in $L^\infty\cap L^1_{\gamma+2s}$. This implies that the formula \eqref{e.weak-initial-data} is well-defined.

	\item Our results only depend on dimension through the restrictions on the velocity weight, which is related to integrability.  In particular, \Cref{t:existence} likely holds in $\R^d\times \R^d$ for any $d\geq 2$ after replacing the restriction $q > 3 + 2s$ with $q > d+2s$; likewise for the results below.  We state our results only in $\R^3\times \R^3$ because (i) this is the physically relevant case and (ii) we rely on many previous results stated only for $\R^3\times \R^3$ (which, again, are likely hold in any dimension).
	
\end{itemize}

One can deduce existence of a classical solution in the $\gamma=0$ case fairly easily  via \Cref{t:existence}.  Indeed, the $C^{2s+\alpha}_\ell$ estimates are uniform as $\gamma\nearrow 0$, so one can use local convergence of solutions $f^\gamma$ to obtain $f^0$ that solves~\eqref{e.boltzmann}, after performing a suitable convergence analysis for $Q(f,f)$ as $\gamma \nearrow 0$.  On the other hand, the higher regularity estimates depend on $\gamma$, so we cannot deduce the smoothing~\eqref{e.c070502}.  We arrive at the following, whose proof we omit as it follows exactly as outlined above:
\begin{corollary}\label{c.gamma=0}
Let $\gamma=0$ and $s\in (0,1)$. 
Assume that $f_{\rm in} \geq 0$ lies in $L^\infty_q(\R^6)$ with $q > 3 + 2s$, and that for some $x_m,v_m \in \R^3$ and $\delta, r>0$ the assumption~\eqref{e.mass-core} holds. 
Then there exists $T>0$ depending on $q$ and $\|f_{\rm in}\|_{L^\infty_q}$ and a solution $f(t,x,v) \geq 0$ to the Boltzmann equation \eqref{e.boltzmann} in $L^\infty_q([0,T]\times \R^6)$. This solution is locally of class $C^{2s}_\ell$. More precisely, for each compact $\Omega\subset (0,T]\times \R^6$, there exist $C, \alpha>0$ depending on $q$, $\Omega$, $x_m$, $v_m$, $r$, $\delta$, and $\|f_{\rm in}\|_{L^\infty_q(\R^6)}$, such that
\[ \|f\|_{C^{2s+\alpha}_\ell(\Omega)} \leq C.\]
At $t=0$, the solution agrees with $f_{\rm in}$ in the weak sense given by~\eqref{e.weak-initial-data}.
\end{corollary}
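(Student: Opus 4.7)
The plan is a compactness argument. Fix a sequence $\gamma_n \nearrow 0$ and apply \Cref{t:existence} with the same initial data $f_{\rm in}$ (whose hypotheses do not involve $\gamma$) to produce classical solutions $f^{\gamma_n}$ associated with the kernel $B^{\gamma_n}$. The first step is to verify that, on some interval $[-\gamma_0, 0)$ with $\gamma_0 > 0$ small, the conclusions of \Cref{t:existence} are uniform in $\gamma_n$: the lifespan $T$, the bound in $L^\infty_q([0,T] \times \R^6)$, and the local bounds in $C^{2s+\alpha}_\ell(\Omega)$ on compact $\Omega \subset (0,T] \times \R^6$. This amounts to inspecting the $\gamma$-dependence of the constants in the proof of \Cref{t:existence} and checking that they are continuous (hence bounded) at $\gamma = 0$.

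Given such uniform estimates, the local $C^{2s+\alpha}_\ell$ bounds together with Arzel\`a--Ascoli yield a subsequence (not relabeled) converging locally uniformly on $(0,T] \times \R^6$ to a limit $f^0 \geq 0$, which inherits the $L^\infty_q$ bound and the local $C^{2s+\alpha}_\ell$ bounds. It remains to show $f^0$ satisfies the weak formulation \eqref{e.weak-initial-data} with $\gamma = 0$. Fix a test function $\varphi \in C^1_{t,x} C^2_v$ with compact support in $[0,T) \times \R^6$. The initial-data term is $\gamma$-independent. In the transport term $\int_0^T \int_{\R^6} f^{\gamma_n} (\partial_t + v \cdot \nabla_x)\varphi \dd v \dd x \dd t$ the integrand is dominated by $\|f^{\gamma_n}\|_{L^\infty_q} \vv^{-q} |(\partial_t + v\cdot\nabla_x)\varphi|$ on $\supp \varphi$, which is integrable since $q > 3$, and pointwise convergence on $\{t > 0\}$ gives the limit by dominated convergence.

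The core of the argument is the convergence of the collision term $\int_0^T \int_{\R^6} Q^{\gamma_n}(f^{\gamma_n}, f^{\gamma_n}) \varphi \dd v \dd x \dd t$. I would use the symmetric weak form of the collision operator, in which the cancellation of the non-integrable angular singularity is built in through a second-order Taylor expansion of $\varphi$ around $\tfrac{v+v_*}{2}$. This produces an explicit integrable majorant of the integrand involving $|v-v_*|^{\gamma_n+2}$ near the diagonal and $|v-v_*|^{\gamma_n}$ away from it, combined with $\|f^{\gamma_n}\|_{L^\infty_q}^2 \vv^{-q}\langle v_*\rangle^{-q}$. The key $\gamma_n$-uniform bound is $|v - v_*|^{\gamma_n} \leq 1 + |v-v_*|^{-\gamma_0}$, which is integrable against $\vv^{-q} \langle v_*\rangle^{-q}$ in $(v,v_*)$ provided $q > 3 + 2s$ and $\gamma_0 < 3$. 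Pointwise a.e.\ convergence of $B^{\gamma_n} f^{\gamma_n}(v_*) f^{\gamma_n}(v)$ to $B^0 f^0(v_*) f^0(v)$ on $\{t>0\}$ combined with dominated convergence then finishes the argument on $[\tau, T]$ for any $\tau > 0$, and the uniform $L^\infty_q$ bound ensures that the contribution of $[0,\tau]$ vanishes as $\tau \to 0$.

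The main obstacle is the verification of $\gamma$-uniformity in the conclusions of \Cref{t:existence}, which is not stated explicitly there but should follow by tracing constants through its proof; once in hand, combined with the standard weak-formulation bounds on $Q$, the remaining compactness and dominated convergence steps are routine.
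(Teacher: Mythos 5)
Your proposal is correct and follows essentially the same route the paper outlines for this corollary: take $\gamma_n \nearrow 0$ with the same data, use the uniformity in $\gamma$ of the lifespan, the $L^\infty_q$ bound, and the local $C^{2s+\alpha}_\ell$ estimates to extract a locally convergent subsequence, and pass to the limit in the weak formulation of the collision operator (the paper likewise relies on the $\gamma$-uniform weak-form bound of Lemma \ref{l:weak-estimate} and a convergence analysis of $Q$ as $\gamma\nearrow 0$). The only point worth flagging is that, to assert the limit $f^0$ solves \eqref{e.boltzmann} pointwise for $t>0$ (and not just in the weak sense \eqref{e.weak-initial-data}), you should also record the locally uniform convergence $Q^{\gamma_n}(f^{\gamma_n},f^{\gamma_n})\to Q^0(f^0,f^0)$ on compact subsets of $(0,T]\times\R^6$, in the spirit of Lemma \ref{l:Q-makes-sense}, which your uniform $C^{2s+\alpha}_\ell$ and $L^\infty_q$ bounds already make routine.
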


Although our main interest is in constructing classical solutions, our approach is robust enough to prove the existence of weak solutions when the decay and positivity conditions on $f_{\rm in}$ are relaxed. In particular, we obtain a well-defined notion of weak solution for any $f_{\rm in}\in L^\infty_q(\R^6)$, $q>3+\gamma+2s$, without any quantitative lower bound assumptions on $f_{\rm in}$. 
 These weak solutions do not have enough regularity to evaluate $Q(f,f)$ pointwise, so we define the weak formulation of the collision operator as follows:
\begin{equation}\label{e.weak-collision}
W(g,h,\varphi) :=  \frac 1 2 \int_{\R^3} \int_{\mathbb S^2} B(v-v_*,\sigma) g(v) h(v_*) [\varphi(v_*') + \varphi(v') - \varphi(v_*) - \varphi(v)] \dd \sigma \dd v_* .
\end{equation}
This weak form of $Q(f,f)$ is very classical and goes back to James Clerk Maxwell's 1867 work on the theory of gases \cite{maxwell1867}. 
When $f$ is sufficiently smooth and rapidly decaying, the identity $\int_{\R^3} \varphi Q(f,f) \dd v = \int_{\R^3} W(f,f,\varphi) \dd v$ follows from the pre-post-collisional change of variables and symmetrization, see, e.g. \cite[Chapter 1, Section 2.3]{villani2002review}.  

Our result on weak solutions is as follows:

\begin{theorem}\label{t:weak-solutions}
Let $\gamma$ and $s$ be as in Theorem \ref{t:existence}. Assume that $f_{\rm in}\geq 0$ lies in $L^\infty_{q}(\R^6)$ for some $q>3+\gamma+2s$. Then there exists $T>0$ depending on $q$ and $\|f_{\rm in}\|_{L^\infty_{q}(\R^6)}$ and $f: [0,T]\times \R^6\to [0,\infty)$ such that, for any $\varphi \in C^1_{t,x} C^2_v$ with compact support in $[0,T)\times\R^6$, there holds
\begin{equation}\label{e.weak-solution-def}
\int_{\R^6} f_{\rm in}(x,v) \varphi(0,x,v) \dd v \dd x = \int_0^T \int_{\R^6}[ f (\partial_t + v\cdot \nabla_x)\varphi +W(f,f,\varphi)] \dd v \dd x \dd t.
\end{equation}
If, in addition, there exist $\delta, r>0$ and $x_m,v_m\in \R^3$ with
\[ f_{\rm in}(x,v) \geq \delta, \quad \text{ for } (x,v) \in B_r(x_m)\times B_r(v_m),\]
then $f$ is locally H\"older continuous: for any compact $\Omega\subset (0,T]\times\R^6$, there exist $C,\alpha>0$ depending on $\Omega$, $x_m$, $v_m$, $r$, $\delta$, and $\|f\|_{L^\infty_q(\R^6)}$, with 
\[ \|f\|_{C^\alpha_\ell(\Omega)}\leq C.\]
\end{theorem}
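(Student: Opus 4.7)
My plan is to derive Theorem~\ref{t:weak-solutions} from Theorem~\ref{t:existence} by an approximation argument. I would regularize the initial data by setting
\[
	f_{\rm in}^n(x,v) := \zeta_n(v) f_{\rm in}(x,v) + \e_n \chi(x,v),
\]
where $\zeta_n\in C^\infty(\R^3)$ is a radial cutoff increasing pointwise to $1$ and enhancing decay (so that each $f_{\rm in}^n$ lies in $L^\infty_{q'}(\R^6)$ for all $q'>0$) and $\chi\in C_c^\infty(\R^6)$ is a fixed nonnegative bump with $\chi \geq 1$ on some ball in phase space, providing the strict positivity required by~\eqref{e.mass-core}. When $f_{\rm in}$ already has a positivity core I take $\e_n\equiv 0$ and $\chi$ supported inside $B_r(x_m,v_m)$; otherwise $\e_n \searrow 0$. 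Each $f_{\rm in}^n$ then satisfies the hypotheses of Theorem~\ref{t:existence}, yielding classical solutions $f^n$, and $\|f_{\rm in}^n\|_{L^\infty_q(\R^6)}$ is bounded uniformly in $n$. Provided the dependence of the existence time in Theorem~\ref{t:existence} on the initial data can be tracked through only the weak-form estimate for $Q$—which requires only $q > 3+\gamma+2s$, since this is the minimal decay ensuring $L^\infty_q \hookrightarrow L^1_{\gamma+2s}$, the bootstrap norm controlling $W(f,f,\varphi)$—one obtains $T_n \geq T > 0$ and $\|f^n\|_{L^\infty_q([0,T]\times \R^6)}\leq C$ uniformly in $n$, by a weighted maximum-principle argument established in the proof of Theorem~\ref{t:existence}.

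\textbf{Passing to the limit.} In the positivity case, Theorem~\ref{t:existence} delivers uniform local H\"older bounds $\|f^n\|_{C^\alpha_\ell(\Omega)}\leq C$ on each compact $\Omega\subset (0,T]\times\R^6$, because the parameters $r,\delta$ describing the positivity core of $f_{\rm in}^n$ can be chosen independent of $n$. Arzel\`a–Ascoli then extracts a subsequence $f^n \to f$ locally uniformly on $(0,T]\times\R^6$ with $f \in C^\alpha_\ell$ locally, which establishes the second (conditional) conclusion of the theorem. To pass to the limit in~\eqref{e.weak-solution-def}, the linear transport piece converges immediately, while for the quadratic term I would use the pointwise bound
\[
	\left| W(f^n, f^n, \varphi)(t,x) \right| \leq C\, \|\varphi\|_{C^2_v}\, \|f^n(t,x,\cdot)\|_{L^\infty_q(\R^3)}\, \|f^n(t,x,\cdot)\|_{L^1_{\gamma+2s}(\R^3)},
\]
coming from the Taylor cancellation $|\varphi(v')+\varphi(v_*')-\varphi(v)-\varphi(v_*)| \leq C\|D^2_v\varphi\|_\infty |v-v_*|^2 \sin^2(\theta/2)$ combined with the grazing bound~\eqref{e.b-bounds} (valid since $s<1$). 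This produces an integrable dominating function, so dominated convergence closes the argument.

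\textbf{Main obstacle.} The hardest step is the general case, without positivity of $f_{\rm in}$, where the H\"older bounds from Theorem~\ref{t:existence} degenerate as $\e_n \to 0$ and only weak-$*$ convergence of $f^n$ in $L^\infty_q$ is available. Since $W$ is quadratic, weak-$*$ convergence alone does not suffice to identify the limit of $W(f^n, f^n, \varphi)$. My plan is to obtain compactness from the equation itself: the material derivative $(\partial_t + v\cdot\nabla_x)f^n = Q(f^n, f^n)$ is controlled uniformly in a negative-regularity-in-$v$ space (tested against $C^2_v$ functions, exactly by the weak bound above), so a velocity-averaging lemma should yield strong $L^p_{\rm loc}$ compactness of $\int \phi(v)f^n(t,x,v)\dd v$ for each $\phi\in C_c^\infty(\R^3)$. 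Decomposing $W(f^n,f^n,\varphi)$ so that one copy of $f^n$ is paired with a $v$-averaged smooth weight built from $\varphi$, while the other is handled by weak convergence, should then suffice to pass to the limit along a subsequence and recover the weak solution $f$. The positivity assumption is used only to upgrade the resulting weak solution to a H\"older-continuous one, which is why the H\"older conclusion appears only conditionally in the theorem.
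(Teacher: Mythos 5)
Your overall skeleton --- approximate the data, run the barrier/decay estimates from the proof of Theorem~\ref{t:existence} at the level $q>3+\gamma+2s$ to obtain a uniform existence time and a uniform $L^\infty_q$ bound, then pass to the limit in the weak formulation --- is the same as the paper's. But two steps, as written, do not go through. First, the H\"older conclusion: you import it from the $C^{2s+\alpha}_\ell$ estimates of Theorem~\ref{t:existence}, whose constants depend on $\|f_{\rm in}\|_{L^\infty_{q'}(\R^6)}$ for some $q'>3+2s$. Theorem~\ref{t:weak-solutions} only assumes decay $q>3+\gamma+2s$, which (since $\gamma<0$) allows $q\le 3+2s$; for such data the higher-decay norms of your truncations $f_{\rm in}^n$ blow up as $n\to\infty$, so the H\"older bounds you get from Theorem~\ref{t:existence} are not uniform in $n$, and the claimed dependence of $C,\alpha$ only on $\Omega$, $x_m$, $v_m$, $r$, $\delta$, $\|f\|_{L^\infty_q}$ is lost. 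The paper instead applies the \emph{local} De Giorgi estimate, Theorem~\ref{t:degiorgi}, directly to the approximate solutions: by Remark~\ref{r:ellipticity} its boundedness and cancellation hypotheses require only that $f^n\ast|\cdot|^{\gamma+2s}$ be bounded, i.e.\ exactly the uniform $L^\infty_q$ bound with $q>3+\gamma+2s$, while coercivity comes from the positivity core propagated by Theorem~\ref{t:lower-bounds}; this yields $n$-uniform $C^\alpha_\ell$ bounds with the stated dependence.

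Second, the collision term. The pointwise cancellation you invoke, $|\varphi(v')+\varphi(v_*')-\varphi(v)-\varphi(v_*)|\le C\|D^2_v\varphi\|_\infty|v-v_*|^2\sin^2(\theta/2)$, is false for a single $\sigma$: Taylor expansion plus momentum conservation gives $(\nabla\varphi(v)-\nabla\varphi(v_*))\cdot(v'-v)+O(\|D^2\varphi\|_\infty|v'-v|^2)$, whose first term is generically of size $\|D^2\varphi\|_\infty|v-v_*|^2\sin(\theta/2)$; the quadratic-in-$\theta$ gain is recovered only after the azimuthal integration ($|\int_0^{2\pi}(v'-v)\,\dd\eta|\lesssim|v-v_*|\theta^2$), as in Lemma~\ref{l:weak-estimate}, and without it the angular integral diverges for $s\ge 1/2$. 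Moreover, even granting a $\theta^2$ bound for every $\sigma$, the $\sigma$-integral then grows like $|v-v_*|^{\gamma+2}$ for large relative velocities, which is \emph{not} dominated by $\|f^n\|_{L^1_{\gamma+2s}}$; the sharp asymptotics $|v-v_*|^\gamma(1+|v-v_*|^{2s})$ of Lemma~\ref{l:weak-estimate}, obtained by splitting the $\theta$-integral at $\theta\sim|v-v_*|^{-1}$, are exactly what makes the minimal decay $q>3+\gamma+2s$ sufficient, and the paper stresses this point. Finally, your plan for the general case (no positivity core) via velocity averaging is only a sketch; the paper avoids averaging lemmas altogether by using weak-$*$ convergence of the approximations in $L^\infty_{q_0}$, the bilinear decomposition $W(f^\eps,f^\eps,\varphi)-W(f,f,\varphi)=W(f^\eps,f^\eps-f,\varphi)+W(f,f^\eps-f,\varphi)$, and the fact that, after exchanging the $v$ and $v_*$ integrations, the $\sigma$-integrated kernel lies in $L^1_{-q_0}(\R^3_{v_*})$. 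Both gaps are repairable, but the repairs are essentially the paper's own devices.
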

This definition of weak solution is similar to one used by Alexandre \cite{alexandre2001solutions} who worked under stricter hypotheses on the initial data.

	The careful reader might be surprised by the H\"older regularity in \Cref{t:weak-solutions} in view of the De Giorgi theory developed by Imbert and Silvestre under the conditional assumptions on the mass, energy, and entropy densities. 
	Notice that \Cref{t:weak-solutions} is compatible with solutions not having finite mass or energy densities.  The results of Imbert and Silvestre, however, only rely on quantities (roughly) like
	\[
		\int f(v+w) |w|^{\gamma + 2s} \dd w.
	\]
	This is bounded above by the $L^\infty_q$-norm of $f$ and below by the mass spreading results of~\cite{HST2020lowerbounds}.  Hence, we are able to access many of their results, see \Cref{s.local_reg,s.collision_operator}.

Next, we present our main result on uniqueness. This is a challenging issue because of the generality of our existence theorem. We discuss some of the specific difficulties in Section \ref{s:intro-uniqueness} below. 
\begin{figure}
	
	\begin{overpic}[scale=.175]
		{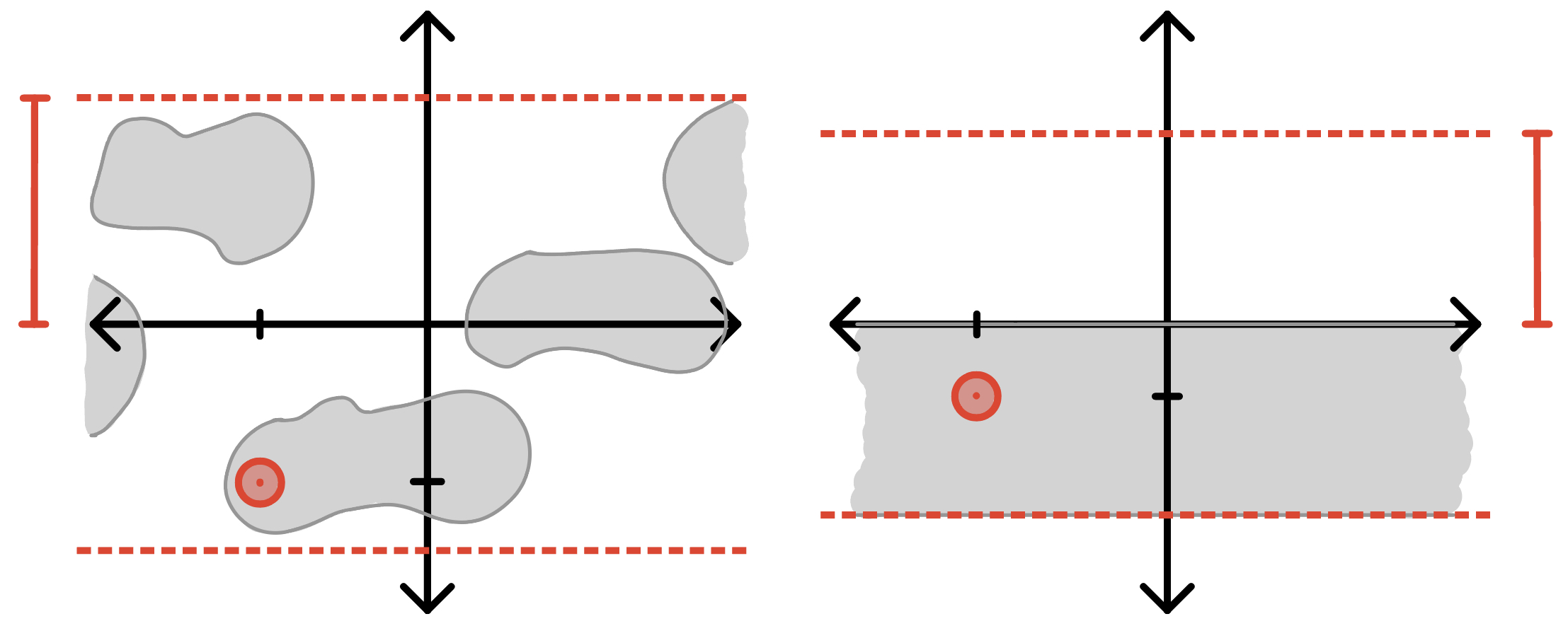}
		\put(-2,25){\color{red} $R$}
		\put(100,25){\color{red} $R$}
		\put(16,20.75){\color{red}$x$}
		\put(29,8.75){\color{red} $v_x$}
		\put(61.5,20.75){\color{red} $x$}
		\put(75.75,14.25){\color{red} $v_x$}
	\end{overpic}
	\caption{A cartoon depicting two $f_{\rm in}$ satisfying the condition~\eqref{e.uniform-lower}.  The set where $f_{\rm in}\geq \delta$ is depicted by the gray shading.  Notice that, for every $x$, there is a $v_x$ between the dashed red lines such that a translation of the red ball ($B_r$) centered at $(x,v_x)$ is within the shaded region.  This would not be the case if, e.g., $f_{\rm in} \equiv 0$ for all $x \in B_1(0)$.}
	\label{f.uniform-lower}
\end{figure}

For this result, we make additional assumptions on $f_{\rm in}$: there are $\delta$, $r$, $R>0$ so that
\begin{equation}\label{e.uniform-lower}
	\text{for each } x\in \R^3,
		\ \text{ there is } \ v_x\in B_R(0)
		\ \text{ such that }\  f_{\rm in} \geq \delta 1_{B_r(x,v_x)}.
\end{equation}
(See Figure \ref{f.uniform-lower}.) This condition is stronger than \eqref{e.mass-core} and rules out vacuum regions in the spatial domain at $t=0$.  We also need to assume that $f_{\rm in}$ is H\"older continuous. Our uniqueness theorem is as follows:

\begin{theorem}\label{t:uniqueness}
Let $\gamma \in (-3,0)$ and $s\in(0,1)$.  For any $\alpha>0$ and for $q>0$ sufficiently large, depending only on $\alpha$, $\gamma$, $s$, and $c_b$, assume that $f_{\rm in}\in L^\infty_q(\R^6)\cap C^\alpha_\ell(\R^6)$, 
and that $f_{\rm in}$ satisfies the lower bound assumption \eqref{e.uniform-lower}. Let $f$ be the classical solution on $[0,T]\times\R^6$ constructed in Theorem \ref{t:existence} with initial data $f_{\rm in}$. 

Then there exists $T_U>0$ depending on $\delta$, $r$, $R$, $\alpha$, $\|f_{\rm in}\|_{L^\infty_q(\R^6)}$, and $\|f_{\rm in}\|_{C^\alpha_\ell(\R^6)}$, such that for any weak solution $g$ in the sense of Theorem \ref{t:weak-solutions} with initial data $f_{\rm in}$, and such that
\[
g\in L^1([0,T_U],L^\infty_q(\R^6)),
\]
the equality $f(t,x,v)=g(t,x,v)$ holds everywhere in $[0,\min(T,T_U)]\times\R^6$.
\end{theorem}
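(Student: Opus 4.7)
The plan is to prove uniqueness by a weak-strong argument: derive a closed Grönwall inequality for the difference $h := f - g$, exploiting the superior regularity and lower bounds of the classical solution $f$ to dominate the weak solution $g$. The argument rests on three pillars: (i) propagating the Hölder regularity of $f_{\rm in}$ uniformly down to $t = 0$ together with a pointwise lower bound on $f$; (ii) a rigorous weak equation for $h$; and (iii) an energy estimate that uses the dissipativity of the collision operator against a positive function.

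First I would establish that $f$ has regularity and positivity down to $t = 0$. Because $f_{\rm in}$ is Hölder continuous, Proposition \ref{p:cont-match} gives continuity at $t = 0$ with $f(0,\cdot) = f_{\rm in}$; combined with the interior $C^{2s+\alpha}_\ell$ estimate of Theorem \ref{t:existence} and an interpolation between initial Hölder regularity and interior regularity, I expect $f \in C^\alpha_\ell([0,T_U]\times \R^6)$ for some uniform $\alpha>0$. Next, by the mass-spreading results of \cite{HST2020lowerbounds} applied to the uniform-in-$x$ lower bound \eqref{e.uniform-lower}, one obtains a pointwise lower envelope $f(t,x,v) \geq \mu(v) > 0$ on $(0,T_U]\times\R^6$, where $\mu$ decays like a Gaussian (or at worst polynomially). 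This is what will make the collision operator $Q(g,\cdot)$, with $g \approx f$, act as a coercive nonlocal diffusion on $h$.

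Next, subtract the two weak formulations \eqref{e.weak-solution-def} for $f$ and $g$ (the classical solution trivially satisfies the weak formulation) and use the bilinearity identity $W(f,f,\varphi) - W(g,g,\varphi) = W(h,f,\varphi) + W(g,h,\varphi)$ to obtain, for every admissible test $\varphi$ and every $t_0 \in (0,\min(T,T_U)]$,
\[
\int_{\R^6} h(t_0)\varphi(t_0) \dd x \dd v = \int_0^{t_0}\!\!\!\int_{\R^6} \bigl[ h\,(\partial_t + v\cdot\nabla_x)\varphi + W(h,f,\varphi) + W(g,h,\varphi)\bigr] \dd v \dd x \dd t.
\]
I then formally test with $\varphi = h\,\vv^{-2q'}$ for $q'$ chosen so all integrals converge, made rigorous by mollifying $h$ in $(x,v)$ and cutting off in $v$. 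The key bilinear estimates are: (a) $W(g,h, h\vv^{-2q'})$ contains a dissipative weighted fractional-Sobolev seminorm in $v$ that is coercive thanks to the lower bound $g \geq \mu/2$ (a consequence of Step 1 and a bootstrap under the assumption that $\|h\|_\infty$ stays small); (b) $W(h,f, h\vv^{-2q'})$ is controlled by $C\|f\|_{C^\alpha_\ell}\|h\|_{L^2_{q'}}^2$ plus a small multiple of the seminorm, using standard estimates for the $\varphi$-factor combined with Hölder regularity of $f$ (here is where the quantitative assumption of a large enough $q$ enters). Absorbing the small seminorm terms into the coercive one and invoking the integrability $g \in L^1_tL^\infty_q$ to handle the coefficient, one arrives at a differential inequality $\|h(t)\|_{L^2_{q'}}^2 \leq C\int_0^t (1+\|g(\tau)\|_{L^\infty_q})\|h(\tau)\|_{L^2_{q'}}^2 \dd\tau$. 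Since $h(0) = 0$, Grönwall's lemma forces $h \equiv 0$ on $[0,\min(T,T_U)]$.

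The main obstacle will be making the energy estimate rigorous given the low regularity of $g$. Because $h$ inherits only $g$'s weak regularity, it cannot be used as a test function directly; one must mollify in $x$ and $v$ and show that commutators of mollification with the transport operator and, more delicately, with the singular non-local collision operator $W$ tend to zero. The commutator estimates for $W$ are non-trivial because the angular kernel is non-integrable and the argument roles in $W(h,f,\cdot)$ and $W(g,h,\cdot)$ are not symmetric. A secondary technical issue is running a bootstrap so that the lower bound on $g$ (needed for coercivity) is maintained on the full interval $[0,T_U]$: one assumes a priori that $\|h\|_\infty$ is small on a subinterval, closes the argument there, and extends by a continuity-in-$t$ argument using the time-integrated $L^\infty_q$ control on $g$.
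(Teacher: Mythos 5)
Your overall architecture (weak--strong uniqueness via a Gr\"onwall inequality for $h=f-g$ in a weighted, space-localized $L^2$-type norm, fed by regularity of the strong solution $f$) is the same as the paper's, but two of your load-bearing steps have genuine gaps. First, the regularity of $f$ you feed into the estimate is both unjustified and insufficient. You propose to get $f\in C^\alpha_\ell$ down to $t=0$ by ``interpolating'' the continuity at $t=0$ (Proposition \ref{p:cont-match}) with the interior $C^{2s+\alpha}_\ell$ estimates; but those interior constants blow up as $t\searrow 0$ at a non-integrable rate (this is exactly the obstruction quantified in \eqref{e.c070501}), and continuity at $t=0$ plus blowing-up interior estimates do not interpolate to a uniform H\"older modulus up to $t=0$. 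The paper needs a dedicated argument: a barrier/maximum-principle estimate for the finite-difference quotient $g(t,x,v,\chi,\nu)$ (Proposition \ref{prop:holder_propagation}), plus the separate step that $(x,v)$-H\"older regularity implies $t$-H\"older regularity (Proposition \ref{prop:holder_in_t}). Moreover, even granting $C^\alpha_\ell$ up to $t=0$, your claim that $W(h,f,h\vv^{-2q'})\lesssim \|f\|_{C^\alpha_\ell}\|h\|_{L^2_{q'}}^2$ cannot hold for small $\alpha$: controlling $Q(h,f)$ (equivalently the $W(h,f,\cdot)$ term, where the test function is only as regular as $h$) requires velocity regularity of $f$ of order at least $s$ for integrated bounds and $2s$ for pointwise bounds, as the paper stresses in Section \ref{s:intro-uniqueness}. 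The paper closes this by upgrading the propagated $C^\alpha$ to $C^{2s+\alpha'}$ via the time-weighted Schauder estimate (Proposition \ref{p:nonlin-schauder}), whose constant $t^{-1+(\alpha-\alpha')/(2s)}$ is integrable in time and is precisely what makes the Gr\"onwall coefficient summable (see \eqref{e.f-estimate-uniq} and Lemma \ref{l:uniqI1}, which needs $\vv^m f\in L^\infty_x C^{2s+\alpha}_v$). Your proposal never performs this upgrade, so the Gr\"onwall inequality does not close.

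Second, your treatment of the $W(g,h,\cdot)$ term relies on a strict lower bound $g\geq \mu/2$, obtained through a bootstrap on smallness of $\|h\|_\infty$. Both ingredients are unavailable: $g$ is only a weak solution in $L^1_t L^\infty_q$, the self-lower-bound machinery (Theorem \ref{t:lower-bounds}) applies to classical solutions, and there is no mechanism to propagate pointwise smallness of $h$ (the whole argument only yields $L^2$-type control of $h$). They are also unnecessary: the paper bounds $I_3=\sup_a\iint\phi_a Q(g,h)h\vv^{2n}$ by symmetrization, producing a nonpositive dissipation $-\tfrac12 D_a$ (which needs only $g\geq 0$) plus Cancellation-Lemma and commutator terms controlled by $\|g\|_{L^\infty_q}\|h\|_{X_n}^2$; the dissipation is used solely to absorb the commutator generated by the weight $\Psi_a$, not to dominate the $Q(h,f)$ contribution. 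Relatedly, the mollification/commutator issues you flag are handled in the paper at the level of the localized energy identity, while the choice of the cutoff $\phi_a(x,v)=(1+|x-a|^2+|v|^2)^{-1}$ with the transport-compatible bound \eqref{e.uniq_kin} is what tames the lack of spatial decay; your $\varphi=h\vv^{-2q'}$ test function alone does not address the unbounded-$x$ issue.
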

Let us make the following comments on the statement of this theorem:
\begin{itemize}

	\item This uniqueness result holds up to a time $T_U$ that depends on the $C^\alpha$ bound for $f_{\rm in}$, and may be smaller than $T$, the time of existence granted by Theorem \ref{t:existence}. This makes sense because our proof of uniqueness breaks down as $\alpha$ is sent to 0.

	\item The admissible values of the parameter $q$ in Theorem \ref{t:uniqueness} are explicitly computable from our proof.

\end{itemize}

The uniqueness or non-uniqueness of the solutions constructed in Theorem \ref{t:existence}, without any regularity assumption for $f_{\rm in}$, remains an interesting open question.  
For other examples of nonlinear evolution equations where uniqueness is not understood in the same generality as existence, even though the system regularizes instantaneously, we refer to  \cite{kiselev2008blowup, HST2020landau, anceschi2021fokkerplanck}.

\subsection{Application: global existence near equilibrium}

In our main results, we do not assume that our initial data is close to equilibrium. In the case that $f_{\rm in}$ is sufficiently close to a Maxwellian equilibrium state, solutions are known to exist globally in time and converge to equilibrium as $t\to\infty$, and there is a large literature about this regime (see Section \ref{s:prior-existence} below). Although in general the near-equilibrium and far-from-equilibrium regimes seem very different mathematically, we are nevertheless able to prove a new result in the close-to-equilibrium regime as an application of our Theorem \ref{t:existence}:

\begin{corollary}\label{c:global}
Let $\gamma$ and $s$ be as in Theorem \ref{t:existence}, let $M(x,v) = (2\pi)^{-3/2} e^{-|v|^2/2}$, and let $q_0>5$ be fixed. There exists $q_1> q_0$ depending on $q_0$, $\gamma$, $s$, and $c_b$, 
such that for any $f_{\rm in}:\T^3\times\R^3\to [0,\infty)$ such that
\[ 
f_{\rm in} \in L^\infty_{q_1}(\T^3\times\R^3), 
\]
and for any $\eps\in (0,\frac 1 2)$, there exists a $\delta>0$, depending on $\eps$ and $\|f_{\rm in}\|_{L^\infty_{q_1}(\T^3\times\R^3)}$, such that if
\[ 
\|f_{\rm in} - M\|_{L^\infty_{q_0}(\T^3\times\R^3)} < \delta,
\]
then there exists a global classical solution $f:[0,\infty) \times\T^3\times\R^3 \to [0,\infty)$ to the Boltzmann equation \eqref{e.boltzmann}, with 
\[ 
\|f(t) - M\|_{L^\infty_{q_0}(\T^3\times\R^3)} < \eps,
\]
for all $t\geq 0$.
\end{corollary}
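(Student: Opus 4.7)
The strategy is a two-stage argument: apply Theorem \ref{t:existence} to produce a classical solution on a short time interval $[0, T]$ out of the bounded, measurable data $f_{\rm in}$, and then hand off at some positive time $t_0$ to an existing near-equilibrium global existence theorem (such as \cite{silvestre2022nearequilibrium} or \cite{alonso2020giorgi}), which typically requires some continuity on its initial data but guarantees global persistence of closeness to $M$. The smoothing effect of Theorem \ref{t:existence} is precisely what bridges the two stages, since the initial data in Corollary \ref{c:global} is only assumed measurable. For the local step, the positivity hypothesis \eqref{e.mass-core} is automatic for $\delta$ small, because $M$ is uniformly positive on any ball, so Theorem \ref{t:existence} applies on $\T^3\times\R^3$ (by periodic extension) and gives a classical solution $f$ which is in $L^\infty_{q_1}$ and of class $C^{2s+\alpha}_\ell$ for each $t > 0$.

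The technical heart of the proof is a short-time stability estimate for $g := f - M$. Since $Q(M,M) = 0$, the perturbation satisfies
\[
(\partial_t + v \cdot \nabla_x) g = L(g) + Q(g,g), \qquad L(g) := Q(M, g) + Q(g, M).
\]
Using weighted $L^\infty$ bilinear bounds for $Q$, and exploiting the Gaussian decay of $M$ in the linearized part (which comfortably absorbs the finite polynomial-moment losses typical of such bounds), one obtains an estimate of the form
\[
|L(g)(t,x,v)| + |Q(g,g)(t,x,v)| \leq C \langle v \rangle^{-q_0} \bigl( \|g(t)\|_{L^\infty_{q_0}} + \|g(t)\|_{L^\infty_{q_0}}^2 \bigr),
\]
where $C$ depends on $\|f\|_{L^\infty_{q_1}}$. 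A Gr\"onwall argument along the transport characteristics then yields $\sup_{t \in [0, T_1]} \|g(t)\|_{L^\infty_{q_0}} \leq C \|g(0)\|_{L^\infty_{q_0}} \leq C \delta$ on some interval $[0, T_1] \subseteq [0, T]$, with $C$ and $T_1$ depending on $\|f_{\rm in}\|_{L^\infty_{q_1}}$ but not on $\delta$, provided $q_1$ is chosen large enough (as a function of $q_0$, $\gamma$, $s$) to accommodate these moment losses.

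Now fix any $t_0 \in (0, T_1)$. At time $t_0$ the function $f(t_0, \cdot, \cdot)$ is H\"older continuous in the kinetic sense, bounded in $L^\infty_{q_1}$, and satisfies $\|f(t_0) - M\|_{L^\infty_{q_0}} \leq C \delta$. For $\delta$ sufficiently small (depending on $\eps$), the hypotheses of the near-equilibrium global existence theorem are satisfied at $t_0$, so one obtains a classical continuation $\tilde f$ on $[t_0, \infty)$ with $\|\tilde f(t) - M\|_{L^\infty_{q_0}} < \eps$ for all $t \geq t_0$. Concatenating $f|_{[0, t_0]}$ with $\tilde f$, and shrinking $\delta$ once more if necessary to control the initial slice $[0, t_0]$ via the stability estimate, yields the global classical solution asserted by the corollary.

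The main obstacle is the stability estimate in the second paragraph: one must choose $q_1$ with enough room above $q_0$ to make the weighted bilinear estimates for $L$ and $Q(g,g)$ close with a constant that is uniform as $\delta \to 0$, and one must simultaneously ensure that the moment-propagation bounds built into Theorem \ref{t:existence} are strong enough to supply this $L^\infty_{q_1}$ control on $[0, T_1]$. Verifying that the H\"older regularity of $f(t_0)$ furnished by Theorem \ref{t:existence} matches the smoothness threshold of the particular near-equilibrium theorem invoked is a secondary, but essentially bookkeeping, issue.
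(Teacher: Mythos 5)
Your plan has a decisive gap at the hand-off step. You propose to run Theorem \ref{t:existence} for a short time and then invoke ``an existing near-equilibrium global existence theorem (such as \cite{silvestre2022nearequilibrium} or \cite{alonso2020giorgi})'' at time $t_0$. But no such theorem exists in the generality the corollary requires: \cite{alonso2020giorgi} treats $\gamma>0$ and \cite{silvestre2022nearequilibrium} requires $\gamma+2s\geq 0$ and initial data decaying faster than any polynomial, whereas the corollary covers $\gamma+2s<0$ and data with only finite polynomial decay (and, since $\gamma\le 0$, the solution's decay at $t_0$ is no better than that of $f_{\rm in}$, so you cannot upgrade it by smoothing). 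This is exactly the novelty being proved, so it cannot be outsourced. The paper instead re-runs the near-equilibrium strategy internally: a quantitative stability lemma (Lemma \ref{l:Max}) showing that for any prescribed $T$ one can choose $\delta$ so that $\|f(t)-M\|_{L^\infty_{q_0}}<\eps$ on $[0,T]$; uniform-in-time higher regularity from Proposition \ref{p:higher-reg} (extended in Section \ref{s:global-reg} to $\gamma+2s<0$) together with Gaussian lower bounds, valid as long as the solution stays $\eps$-close to $M$; the Desvillettes--Villani decay estimate (Theorem \ref{t:dv}) to show any first crossing time $t_\rmcr$ with $\|f(t_\rmcr)-M\|_{L^\infty_{q_0}}=\eps$ must satisfy $t_\rmcr\le C_1/\eps$; and then Lemma \ref{l:Max} applied with $T=C_1/\eps+1$ to rule out such a crossing, after which Theorem \ref{t:existence} continues the solution for all time. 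A final truncation argument handles data with only $L^\infty_{q_1}$ decay.

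There is also a flaw in your stability estimate itself. The claimed pointwise bound $|L(g)|+|Q(g,g)|\le C\vv^{-q_0}\bigl(\|g\|_{L^\infty_{q_0}}+\|g\|_{L^\infty_{q_0}}^2\bigr)$ cannot hold: the singular part $Q_{\rm s}(M+g,g)$ (and $Q_{\rm s}(g,\cdot)$ acting on non-smooth arguments) involves second-order differences of $g$ of order $2s$ and is not controlled pointwise by a weighted $L^\infty$ norm of $g$ alone, so a Gr\"onwall argument along characteristics does not close in $L^\infty_{q_0}$. The paper's Lemma \ref{l:Max} gets around this with a comparison/barrier argument: at a first crossing of $\tilde f=f-M$ with the barrier $\delta e^{\beta t}\vv^{-q}$, one has $Q_{\rm s}(M+\tilde f,\tilde f)\le Q_{\rm s}(M+\tilde f,g_{\rm barrier})$ by nonnegativity of the kernel, and the barrier is then estimated via Lemma \ref{l:Q-polynomial} (plus separate bounds for $Q(\tilde f,M)$ using the Gaussian decay of $M$). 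If you repair your short-time stability step along these lines, it coincides with Lemma \ref{l:Max}; but the global continuation still requires the regularity/trend-to-equilibrium bootstrap described above rather than an appeal to prior near-equilibrium results.
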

By the results of \cite{desvillettes2005global}, this solution converges to $M$ as $t\to\infty$ faster than $t^{-p}$ for any $p>0$ for $q_1$ sufficiently large, depending on $p$.

The proof of Corollary \ref{c:global} is based on a strategy developed by the second named author, jointly with Silvestre \cite{silvestre2022nearequilibrium}. The idea is to combine a short-time existence theorem, the conditional regularity estimates of Imbert-Silvestre \cite{imbert2020smooth}, and the global trend-to-equilibrium result of Desvillettes-Villani \cite{desvillettes2005global}. The result in \cite{silvestre2022nearequilibrium} worked under the assumption $\gamma+2s\geq0$.  
Taking advantage of the generality of the short-time existence theorem in the current paper, Corollary \ref{c:global} improves on \cite{silvestre2022nearequilibrium} in two ways: by including the case $\gamma+2s<0$, and by working with initial data that decays at a finite polynomial rate, rather than at a rate faster than any polynomial. For the regime $\gamma+2s<0$, this seems to be the first result proving global existence near equilibrium for initial data in a zeroth-order space.

\subsection{Related work}

\subsubsection{Prior well-posedness results and comparison}\label{s:prior-existence}

Existence results for the non-cutoff Boltzmann equation fall into the following categories:

\begin{itemize}

\item {\it Spatially homogeneous solutions.} Local well-posedness and smoothing are well understood for the space homogeneous equation, and classical solutions are known to exist globally when $\gamma+2s\geq 0$. We refer to \cite{ukai1984gevrey, desvillettes2004homogeneous, fournier2008uniqueness, desvillettes2009stability, morimoto2009homogeneous, chen2011smoothing,  he2012homogeneous-boltzmann, glangetas2016sharp} and the references therein, as well as \cite{villani1998weak} for so-called $H$-solutions and \cite{lu2012measure, morimoto2016measure} for measure-valued solutions. 

\item {\it Close-to-equilibrium solutions.} 
For close-to-equilibrium solutions, we refer to \cite{gressman2011boltzmann, amuxy2011global, alexandre2012global,  alonso2018polynomial, alonso2020giorgi, herau2020regularization, zhang2020global, duan2021global, silvestre2022nearequilibrium, cao2022moments} and the references therein. The first result for near-equilibrium initial data in a weighted $L^\infty_{x,v}$ space was \cite{alonso2020giorgi}, which applied to $\gamma>0$. This was extended to the case $\gamma+2s\in [0,2]$ in \cite{silvestre2022nearequilibrium}, and we extend it to $\gamma+2s<0$ in our Corollary \ref{c:global}. We also refer to \cite{duan2021global, morimoto2016global, zhang2020global} for other results in low-regularity spaces.  Results that work with polynomially decaying initial data include \cite{herau2020regularization, alonso2018polynomial, alonso2020giorgi, cao2022moments}.

\item {\it Close-to-vacuum solutions.} Recently, global solutions that are close to the vacuum state $f\equiv 0$ have been constructed by Chaturvedi \cite{chaturvedi2021vacuum}, with initial data in a tenth-order Sobolev space with Gaussian weight.

\item {\it Weak solutions.} 
A generalized notion of solution for the non-cutoff Boltzmann equation called {\it renormalized solution with defect measure} was constructed by Alexandre-Villani \cite{alexandre2002longrange}. The uniqueness and regularity of these solutions are not understood, but they exist globally in time, for any initial data $f_{\rm in}$ such that 
\[
	\int_{\R^6} f_{\rm in}(x,v)(1+|v|^2 + |x|^2 + \log f_{\rm in}(x,v)) \dd x \dd v < \infty.
\]
This assumption is weaker than ours in terms of local integrability and $v$-decay, but stronger in terms of $x$-decay. If $f_{\rm in}$ satisfies the assumptions of both \cite{alexandre2002longrange} and our Theorem \ref{t:weak-solutions}, then our weak solutions are, in particular, renormalized solutions with defect measure, as can be seen from the stability theorem \cite[Theorem 2]{alexandre2002longrange} and the fact that our weak solutions are obtained as a limit of classical solutions of \eqref{e.boltzmann}.

\item {\it Short-time solutions.} Early results on local existence in the non-cutoff case were due to the AMUXY group (Alexandre, Morimoto, Ukai, Xu, and Yang) \cite{amuxy2010regularizing, amuxy2011bounded} and required initial data to lie in Sobolev spaces of order 4 with Gaussian velocity weights. 
Later results relaxed the decay assumption by treating initial data with finite polynomial decay, at the cost of increasing the regularity requirement on $f_{\rm in}$. The first result in this direction was from Morimoto-Yang \cite{morimoto2015polynomial}, who worked with $s\in (0,\frac 1 2)$ and $\gamma \in (-\frac 3 2, 0]$ and took $\vv^q f_{\rm in}\in H^6(\T^3\times\R^3)$ with $q> 13$. Next,  the work \cite{HST2020boltzmann} by the current authors assumed $\max\{-3,-\frac 3 2 - 2s\} < \gamma < 0$ and required $\vv^q f_{\rm in} \in H^5(\T^3\times\R^3)$ for some non-explicit $q$. See also \cite{amuxy2011uniqueness} for an earlier uniqueness result in a similar regime as \cite{HST2020boltzmann}. Most recently, Henderson-Wang \cite{henderson2021existence} extended the result of \cite{HST2020boltzmann} to the case $\gamma+2s<-\frac 3 2$. The only prior results that require fewer than 4 derivatives for $f_{\rm in}$ are restricted to the case $s\in (0,\frac 1 2)$: see \cite{amuxy2013mild}, which requires at least two Sobolev derivatives as well as spatial localization, and \cite[Theorem 1.2]{henderson2021existence}, which requires only $\vv^q f_{\rm in}\in C^1(\T^3\times\R^3)$ and $q$ sufficiently large, but uses a specific argument that cannot be generalized to $s>\frac 1 2$. 
Our Theorem \ref{t:existence} represents a significant improvement, in terms of the decay and regularity assumptions on $f_{\rm in}$, and applies for any $\gamma< 0$ and $s\in (0,1)$.

\end{itemize}

\subsubsection{Regularizing effect}\label{s:related}

The regularizing effect of the non-cutoff Boltzmann equation is a major theme and motivation of this work. The first rigorous understanding of this effect came in the 1990s 
with Desvillettes' work on the two-dimensional homogeneous setting \cite{desvillettes1995kac, desvillettes1996homogeneous, desvillettes1997nonradial}, as well as functional estimates for $Q(f,g)$ in Sobolev spaces by various authors \cite{lions1998compactness, alexandre1998entropy, villani1999entropy}, culminating in the sharp entropy dissipation estimate of Alexandre-Desvillettes-Villani-Wennberg  \cite{alexandre2000entropy}. (Much earlier, the idea that $Q(f,g)$ behaves like a fractional differentiation operator in $v$ was understood on a heuristic level by Cercignani \cite{cercignani1969kinetic}.) The key property for many of these estimates is the following functional identity for the collision operator:
\begin{equation}\label{e.entropy-dissipation}
\int_{\R^3} Q(f,f) \log f \dd v = -\frac 1 4\int_{\R^6}\int_{\mathbb S^2} B(v-v_*,\sigma)\left( f' f_*' - f f_*\right) \log \frac{f' f_*'}{ff_*} \dd \sigma \dd v_*\dd v \leq 0.
\end{equation}
This identity implies the entropy $\int_{\R^6} f\log f \dd v \dd x$ is nonincreasing for solutions of \eqref{e.boltzmann}, but even more, it implies a smoothing effect in the $v$ variable, because the quantity on the right---called the {\it entropy dissipation}---turns out to control $\|\sqrt f\|_{H^s_v}$, up to a lower-order correction term. 
In the context of the homogeneous Boltzmann equation, this fractional smoothing effect can be iterated to show solutions are $C^\infty$. 

For the full inhomogeneous equation, the matter is more difficult because the diffusion acts only in velocity, and the smoothing effect of \eqref{e.boltzmann} is therefore hypoelliptic rather than parabolic. Results such as \cite{amuxy2010regularizing, chen2012smoothing} proved that any solution that lies in $H^5_{x,v}(\R^6)$ uniformly in time, decays faster than any polynomial, and satisfies a lower bound on the mass density, is in fact $C^\infty$. More recently, the breakthrough result of Imbert-Silvestre \cite{imbert2020smooth}, which finished off a long program of the two authors and Mouhot \cite{silvestre2016boltzmann, imbert2016weak, imbert2018schauder, imbert2018decay, imbert2020lowerbounds} (see also the survey articles \cite{mouhot2018review,imbert2020review, silvestre2022review}), established  $C^\infty$ estimates for solutions of \eqref{e.boltzmann} that depend only on bounds for the mass, energy, and entropy densities of the solution, as well as (when $\gamma<0$) the polynomial decay rates of the initial data. See also \cite{loher2022quantitative} for a quantitative version of the H\"older estimate of \cite{imbert2016weak}. 
These results do not use entropy dissipation estimates, relying instead on understanding the ellipticity of $Q(f,g)$ as an integro-differential operator. The current article adapts some techniques from  the Imbert-Silvestre program.

\subsubsection{Comparison with Landau equation}\label{s:landau}

The Landau equation is a kinetic model that can be derived from the Boltzmann equation \eqref{e.boltzmann} in the limit as grazing collisions (collisions with $\theta \approx 0$ in \eqref{e.b-bounds}) predominate (see e.g. \cite{desvillettes1992grazing, alexandre2004landau}). This equation reads
\[ 
\partial_t f + v\cdot \nabla_x f = Q_L(f,f),
\]
where the Landau collision operator takes the form
\begin{equation}\label{e.QL}
Q_L(f,g) = \nabla_v\cdot(\bar a^f \nabla_v g) + \bar b^f \cdot \nabla_v g + \bar c^f g,
\end{equation}
and $\bar a^f$, $\bar b^f$, and $\bar c^f$ are defined in terms of velocity integrals of $f$. This is an important model in plasma physics and has also attracted a great deal of interest as an equation with similar mathematical properties to the Boltzmann equation. 

In \cite{HST2020landau}, we proved existence and uniqueness results for the Landau equation that are in a similar spirit to Theorem \ref{t:existence} and Theorem \ref{t:uniqueness} above. While \cite{HST2020landau} provides a helpful outline for the current study, the Boltzmann case turns out to be much more challenging. This is partly because $Q_L(f,g)$ defined in \eqref{e.QL} is a second-order differential operator which is {\it local} in $g$, unlike $Q(f,g)$, which is nonlocal in both $f$ and $g$.   
The local structure of the Landau equation is more amenable to barrier arguments because, letting $f$ be a solution and $g$ be an upper (resp. lower) barrier, one can derive good upper (resp. lower) bounds for $Q_L(f,g)$ at a crossing point between $f$ and $g$, using information about $g$ only at the crossing point. In the Boltzmann case, bounding $Q(f,g)$ is more subtle because one has to take into account the values of both $f$ and $g$ in the entire velocity domain $\R^3$. Since we use barrier arguments extensively in this work, the ``double nonlocality'' of $Q$ is a significant source of difficulty. 

Compared to \cite{HST2020landau}, the current study makes a much less stringent positivity assumption on the initial data: the main result of \cite{HST2020landau} required that no $x$ location in $\R^3$ be too far from a region in which $f_{\rm in}$ is uniformly positive, whereas our Theorem \ref{t:existence} only requires that $f_{\rm in}$ is uniformly positive in one single region. This is due to improvements in our method, rather than differences between the two equations.

We also refer to the well-posedness theorem of \cite{anceschi2021fokkerplanck} for a nonlinear Fokker-Planck equation (studied earlier in 
\cite{imbert2021nonlinear, liao2018fokkerplanck}) that shares some properties with the Boltzmann and Landau equations. With a similar approach as~\cite{HST2020landau}, the authors construct a solution with initial data in $L^\infty_{x,v}(\R^6)$.  
As in \cite{HST2020landau} and the current article, an extra assumption of H\"older continuity is needed to prove uniqueness. 
We note that the authors also prove an interesting estimate on the diffusion asymptotics of the solution.

\subsection{Difficulties and proof ideas}

\subsubsection{Existence}

The prior large-data existence results for \eqref{e.boltzmann} cited above are based on the energy method. To demonstrate some disadvantages of this method, let us integrate \eqref{e.boltzmann} against $\varphi(x) f$ for a compactly supported cutoff $\varphi$, which is needed because $f$ and its derivatives may not decay as $|x|\to \infty$. This gives
\be\label{e.energy}
	\frac 1 2 \frac d {dt} \int_{\R^6} \varphi f^2 \dd x \dd v
		\leq \int_{\R^6}\left[ \varphi f Q(f,f) -\frac 1 2 f^2 v\cdot \nabla_x \varphi \right] \dd v \dd x.
\ee
One would like to bound the right-hand side in terms of $\int \varphi f^2 \dd v \dd x$, but this is not possible with either term. 
First, the collision operator $Q(f,f)$ cannot be controlled using only $L^2_v$-based norms of $f$ due to the kinetic factor $|v-v_*|^\gamma$  (recall $\gamma \in (-3,0)$). 
Instead, an $L^p_v$ bound is needed, with $p>2$ depending on $\gamma$ and $s$. Therefore, to continue with $L^2$-based energy estimates, one must seek bounds on higher derivatives of $f$ in order to use an embedding theorem.  Second, the $Q(f,f)$ integral involves three $f$ terms, so an $L^2$-estimate will not close.  (One might hope to use the fact that, in some sense, $Q(f,\cdot)$ involves an average over $v$ in order to close the estimate.  Unfortunately, $Q$ has no such average in $x$, meaning $L^\infty_x$-regularity of $f$ is required.)  One cannot sidestep this by working in an $L^p$ space with $p \in (2,\infty)$: the analogous integral will involve $p+1$ copies of $f$ and, hence, will not close.
For these reasons, the energy method seems incompatible with working in a zeroth-order space. 


Similarly, the growth of $v\cdot \nabla_x \varphi$ for large $v$ means the second term on the right cannot be controlled by $\int \varphi f^2 \dd v \dd x$. (When $\gamma+2s>0$, there are also terms coming from $Q(f,f)$ that grow as $|v|\to\infty$, leading to a similar issue, even in the spatially periodic case where $\varphi$ is not needed.) One standard way to overcome this issue \cite{amuxy2010regularizing, amuxy2011bounded, amuxy2013mild} is to divide $f$ by a time-dependent Gaussian $e^{(\rho-\kappa t) \vv^2}$, which adds a term proportional to $\vv^2 f$ to the equation, with the correct sign to absorb the terms with growing velocity dependence. However, this requires $f_{\rm in}$ to have velocity decay proportional to a Gaussian. More intricate methods, based on the coercivity properties of $Q(f,f)$, have been found to deal with this velocity growth \cite{morimoto2015polynomial, HST2020boltzmann, henderson2021existence}, but these also require working with polynomial decay of relatively high degree.

Instead of the energy method, we use a barrier argument to propagate decay estimates in $L^\infty_q$ from $t=0$ forward in time, using barriers of the form $g = N e^{\beta t} \vv^{-q}$ with $N, \beta, q>0$. The function $g$ is a valid barrier if $q>3+\gamma+2s$ and if $f$ also decays at a rate proportional to $\vv^{-q}$, which we show via a detailed analysis of $Q(f, g)$ in Lemma \ref{l:Q-polynomial}. This argument gives a closed estimate in the space $L^\infty_q([0,T_q]\times\R^6)$ for some $T_q>0$ depending on $\|f_{\rm in}\|_{L^\infty_q(\R^6)}$.

To understand the regularity of our solutions for positive times, we need to propagate higher decay estimates for $f$, because each step of the regularity bootstrap uses up a certain number of velocity moments. This brings up a subtle difficulty: since our time of existence should depend only on some fixed $L^\infty_{q_0}$ norm of $f_{\rm in}$ with $q_0$ small, we need to propagate higher $L^\infty_q$ norms to a common time interval $[0,T]$ depending only on the norm of $f_{\rm in}$ in $L^\infty_{q_0}$. We note that this is one place where the double nonlocality (see Section \ref{s:landau}) causes issues.  To overcome this, we return to our barrier argument, proceeding more carefully in order to extract a small gain in the exponent $q$, which can then be iterated to bound any $L^\infty_q$ norm on $[0,T]$, provided $\|f_{\rm in}\|_{L^\infty_q}$ is finite (see Lemma \ref{p:upper-bounds}).



Once we have propagated sufficient decay forward in time, 
we would like to apply the global regularity estimates of \cite{imbert2020smooth}. These estimates are an important tool in our study, but applying them to the problem we consider is not straightforward for several reasons. First, the authors of \cite{imbert2020smooth} work under the assumption $\gamma+2s\in [0,2]$, while we treat any $\gamma\in (-3,0)$ and $s\in (0,1)$. Therefore, we need to extend the analysis of \cite{imbert2020smooth} to the case $\gamma+2s< 0$, with suitably modified hypotheses (see Section \ref{s:global-reg}). The change of variables developed in \cite{imbert2020smooth} to pass from local to global regularity estimates is defined in a way that does not generalize well to the case $\gamma+2s< 0$, and the main novelty of our work in Section \ref{s:global-reg} is defining a suitable change of variables for this case. Second, the estimates of \cite{imbert2020smooth} require a uniform-in-$x$ positive lower bound on the mass density $\int_{\R^3} f(t,x,v)\dd v$, but it  does not seem possible to propagate such a bound forward from time zero with current techniques (unlike in the space homogeneous case). Instead, we work with initial data that is pointwise positive in a small ball, and spread this positivity to the whole domain via our result in \cite{HST2020lowerbounds}. This means we need to re-work the regularity estimates of \cite{imbert2020smooth} to depend quantitatively on pointwise lower bounds for $f$ rather than a lower mass density bound. Finally, we need to understand how the regularity of $f$ degenerates as $t\searrow 0$ in the case of irregular initial data, which requires us to revisit some of the arguments in  \cite{imbert2020smooth} to track the dependence on $t$.

Our extension of the global regularity estimates and change of variables of \cite{imbert2020smooth} to the case $\gamma+2s< 0$ may be of independent interest.

\subsubsection{Uniqueness}\label{s:intro-uniqueness}

In this section, we discuss some of the difficulties in proving uniqueness. Given two solutions $f$, $g$, the bilinearity of $Q$ implies, with $h=f-g$,
\begin{equation}\label{e.h-eqn}
 \partial_t h + v\cdot \nabla_x h =  Q(h,f) + Q(g,h).
 \end{equation}
Any standard strategy for proving uniqueness would involve bounding $h$ in some norm, using this equation or its equivalent. For the sake of discussion, we set aside the (nontrivial) difficulties related to velocity growth on the right-hand side of \eqref{e.h-eqn}, as well as the potential lack of decay for large $x$, to focus on a more serious difficulty: that some regularity of $f$ in the $v$ variable is needed to control the term $Q(h,f)$, either of order $2s$ for a pointwise bound or order $s$ for integrals like $\int g Q(h,f) \dd v$. In the context of irregular initial data, regularity estimates for $f$ must degenerate as $t\searrow 0$, but one may still get a good bound for $h$ if this degeneration is slow enough for a Gr\"onwall-style argument. Let us distinguish between two very different regimes:


\medskip

{\it If vacuum regions are present in the initial data}, then the known lower bounds for $f$, which are expected to be sharp, degenerate very quickly in such regions, at a rate like $e^{-c/t}$ (see \cite{HST2020lowerbounds}).  The available regularization mechanisms, such as entropy dissipation or linear De Giorgi estimates, rely on lower bounds for the mass density of $f$, and are therefore useless as $t\searrow 0$. For this reason, uniqueness of solutions in this regime is expected to be very difficult and require completely new ideas, if it even holds.

\medskip

{\it If there are no vacuum regions in the initial data}, the situation appears more hopeful, because we can use our earlier result \cite{HST2020lowerbounds} to obtain positive lower bounds for $f$ and $g$ that are uniform for small times. Because $f$ and $g$ satisfy good lower and upper bounds on some time interval, they enjoy the regularity provided by entropy dissipation on that interval, and one might try to exploit this regularity to prove uniqueness. Let us make a brief digression to explain why this approach does not work: As described in Section \ref{s:related}, one has an {\it a priori} bound on  $\sqrt f$ in $L^2_{t,x}H^s_v([0,T]\times\R^6)$ via the formal identity \eqref{e.entropy-dissipation}, which can be improved to a bound for $f$ itself in the same space, using our $L^\infty_q$ estimates for $f$. The same bounds apply to $g$ and (by the triangle inequality) $h$. Integrating \eqref{e.h-eqn} against $h$ and using coercivity and trilinear estimates for $Q$ that are standard in the literature, one would obtain an estimate of the following form (recall that we are ignoring velocity weights and the possible lack of decay for large $x$): 
\[
	\frac d {dt} \|h(t)\|_{L^2_{x,v}}^2
		\lesssim \int \|h(t,x)\|_{L^2_v} \|f(t,x)\|_{H^s_v} \|h(t,x)\|_{H^s_v} \dd x.
\]
To close this estimate, one would need to bound the right-hand side by a constant times $\|h(t)\|_{L^2_{x,v}}^2$. An $L^\infty_x$-bound on $\|f(t,x)\|_{H^s_v}$ and a bound like $\|h(t,x)\|_{H^s_v} \lesssim  \|h(t,x)\|_{L^2_v}$ would be sufficient to do this, but unfortunately, we only have bounds for $f$ and $h$ in $L^2_x H^s_v$, so it is not at all clear how to close the above argument.



This gap between an $L^2_x$ estimate arising from the formal structure of the equation, and a desired estimate in $L^\infty_x$, is reminiscent of the current state of the global well-posedness problem for \eqref{e.boltzmann}: $L^\infty_x$ bounds for the mass, energy, and entropy densities would be sufficient to extend large-data solutions globally in time \cite{imbert2020smooth}, but the natural conservation laws of the equation only provide bounds in $L^1_x$ for these densities. Bridging this gap is widely considered to be out of reach with current techniques. Based on this apparent similarity, we believe that our assumption of H\"older continuity for $f_{\rm in}$ in Theorem \ref{t:uniqueness} is more than a technicality, and that removing it may be a difficult problem.

Instead of entropy dissipation, one may try to apply the global H\"older estimates of De Giorgi and Schauder type from \cite{imbert2020smooth} to obtain enough regularity to bound $Q(h,f)$ pointwise.  Although these estimates on $[\tau,T]\times\R^6$ are uniform in $x$, they also must degenerate as $\tau\to 0$ since they include the case of irregular initial data. In Proposition \ref{p:nonlin-schauder}, we determine the explicit dependence on $\tau$ when Schauder estimates are applied to $f$: ignoring velocity weights, one has
\begin{equation}\label{e.fake-schauder}
	\|f\|_{C^{2s+\alpha'}_\ell([\tau,T]\times\R^6)}
		\leq C \tau^{-1 + \frac{\alpha-\alpha'}{2s}}
			\|f\|_{C^\alpha_\ell([\tau/2,T]\times\R^6)}^{1+(\alpha+2s)/\alpha'},
\end{equation}
with $\alpha' = \alpha \frac {2s}{1+2s}$. This exponent of $\tau$ is consistent with a gain of regularity of order $2s+\alpha' - \alpha$ on a kinetic cylinder of width $\sim\tau^{\frac 1 {2s}}$ in the time variable (see \eqref{e.cylinder}). By a similar heuristic, the global $C^\alpha_\ell$ estimate (Theorem \ref{t:degiorgi}) on $[\tau/2,T]\times\R^6$ should have a constant proportional to $\tau^{-\frac \alpha {2s}}$. Combining this with \eqref{e.fake-schauder}, a bound for the $C^{2s+\alpha'}_\ell$ norm in terms of $\|f\|_{L^\infty}$ would give an overall $\tau$ dependence of 
\be\label{e.c070501}
\tau^{-1 + \frac{\alpha - \alpha'}{2s} - \frac \alpha {2s}(1+\frac{\alpha+2s}{\alpha'})},
\ee
which is not integrable as $\tau\to 0$. Therefore, this line of argument does not seem feasible without any additional regularity assumptions for $f_{\rm in}$.

Another tempting approach is to work directly with the time shifts of $f$
\[
	g(t) = \frac{f(t+h) - f(t)}{h^{\sfrac\alpha2}}
		\qquad\text{(for fixed $h>0$)}.
\]
Unfortunately, this approach also fails.  As above, we cannot use ``smoothing'' effects of the equation due to time factors like~\eqref{e.c070501}. If we try to naively propagate forward a bound on $g$ by, e.g., barrier arguments, we would first need a (uniform in $h$) bound on the initial data $g(0) = (f(h) - f_{\rm in}) h^{-\sfrac\alpha2}$. These obstacles are the main reason why proving uniqueness is much harder, and why an additional assumption on the initial data seems necessary.

On the other hand, if $f$ were bounded in $C^\alpha _\ell$ uniformly for small times, an estimate of the form \eqref{e.fake-schauder} would be sufficient to derive a time-integrable bound on $Q(h,f)$ in \eqref{e.h-eqn}.  This motivates our extra assumption that $f_{\rm in}$ is H\"older continuous, and the following step-by-step strategy for proving uniqueness:

\begin{enumerate}

\item Prove that the H\"older modulus of $f_{\rm in}$ in $(x,v)$ variables is propagated forward to positive times. To do this, we study the function $g$ defined for $(t,x,v,\chi,\nu) \in \R^{13}$ and $m>0$ by
\begin{equation*}
g(t,x,v,\chi,\nu) = \frac{f(t,x+\chi,v+\nu) - f(t,x,v)}{(|\chi|^2+|\nu|^2)^{\alpha/2}} \vv^m.
\end{equation*}
Bounding $g$ in $L^\infty$ on a short time interval is equivalent to controlling the weighted $\alpha$-H\"older seminorm of $f$. Note that this is the H\"older seminorm with respect to the Euclidean scaling on $\R^6$, not the kinetic scaling that one might expect. This choice is imposed on us by the proof. 

Using \eqref{e.boltzmann}, we derive an equation satisfied by $g$ and use Gr\"onwall's inequality to bound $g$ on a short time interval. This step requires a detailed analysis of the quantity $Q(f,f)(t,x+\chi,v+\nu) - Q(f,f)(t,x,v)$, the repeated use of annular decompositions of the velocity integrals defining $Q$,  and an estimate of the form~\eqref{e.fake-schauder} coming from a carefully scaled version of the Schauder estimates.

This approach to propagating H\"older continuity is inspired by \cite{constantin2015SQG}. 

\item Show that the H\"older regularity for $f$ in $(x,v)$ from the previous step implies H\"older regularity in $t$ as well. This property is clearly false for general functions on $\R^7$, so we must exploit the equation \eqref{e.boltzmann}. The proof is surprisingly intricate and is based on controlling a finite difference in $t$ of $f$ via well-chosen barriers.

\item Using the regularity from the prior two steps, apply Schauder estimates to conclude $C^{2s+\alpha'}_\ell$ regularity for $f$, for some $\alpha'>0$. 

\item Armed with this regularity for $f$, return to \eqref{e.h-eqn} and use (for the only time in this paper) the energy method to bound $h= f-g$ in a weighted $L^2$-norm and establish weak-strong uniqueness. The energy method is chosen because of its compatibility with our notion of weak solution, but one must contend with the lack of decay for large $x$. This step of the proof combines the strategy for $L^2$-estimates developed in \cite{HST2020boltzmann, henderson2021existence} with a spatial localization method that is compatible with the transport term. The particular form of our localizing cutoff function (which depends on both $x$ and $v$) leads to extra difficulties because we cannot deal with the $x$ and $v$ integrations separately.
\end{enumerate}

We should note that this strategy requires working with regularity in all three variables because of the application of Schauder estimates, even though the important ingredient for proving uniqueness is the regularity in $v$.

\subsection{Open problems}

\subsubsection{Relaxing the H\"older continuity assumption in Theorem \ref{t:uniqueness}}
As discussed in Section \ref{s:intro-uniqueness}, proving uniqueness without any regularity assumptions on $f_{\rm in}$ may be a difficult problem. In the Landau case, a recent result  \cite{henderson2022schauder} by the first named author, jointly with W.~Wang, derived a uniqueness theorem that requires $f_{\rm in}$ to be H\"older continuous in $x$ but only logarithmically H\"older in $v$, via Schauder estimates with time-irregular coefficients. See \cite{biagi2022schauder} for a similar Schauder estimate. 
It is likely that an analogous improvement is available for the Boltzmann equation via a refinement of the Schauder estimates in \cite{imbert2018schauder}, though this would be nontrivial to prove. Even with such an improvement, H\"older regularity in $x$ would be needed for the initial data.

\subsubsection{The case $\gamma > 0$} 

In the case $\gamma > 0$, the analysis of the Boltzmann equation is somewhat different because the kinetic term $|v-v_*|^\gamma$ in $Q(f,g)$ becomes a growing weight instead of a singularity.  Our argument in this paper for local existence uses the assumption $\gamma\leq 0$ crucially, and a different argument would be required for $\gamma > 0$. We have proven some of our intermediate results in this paper without the restriction $\gamma\leq 0$, with a mind to eventually filling this gap.

\subsubsection{Classical solutions without a locally uniform lower bound}

Our construction of classical solutions requires a locally uniform positive lower bound at time zero (condition \eqref{e.mass-core} in Theorem \ref{t:existence}). This is automatically true if the initial data is continuous and not identically zero, but our initial data may be discontinuous, so \eqref{e.mass-core} is an extra assumption we have to make. In either limits $\delta\searrow 0$ or $r\searrow 0$, we lose all quantitative control on the pointwise regularity of our solutions, and we can only recover a weak solution in the sense of Theorem \ref{t:weak-solutions}. On the other hand, if $f_{\rm in}$ is identically 0, then the solution is also identically zero for positive times, and is therefore perfectly smooth. This leaves open the question of regularity for solutions with initial data $f_{\rm in}\in L^\infty_q(\R^6)$ that is not identically zero but nowhere uniformly positive. 

\subsubsection{Decay estimates and continuation}

Continuation criteria for \eqref{e.boltzmann} are highly relevant because they represent partial progress toward the outstanding open problem of global existence of non-perturbative solutions. As with any short-time existence result, our Theorem \ref{t:existence} implies a continuation criterion: solutions can be extended past any time $T$ such that $\|f(t)\|_{L^\infty_q(\R^6)}$ remains finite for $t\in[0,T]$, for some $q>3+2s$. (By \cite{HST2020lowerbounds}, the lower bound condition \eqref{e.mass-core} is automatically satisfied for any positive time $T$, with constants depending on $T$, as long as it holds at time zero. Note that the time of existence granted by Theorem \ref{t:existence} does not depend quantitatively on $\delta$, $r$, or $v_m$.)

On the other hand, the continuation criterion of \cite{HST2020lowerbounds} (which combined the lower bounds of \cite{HST2020lowerbounds} with the continuation criterion of \cite{imbert2020smooth}) states that solutions can be continued as long as $\|f(t)\|_{L^\infty_x (L^1_2)_v(\R^6)}$ remains finite. The continuation criterion of \cite{HST2020lowerbounds} only applies to solutions that are smooth, rapidly decaying, and spatially periodic, and applies only when $\gamma+2s\in [0,2]$. Ideas related to the decay analysis in the current paper could likely strengthen \cite{HST2020lowerbounds} by enlarging the class of solutions and ranges of $(\gamma,s)$ that can be handled, and possibly by replacing  $L^\infty_x (L^1_2)_v$ with a weaker $L^\infty_x (L^1_q)_v$ norm. 
We plan to explore this question in a future article.

\subsection{Notation}\label{s.notation}
For any $\lambda\in \R$, we write $\lambda_+ = \max\{\lambda,0\}$ and $\lambda_- = \max\{-\lambda,0\}$.  We call a constant \emph{universal} if it depends only on $\gamma$, $s$, and the constant $c_b$ in \eqref{e.b-bounds}. Inside of proofs, to keep the notation clean, we often write $A \lesssim B$ to mean $A\leq CB$ for a constant $C>0$ depending on $\gamma$, $s$, $c_b$, and the quantities in the statement of the lemma or theorem being proven. We also write $A\approx B$ when $A\lesssim B$ and $B\lesssim A$.  Occasionally, we use the notation $z= (t,x,v)$.  When $z$ is accompanied by a subscript, so are the corresponding $(t,x,v)$; e.g., $z_0 = (t_0,x_0,v_0)$.

Throughout the manuscript, it is always assumed that $\gamma < 0$ unless otherwise indicated (some results apply to the case $\gamma \in [0,1]$ as well).

We say that a solution to~\eqref{e.boltzmann} is classical if $(\partial_t + v\cdot\nabla_x) f$ and $Q(f,f)$ are continuous and~\eqref{e.boltzmann} holds pointwise.

Throughout the paper, we work with various integro-differential operators.  Given a kernel $K$, we denote
\be\label{e.c062801}
	(\cL_K g)(v)
		= \int_{\R^3} (g(v') - g(v)) K(v,v') \dd v'.
\ee
When the kernel $K$ is obvious from context, we omit it notationally and simply write $\cL$.

\subsection{Outline of the paper} In Section \ref{s:prelim}, we recall and slightly extend some results from the literature that are needed for our study. Section \ref{s:global-reg} extends the change of variables and global regularity estimates of \cite{imbert2020smooth} to the case $\gamma+2s<0$. Section \ref{s:existence} is devoted to the proof of existence. Section \ref{s:time-reg} addresses the extension of H\"older regularity from $(x,v)$ variables to the $t$ variable. Section \ref{s:holder-xv}  propagates a H\"older modulus from $t=0$ to positive times, and Section \ref{s:uniqueness} finishes the proof of uniqueness. Section \ref{s:global} proves existence of global solutions near equilibrium. Appendix \ref{s:cov-appendix} proves the key properties of the change of variables defined in Section \ref{s:global-reg}, and Appendix \ref{s:lemmas} contains some technical lemmas.

\section{Preliminaries}\label{s:prelim}

\subsection{Kinetic H\"older spaces}\label{s:holder-spaces}

To study the regularity properties of the Boltzmann equation, we use the kinetic H\"older spaces from \cite{imbert2020smooth, imbert2018schauder}, which we briefly recall now. 

First, let us recall two transformations that are well-adapted to the symmetries of linear kinetic equations with velocity diffusion of order $2s$. For $z_1 = (t_1,x_1,v_1)$ and $z = (t,x,v)$ points of $\R^7$, define the Lie product
\[ z_1 \circ z = ( t_1 + t, x_1 + x + tv_1, v_1 + v).\]
and the dilation
\be\label{e.dilation-def}
	\delta_r(z) = (r^{2s}t, r^{1+2s}x, rv), \quad r>0.
\ee
Next, define the distance
\begin{equation}\label{e.dl}
d_\ell(z_1,z_2) := \min_{w\in \R^3} \max\{ |t_1-t_2|^{\frac 1 {2s}}, |x_1 - x_2 - (t_1-t_2) w|^{\frac 1 {1+2s}}, |v_1 - w|, |v_2 - w|\}.
\end{equation}
In fact, $d_\ell$ does not satisfy the triangle inequality if $s< 1/2$; see \cite{imbert2018schauder}. This fact causes no issues in our analysis, and we refer to $d_\ell$ as a distance regardless. 
This distance is invariant under left translations (hence the $\ell$ subscript, which stands for left-invariant) and dilations: for any $z_1, z_2, \xi\in \R^7$ and $r>0$,
\begin{align}
d_\ell(\xi\circ z_1, \xi\circ z_2) &= d_\ell(z_1,z_2).\label{e.left}\\
d_\ell(\delta_r(z_1), \delta_r(z_2)) &= rd_\ell(z_1,z_2),\label{e.dilation}
\end{align}
The distance $d_\ell$ is not invariant under right translations. However, for right translations in the velocity variable, one has the useful property
\begin{equation}\label{e.right-translation}
\begin{split}
d_\ell(z_1\circ(0,0,w), z_2\circ(0,0,w)) &\leq d_\ell(z_1,z_2) + |t_1-t_2|^{1/(1+2s)} |w|^{1/(1+2s)}\\
&\leq d_\ell(z_1,z_2) + d_\ell(z_1,z_2)^{2s/(1+2s)} |w|^{1/(1+2s)}
\end{split}
\end{equation}

We define the kinetic cylinders in a way that respects the transformations \eqref{e.left}, \eqref{e.dilation}:
\begin{equation}\label{e.cylinder}
	Q_r(z_0)
		= \{z=(t,x,v) \in \R^7 :t_0-r^{2s}< t\leq t_0, |x-x_0-(t-t_0)v_0|<r^{1+2s}, |v-v_0| < r\}.
\end{equation}
We often write $Q_r = Q_r(0)$. Note that $Q_r = \delta_r(Q_1)$, and $Q_r(z_0) = z_0 \circ Q_r$.

The kinetic H\"older spaces are defined in terms of approximation by polynomials. For any monomial $m$ in the variables $t,x,v$ of the form
\[ m(t,x,v) = c t^{\alpha_0} x_1^{\alpha_1} x_2^{\alpha_2} x_3^{\alpha_3} v_1^{\alpha_4} v_2^{\alpha_5} v_3^{\alpha_6},\]
with $c\neq 0$, we define the kinetic degree as 
\[\mbox{deg}_k m = 2s\alpha_0 + (1+2s)(\alpha_1 + \alpha_2 + \alpha_3) + \alpha_4 + \alpha_5 + \alpha_6.\]
This definition is compatible with the scaling $(t,x,v) \mapsto \delta_r(t,x,v)$. For any nonzero polynomial $p(t,x,v)$, we define its kinetic degree as the maximum of $\mbox{deg}_k$ over all monomial terms in $p$.

Now we are ready to define the kinetic H\"older spaces:

\begin{definition}
Given any $\alpha>0$ and any open set $D\subset \R^7$, a continuous function $f:D\to \R$ is \emph{$\alpha$-H\"older continuous} at $z_0\in D$ if there exists a polynomial $p(t,x,v)$ with $\mbox{deg}_k(p)< \alpha$, and 
\begin{equation}\label{e.f-holder}
 |f(z) - p(z)| \leq C d_\ell(z,z_0)^\alpha, \quad z\in D.
 \end{equation}
We say $f\in C^\alpha_\ell(D)$ if the inequality \eqref{e.f-holder} holds at all points of $D$. The semi-norm $[f]_{C^\alpha_\ell(D)}$ is the smallest value of the constant $C$ such that \eqref{e.f-holder} holds for all $z,z_0\in D$ (with the polynomial $p$ depending on $z_0$).

The norm $\|f\|_{C^\alpha_\ell(D)}$ is defined as $\|f\|_{L^\infty(D)} + [f]_{C^\alpha_\ell(D)}$.
\end{definition}

For functions defined on open subsets $D\subset \R^6$, the seminorm $[f]_{C^\alpha_{\ell,x,v}(D)}$ can be defined similarly as the smallest constant $C>0$ such that for every $(x_0,v_0)\in D$, there is a polynomial $p(x,v)$ with $\mbox{deg}_k(p)< \alpha$, such that
\[ 
|f(x,v) - p(x,v)|\leq Cd_\ell((0,x_0,v_0), (0,x,v))^\alpha.
\]

We also define the global kinetic H\"older spaces with polynomial weights:
\begin{definition}\label{d:C-alpha-q}
Given $\alpha, q>0$ and $0<\tau<T$, we define the weighted semi-norm
\[ [f]_{C^\alpha_{\ell,q}([\tau,T]\times\T^6)} := \sup\left\{ (1+|v|)^q [f]_{C^\alpha_\ell(Q_r(z))} : r\in (0,1] \text{ and } Q_r(z) \subset [\tau,T]\times\R^6\right\}.\]
We say $f\in C^\alpha_{\ell,q}([\tau,T]\times\R^6)$ if the norm
\[ \|f\|_{C^\alpha_{\ell,q}([\tau,T]\times\R^6)} = \|f\|_{L^\infty_q([\tau,T]\times\R^6)} + [f]_{C^\alpha_{\ell,q}([\tau,T]\times\R^6)}\]
is finite.
\end{definition}

\subsection{Well-posedness for regular initial data}

As part of our existence proof, we need to construct solutions corresponding to smooth, rapidly decaying approximations of our initial data. For this, we use the following proposition, which combines 
two short-time existence results from the literature. 
We state here a non-sharp result with assumptions that are uniform in $\gamma$ and $s$ for the sake of brevity (and because we do not need the sharp version).

\begin{proposition}\label{p:prior-existence}
Let $\gamma \in (-3,0)$ and $s\in (0,1)$. 
Let $\T_M^3$ be the 3-dimensional torus of side length $M>0$.

For any $k\geq 6$, there exists $n_0,p_0>0$ depending on universal constants and $k$, such that for any initial data $f_{\rm in}\geq 0$ defined for $(x,v)\in \T_M^3\times \R^3$ with $f_{\rm in} \in H^k_{n}\cap L^\infty_{p}(\T^3_M\times \R^3)$ with $n\geq n_0$ and $p\geq p_0$, there exists a unique solution $f\geq 0$ to \eqref{e.boltzmann} in $C^0([0,T], H^k_{n}\cap L^\infty_{p}( \T_M^3\times \R^3))$ for some $T>0$ depending on $\|f_{\rm in} \|_{H^k_{n}} + \|f_{\rm in}\|_{L^\infty_{p}}$, with $f(0,x,v) = f_{\rm in}(x,v)$.
\end{proposition}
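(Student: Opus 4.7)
The plan is to reduce to existing short-time existence results in weighted Sobolev spaces and combine them with an independent propagation of the pointwise decay norm $L^\infty_p$, choosing $T$ to be the minimum of the two resulting existence times.

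\textbf{Step 1 ($H^k_n$ existence from the literature).} For the Sobolev component, I would invoke the existence theorem of \cite{HST2020boltzmann} in the regime $\gamma+2s \geq -\sfrac{3}{2}$, and the extension of Henderson--Wang \cite{henderson2021existence} in the complementary regime $\gamma+2s < -\sfrac{3}{2}$. Taken together these cover every $\gamma\in(-3,0)$ and $s\in(0,1)$. Setting $n_0$ to be the larger of the two required weight thresholds (each depending only on $k$ and universal constants), each reference produces a unique nonnegative solution $f\in C^0([0,T_1],H^k_n(\T^3_M\times\R^3))$ with $T_1>0$ depending only on $\|f_{\rm in}\|_{H^k_n}$; the statements are given on $\T^3$, but the argument is insensitive to the side length $M$. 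Since $k \geq 6 > d/2+2$ with $d=6$, Sobolev embedding places this solution in $C^2_{x,v}$, so \eqref{e.boltzmann} holds in the classical pointwise sense.

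\textbf{Step 2 (Barrier argument for $L^\infty_p$).} With a classical solution in hand, I would propagate the pointwise decay via a first-contact-point argument. Let $A = 2\|f_{\rm in}\|_{L^\infty_p}$ and set $g(t,v) = A(1+\beta t)\vv^{-p}$ for $\beta$ to be chosen. Suppose $t_0\in(0,T_1]$ is the first time at which $f-g$ reaches $0$ at some $(x_0,v_0)$. Then $(\partial_t+v\cdot\nabla_x)(f-g)(t_0,x_0,v_0)\geq 0$, so the equation yields $Q(f,f)(t_0,x_0,v_0) \geq A\beta\vvo^{-p}$. On the other hand, the bound $f\leq g$ on $[0,t_0]\times\T^3_M\times\R^3$ together with $p\geq p_0$ sufficiently large allows a decomposition of $Q(f,f)$ into singular, non-singular, and loss parts to give an upper bound $Q(f,f)(t_0,x_0,v_0)\leq C \vvo^{-p}$, where $C$ depends only on $\|f_{\rm in}\|_{L^\infty_p}$, $p$, and universal constants. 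Choosing $\beta > C/A$ forces a contradiction, so $f\leq g$ on some $[0,T_2]$, yielding the $L^\infty_p$ propagation. Nonnegativity of $f$ is already furnished by the $H^k_n$ theory of Step 1, so no separate lower barrier is needed.

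\textbf{Step 3 (Combination and main obstacle).} Setting $T=\min(T_1,T_2)$ and noting that continuity in time in both norms follows from the strong continuity in $H^k_n$ together with the barrier, one obtains $f\in C^0([0,T],H^k_n\cap L^\infty_p)$. Uniqueness in this class is inherited from the $H^k_n$ uniqueness in the references of Step 1. The main obstacle is the upper bound on $Q(f,f)(t_0,x_0,v_0)$ in Step 2: because of the \emph{double nonlocality} of $Q$, its value at $v_0$ depends on $f$ across all of $\R^3_v$, and not merely near $v_0$. The required estimate must therefore split the $v_*$-integral into a region near $v_0$ (controlled by the $|v-v_*|^\gamma$ singularity against the smooth decaying barrier $g$, using the standard symmetrization/cancellation in $f(v')f(v_*')-f(v)f(v_*)$) and a region far from $v_0$ (controlled by the pointwise decay of $f\leq g$). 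This analysis is essentially the content of \Cref{l:Q-polynomial} of the paper, which can then be invoked directly to close the argument.
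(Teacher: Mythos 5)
Your plan diverges from the paper's proof, which is essentially a citation: the paper takes the $M=1$ statement, including the $L^\infty_p$ component, directly from \cite{HST2020boltzmann} and \cite{henderson2021existence}, and handles a general side length $M$ by the explicit rescaling $\tilde f_{\rm in}(x,v)=M^{\gamma+3}f_{\rm in}(Mx,Mv)$, solving on $\T^3$ and scaling back. You instead use the references only for the $H^k_n$ part and try to supply the $L^\infty_p$ part yourself with a barrier. That step has a genuine gap: the first-contact argument against $g=A(1+\beta t)\vv^{-p}$ requires the crossing to be attained at a finite point $(t_0,x_0,v_0)$, which needs the a priori qualitative information that $\vv^{p}f(t,\cdot,\cdot)$ is finite — indeed that $f(t)$ decays strictly faster than $\vv^{-p}$ — for $t>0$. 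Step 1 does not provide this: the proposition allows $p$ to be arbitrarily large relative to $n$, and $C^0([0,T_1],H^k_n)$ only yields pointwise decay of order comparable to $n$ via Sobolev embedding. Without that input, $\sup \vv^{p}f(t)$ could blow up (or the crossing could ``occur at $|v|=\infty$'') with no touching point, and no contradiction is reached. This is precisely why the paper's barrier lemma, Lemma \ref{l:simple-bound}, carries the hypothesis $f\in L^\infty_{q'}$ with $q'>q$, and why in Section \ref{s:construct} the qualitative finiteness of higher $L^\infty_q$ norms is first obtained by applying \Cref{p:prior-existence} itself in the space $H^k_n\cap L^\infty_q$ before any barrier is run; deriving the $L^\infty_p$ component of \Cref{p:prior-existence} from the barrier alone is therefore circular within the paper's logic. (The quantitative part of your Step 2 — the comparison $Q(f,f)\le Q(f,g)$ at a crossing point and the bound from Lemma \ref{l:Q-polynomial}, with a self-consistent choice of $\beta$ on a short interval — is fine; the issue is the missing attainment/qualitative decay.)

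Two smaller points. The conclusion $f\in C^0([0,T],H^k_n\cap L^\infty_p)$ requires continuity in time in the $L^\infty_p$ norm; a uniform barrier bound combined with continuity in $H^k_n$ only gives continuity in $L^\infty_{p'}$ for $p'<p$, so even granting Step 2 an extra argument would be needed. And your assertion that the references are ``insensitive to the side length $M$'' replaces a short but necessary step: they are stated on $\T^3$, and the paper passes to $\T^3_M$ by the rescaling above rather than by rerunning their estimates; if you keep your structure, you should include that scaling explicitly.
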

The proofs for the case $M=1$ can found in the following works: for any $s\in (0,1)$ and $\max\{-3,-3/2-2s\} < \gamma < 0$, see \cite{HST2020boltzmann}. For $s\in (0,1)$ and $\gamma \in (-3, -2s)$, see \cite{henderson2021existence}. 

To extend this result to the case of general $M>0$, we rescale to the torus $\T^3$ of side length 1 by defining
\[
\tilde f_{\rm in} := M^{\gamma+3} f_{\rm in}^\eps (M x, M v), \quad x\in \mathbb T^3, v\in \R^3.
\]
The result for the $M=1$ case gives us a solution $\tilde f$ on $[0,T]\times \T^3\times\R^3$, and to scale back to the torus of size $M$, we define
\[
f^\eps(t,x,v) := \frac 1 {M^{\gamma+3}}\tilde f^\eps\left( t, \frac x {M}, \frac v {M}\right), \quad t\in [0,T_\eps], x\in \TM, v\in \R^3.
\]
By a direct calculation, $f$ solves the Boltzmann equation \eqref{e.boltzmann}, with initial data $f_{\rm in}$. The function $f$ lies in the same regularity spaces as $\tilde f$.

\subsection{Carleman representation}\label{s:carleman}

The collision operator $Q(f,g)$ defined in \eqref{e.Q} can be written as a sum of two terms $Q = Q_{\rm s} + Q_{\rm ns}$, where the first (``\textbf{\textit{s}}ingular'') term $Q_{\rm s}$ acts as a nonlocal diffusion operator of order $2s$. The second (``\textbf{\textit{n}}on\textbf{\textit{s}}ingular'') term $Q_{\rm ns}$ is a lower-order convolution term. By adding and subtracting $f(v_*')g(v)$ inside the integral in \eqref{e.Q}, one has
\begin{equation}\label{e.c062901}
\begin{split}
	&Q_{\rm s}(f,g) = \int_{\R^3} \int_{{\mathbb S}^2} (g(v')-g(v)) f(v_*') B(|v-v_*|,\sigma)\dd \sigma \dd v_*
		\qquad\text{and}\\
	&Q_{\rm ns}(f,g) = g(v) \int_{\R^3} \int_{{\mathbb S}^2} (f(v_*')-f(v_*)) B(|v-v_*|,\sigma) \dd \sigma \dd v_*.
\end{split}
\end{equation}

It can be shown \cite{alexandre2000noncutoff, silvestre2016boltzmann} that $Q_{\rm s}(f,\cdot)$ is equal to an integro-differential operator with kernel depending on $f$:
\begin{lemma}{\cite[Section 4]{silvestre2016boltzmann}}\label{l:Q1}
The term $Q_{\rm s}(f,g)$ can be written
\begin{equation}\label{e.Q1-new}
Q_{\rm s}(f,g) = \int_{\R^3} (g(v')-g(v)) K_f (v, v') \dd v',
\end{equation}
with kernel  
\begin{equation}\label{e.kernel}
K_f(v,v') = \frac {1} {|v'-v|^{3+2s} }\int_{(v'-v)^\perp} f(v+w) |w|^{\gamma+2s+1} \tilde b(\cos\theta) \dd w,
\end{equation}
where $\tilde b$ is uniformly positive and bounded.
\end{lemma}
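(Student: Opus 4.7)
The plan is to derive the stated representation via the classical Carleman change of variables, which converts the integration over $(v_*, \sigma) \in \R^3 \times S^2$ into an integration over the post-collisional pair $(v', v_*')$. This calculation is standard, going back to Carleman and recently reviewed in the cited work of Silvestre, but the key is to track the geometric factors carefully so that the exponents on $|v'-v|$ and $|w|$ come out as claimed.

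First, I would start from the expression for $Q_{\rm s}$ in \eqref{e.c062901} and observe that, for fixed $v$, the map $(v_*, \sigma) \mapsto (v', v_*')$ is a $2$-to-$1$ covering (corresponding to $\sigma \leftrightarrow -\sigma$) of the algebraic subset
\[
\{(v', v_*') \in \R^3 \times \R^3 : (v'-v) \cdot (v_*'-v) = 0\}.
\]
The orthogonality follows immediately from $(v'-v)+(v_*'-v) = v_*-v$ and $(v'-v)-(v_*'-v) = |v-v_*|\sigma$, since the dot product telescopes to $\tfrac14(|v_*-v|^2 - |v-v_*|^2|\sigma|^2) = 0$. I would then parameterize this variety by $(v', w)$ where $v_*' = v+w$ and $w$ ranges over the $2$-plane $(v'-v)^\perp$.

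Second, I would compute the Jacobian of the change $(v_*, \sigma) \mapsto (v', w)$, using the explicit formulas $v_* = v' + w - v$, $|v-v_*|^2 = |v'-v|^2 + |w|^2$, and $\sigma = ((v'-v) - w)/|v-v_*|$. A direct computation in $3$D yields a Jacobian factor of order $|v'-v|^{-1} |v-v_*|^{-1}$. After substituting into $Q_{\rm s}(f,g)$, extracting the difference $g(v')-g(v)$, and recognizing the remaining integral over $w$ as the kernel $K_f(v,v')$, I would obtain
\[
K_f(v,v') = \int_{(v'-v)^\perp} f(v+w) \, b(\cos\theta) \, (|v'-v|^2 + |w|^2)^{(\gamma - 1)/2} \, |v'-v|^{-1} \dd w,
\]
up to an absolute constant.

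Third, the remaining task is to convert this into the form stated in the lemma by extracting the factor $|v'-v|^{-3-2s}$ and isolating $|w|^{\gamma+2s+1}$. The key geometric identity is $\sin\theta = 2|v'-v||w|/(|v'-v|^2+|w|^2)$ and $\cos\theta = (|w|^2 - |v'-v|^2)/(|w|^2+|v'-v|^2)$, which express the scattering angle in Carleman coordinates. Substituting these and collecting terms, the geometric factor becomes a product of $|v'-v|^{-3-2s} |w|^{\gamma+2s+1}$ and an expression depending only on $\theta$, which I define as $\tilde b(\cos\theta)$. The main obstacle, and the only subtle part of the proof, is verifying that $\tilde b$ is bounded above and below by universal constants. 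This reduces to checking that the singularity $\theta^{-1-2s}$ appearing in $b(\cos\theta)\sin\theta$ (from \eqref{e.b-bounds}) cancels exactly with the powers of $\sin(\theta/2) \sim |w|/|v'-v|$ produced by the algebraic rearrangement; this cancellation is precisely what forces the exponents $\gamma+2s+1$ and $3+2s$ to appear in the formula.
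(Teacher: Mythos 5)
The paper does not actually prove this lemma: it quotes the Carleman representation from Section 4 of \cite{silvestre2016boltzmann}. Your derivation is exactly the standard computation behind that citation, and its core steps are correct: the orthogonality $(v'-v)\cdot(v_*'-v)=0$, the parameterization of the collision variety by $(v',w)$ with $w=v_*'-v\in(v'-v)^\perp$, the measure factor of order $|v'-v|^{-1}|v-v_*|^{-1}$ in dimension three, and the identities $\cos\theta=(|w|^2-|v'-v|^2)/(|w|^2+|v'-v|^2)$, $\sin\theta=2|w||v'-v|/(|w|^2+|v'-v|^2)$. Collecting powers (using $|v'-v|=r\sin(\theta/2)$, $|w|=r\cos(\theta/2)$, $r=|v-v_*|$) indeed yields $K_f(v,v')=|v'-v|^{-3-2s}\int_{(v'-v)^\perp}f(v+w)|w|^{\gamma+2s+1}\tilde b(\cos\theta)\dd w$ with $\tilde b(\cos\theta)=c\, b(\cos\theta)\sin(\theta/2)^{2+2s}\cos(\theta/2)^{-1-\gamma-2s}$, all powers of $r$ cancelling, as you claim.

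Three points need attention. First, the map $(v_*,\sigma)\mapsto(v',v_*')$ is essentially a bijection onto the variety, not a $2$-to-$1$ covering: $\sigma\mapsto-\sigma$ sends $(v',v_*')$ to $(v_*',v')$, a different point of the variety, so it is not a deck transformation over the same image; this only affects absolute constants (irrelevant here since $\tilde b$ is only claimed comparable to $1$), but the description is wrong. Second, the half-angle asymptotics are inverted in your write-up: $\sin(\theta/2)=|v'-v|/r$ and $\cos(\theta/2)=|w|/r$, so near the grazing singularity $\sin(\theta/2)\sim|v'-v|/|w|$, not $|w|/|v'-v|$. Third, and substantively: your verification of the two-sided bound on $\tilde b$ only treats $\theta\to0$, where $b(\cos\theta)\approx\theta^{-2-2s}$ cancels against $\sin(\theta/2)^{2+2s}$. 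The formula for $\tilde b$ also contains $\cos(\theta/2)^{-1-\gamma-2s}$, which degenerates as $\theta\to\pi$, i.e. as $|w|\to 0$. If \eqref{e.b-bounds} is read literally up to $\theta=\pi$, then $b(\cos\theta)\approx(\pi-\theta)^{-1}$ there and $\tilde b\approx(\pi-\theta)^{-2-\gamma-2s}$, which is unbounded whenever $\gamma+2s>-2$. The uniform upper and lower bounds on $\tilde b$ therefore rely on the standard convention for the angular cross-section away from grazing angles (e.g. $b$ supported in $\theta\in[0,\pi/2]$ after the usual symmetrization, so $\cos(\theta/2)\geq 1/\sqrt2$ on the support, with "uniformly positive" then understood on $\{|w|\geq|v'-v|\}$); your proof should state and invoke this convention, since as written the final step does not follow from \eqref{e.b-bounds} alone.
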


Above, we have used the shorthand $(v-v')^\perp$ to mean $\{w: w\cdot(v-v') = 0 \}$.

For the term $Q_{\rm ns}$, we have the following formula, which is related to the Cancellation Lemma of \cite{alexandre2000entropy}:

\begin{lemma}\label{l:Q2}
The term $Q_{\rm ns}(f,g)$ can be written
\[ 
\Qns(f,g) = Cg(v)\int_{\R^3} f(v+w) |w|^\gamma \dd w,
\]
for a constant $C>0$ depending only on the bounds \eqref{e.b-bounds} for the collision cross-section $b$.
\end{lemma}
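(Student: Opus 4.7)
The plan is to reduce this to the classical Cancellation Lemma of Alexandre--Desvillettes--Villani--Wennberg. Since $g(v)$ does not depend on $(v_*, \sigma)$, it factors out of the $Q_{\rm ns}$ integral, and it suffices to show
\begin{equation*}
	J(v) := \int_{\R^3}\int_{\mathbb S^2}[f(v_*') - f(v_*)]\, B(v-v_*,\sigma)\dd\sigma\dd v_* = C\int_{\R^3} f(v+w)|w|^\gamma\dd w
\end{equation*}
for some constant $C>0$ depending only on $b$, $\gamma$, and $s$.

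For the $f(v_*')$ contribution to $J(v)$, I would perform the change of variables $v_*\mapsto v_*'$ at fixed $v$ and $\sigma$. Writing $u = v_* - v$, so that $v_*' - v = (u - |u|\sigma)/2$, a direct computation gives
\begin{equation*}
	\frac{\partial v_*'}{\partial v_*} = \tfrac{1}{2}(I - \sigma\hat u^{\,T}),\qquad \biggl|\det\frac{\partial v_*'}{\partial v_*}\biggr| = \frac{1-\cos\theta}{8},
\end{equation*}
and the image of this map is the half-space $\{w\in\R^3 : w\cdot\sigma\leq 0\}$. Using the identity $|v-v_*'| = |v-v_*|\sin(\theta/2)$ together with $\sigma\cdot\hat w = -\sin(\theta/2)$ (where $w = v_*' - v$ and $\hat w = w/|w|$), both $|v-v_*|$ and $\cos\theta$ can be re-expressed entirely in terms of $(w,\sigma)$. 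Inserting this into $J(v)$ and swapping the order of integration yields
\begin{equation*}
	J(v) = \int_{\R^3} f(v+w)|w|^\gamma \biggl[\int_{\mathbb S^2} \cK(\sigma,\hat w)\dd\sigma\biggr] \dd w,
\end{equation*}
where the integrand $\cK(\sigma,\hat w)$ depends on $\sigma$ only through $\sigma\cdot\hat w$, so that by choosing $\hat w$ as the pole of $\mathbb S^2$ one sees that the angular integral is a number independent of $\hat w$; this is the desired constant $C$.

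The main obstacle is verifying that the remaining angular integral is finite despite the non-integrable singularity of $b$. Each contribution to $\cK$ behaves like $b(\cos\theta)\sin\theta\sim\theta^{-1-2s}$ near $\theta = 0$, which is not integrable. The saving grace is the Jacobian factor $(1-\cos\theta)/8\sim\theta^2/16$, which appears when the $f(v_*')$ term is pulled back to an integral against $f(v+w)$ and provides precisely the extra vanishing needed to make the difference of the two contributions integrable for all $s\in(0,1)$. This is exactly the cancellation mechanism of the ADVW lemma, and carrying it out requires careful bookkeeping of the substitutions near the grazing singularity $\theta = 0$, where the change of variables degenerates.
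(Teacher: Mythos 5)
Your overall route is the right one: the paper gives no proof of this lemma at all, but simply invokes the Cancellation Lemma of Alexandre--Desvillettes--Villani--Wennberg, whose standard proof is exactly the change of variables $v_*\mapsto v_*'$ at fixed $(v,\sigma)$ followed by Fubini that you outline. However, your execution contains a genuine error: the explicit formulas you quote are those of the \emph{other} post-collisional map $v_*\mapsto v'$, not of $v_*\mapsto v_*'$. With the paper's convention $\cos\theta=\sigma\cdot\frac{v-v_*}{|v-v_*|}$, one has $\det\frac{\partial v_*'}{\partial v_*}=\frac{1}{8}(1+\cos\theta)=\frac{1}{4}\cos^2(\theta/2)$, $|v-v_*'|=|v-v_*|\cos(\theta/2)$, and $\sigma\cdot\hat w=-\cos(\theta/2)$ for $w=v_*'-v$; the quantities you wrote, $\frac{1}{8}(1-\cos\theta)$ and $\sin(\theta/2)$, belong to $v_*\mapsto v'$ (you do have the correct image half-space for $v_*'$, which shows the two maps got mixed).

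This is not merely a bookkeeping slip, because it invalidates the mechanism you invoke to resolve what you correctly identify as the main obstacle. The Jacobian of $v_*\mapsto v_*'$ does \emph{not} vanish at grazing angles (it tends to $1/4$ as $\theta\to 0$), so it cannot supply "extra vanishing"; and even if it did vanish like $\theta^2$, it is the \emph{inverse} Jacobian that appears when the gain term is pulled back, which would make the angular singularity worse, not better. The true cancellation is different: after the change of variables, the polar angle of $\sigma$ about $-\hat w$ in the gain term equals $\theta/2$ rather than $\theta$, so one must first re-express both the gain and loss angular integrals in terms of the same deviation angle $\theta$; the combined integrand is then $\sin\theta\,b(\cos\theta)\bigl[\cos^{-(3+\gamma)}(\theta/2)-1\bigr]$ (the $\cos^{-3}(\theta/2)$ from the inverse Jacobian, the $\cos^{-\gamma}(\theta/2)$ from the kinetic factor), and the bracket is $O(\theta^2)$ because $\cos(\theta/2)=1+O(\theta^2)$, which tames $\theta^{-1-2s}$ for all $s\in(0,1)$ and yields $C>0$ precisely because $\gamma>-3$. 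Note also that each angular integral separately diverges, so the subtraction has to be justified by carrying an angular cutoff $\theta>\eps$ through the change of variables and passing to the limit (together with the usual symmetrization reducing to $\theta\in[0,\pi/2]$, which you also need to avoid trouble at $\theta=\pi$ when $\gamma\le -1$). With these corrections your argument becomes a reproduction of the ADVW lemma that the paper cites.
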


\subsection{Self-generating lower bounds}

The main result of \cite{HST2020lowerbounds} states that if $f_{\rm in}$ is uniformly positive in some ball in $(x,v)$ space, this positivity is spread instantly to the entire domain:   
\begin{theorem}{\cite[Theorem 1.2]{HST2020lowerbounds}}\label{t:lower-bounds}
Let $\gamma \in (-3,1)$ and $s\in (0,1)$. Suppose that $f$ is a classical solution ($C^1$ in $t,x$ and $C^2$ in $v$) of \eqref{e.boltzmann} on $[0,T]\times \R^6$, with initial data $f(0,x,v)\geq 0$ satisfying the lower bound \eqref{e.mass-core}, i.e.
\begin{equation*}
 f(0,x,v) \geq \delta, \quad (x,v) \in B_r(x_m,v_m),
 \end{equation*}
for some $x_m,v_m\in \R^3$ and $\delta, r>0$. 
Assume that $f$ satisfies
\begin{equation}\label{e.hydro-general}
\begin{split}
&\sup_{t\in [0,T],x\in \R^3} \int_{\R^3} \vv^{(\gamma+2s)_+} f(t,x,v) \dd v \leq K_0, \qquad \text{ and}\\
&\sup_{t\in [0,T],x\in \R^3} \| f(t,x,\cdot)\|_{L^p(\R^3)} \leq P_0
	\quad \text{ for some } p>\frac{3}{3+\gamma+2s} \quad (\mbox{only if } \gamma + 2s < 0).
\end{split}
\end{equation}
We emphasize that the first inequality in~\eqref{e.hydro-general} is assumed in all cases, while the second one is additionally assumed only in the case $\gamma + 2s<0$. Then 
\[ f(t,x,v) \geq \mu(t,x) e^{-\eta(t,x)|v|^2}, \quad (t,x,v) \in (0,T]\times\R^6,\]
where $\mu(t,x)$ and $\eta(t,x)$ are uniformly positive and bounded on any compact subset of $(0,T]\times \R^3$, and depend only on $t$, $T$, $|x-x_m|$, $\delta$, $r$, $v_m$, $K_0$, and $P_0$.

Furthermore, near the point $x_m$, the lower bounds are uniform up to time zero:
\begin{equation}\label{e.small-t-lower}
f(t,x,v) \geq \mu, \quad (t,x,v) \in (0,T]\times B_{r/2}(x_m,v_m),
\end{equation}
for $\mu>0$ depending on $\delta$, $r$, $v_m$, $K_0$, and $P_0$.
\end{theorem}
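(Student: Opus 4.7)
The plan is to establish the lower bound in two stages: first, preserve the initial positivity on a ball moving along characteristics by a barrier argument; then, exploit the nonlocality of $\Qs$ to spread this positivity to all of $\R^3_v$, with the number of spreading steps producing the Gaussian dependence in $v$.

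\textbf{Step 1 (local preservation).} Define $g(t,x,v) := f(t, x + tv, v)$ so that the transport term disappears and $g$ satisfies $\partial_t g = Q(g,g)$, with the collision operator acting on the shifted spatial argument. By Lemma~\ref{l:Q2}, the loss term is bounded pointwise by $\Qns(g,g)(v) \le C g(v)$, where $C$ depends on $K_0$ when $\gamma \ge -2s$, and additionally on $P_0$ when $\gamma + 2s < 0$, via H\"older's inequality applied to $\int g(v+w)|w|^\gamma \dd w$ split at $|w|=1$ (this is precisely where the second hypothesis in \eqref{e.hydro-general} enters). Consider the candidate subsolution $\phi(t,x,v) := (\delta/2) e^{-Ct} \mathbf{1}_{B_{r/2}(x_m,v_m)}(x,v)$. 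At a first touching point of $g$ and $\phi$ in the interior of the ball, Lemma~\ref{l:Q1} gives $\Qs(g, g-\phi) \ge 0$ by the usual integro-differential maximum principle for $\cL_{K_g}$ with $K_g \ge 0$, while the exponential factor absorbs the loss contribution. Comparison yields $g \ge \phi$ on a short interval $[0, \tau_0]$, which is exactly \eqref{e.small-t-lower} near $(x_m,v_m)$; iterating on successive short intervals extends this to $[0,T]$.

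\textbf{Step 2 (velocity spreading).} Suppose at some time $t_1 \in [0,T)$ and point $(x_1, v_1)$ one has $f(t_1, \cdot, \cdot) \ge \mu_1$ on $B_\rho(x_1, v_1)$. Given a new target $v_2$ with $|v_2 - v_1| \le M$ for a suitable fixed increment $M$, take the barrier
\[
\psi(t,x,v) := \mu_2\, \eta(t - t_1)\, \mathbf{1}_{B_{\rho/2}(x_1 + (t - t_1) v,\, v_2)}(x,v),
\]
with $\eta(0) = 0$ and $\eta'(0) > 0$. The Carleman representation \eqref{e.kernel} gives $K_f(v, v') \gtrsim \mu_1$ on a nontrivial set of $v' \in B_\rho(v_1)$ whenever $v$ lies near $v_2$, since $f$ is uniformly positive on $B_\rho(x_1, v_1)$ and the angular factor $\tilde b$ is bounded below. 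Consequently $\Qs(f, \psi)(v) \gtrsim \mu_1 \mu_2$ on the boundary of the $v$-slice of the support of $\psi$. Choosing $\eta'(0)$ large enough and $\mu_2$ small enough makes $\psi$ a subsolution of the equation satisfied by $f$ (using the already-established upper bound on $\Qns(f,f)/f$), and the comparison principle yields $f(t_1 + \tau, \cdot, \cdot) \ge \mu_2$ on $B_{\rho/2}(x_1 + \tau v_2, v_2)$.

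\textbf{Step 3 (iteration, Gaussian conclusion, main obstacle).} To reach an arbitrary $(x, v)$ starting from the mass core, chain Step 2 along a path $v_m = w_0, w_1, \ldots, w_n = v$ with $|w_{k+1} - w_k| \le M$, so $n \sim |v - v_m|$. Each hop costs a fixed multiplicative factor and a small amount of time; the kinetic scaling in \eqref{e.dilation-def} permits compressing $n$ hops into any prescribed total time interval, at the expense of an aggregate factor $c^{n^2}$, which is exactly the Gaussian dependence $e^{-\eta(t,x)|v|^2}$. The resulting $\mu(t,x)$ and $\eta(t,x)$ depend only on the listed parameters. The main obstacle is the quantitative control of $K_f$ in Step 2 when $\gamma + 2s < 0$: the integrand $|w|^{\gamma+2s+1}$ in \eqref{e.kernel} is singular at $w=0$, so the lower bound on $K_f$ requires not merely the mass control $K_0$ but also the $L^p$ hypothesis $P_0$ with $p > 3/(3+\gamma+2s)$, which is exactly the integrability threshold at which H\"older's inequality closes. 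Bookkeeping the dependence of $\mu_1$, $\rho$, and $\tau$ through the iteration, and verifying that the aggregated factor has the Gaussian form, constitutes the bulk of the technical work.
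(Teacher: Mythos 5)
You should first be aware of how the paper itself handles this statement: it does not prove Theorem \ref{t:lower-bounds} at all. The result is imported verbatim from \cite{HST2020lowerbounds}, and the only ``proof content'' in the present paper is the remark following the statement, namely (i) that the energy-density hypothesis of the original theorem may be relaxed to the $(\gamma+2s)_+$-moment bound in \eqref{e.hydro-general}, because that hypothesis enters the cited proof only through the upper bound on $\Qs$ via Lemma~\ref{l:C2Linfty} (i.e.\ through the convolution $f\ast|\cdot|^{\gamma+2s}$), and (ii) that the small-time bound \eqref{e.small-t-lower} follows directly from Lemma~3.1 and Proposition~3.3 of \cite{HST2020lowerbounds}. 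Your proposal instead attempts a from-scratch sketch of the imported theorem. Its broad architecture (preserve the initial core by a barrier, spread positivity through the nonlocal gain, iterate to obtain a Gaussian) does match the strategy of the cited work, but as a proof it has concrete gaps.

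First, the indicator barriers in Steps 1--2 are not subsolutions as written: at a crossing point $v$ at distance $d$ from the boundary of the ball, the nonlocal operator applied to $\mathbf{1}_{B}$ produces a loss term of size $\int_{B^c}K_f(v,v')\dd v'\sim d^{-2s}$ (cf.\ Lemma~\ref{l:K-upper-bound-2}), which is unbounded as the crossing point approaches $\partial B$ and cannot be absorbed by an $e^{-Ct}$ prefactor or by taking $\eta'(0)$ large; this is precisely why the cited proof works with supports that shrink or grow in time (or profiles vanishing at the boundary), and your sketch never addresses it. Second, the kernel lower bound in Step 2 is misstated: from \eqref{e.kernel}, $K_f(v,v')$ sees $f$ only on the hyperplane through $v$ orthogonal to $v'-v$, so positivity of $f$ on $B_\rho(x_1,v_1)$ yields a lower bound on $K_f(v,v')$ only when $v'-v$ is (nearly) orthogonal to $v-v_1$ --- not ``for $v'\in B_\rho(v_1)$.'' This is exactly the cone of nondegeneracy of Lemma~\ref{l:cone}, which your argument never invokes, and it is also the true source of the Gaussian: because the admissible jump directions are nearly orthogonal to $v$, each spreading step advances $|v|^2$ (not $|v|$) by an amount of order one, so reaching velocity $v$ takes $\sim|v|^2$ steps of fixed multiplicative cost. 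Your alternative bookkeeping ($n\sim|v-v_m|$ hops whose time-compression costs $c^{n^2}$) is unsubstantiated and is not the mechanism. Third, you attribute the $L^p$ hypothesis ($P_0$, $p>3/(3+\gamma+2s)$) to the lower bound on $K_f$; in fact the kernel lower bound requires only the pointwise core $f\geq\delta\,\mathbf{1}_{B_r(v_m)}$ (Lemma~\ref{l:cone}), while \eqref{e.hydro-general} --- including the $L^p$ bound when $\gamma+2s<0$ --- is needed for the \emph{upper} bounds on the collision operator (the convolution $f\ast|\cdot|^{\gamma+2s}$ in Lemma~\ref{l:C2Linfty}), exactly as the paper's remark after the theorem explains.
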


 As stated in \cite{HST2020lowerbounds}, this theorem requires an upper bound on the energy density $\int_{\R^3} |v|^2 f(t,x,v) \dd v$. However, it is clear from the proof that a bound on the $\gamma+2s$ moment is sufficient. More specifically, the only purpose of the energy density bound is to estimate $Q_{\rm s}$ from above via Lemma \ref{l:C2Linfty} below, 
and a bound for $\int_{\R^3} \vv^{\gamma+2s} f\dd v$ suffices to estimate the convolution $f\ast|v|^{\gamma+2s}$ in Lemma \ref{l:C2Linfty}.

We should also note that \eqref{e.small-t-lower} is not stated as part of the main result of \cite{HST2020lowerbounds}, but follows immediately from \cite[Lemma 3.1 and Proposition 3.3]{HST2020lowerbounds}.

The following lemma gives a cone of nondegeneracy for the collision kernel $K_f$. When combined with the previous theorem, it provides coercivity estimates for $Q_s(f,\cdot)$ that depend only on the initial data and the quantities in \eqref{e.hydro-general}.

\begin{lemma}{\cite[Lemma 4.1]{HST2020lowerbounds}}\label{l:cone}
	Let $f:\R^3\to \R$ be a nonnegative function with $f(v) \geq \delta 1_{B_r(v_m)}$ for some $\delta, r > 0$ and $v_m\in \R^3$. There exist constants $\lambda, \mu, C > 0$ (depending on $\delta$, $r$, and $|v_m|$) such that for each $v\in \R^3$, there is a symmetric subset of the unit sphere $A(v)\subset \mathbb S^2$ such that:
	\begin{itemize}
		\item $|A(v)|_{\mathcal H^2}\geq \mu (1+|v|)^{-1}$.	
		where $|\cdot|_{\mathcal H^2}$ is the $2$-dimensional Hausdorff measure.
		\item For all $\sigma \in A(v)$, $|\sigma\cdot v|\leq C$.
		\item Whenever $(v-v')/|v-v'| \in A(v)$, 
			\[
				K_f(v,v') \geq \lambda (1+|v|)^{1+\gamma+2s} |v'-v|^{-3-2s}.
			\]
	\end{itemize}
\end{lemma}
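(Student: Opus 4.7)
The plan is to exploit the Carleman representation \eqref{e.kernel}: since $K_f(v,v')$ is expressed as an integral of $f$ over the affine plane $v + (v-v')^\perp$ with weight $|w|^{\gamma+2s+1}\tilde b$, a lower bound should come from choosing the direction $\sigma := (v-v')/|v-v'|$ so that this plane passes through the ball $B_r(v_m)$ where $f\geq \delta$. Accordingly, I would define
\[
A(v) := \{\sigma \in \mathbb{S}^2 : |\sigma \cdot (v - v_m)| \leq r/2\},
\]
consisting of those $\sigma$ for which $v_m$ lies within distance $r/2$ of the plane $v + \sigma^\perp$. This set is manifestly symmetric under $\sigma\mapsto-\sigma$.

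The measure bound will follow from a direct spherical integration: $A(v)$ is a band around the great circle perpendicular to $v-v_m$, and an elementary computation gives $\mathcal{H}^2(A(v)) = \min(4\pi,\,2\pi r/|v-v_m|)$. Since $|v-v_m| \leq (1+|v_m|)(1+|v|)$, this is bounded below by $\mu/(1+|v|)$ for some $\mu>0$ depending on $r$ and $|v_m|$. The bound $|\sigma\cdot v|\leq C$ is then immediate from the decomposition $\sigma\cdot v = \sigma\cdot(v-v_m) + \sigma\cdot v_m$, giving $C = r/2 + |v_m|$.

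For the kernel lower bound, I would fix $\sigma\in A(v)$ and let $w_0$ denote the orthogonal projection of $v_m-v$ onto $\sigma^\perp$. By the definition of $A(v)$, $|w_0 - (v_m - v)| = |\sigma\cdot(v_m-v)| \leq r/2$, so the disk $D := \{w\in\sigma^\perp : |w-w_0|\leq r/2\}$ satisfies $v+D\subset B_r(v_m)$ and hence $f(v+w)\geq\delta$ on $D$. Since $\tilde b$ is uniformly bounded below, the Carleman formula reduces matters to bounding $\int_D|w|^{\gamma+2s+1}\dd w$ from below by a constant times $(1+|v|)^{1+\gamma+2s}$. I would then split into two regimes: when $|v-v_m|\geq 2r$, the Pythagorean identity gives $|w_0|\geq \tfrac{\sqrt{15}}{4}|v-v_m|$, so $|w|$ is comparable to $|v-v_m|\asymp 1+|v|$ throughout $D$ and the integral is bounded below by a constant times $r^2(1+|v|)^{\gamma+2s+1}$; when $|v-v_m|<2r$, the integrand is singular at $w=0$ but integrable since $\gamma+2s+1>-2$ (using crucially $\gamma>-3$), and since $|v|$ is bounded here, the integral is bounded below by a positive constant depending only on $r$.

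The main obstacle will be unifying the two regimes into a single estimate uniform in $v$ and in the sign of $\gamma+2s+1$: one must verify that $|w|^{\gamma+2s+1}$ is controlled from below by a constant multiple of $(1+|v|)^{\gamma+2s+1}$ on $D$ in both cases, with constants depending only on $\delta$, $r$, and $|v_m|$. This requires a careful case analysis near the transition $|v-v_m|\sim r$, together with a separate treatment of the singular integrand in the small-$|v-v_m|$ regime where $w_0$ may lie close to the origin of $\sigma^\perp$.
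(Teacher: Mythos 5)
Your argument is correct: the band $A(v)=\{\sigma:|\sigma\cdot(v-v_m)|\le r/2\}$ makes the Carleman plane $v+\sigma^\perp$ pass through $B_r(v_m)$, the hat-box computation gives the $\mu(1+|v|)^{-1}$ measure bound, and integrating $f\ge\delta$ over the disk $D$ (using $|w|\asymp 1+|v|$ when $|v-v_m|\ge 2r$ and a uniform bound when $|v-v_m|<2r$, where $(1+|v|)$ is bounded so the sign of $\gamma+2s+1$ causes no trouble) yields exactly the stated kernel lower bound with constants depending only on $\delta$, $r$, $|v_m|$ (and the universal parameters). The paper itself does not prove this lemma but quotes it from \cite{HST2020lowerbounds}, and your construction is essentially the same cone-of-nondegeneracy argument used there, so nothing further is needed beyond the routine case checks you already flag.
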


\subsection{Local regularity estimates}\label{s.local_reg}

We recall the local regularity estimates of \cite{imbert2016weak} and \cite{imbert2018schauder} for linear kinetic equations of the following type:
\begin{equation}\label{e.linear-kinetic}
\partial_t f + v\cdot \nabla_x f = \int_{\R^3} (f(t,x,v') - f(t,x,v))K(t,x,v,v') \dd v' + h,
\end{equation}
where the kernel $K$ satisfies suitable ellipticity assumptions. First, we have a De Giorgi-type estimate that gives H\"older continuity of solutions:

\begin{theorem}[\cite{imbert2016weak}]\label{t:degiorgi}
Let $K: (-1,0]\times B_1 \times B_2 \times \R^3 \to \R_+$ be a kernel satisfying the following ellipticity conditions, uniformly in $t$ and $x$, for some $\lambda, \Lambda > 0$:
\begin{flalign}
&\text{For all } v\in B_2, r>0,\quad \inf_{|e|=1}\int_{B_r(v)} ((v'-v)\cdot e)^2_+ K(t,x,v,v') \dd v' \geq \lambda r^{2-2s},\quad \text{(if $s< 1/2$)},\label{e.coercivity1}
\end{flalign}
\begin{flalign}
&\text{For any } f(v) \text{ supported in } B_2,  \label{e.coercivity2}&\\
&\phantom{ \text{For any } f(v) } \iint_{B_2\times\R^3} f(v) (f(v) - f(v')) K(t,x,v,v') \dd v' \dd v \geq \lambda \|f\|_{\dot H^s(\R^3)}^2 - \Lambda \|f\|_{L^2(\R^3)}^2,\nonumber
\end{flalign}
\begin{flalign}
&\text{For all }   v\in B_2 , r>0, \quad \int_{\R^3\setminus B_r(v)} K(t,x,v,v') \dd v' \leq \Lambda r^{-2s},&\label{e.upper1}
\end{flalign}
\begin{flalign}
&\text{For all }   v'\in B_2 , r>0, \quad \int_{\R^3\setminus B_r(v')} K(t,x,v,v') \dd v \leq \Lambda r^{-2s},&\label{e.upper2}
\end{flalign}
\begin{flalign}
&\text{For all } v \in B_{7/4}, \quad \left| \mbox{p.v.} \int_{B_{1/4}(v)}(K(t,x,v,v') - K(t,x,v',v)) \dd v'\right| \leq \Lambda,&\label{e.cancellation1}
\end{flalign}
\begin{flalign}
&\text{For all } r\in [0,1/4] \text{ and } v\in B_{7/4}, \label{e.cancellation2}&\\
&\phantom{\text{For }}\left| \mbox{p.v.} \int_{B_r(v)} (K(t,x,v,v') - K(t,x,v',v)) (v'-v) \dd v'\right| \leq \Lambda (1+r^{1-2s}), \quad \text{(if $s\geq 1/2$)}.\nonumber
\end{flalign}
Let $f:(-1,0]\times B_1\times \R^3\to\R$ be a bounded function that is a solution of \eqref{e.linear-kinetic} in $Q_1$, for some bounded function $h$. Then $f$ is H\"older continuous in $Q_{1/2}$, and 
\[ \|f\|_{C^\alpha_\ell(Q_{1/2})} \leq C\left( \|f\|_{L^\infty((-1,0]\times B_1\times \R^3)} + \|h\|_{L^\infty(Q_1)}\right).\]
The constants $C>0$ and $\alpha \in (0,1)$ depend only on $\lambda$ and $\Lambda$.
\end{theorem}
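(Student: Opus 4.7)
The plan is to follow the De Giorgi strategy adapted to the kinetic integro-differential setting. The core claim is a decrease-of-oscillation statement: there exist universal $\rho\in(0,1/2)$ and $\theta>0$ such that
\[
\mathrm{osc}_{Q_\rho} f \leq (1-\theta)\bigl(\mathrm{osc}_{(-1,0]\times B_1\times\R^3} f + \|h\|_{L^\infty(Q_1)}\bigr).
\]
Once this is established, translation and dilation invariance of the hypotheses \eqref{e.coercivity1}--\eqref{e.cancellation2} under $z\mapsto z_0\circ\delta_r(z)$ let me iterate on scales $\rho^k$ to deduce the $C^\alpha_\ell(Q_{1/2})$ bound with $\alpha=-\log(1-\theta)/\log\rho$, using \eqref{e.upper1} to absorb the rescaled tails of $f$ outside $B_2$.

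The first ingredient is a local $L^2$-to-$L^\infty$ estimate for subsolutions of \eqref{e.linear-kinetic}. Multiplying by $\varphi^2(f-k)_+$ for a kinetic cutoff $\varphi(t,x,v)$ supported in a slightly smaller cylinder, the transport term becomes a full derivative $\tfrac12(\partial_t+v\cdot\nabla_x)(\varphi(f-k)_+)^2$ up to a commutator controlled by $\nabla_{t,x}\varphi$, and the coercivity \eqref{e.coercivity2} applied to the truncation yields
\[
\sup_t\!\int\!\varphi^2(f-k)_+^2 \dd x\dd v \;+\; \lambda\!\int\!\|\varphi(f-k)_+\|_{\dot H^s_v}^2 \dd t\dd x \;\lesssim\; \text{commutators}+\text{tails}+\|h\|_\infty^2,
\]
where the commutator and tail terms coming from bringing $\varphi$ inside the nonlocal operator are absorbed using \eqref{e.upper1}--\eqref{e.cancellation2}. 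A kinetic averaging/Sobolev embedding of Golse--Imbert--Mouhot--Vasseur type then upgrades the $L^\infty_t L^2\cap L^2_t\dot H^s_v$ control to an $L^p$ gain for some $p>2$, and De Giorgi iteration along the levels $k_n=\tfrac12-2^{-n}$ produces the sub-level $L^\infty$ bound.

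The heart of the proof, and the main obstacle I expect, is the \emph{intermediate-value lemma}: if $f\leq 1$ on the wide slab, $|\{f\leq 0\}\cap (Q_1\setminus Q_{1/2})|$ is nontrivial, and $|\{f\geq 1-\delta\}\cap Q_{1/4}|\geq \mu>0$, then for some intermediate level $\ell\in(\delta,1-\delta)$ the slice $\{\ell<f<1-\delta\}$ has measure bounded below by a universal constant, producing a contradiction with the energy inequality once $\delta$ is chosen small. Two features make this harder than in the purely nonlocal parabolic case: information in the $x$ variable propagates only through the transport, so the overlap between $\{f\leq 0\}$ and $\{f\geq 1-\delta\}$ must be produced by sliding along characteristics $(x,v)\mapsto(x+tv,v)$ as in the hypoelliptic De Giorgi scheme; and the nonlocal operator couples each point to all of $\R^3_v$, so the tail from large $|v|$ must be controlled via \eqref{e.upper1} together with the non-degeneracy cone implicit in \eqref{e.coercivity1}. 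Combining this lemma with the local $L^\infty$ bound through the standard dichotomy (either a substantial portion of $Q_{1/2}$ lies below the median or above, in which case the $L^\infty$ bound applies to $f$ or $-f$, or else the intermediate-value lemma gives a contradiction) yields the decrease of oscillation, and iteration by kinetic dilation closes the argument.
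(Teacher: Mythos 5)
You should first note that the paper does not prove this statement at all: Theorem \ref{t:degiorgi} is imported verbatim from Imbert--Silvestre \cite{imbert2016weak}, so the only meaningful comparison is with the proof in that reference. Your outline does match the broad De Giorgi program carried out there: a Caccioppoli-type energy estimate using \eqref{e.coercivity2} with the tail and commutator terms controlled by \eqref{e.upper1}--\eqref{e.cancellation2}, a gain of integrability obtained by combining the $\dot H^s_v$ coercivity with velocity-averaging regularity in $(t,x)$, a measure-theoretic ``second lemma,'' and an iteration over kinetic scales $z_0\circ\delta_r$ with the rescaled tails absorbed via \eqref{e.upper1}. So the strategy is the right one in spirit.

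The genuine gap is that the step you yourself identify as ``the heart of the proof'' --- the intermediate-value lemma and its conversion into a decrease of oscillation --- is only announced as an expected obstacle; no argument is given, and that step is essentially the entire content of the theorem. In \cite{imbert2016weak} it is not a routine adaptation of the classical isoperimetric computation: it is obtained through a compactness argument resting on averaging-lemma regularity of velocity averages, a lemma propagating measure information \emph{forward in time} along the characteristics $(x,v)\mapsto(x+tv,v)$, and an ink-spots covering theorem with kinetic cylinders stacked along the transport, from which a weak Harnack inequality (with a time lag between the cylinder carrying the measure information and the cylinder where the pointwise bound holds) is deduced; the H\"older estimate is then a corollary. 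Your ``standard dichotomy'' inside a single cylinder $Q_{1/2}$ cannot work as stated, precisely because smallness of $\{f\le 0\}$ only influences the solution at later times, so the oscillation decrease must be arranged on suitably time-shifted cylinders rather than by a symmetric median argument in the same cylinder; moreover, the statement ``the $L^\infty$ bound applies to $f$ or $-f$'' requires smallness of the measure of the relevant level set, which is exactly what the chained intermediate-value argument must produce. Likewise, the $L^2\to L^\infty$ lemma's gain of integrability (the coercivity acts only in $v$, and nothing is gained in $(t,x)$ without the averaging machinery) is named but not established. As a roadmap your proposal is consistent with the literature; as a proof it omits the two pillars on which it rests, which are the substance of the cited paper.
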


Next, we recall Schauder-type estimates for linear kinetic integro-differential equations of the form \eqref{e.linear-kinetic}. 
As in \cite{imbert2018schauder}, the kernel is assumed to be elliptic in the sense of the following definition:
\begin{definition}[Ellipticity class]\label{d:ellipticity}
Given $s\in (0,1)$ and $0< \lambda < \Lambda$, a kernel $K:\R^3\setminus\{0\}\to \R_+$ lies in the ellipticity class of order $2s$ if
\begin{itemize}
\item $K(w) = K(-w)$.
\item For all $r>0$,
\begin{equation}\label{e.upper-bound-schauder}
 \int_{B_r} |w|^2 K(w) \dd w \leq \Lambda r^{2-2s}.
 \end{equation}
\item For any $R>0$ and $\varphi \in C^2(B_R)$,
\begin{equation}\label{e.coercivity3}
 \iint_{B_R\times B_R} |\varphi(v) - \varphi(v')|^2 K(v'-v) \dd v' \dd v \geq \lambda \iint_{B_{R/2}\times B_{R/2}} |\varphi(v) - \varphi(v')|^2 |v'- v|^{-3-2s} \dd v' \dd v.
 \end{equation}
\item If $s<1/2$, assume in addition that for each $r>0$,
\begin{equation}\label{e.coercivity4}
 \inf_{|e|=1} \int_{B_r} (w\cdot e)^2_+ K(w) \dd w \geq \lambda r^{2-2s}.
 \end{equation}
\end{itemize}
\end{definition}
For technical convenience, we quote the scaled form of the Schauder estimate on cylinders $Q_{2r}$ with $r>0$, as in \cite[Theorem 4.5]{imbert2020smooth}:
\begin{theorem}[\cite{imbert2018schauder, imbert2020smooth}]\label{t:schauder}
Let $0 < \alpha < \min(1,2s)$, and let $\alpha' = \frac {2s}{1+2s}\alpha$. Let $f:(-(2r)^{2s},0]\times B_{(2r)^{1+2s}} \times \R^3\to \R$ be a solution of the linear equation \eqref{e.linear-kinetic} in $Q_{2r}$ for some bounded function $h$ and some integral kernel $K_z(w) = K(t,x,v,v+w): (-(2r)^{2s}, 0]\times B_{(2r)^{1+2s}}\times \R^3\times\R^3\to [0,\infty)$ satisfying, for each $t$, $x$, and $v$, the ellipticity assumptions of Definition \ref{d:ellipticity} for uniform constants $0< \lambda< \Lambda$, as well as the H\"older continuity assumption
\begin{equation}\label{e.kernel-holder}
 \int_{B_\rho} |K_{z_1}(w) - K_{z_2}(w)| |w|^2 \dd w \leq A_0 \rho^{2-2s} d_\ell(z_1,z_2)^{\alpha'}, \quad z_1,z_2\in Q_{2r}, \rho>0,
\end{equation}
for some $\alpha>0$. If $f\in C^\alpha_\ell((-(2r)^{2s},0]\times B_{(2r)^{1+2s}} \times \R^3)$ and $h\in C^\alpha_\ell(Q_{2r})$, then 
\[\begin{split}
 \|f\|_{C^{2s+\alpha'}_\ell(Q_{r})} &\leq C\left(\max\left(r^{-2s-\alpha'+\alpha}, A_0^{(2s+\alpha'-\alpha)/\alpha'} \right)[f]_{C^\alpha_\ell([-(2r)^{2s},0]\times B_{(2r)^{1+2s}}\times\R^3)}\right.\\
&\qquad \left. + [h]_{C^{\alpha'}_\ell(Q_{2r})} + \max(r^{-\alpha'}, A_0)\|h\|_{L^\infty(Q_{2r})}\right).
\end{split}\]
The constant $C$ depends on $s$, $\lambda$, and $\Lambda$.
\end{theorem}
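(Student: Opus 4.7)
The plan is to prove this via a polynomial-approximation (Campanato-type) scheme adapted to the kinetic Lie group structure. I would first reduce to the unit-cylinder case $r=1$ via the dilation $\delta_r$ from \eqref{e.dilation-def}: writing $\tilde f(z) := f(\delta_r(z))$, the equation for $\tilde f$ has kernel $\tilde K_z(w) := r^{3+2s} K_{\delta_r(z)}(rw)$, which preserves the ellipticity of Definition \ref{d:ellipticity} while converting the Hölder constant $A_0$ into $r^{\alpha'} A_0$ and the source $h$ into $r^{2s}\, h\circ \delta_r$. Proving the estimate on the rescaled problem $Q_2\to Q_1$ and undoing the scaling produces the two maxima $\max(r^{-2s-\alpha'+\alpha}, A_0^{(2s+\alpha'-\alpha)/\alpha'})$ and $\max(r^{-\alpha'}, A_0)$ in the statement as artifacts of whether the intrinsic scale of the kernel regularity or the scale $r$ is dominant.

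The technical heart is a constant-coefficient Schauder estimate: if $K$ depends only on $w$, a bounded solution of $(\partial_t + v\cdot \nabla_x)f = \cL_K f$ on $Q_2$ lies in $C^{2s+\beta}_\ell(Q_1)$ for every $\beta < \min(1,2s)$, with norm controlled by $\|f\|_{L^\infty((-2^{2s},0]\times B_{2^{1+2s}}\times \R^3)}$. I would prove this by a blow-up/compactness argument: assume the estimate fails along a sequence $(K_n, f_n)$, renormalize by subtracting the best polynomial of kinetic degree $< 2s+\beta$ at the offending scale, then use the De Giorgi estimate of Theorem \ref{t:degiorgi} together with the uniform ellipticity of Definition \ref{d:ellipticity} to extract a $C^\alpha_\ell$-convergent subsequence. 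The limit is a global solution to a translation-invariant kinetic integro-differential equation with controlled polynomial growth, which by a Liouville-type classification must be a polynomial of kinetic degree $< 2s+\beta$ (here the invariance under $\delta_r$ and the Lie product plays the usual role), contradicting the nontriviality imposed by the renormalization.

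With the constant-coefficient result in hand, for each $z_0 \in Q_1$ I would construct a polynomial $p_{z_0}$ of kinetic degree $< 2s+\alpha'$ approximating $f$ by iterating over dyadic scales $\rho_k = 2^{-k}$. At scale $\rho_k$, freeze the kernel at $z_0$, solve the homogeneous problem $(\partial_t + v\cdot\nabla_x)g_k = \cL_{K_{z_0}} g_k$ matching $f$ on the appropriate boundary of $z_0\circ Q_{\rho_k}$, and apply the constant-coefficient estimate to obtain a polynomial $q_k$ with $|g_k - q_k| \lesssim \rho_k^{2s+\alpha'}$. The defect $f - g_k$ solves a kinetic equation with right-hand side $(\cL_{K_z} - \cL_{K_{z_0}})f + h$; the hypothesis \eqref{e.kernel-holder} combined with the $C^\alpha_\ell$ modulus of $f$ gives a pointwise bound on this source of order $A_0 \rho_k^{\alpha'} [f]_{C^\alpha_\ell}$ plus an $h$-contribution, which by an $L^\infty$ estimate for the linear problem (itself a consequence of a maximum-principle argument using the coercivity in Definition \ref{d:ellipticity}) transfers to $\|f - g_k\|_{L^\infty(z_0\circ Q_{\rho_k/2})}$ with the correct power. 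Summing the geometric series $\sum_k (q_k - q_{k+1})$ produces $p_{z_0}$ and the Campanato-type bound $|f(z) - p_{z_0}(z)| \lesssim d_\ell(z,z_0)^{2s+\alpha'}$.

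The main obstacle will be controlling the nonlocal tail in the perturbation step, since $f$ is only $L^\infty$ outside $Q_2$ yet is integrated against $K_z - K_{z_0}$ over all of $\R^3$. I would split the difference into a near-field part where \eqref{e.kernel-holder} gives the bound directly against $[f]_{C^\alpha_\ell}$, and a tail part where one uses $\int_{\R^3 \setminus B_\rho} K \lesssim \Lambda \rho^{-2s}$ from \eqref{e.upper1}--\eqref{e.upper2} together with $\|f\|_{L^\infty}$. A secondary subtlety is that $d_\ell$ fails the triangle inequality for $s<1/2$; this is absorbed using \eqref{e.right-translation} as a quasi-metric substitute, which suffices because each dyadic iteration step loses only a universal constant and these accumulate harmlessly in the geometric sum.
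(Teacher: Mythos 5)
First, a point of comparison: the paper does not prove Theorem \ref{t:schauder} at all --- it is quoted (in scaled form, as in Theorem 4.5 of \cite{imbert2020smooth}) from Imbert--Silvestre \cite{imbert2018schauder, imbert2020smooth} --- so the only meaningful benchmark is the proof in those references. Your overall architecture matches theirs in outline: reduce to unit scale by $\delta_r$ (your bookkeeping $A_0\mapsto r^{\alpha'}A_0$, $h\mapsto r^{2s}h\circ\delta_r$ is correct, and the two maxima do come from working at the scale $\min(r,A_0^{-1/\alpha'})$ and patching), prove a constant-coefficient estimate, and use a kinetic Liouville theorem plus blow-up/compactness.

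The genuine gap is in your perturbation step. You claim that on $z_0\circ Q_{\rho_k}$ the hypothesis \eqref{e.kernel-holder} together with the $C^\alpha_\ell$ modulus of $f$ gives a \emph{pointwise} bound $|(\cL_{K_z}-\cL_{K_{z_0}})f|\lesssim A_0\rho_k^{\alpha'}[f]_{C^\alpha_\ell}$. This fails: writing the commutator in symmetric form and using $|f(v+w)+f(v-w)-2f(v)|\lesssim [f]_{C^\alpha_\ell}|w|^{\alpha}$, the annulus $B_{2^{-j}}\setminus B_{2^{-j-1}}$ contributes, via \eqref{e.kernel-holder},
\begin{equation*}
\int_{B_{2^{-j}}\setminus B_{2^{-j-1}}}|K_z-K_{z_0}(w)|\,|w|^{\alpha}\dd w
\lesssim 2^{j(2-\alpha)}\,A_0\,2^{-j(2-2s)}\,d_\ell(z,z_0)^{\alpha'}
= A_0\,d_\ell(z,z_0)^{\alpha'}\,2^{j(2s-\alpha)},
\end{equation*}
and since $\alpha<2s$ the sum over $j\geq 0$ diverges. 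An operator of order $2s$ simply cannot be evaluated pointwise on a function that is only $C^\alpha_\ell$ with $\alpha<2s$ --- that is exactly the regularity you are trying to prove. This is why the cited proof does not run a solve-freeze-and-compare Campanato iteration: the improvement-of-polynomial-approximation step is itself established by contradiction and compactness on the \emph{variable}-coefficient equation, where the kernel H\"older hypothesis enters only through (weak) convergence of the kernels along the blow-up sequence and no pointwise commutator estimate is ever needed; the Liouville theorem then classifies the limit. To rescue your scheme you would need either an absorption argument in which the commutator is bounded by $\varepsilon[f]_{C^{2s+\alpha'}_\ell}+C(\varepsilon)\cdot(\ldots)$ and the unknown seminorm is reabsorbed (requiring an interpolation inequality and careful tail handling), or to switch to the compactness route. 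Two secondary points: your comparison step presupposes solvability and a comparison principle for the exterior-data Dirichlet problem for $\cL_{K_{z_0}}$ with merely measurable kernels in the class of Definition \ref{d:ellipticity}, which needs justification; and in your blow-up argument for the constant-coefficient estimate you must specify the topology in which the kernels $K_n$ converge and check that the ellipticity class is closed under it, otherwise the limiting equation is not well defined.
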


\begin{remark}\label{r:ellipticity}
The local estimates of Theorems \ref{t:degiorgi} and \ref{t:schauder} impose a number of conditions on the integral kernel $K$. When the kernel is defined in terms of a function $f$ according to the formula for $K_f$ from \eqref{e.kernel}, one must place appropriate conditions on $f$ so that the kernel $K_f$ satisfies all the hypotheses of these two theorems.

Regarding the coercivity conditions \eqref{e.coercivity1}, \eqref{e.coercivity2}, \eqref{e.coercivity3}, and \eqref{e.coercivity4}, it is understood in the literature (see \cite{imbert2016weak, chaker2020coercivity, imbert2020smooth}) that all of these conditions follow from the existence of a cone of nondegeneracy as in Lemma \ref{l:cone}. 

The upper bound conditions \eqref{e.upper1} and \eqref{e.upper2}  from Theorem \ref{t:degiorgi} hold for $K_f$ (locally in $v$) whenever the convolution $f\ast|\cdot|^{\gamma+2s} = \int_{\R^3} f(v+w)|w|^{\gamma+2s} \dd w$ is bounded. This is shown in \cite[Lemmas 3.4 and 3.5]{imbert2016weak}.  The cancellation conditions \eqref{e.cancellation1} and \eqref{e.cancellation2} hold whenever the convolutions $f\ast |\cdot|^\gamma$ and $f\ast |\cdot|^{\gamma+1}$ are bounded, from \cite[Lemmas 3.6 and 3.7]{imbert2016weak}. In particular, these conditions all hold whenever $f\in L^\infty_q(\R^3)$ for $q>3+\gamma+2s$. We emphasize that these four lemmas from \cite{imbert2016weak} are proven for any $\gamma$ and $s$ such that $\gamma+2s\leq 2$, including in the case $\gamma+2s<0$.

From Lemma \ref{l:K-upper-bound-2}, we see that the upper bound \eqref{e.upper-bound-schauder} is also satisfied whenever the convolution $f\ast |\cdot|^{3+\gamma+2s}$ is bounded. 

To sum up, this discussion shows that the kernel $K_f$ defined by \eqref{e.kernel} satisfies the hypotheses of Theorems \ref{t:degiorgi} and \ref{t:schauder} whenever $f\in L^\infty_q(\R^3)$ with $q>3+\gamma+2s$ and $f$ satisfies a pointwise lower bound condition as in Lemma \ref{l:cone}, with constants depending on $|v|$, $\|f\|_{L^\infty_q(\R^3)}$, $q$, $\delta$, $r$, and $|v_m|$.

In general, these estimates for $K_f$ degenerate as $|v|\to \infty$, which means $K_f$ is uniformly elliptic on any fixed bounded domain in velocity space, but not uniformly elliptic globally. The change of variables $\mathcal T_0$, described in \cite{imbert2020smooth} and Section \ref{s:global-reg} below, addresses this difficulty.
\end{remark}

\subsection{Estimates for the collision operator}\label{s.collision_operator}

First, we have an integral estimate on annuli for the kernel $K_f$ defined in \eqref{e.kernel}:

\begin{lemma}{\cite[Lemma 4.3]{silvestre2016boltzmann}}\label{l:K-upper-bound}
For any $r>0$, 
\[
	\int_{B_{2r}\setminus B_r} K_f(v,v+w) \dd w \leq C \left( \int_{\R^3} f(v+w)|w|^{\gamma+2s} \dd w\right) r^{-2s}.
\]
\end{lemma}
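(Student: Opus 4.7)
The plan is a direct Fubini/polar-coordinates calculation using the Carleman representation~\eqref{e.kernel}. The key observation is that although $K_f(v,v+w')$ involves an integral over the $2$-plane $(w')^\perp$ against $|w|^{\gamma+2s+1}$, averaging over $w' \in B_{2r}\setminus B_r$ reassembles these $2$D integrals into a genuine $3$D integral of $f$ against $|w|^{\gamma+2s}$, with a factor of $r^{-2s}$ coming from the radial integration in $|w'|$.

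First, since $\tilde b$ is bounded, Lemma~\ref{l:Q1} yields
\[
I := \int_{B_{2r}\setminus B_r} K_f(v,v+w')\dd w'
\;\lesssim\; \int_{B_{2r}\setminus B_r} \frac{1}{|w'|^{3+2s}} \int_{(w')^\perp} f(v+w)\,|w|^{\gamma+2s+1}\dd\mathcal H^2(w)\dd w'.
\]
Writing $w' = \rho\sigma$ with $\rho = |w'|\in (r,2r)$ and $\sigma\in \mathbb S^2$, we have $(w')^\perp = \sigma^\perp$ and $\dd w' = \rho^2 \dd\rho\dd\sigma$, and the inner integral does not depend on $\rho$. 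Hence
\[
I \;\lesssim\; \left(\int_r^{2r} \rho^{-1-2s}\dd\rho\right) \int_{\mathbb S^2}\int_{\sigma^\perp} f(v+w)\,|w|^{\gamma+2s+1}\dd\mathcal H^2(w)\dd\sigma
\;\lesssim\; r^{-2s}\,J,
\]
where $J$ denotes the double integral on the right.

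Next I parametrize $w\in \sigma^\perp$ in polar within that $2$-plane: $w = \tau\omega$ with $\tau>0$ and $\omega \in \mathbb S^2\cap \sigma^\perp$, so that $\dd\mathcal H^2(w) = \tau\dd\tau\dd\mathcal H^1(\omega)$. Then
\[
J = \int_0^\infty \tau^{\gamma+2s+2} \int_{\mathbb S^2}\int_{\mathbb S^2\cap\sigma^\perp} f(v+\tau\omega)\dd\mathcal H^1(\omega)\dd\sigma\dd\tau.
\]
The set $\{(\omega,\sigma)\in \mathbb S^2\times \mathbb S^2: \omega\perp\sigma\}$ carries a natural measure that may be sliced either way; swapping the order, for fixed $\omega\in \mathbb S^2$ the set of admissible $\sigma$ is the great circle $\mathbb S^2\cap \omega^\perp$, of $\mathcal H^1$-length $2\pi$. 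Therefore
\[
\int_{\mathbb S^2}\int_{\mathbb S^2\cap\sigma^\perp} f(v+\tau\omega)\dd\mathcal H^1(\omega)\dd\sigma \;=\; 2\pi \int_{\mathbb S^2} f(v+\tau\omega)\dd\omega.
\]
Reassembling $\tilde w = \tau\omega \in \R^3$ (so $\dd\tilde w = \tau^2\dd\tau\dd\omega$) yields
\[
J \;=\; 2\pi \int_0^\infty \int_{\mathbb S^2} f(v+\tau\omega)\,\tau^{\gamma+2s+2}\dd\omega\dd\tau
\;=\; 2\pi \int_{\R^3} f(v+\tilde w)\,|\tilde w|^{\gamma+2s}\dd\tilde w,
\]
and combining with $I\lesssim r^{-2s} J$ gives the claim. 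The only point that requires a moment's thought is the Fubini swap on the Stiefel-type set $\{(\omega,\sigma):\omega\perp\sigma\}$, which is justified by the nonnegativity of the integrand; everything else is routine bookkeeping in polar coordinates.
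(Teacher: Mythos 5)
Your computation is correct: the paper does not prove this lemma but imports it from \cite[Lemma 4.3]{silvestre2016boltzmann}, and your argument via the Carleman representation of Lemma~\ref{l:Q1}, polar coordinates in $w'$ (producing the $r^{-2s}$ factor), and reassembly of the planar integrals into $\int_{\R^3} f(v+w)|w|^{\gamma+2s}\dd w$ is essentially the standard proof behind that citation. The one step worth making explicit is the exchange of the $\sigma$- and $\omega$-integrations over the orthogonality set $\{\omega\perp\sigma\}$, which is justified (as you indicate) by Tonelli together with rotation invariance: both iterated integrals define the same $SO(3)$-invariant measure on orthonormal pairs, so slicing either way gives the factor $2\pi$.
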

The following two closely related estimates can be proven by writing the integral over $B_r$ (respectively, $B_r^c$) as a sum of integrals over $B_{r2^{-n}}\setminus B_{r2^{-n-1}}$ for $n=0,1,2,\ldots$ (respectively $n=-1,-2,\ldots$) and applying Lemma \ref{l:K-upper-bound} for each $n$:
\begin{lemma}\label{l:K-upper-bound-2}
For any $r>0$, 
\[
	\int_{B_{r}} K_f(v,v+w) |w|^2 \dd w 
		\leq C \left( \int_{\R^3} f(v+w)|w|^{\gamma+2s} \dd w\right) r^{2-2s}.
\]

\[
	\int_{B_r^c} K_f(v,v+w) \dd w
		\leq C \left( \int_{\R^3} f(v+w)|w|^{\gamma+2s} \dd w\right) r^{-2s}.
\]
\end{lemma}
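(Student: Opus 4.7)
The plan is to prove both inequalities by dyadic decomposition, exactly as hinted in the paragraph preceding the statement. The structural reason these two estimates are available together is that Lemma~\ref{l:K-upper-bound} gives a bound on the annular mass $\int_{B_{2r}\setminus B_r} K_f(v,v+w)\dd w$ that scales like $r^{-2s}$, so inside the ball $B_r$ the extra weight $|w|^2\le r^2 2^{-2n}$ turns the diverging geometric sum into a convergent one, while outside $B_r$ the bound $r^{-2s}$ itself is already summable over shells of growing radius.

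For the first estimate I would write
\[
 \int_{B_r} K_f(v,v+w)|w|^2\dd w = \sum_{n=0}^{\infty} \int_{B_{r2^{-n}}\setminus B_{r2^{-n-1}}} K_f(v,v+w)|w|^2\dd w.
\]
On each annulus, $|w|^2\le (r2^{-n})^2$, and Lemma~\ref{l:K-upper-bound} (applied with radius $r_n:=r2^{-n-1}$, so the annulus is $B_{2r_n}\setminus B_{r_n}$) gives
\[
 \int_{B_{2r_n}\setminus B_{r_n}} K_f(v,v+w)\dd w \le C\Big(\!\int_{\R^3} f(v+w)|w|^{\gamma+2s}\dd w\Big) (r2^{-n-1})^{-2s}.
\]
Multiplying and summing yields $r^{2-2s}\sum_{n\ge 0} 2^{(2s-2)n}$ up to a constant, and the geometric series converges precisely because $s<1$.

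For the second estimate I would perform the analogous decomposition in the opposite direction,
\[
 \int_{B_r^c} K_f(v,v+w)\dd w = \sum_{m=0}^{\infty}\int_{B_{r2^{m+1}}\setminus B_{r2^{m}}} K_f(v,v+w)\dd w,
\]
apply Lemma~\ref{l:K-upper-bound} with radius $r2^m$ in each shell, and sum $\sum_{m\ge 0} (r2^m)^{-2s} = r^{-2s}\sum_{m\ge 0} 2^{-2ms}$, which converges since $s>0$. Both series contribute only a universal multiplicative constant, which gets absorbed into $C$.

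There is essentially no obstacle here: the only thing to watch is that the annular constants in Lemma~\ref{l:K-upper-bound} are independent of $v$ and of the scale $r$, so the sum factors cleanly into $\big(\!\int f|w|^{\gamma+2s}\dd w\big)$ times a purely dyadic sum. The exponent arithmetic $2s<2$ (first bound) and $s>0$ (second bound) corresponds exactly to the standing assumption $s\in(0,1)$, so nothing extra on $\gamma$ or $s$ is needed.
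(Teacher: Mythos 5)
Your proof is correct and follows exactly the approach the paper indicates: decompose $B_r$ (resp. $B_r^c$) into dyadic annuli, apply Lemma \ref{l:K-upper-bound} on each, and sum the resulting geometric series, which converge because $s<1$ (first bound) and $s>0$ (second bound). The exponent bookkeeping and the uniformity of the constant in Lemma \ref{l:K-upper-bound} are handled correctly, so nothing is missing.
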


\begin{lemma}{\cite[Lemma 2.3]{imbert2020lowerbounds}}\label{l:C2Linfty}
For any bounded, $C^2$ function $\varphi$ on $\R^3$, the following inequality holds:
\[
	|Q_s(f, \varphi)|
		\leq C\left(\int_{\R^3} f(v+w) |w|^{\gamma+2s} \dd w\right)
			\|\varphi\|_{L^\infty(\R^3)}^{1-s}
			\|D^2 \varphi\|_{L^\infty(\R^3)}^s.\]
\end{lemma}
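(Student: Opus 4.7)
The plan is to use the integro-differential representation of $Q_s$ provided by Lemma \ref{l:Q1} and then split the velocity integral into a near-origin piece and a far-origin piece, optimizing the splitting radius. Explicitly, I would start from
\[
	Q_s(f,\varphi)(v) = \int_{\R^3} \bigl(\varphi(v+w) - \varphi(v)\bigr) K_f(v,v+w) \dd w,
\]
and observe that the kernel $K_f(v,v+w)$ is symmetric in $w \mapsto -w$, since the integration domain $(v'-v)^\perp$ and the weight $|w|^{\gamma+2s+1} \tilde b(\cos\theta)$ in \eqref{e.kernel} are invariant under this reflection. This symmetry lets me add $-w\cdot \nabla\varphi(v)$ inside the integrand when integrating over any centered ball $B_r$ without changing the value.

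Next, fix a radius $r > 0$ to be chosen, and decompose
\[
	Q_s(f,\varphi)(v) = \int_{B_r} \bigl(\varphi(v+w) - \varphi(v) - w\cdot\nabla\varphi(v)\bigr) K_f(v,v+w) \dd w + \int_{\R^3\setminus B_r} \bigl(\varphi(v+w) - \varphi(v)\bigr) K_f(v,v+w) \dd w.
\]
For the near piece, Taylor's theorem gives $|\varphi(v+w) - \varphi(v) - w\cdot \nabla\varphi(v)| \leq \tfrac{1}{2} \|D^2\varphi\|_{L^\infty} |w|^2$, and then the first inequality in Lemma \ref{l:K-upper-bound-2} bounds the integral by $C \|D^2\varphi\|_{L^\infty} \bigl(\int f(v+w)|w|^{\gamma+2s}\dd w\bigr) r^{2-2s}$. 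For the far piece, the crude bound $|\varphi(v+w)-\varphi(v)| \leq 2\|\varphi\|_{L^\infty}$, together with the second inequality in Lemma \ref{l:K-upper-bound-2}, gives $C \|\varphi\|_{L^\infty} \bigl(\int f(v+w)|w|^{\gamma+2s}\dd w\bigr) r^{-2s}$.

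Finally, I would optimize in $r$. Setting $\|D^2\varphi\|_{L^\infty} r^{2-2s} = \|\varphi\|_{L^\infty} r^{-2s}$ yields $r^2 = \|\varphi\|_{L^\infty}/\|D^2\varphi\|_{L^\infty}$ (assuming $\|D^2\varphi\|_{L^\infty} > 0$; otherwise $\varphi$ is affine and after subtracting the constant the integral vanishes by symmetry anyway). Substituting this choice back into both estimates yields the common bound $C\bigl(\int f(v+w)|w|^{\gamma+2s}\dd w\bigr) \|\varphi\|_{L^\infty}^{1-s} \|D^2\varphi\|_{L^\infty}^{s}$, which is the desired inequality. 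No step looks like a genuine obstacle here; the only subtle point is the use of the symmetry $K_f(v,v+w)=K_f(v,v-w)$ to legitimately subtract the linear term on $B_r$, which is essential in the range $s \geq 1/2$ where $|w|\,K_f$ is not integrable at the origin.
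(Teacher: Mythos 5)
Your proof is correct, and it coincides with the standard argument: the paper does not prove this lemma but quotes it from \cite{imbert2020lowerbounds}, where the proof is exactly your scheme — the Carleman representation of Lemma \ref{l:Q1}, cancellation of the gradient term on $B_r$ by the symmetry $K_f(v,v+w)=K_f(v,v-w)$, the two bounds of Lemma \ref{l:K-upper-bound-2} for the near and far pieces, and optimization of the splitting radius $r=(\|\varphi\|_{L^\infty}/\|D^2\varphi\|_{L^\infty})^{1/2}$. Nothing further is needed.
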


The next lemma, which appears to be new, is related to \cite[Proposition 3.1(v)]{HST2020boltzmann}, but the statement here is sharper in terms of the decay exponent. The small gain in the exponent provided by this lemma will be crucial in propagating higher polynomial decay estimates forward in time.

\begin{lemma}\label{l:Q-polynomial}
For any $q_0> 3 + \gamma + 2s$ let $f\in L^\infty_{q_0}(\R^3)$ be a nonnegative function, and choose $q\in [q_0, q_0-\gamma]$. (Recall that $\gamma < 0$.) Then there holds
\[ Q(f,\langle \cdot\rangle^{-q})(v) \leq C\|f\|_{L^\infty_{q_0}(\R^3)}\vv^{-q}.\]
The constant $C$ depends on universal constants and $q$.
\end{lemma}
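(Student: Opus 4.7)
The plan is to split $Q = \Qs + \Qns$ via the Carleman decomposition (\Cref{l:Q1,l:Q2}) and bound each piece by $C\|f\|_{L^\infty_{q_0}}\vv^{-q}$. The non-singular piece is handled immediately: from \Cref{l:Q2}, $\Qns(f,g)(v) = Cg(v)\int f(u)|u-v|^\gamma\dd u$, and a standard three-region split of the convolution (over $|u|\leq|v|/2$, $|u|\approx|v|$, $|u|\geq 2|v|$), using $f(u)\leq \|f\|_{L^\infty_{q_0}}\langle u\rangle^{-q_0}$ and $q_0>3+\gamma$, gives $\int f(u)|u-v|^\gamma\dd u\leq C\|f\|_{L^\infty_{q_0}}\vv^\gamma$. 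Hence $\Qns(f,g)(v)\leq C\|f\|_{L^\infty_{q_0}}\vv^{-q+\gamma}\leq C\|f\|_{L^\infty_{q_0}}\vv^{-q}$. The bounded-$|v|$ case of $\Qs$ (say $|v|\leq 2$) follows directly from \Cref{l:C2Linfty}, so only the analysis of $\Qs$ for $|v|\geq 2$ remains.

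For $\Qs$ at large $|v|$, I would use the Carleman representation and symmetrize in $w=v'-v$ via the evenness $K_f(v,v+w) = K_f(v,v-w)$ (coming from the $\sigma\to-\sigma$ symmetry):
\[
	\Qs(f,g)(v) = \frac{1}{2}\int_{\R^3}\bigl[g(v+w)+g(v-w)-2g(v)\bigr] K_f(v,v+w)\dd w,
\]
and split at $|w|=\vv/2$. In the inner region $|w|\leq\vv/2$, second-order Taylor expansion of $g(v)=\vv^{-q}$ gives $|g(v+w)+g(v-w)-2g(v)|\leq C|w|^2\vv^{-q-2}$; combined with \Cref{l:K-upper-bound-2} and the convolution bound $\int f(v+w)|w|^{\gamma+2s}\dd w \leq C\|f\|_{L^\infty_{q_0}}\vv^{\gamma+2s}$ (again via three-region splitting, using $q_0>3+\gamma+2s$), the inner contribution is at most $C\|f\|_{L^\infty_{q_0}}\vv^{-q+\gamma}\leq C\|f\|_{L^\infty_{q_0}}\vv^{-q}$.

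The main obstacle is the outer region $|w|>\vv/2$. After discarding the non-positive $-2g(v)$ term and using symmetry, it suffices to estimate $\int_{|u-v|>\vv/2}\langle u\rangle^{-q} K_f(v,u)\dd u$, with $u=v+w$. On the subregion $|u|>\vv/4$ one has $\langle u\rangle^{-q}\leq C\vv^{-q}$, and \Cref{l:K-upper-bound-2} again produces a $C\|f\|_{L^\infty_{q_0}}\vv^{-q+\gamma}$ bound. The remaining subregion $|u|\leq\vv/4$ is the crux: here $g(u)$ provides no $v$-decay, so the decay must come entirely from $K_f$. Since $|u-v|\approx|v|$, \Cref{l:Q1} gives
\[
K_f(v,u)\leq C|v|^{-3-2s}\int_{(u-v)^\perp} f(v+z)|z|^{\gamma+2s+1}\dd z.
\]
The geometric key is that for $|u|\leq\vv/4$, decomposing $v$ along and perpendicular to $w=u-v$ shows $|v\cdot\hat w|\gtrsim|v|$, so $\langle v+z\rangle\gtrsim|v|$ uniformly for $z$ in the 2D plane $(u-v)^\perp$; a polar calculation on that plane (splitting $|z|\leq|v|$ vs $|z|>|v|$, using $q_0>3+\gamma+2s$ at infinity) yields
\[
	\int_{(u-v)^\perp} f(v+z)|z|^{\gamma+2s+1}\dd z \leq C\|f\|_{L^\infty_{q_0}}\vv^{-q_0+\gamma+2s+3}.
\]
Integrating $\langle u\rangle^{-q}$ over $|u|\leq\vv/4$ and assembling, the contribution from this region is of order $C\|f\|_{L^\infty_{q_0}}\vv^{-q_0+\gamma}$, and the hypothesis $q\leq q_0-\gamma$ is precisely what reduces this to $C\|f\|_{L^\infty_{q_0}}\vv^{-q}$. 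This sharp balance produces the small gain of $|\gamma|$ moments per application that the paper subsequently iterates, and the geometric bookkeeping in this last subregion is where I expect the main technical difficulty to lie.
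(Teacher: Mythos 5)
Your proposal is correct and follows essentially the same route as the paper: split $Q=\Qs+\Qns$, treat $\Qns$ and the bounded-$|v|$ case of $\Qs$ by the convolution estimate and Lemma \ref{l:C2Linfty}, use symmetry plus a second-order Taylor expansion near $v$, use $\langle v'\rangle\approx\vv$ where $|v'|\gtrsim|v|$, and in the crux region $|v'|\ll|v|$ invoke the pointwise kernel decay $K_f(v,v')\lesssim |v-v'|^{-3-2s}\|f\|_{L^\infty_{q_0}}\vv^{3+\gamma+2s-q_0}$ (which the paper cites from prior work and you re-derive via the correct geometric observation that the plane $(v'-v)^\perp+v$ stays at distance $\gtrsim|v|$ from the origin), with $q\le q_0-\gamma$ entering exactly at the end; your single split at $|w|=\vv/2$ versus the paper's dyadic annuli is cosmetic. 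One small bookkeeping caveat: your final step tacitly takes $\int_{B_{\vv/4}}\langle u\rangle^{-q}\dd u=O(1)$, i.e.\ $q>3$, whereas $q$ can be as small as $q_0>3+\gamma+2s$, which may be below $3$ when $\gamma+2s<0$; in that case the integral contributes an extra $\vv^{3-q}$ (or a logarithm when $q=3$), and the bound $\lesssim\vv^{-q}$ then follows from $q_0>3+\gamma+2s$ rather than from $q\le q_0-\gamma$, exactly as the paper does in its case distinction.
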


It is easy to see from the computations below that a sharper estimate can be given with $\vv^{-q}$ replaced by a factor with faster decay.  We do not, however, see an application of the sharper estimate, and its proof will require a bit more care.  Hence, for brevity, we simply state and prove the less sharp version above.

\begin{proof}
Writing $Q = Q_{\rm s} + Q_{\rm ns}$, from Lemma \ref{l:Q2} we have
\[ Q_{\rm ns} (f, \langle \cdot\rangle^{-q})(v) \approx \vv^{-q} \int_{\R^3} f(v-w) |w|^{\gamma} \dd w \lesssim \vv^{-q} \|f\|_{L^\infty_{q_0}(\R^3)},\]
by the convolution estimate Lemma \ref{l:convolution}, since $q_0> 3+\gamma+2s > 3+\gamma$.

For the singular term, Lemma \ref{l:Q1} gives
\[ Q_{\rm s} (f,\langle \cdot \rangle^{-q})(v)= \int_{\R^3} K_f(v,v') [\langle v'\rangle^{-q} - \vv^{-q}] \dd v'.\]
If $|v|\leq 2$, then Lemma \ref{l:C2Linfty} implies
\[\begin{split}
 Q_{\rm s}(f,\langle \cdot \rangle^{-q})(v) &\lesssim \left(\int_{\R^3} f(v+w) |w|^{\gamma+2s} \dd w \right) \|\vv^{-q} \|_{L^\infty(\R^3)}^{1-s} \|D^2\vv^{-q}\|_{L^\infty(\R^3)}^s\\
 &\lesssim \|f\|_{L^\infty_{q_0}(\R^3)}
 \lesssim \|f\|_{L^\infty_{q_0}(\R^3)} \vv^{-q},
\end{split} \]
since $v$ lives on a bounded domain and $q_0 > 3 + \gamma + 2s$.

When $|v|\geq 2$, we write the integral over $\R^3$ as an infinite sum by defining, for each integer $k$, the annulus $A_k(v) = B_{2^k|v|}(v) \setminus B_{2^{k-1}|v|}(v)$. For the terms with $k\leq -1$, for $v' \in A_k(v) \subset B_{|v|/2}(v)$ we Taylor expand $g(v) := \vv^{-q}$ to obtain
\[ g(v') - g(v) = (v'-v) \cdot \nabla g(v) + \frac 1 2 (v'-v) \cdot (D^2g (z) (v'-v)), \quad \text{for some } z\in B_{|v|/2}(v).\]
The symmetry of the kernel $K_f$ implies $\int_{A_k(v)} (v'-v) \cdot \nabla g(v) K_f(v,v') \dd v' = 0$. We then have, using Lemma \ref{l:K-upper-bound} and that $q_0 > 3 + \gamma + 2s$,
\[\begin{split}
\Big|\sum_{k\leq -1} \int_{A_k(v)} &K_f(v,v') [\langle v'\rangle^{-q} - \vv^{-q}] \dd v'\Big| 
	\lesssim \|D^2 g\|_{L^\infty(B_{|v|/2}(v))} \sum_{k\leq -1} \int_{A_k(v)}|v'-v|^2 K_f(v,v') \dd v'\\
	&\lesssim \vv^{-q-2}\left(\int_{\R^3} f(v+w) |w|^{\gamma+2s}\dd w\right)  \sum_{k\leq -1} (2^{k-1} |v|)^{2-2s}\\
	&\lesssim \vv^{-q-2} \|f\|_{L^\infty_{q_0}(\R^3)} \vv^{(\gamma+2s)_+} |v|^{2-2s}
	\lesssim \vv^{-q}\|f\|_{L^\infty_{q_0}(\R^3)}.
\end{split}\]

  For the terms with $k\geq 0$, we further divide $A_k(v)$ into $A_k(v) \cap B_{|v|/2}(0)$ and $A_k(v) \setminus B_{|v|/2}(0)$. (Note that $A_k(v) \cap B_{|v|/2}(0)$ is empty unless $k = 0$ or $k=1$.) In $A_k(v) \setminus B_{|v|/2}(0)$, we use $\langle v'\rangle^{-q} \lesssim \vv^{-q}$ and Lemma \ref{l:K-upper-bound} to write
\[\begin{split}
	\sum_{k\geq 2}\int_{A_k(v)\setminus B_{|v|/2}} K_f(v,v')
		[\langle v'\rangle^{-q} - \vv^{-q}] \dd v'
	&\lesssim \vv^{-q} \left(\int_{\R^3} f(v+w) |w|^{\gamma+2s} \dd w\right)\sum_{k\geq 2} (2^{k-1}|v|)^{-2s}\\
	 &\lesssim \|f\|_{L^\infty_{q_0}(\R^3)} \vv^{-q} ,
\end{split}
\]
where we again used that $q_0> 3+\gamma+2s$.

It only remains to bound the integral over $v' \in A_k(v) \cap B_{|v|/2}(0)$. From  \cite[Lemma 2.4]{henderson2021existence}, we have
\[
	K_f(v,v') \lesssim \frac 1{|v-v'|^{3+2s}} \|f\|_{L^\infty_{q_0}} \vv^{3+\gamma+2s-q_0}
		\qquad \text{if } |v'|
		\leq |v|/2.
\]
This implies 
\[
\begin{split}
	\int_{A_k(v) \cap B_{|v|/2}} K_f(v,v') &[\langle v'\rangle^{-q} - \vv^{-q}] \dd v'
		\lesssim \|f\|_{L^\infty_{q_0}} \int_{A_k(v) \cap B_{|v|/2}}\frac{ \vv^{3+\gamma+2s-q_0}}{|v-v'|^{3+2s}} \langle v'\rangle^{-q} \dd v'\\
		&\approx \|f\|_{L^\infty_{q_0}} \vv^{-q_0+\gamma} \int_{B_{|v|/2}} \langle v'\rangle^{-q}\dd v',
\end{split}
\]
where we used the nonnegativity of $K_f$ to discard the term $-\vv^{-q}$ inside the integral and we also used that $|v'-v|\geq |v|/2$.

When $q >3$, the final integral is finite and we clearly find
\be
	\int_{B_{|v|/2}} K_f(v,v') [\langle v'\rangle^{-q} - \vv^{-q}] \dd v'
		\lesssim \|f\|_{L^\infty_{q_0}} \vv^{-q_0 + \gamma}
		\leq \|f\|_{L^\infty_{q_0}}\vv^{- q},
\ee
due to the condition $q \leq q_0 - \gamma$.  When $q < 3$, the above becomes
\be
	\begin{split}
	\int_{B_{|v|/2}} K_f(v,v') [\langle v'\rangle^{-q} - \vv^{-q}] \dd v'
		&\lesssim \|f\|_{L^\infty_{q_0}} \vv^{-q_0 + \gamma + 3 - q}
		\\&\leq \|f\|_{L^\infty_{q_0}} \vv^{-(3+\gamma + 2s) + \gamma + 3 - q}
		=  \|f\|_{L^\infty_{q_0}} \vv^{- 2s - q}
	\end{split}
\ee
since $q_0 > 3 + \gamma + 2s$.  The case $q=3$ is  the same up to an additional $\log \vv$ factor, which is controlled by $\vv^{-2s}$.  Hence, in all cases, we find
\be
	\int_{B_{|v|/2}} K_f(v,v') [\langle v'\rangle^{-q} - \vv^{-q}] \dd v'
		\lesssim \|f\|_{L^\infty_{q_0}} \vv^{- q}.
\ee
This completes the proof. 
\end{proof}

\section{Change of variables and global regularity estimates}\label{s:global-reg}

The regularity estimates and continuation criterion of Imbert-Silvestre \cite{imbert2020smooth} apply to the case $\gamma + 2s\in [0,2]$. In this section, we discuss the extension of these results to $\gamma + 2s < 0$, with suitably modified hypotheses. 

As mentioned above, our current study requires these regularity estimates 
to establish the smoothness of our solutions for positive times. 
 More generally, the global estimates and the change of variables used to prove them 
are important tools in the study of the non-cutoff Boltzmann equation, so extending these tools to the case $\gamma+2s<0$ may be of independent interest.

The key obstacle in passing from local estimates (Theorems \ref{t:degiorgi} and \ref{t:schauder}) to global estimates on $[0,T]\times\R^3\times\R^3$ is the degeneration of the upper and lower ellipticity bounds for the collision kernel $K_f(t,x,v,v')$ as $|v|\to \infty$. To overcome this, the authors of \cite{imbert2020smooth} developed a change of variables that ``straightens out'' the anisotropic ellipticity of the kernel and allows to precisely track the behavior of the estimates for large $|v|$. Their change of variables is defined as follows:  
for a fixed reference point $z_0 = (t_0,x_0,v_0) \in [0,\infty)\times \R^6$ with $|v_0|>2$ and with $\gamma + 2s\geq 0$, define the linear transformation $T_0:\R^3\to \R^3$ by
\[
	T_0 (av_0 + w)
		:= \frac a {|v_0|}v_0 + w
		\quad \text{ where } a\in \R, \, w\cdot v_0 = 0, \quad \text{(if $\gamma+2s\geq 0$).}
\]
Next, for $(t,x,v) \in Q_1(z_0)$ and when $\gamma + 2s \geq 0$, define
\begin{equation}\label{e.cov+}
\begin{split}
 	(\bar t, \bar x, \bar v)
		= \mathcal T_0(t,x,v)
		&:= \left(
			t_0 + \frac t {|v_0|^{\gamma+2s}},
			x_0 + \frac {T_0x + tv_0}{|v_0|^{\gamma+2s}},
			v_0 + T_0v\right) \\
		&= z_0 \circ \left(
			\frac t {|v_0|^{\gamma+2s}},
			\frac {T_0 x} {|v_0|^{\gamma+2s}},
			T_0v\right).
 \end{split}
 \end{equation}
When $|z_0| \leq 2$, let $\mathcal T_0(t,x,v) = z_0 \circ z$.
 
The definition \eqref{e.cov+} applies when $\gamma + 2s \in [0,2]$, and does not generalize well to the case $\gamma + 2s< 0$.  Indeed, the solution $f$ of~\eqref{e.boltzmann} is not defined at the point $(\bar t, \bar x, \bar v)$ if $\bar t < 0$, which occurs if, e.g., $\gamma + 2s < 0$ and $v_0$ is sufficiently large. 
Thus, for the case $\gamma+2s < 0$, we introduce a new definition: first, extend the definition of $T_0$ as follows:
\begin{equation}\label{e.T0-def}
	T_0 (av_0 + w)
		:= |v_0|^\frac{\gamma + 2s}{2s} \Big(\frac a {|v_0|}v_0 + w\Big)
		\quad \text{ where } a\in \R, \, w\cdot v_0 = 0, \quad \text{(if $\gamma+2s<0$).}
\end{equation}
Then, let
\begin{equation}\label{e.cov-}
\begin{split}
 (\bar t, \bar x, \bar v)
 	= \mathcal T_0(t,x,v)
	&:= \left( t_0 + t, x_0 + T_0x + tv_0, v_0 + T_0v\right) \\
 &= z_0 \circ \left(t , T_0 x,  T_0v\right).
 \end{split}
 \end{equation}
 As above, when $|v_0|<2$, we take $\mathcal T_0z = z_0\circ z$.  It is interesting to note that sending $s\to 1$ in \eqref{e.cov-} recovers the change of variables that was applied to the Landau equation in \cite{henderson2017smoothing} in the very soft potentials regime.

For $r>0$ and $z_0 = (t_0,x_0,v_0)\in \R^7$, define 
\[
	\mathcal E_r(z_0)
		:= \mathcal T_0(Q_r), \quad E_r(v_0)
		:=
		v_0 + T_0(B_r),
\]
and
\[
	\mathcal E_r^{t,x}(z_0)
		:= \begin{cases}
			\Big\{\Big(t_0 +\frac{t}{|v_0|^{\gamma+2s}}, x_0 + \frac{1}{|v_0|^{\gamma+2s}}(T_0 x + tv_0)\Big) : t\in [-r^{2s},0], x\in B_{r^{1+2s}}\Big\}, &\gamma+2s\geq 0,
			\smallskip \\
			\big\{\big(t_0 +t, x_0 + T_0x +   tv_0\big) :t\in [-r^{2s},0], x\in B_{r^{1+2s}}\big\}, &\gamma+2s<0,
\end{cases} \]
 so that
 \be
 	\mathcal E_r(z_0)
		= \mathcal E_r^{t,x}(z_0)
			\times E_r(v_0).
\ee
 
Now, for a solution $f$  of the Boltzmann equation \eqref{e.boltzmann} and $z_0$ such that $t_0\geq 1$, let us define
\[ \bar f(t,x,v) = f(\bar t, \bar x, \bar v).\]
By direct computation, $\bar f$ satisfies
\[ \partial_t \bar f + v\cdot \nabla_x \bar f = \mathcal L_{\bar K_f} \bar f + \bar h, \quad \text{ in } Q_1,\]
where
\begin{equation}\label{e.barKf}
	\bar K_f(t,x,v,v')
		:= \begin{cases}
		\frac{1}{|v_0|^{1+\gamma+2s}} \, K_f(\bar t, \bar x, \bar v, v_0 + T_0v'), &\qquad \gamma+2s\geq 0,
		\smallskip\\
		|v_0|^{2+\frac{3\gamma}{2s}} \, K_f(\bar t, \bar x, \bar v, v_0 +T_0 v'), &\qquad\gamma+2s<0, \end{cases}
\end{equation}
and
\[ \bar h(t,x,v) = c_b |v_0|^{-(\gamma+2s)_+} f(\bar t, \bar x, \bar v) [f\ast |\cdot|^\gamma](\bar t, \bar x, \bar v).\]
For use in the convolution defining $K_f$, we also define
\[
	\bar v' := v_0 + T_0 v'.
\]

In order to apply the local regularity estimates (Theorems \ref{t:degiorgi} and \ref{t:schauder} above) to $\bar f$, one obviously needs to check that the kernel $\bar K_f$ satisfies the hypotheses of these two theorems. In the case $\gamma+2s\in [0,2]$, this was done in \cite[Section~5]{imbert2020smooth}. As stated, the results in \cite[Section~5]{imbert2020smooth} require a uniform upper bound on the energy density $\int_{\R^3} |v|^2 f(t,x,v) \dd v$, but it is clear from the proof that this can be replaced by a bound on the $2s$ moment $\int_{\R^3} |v|^{2s}f(t,x,v) \dd v$.

\begin{proposition}{\cite[Theorems 5.1 and 5.4]{imbert2020smooth}}\label{p:moderately-soft-kernel}
Assume $\gamma + 2s \in [0,2]$. Let $z_0 = (t_0,x_0,v_0)$ be arbitrary, and let $\mathcal E_1(z_0)$ be defined as above. If $f$ satisfies 
\[
\begin{aligned}
 	&0<m_0\leq \int_{\R^3}f(t,x,v) \leq M_0,
		&\qquad&\qquad&
  	&\int_{\R^3}|v|^{2s}f(t,x,v) \dd v \leq \tilde E_0, \\
	 & \int_{\R^3} f(t,x,v)\log f(t,x,v) \dd v \leq H_0,
	 	&\quad&\text{and }\quad&
	&\sup_{v\in\R^3}\int_{\R^3}f(t,x,v+u)|u|^\gamma \dd u \leq C_\gamma,
\end{aligned}
\]
for all $(t,x) \in \mathcal E_1^{t,x}(z_0)$, then the kernel $\bar K_f(t,x,v,v')$ defined in \eqref{e.barKf} ($\gamma+2s\geq 0$ case) satisfies the conditions \eqref{e.coercivity1}---\eqref{e.cancellation2} of Theorem \ref{t:degiorgi}, with constants depending only on $\gamma$, $s$, $m_0$, $M_0$, $\tilde E_0$, $H_0$, and $C_\gamma$. In particular, all constants are independent of $v_0$.

Furthermore, for each $z=(t,x,v) \in \mathcal E_1$, the kernel $\bar K_z(w) = \bar K_f(t,x,v,v+w)$ lies in the ellipticity class defined in Definition \ref{d:ellipticity}, with constants as in the previous paragraph. 
\end{proposition}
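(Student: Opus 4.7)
The plan is to verify each of the ellipticity conditions \eqref{e.coercivity1}--\eqref{e.cancellation2} of Theorem \ref{t:degiorgi} together with the structural conditions of Definition \ref{d:ellipticity} directly, working from the Carleman representation \eqref{e.kernel} for $K_f$ and the explicit form of $\mathcal T_0$ in \eqref{e.cov+}. The starting point is to rewrite $\bar K_f(v,v')$ as a Carleman-type integral over the hyperplane $(v'-v)^\perp$, with integrand involving $f$ evaluated at the transformed velocity. The key algebraic observation is that, for $|v_0|$ large, the map $T_0$ preserves directions transverse to $v_0$ while contracting the $v_0$-direction by $|v_0|^{-1}$, so the Jacobian of the transverse integration in \eqref{e.kernel} carries a factor $|v_0|^{\gamma+2s+1}$ that is exactly canceled by the prefactor $|v_0|^{-(1+\gamma+2s)}$ in \eqref{e.barKf}. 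Thus the anisotropic degeneration of $K_f$ as $|v_0|\to\infty$ is absorbed into the change of variables, and $\bar K_f(v,v+w)$ takes the form $|w|^{-3-2s}$ times a transverse average of $f$ that we must control uniformly in $v_0$.

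With this normalization in hand, the upper bounds \eqref{e.upper1}, \eqref{e.upper2}, and \eqref{e.upper-bound-schauder} reduce, via Lemma \ref{l:K-upper-bound} and Lemma \ref{l:K-upper-bound-2} applied at the base point $\bar v = v_0 + T_0 v$, to pointwise uniform control of the convolution $\int f(\bar v+w)|w|^{\gamma+2s}\dd w$ on the set $\mathcal E_1^{t,x}(z_0)$. This convolution is bounded by interpolating between the mass bound $M_0$ and the $|v|^{2s}$-moment bound $\tilde E_0$ when $\gamma+2s \geq 0$ (and by $C_\gamma$ for the lower exponent). The cancellation conditions \eqref{e.cancellation1} and \eqref{e.cancellation2} follow from the near-diagonal symmetry of $K_f$ in its two velocity arguments, after expanding the antisymmetric piece $\bar K_f(v,v')-\bar K_f(v',v)$ and recognizing the obstruction as controlled by $f \ast |\cdot|^\gamma$, which is bounded by $C_\gamma$.

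The main obstacle is the coercivity bound \eqref{e.coercivity2} (and, for $s<1/2$, the directional condition \eqref{e.coercivity1}). For this, the plan is to invoke the cone of nondegeneracy from Lemma \ref{l:cone}: one first uses the hydrodynamic bounds $m_0$, $M_0$, $\tilde E_0$, $H_0$ together with a standard entropy-based argument (pigeonhole on level sets of $f$ using the bound $\int f \log f \le H_0$ in combination with $\int f \ge m_0$ and $\int |v|^{2s} f \le \tilde E_0$) to extract, at each $(t,x) \in \mathcal E_1^{t,x}(z_0)$, a ball in $v$-space of controlled size and location on which $f \ge \delta$ for universal $\delta$. Lemma \ref{l:cone} then supplies, for each $\bar v \in E_1(v_0)$, a symmetric subset $A(\bar v)\subset \mathbb S^2$ of measure $\gtrsim \langle \bar v\rangle^{-1}\approx |v_0|^{-1}$ and a matching lower bound $K_f(\bar v, \bar v') \gtrsim |v_0|^{1+\gamma+2s} |\bar v'-\bar v|^{-3-2s}$ on the cone. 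Pulling back through $T_0$, the radial contraction by $1/|v_0|$ is what turns the measure of $A(\bar v)$ from $\lesssim |v_0|^{-1}$ into a set of uniformly positive measure on $\mathbb S^2_v$, while the pointwise lower bound is upgraded by the prefactor $|v_0|^{-(1+\gamma+2s)}$ in \eqref{e.barKf}. The resulting uniform cone in $v$-coordinates then yields \eqref{e.coercivity2} and \eqref{e.coercivity3} by a standard Poincaré-type argument for kernels with a cone of nondegeneracy, and \eqref{e.coercivity4} by integrating the lower bound against $(w\cdot e)^2_+$.

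The delicate step is precisely this tracking of how the anisotropic cone of nondegeneracy from Lemma \ref{l:cone} transforms under $T_0$: the radial/tangential split of $T_0$ must be matched exactly against the geometry of $A(\bar v)$, whose width scales like $|v_0|^{-1}$ in one specific direction, in order for the transformed cone to be uniformly nondegenerate. The remaining novelty relative to \cite{imbert2020smooth} is the replacement of their energy density assumption by the $|v|^{2s}$ moment assumption; this enters only in the convolution bounds mentioned in the previous paragraph, where $\tilde E_0$ suffices because in each such bound the exponent one needs to integrate against $f$ is at most $\gamma+2s \le 2s$.
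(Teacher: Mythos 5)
The paper does not actually prove Proposition \ref{p:moderately-soft-kernel}: it is imported wholesale from \cite[Theorems 5.1 and 5.4]{imbert2020smooth}, and the only original content is the remark immediately preceding it that the energy-density hypothesis of \cite{imbert2020smooth} may be replaced by the $2s$-moment bound $\tilde E_0$, because that hypothesis enters only through upper bounds on convolutions of the form $\int f(\bar v+w)|w|^{\gamma+2s}\dd w$. Your proposal instead reconstructs the proof of the cited theorems, and in outline it follows the same architecture as \cite{imbert2020smooth}: Carleman representation, absorption of the $|v_0|$-anisotropy by $T_0$, upper and cancellation bounds from the moment and $C_\gamma$ hypotheses, and coercivity from a cone of nondegeneracy; your closing observation on why $\tilde E_0$ suffices (every exponent integrated against $f$ is at most $\gamma+2s\le 2s$) is precisely the paper's remark.

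One step, however, fails as written. You claim that the bounds $m_0,M_0,\tilde E_0,H_0$ let you extract, at each $(t,x)$, a \emph{ball} in velocity space on which $f\ge\delta$, so that Lemma \ref{l:cone} applies. That is not obtainable: mass, moment and entropy bounds only yield constants $\ell,\mu,R$ and a measurable set $S\subset B_R$ with $|S|\ge\mu$ and $f\ge\ell$ on $S$ (for instance $f$ may vanish on a dense open set of small measure, so no ball carries a pointwise lower bound). Lemma \ref{l:cone} as stated in this paper genuinely requires the ball hypothesis --- which is exactly why the $\gamma+2s<0$ analogue, Proposition \ref{p:very-soft-kernel}, is formulated with the pointwise assumption $f\ge\delta$ on $B_r(v_m)$ rather than with hydrodynamic bounds. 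To repair the argument you must either use the cone-of-nondegeneracy construction of \cite{imbert2020smooth}, which runs directly off a positive-measure superlevel set (the construction only integrates $f$ over that set), or re-prove Lemma \ref{l:cone} in that generality. A smaller, cosmetic point: the factor $|v_0|^{1+\gamma+2s}$ cancelled by the prefactor in \eqref{e.barKf} is not a Jacobian of the hyperplane integration in \eqref{e.kernel} (that integral is not changed of variables); it comes from the kernel bounds themselves, since on the cone $K_f(\bar v,\bar v')\gtrsim(1+|\bar v|)^{1+\gamma+2s}|\bar v'-\bar v|^{-3-2s}$, while in the upper bounds one has $|w|\approx|v_0|$ on the region where $f(\bar v+w)$ carries mass. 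Neither point alters the overall plan, which does match the cited proof.
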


For the case $\gamma +2s<0$, the result corresponding to Proposition \ref{p:moderately-soft-kernel} is contained in the following proposition, which we prove in Appendix \ref{s:cov-appendix}:
\begin{proposition}\label{p:very-soft-kernel}
Asume $\gamma+2s <0$. Let $z_0$ and $\mathcal E_1(z_0)$ be as in Proposition \ref{p:moderately-soft-kernel}, and assume that
\[ \|f(t,x,\cdot)\|_{L^\infty_q(\R^3)} \leq L_0, \qquad \text{for all } (t,x) \in \mathcal E_1^{t,x}(z_0),\]
for some $q>3 + 2s$, and
\[ f(t,x,v) \geq \delta, \quad \text{for all } (t,x,v) \in \mathcal E_1^{t,x}(z_0)\times B_r(v_m),\]
for some $v_m\in \R^3$, and $\delta, r>0$. Then the kernel $\bar K_f(t,x,v,v')$ defined in \eqref{e.barKf} ($\gamma+2s<0$ case) satisfies the conditions \eqref{e.coercivity1}---\eqref{e.cancellation2} of Theorem \ref{t:degiorgi}, with constants depending only on $L_0$, $\delta$, $r$, and $v_m$.

Furthermore, for each $z\in \mathcal E_1$, the kernel $\bar K_z(w) = \bar K_f(t,x,v,v+w)$ lies in the ellipticity class defined in Definition \ref{d:ellipticity}, with constants as in the previous paragraph.  
\end{proposition}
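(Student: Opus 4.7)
The argument mirrors that of \cite[Section 5]{imbert2020smooth} for the case $\gamma+2s\geq 0$: one verifies each condition of Theorem \ref{t:degiorgi} and Definition \ref{d:ellipticity} for $\bar K_f$ by pulling back to the original kernel $K_f$ through the change of variables $\mathcal T_0$. Two substantive changes are needed compared to the moderately soft case: (i) the hydrodynamic bounds on $f$ are replaced by the pointwise decay $\|f\|_{L^\infty_q}\leq L_0$ (with $q>3+2s$, which via Lemma \ref{l:convolution} bounds all convolutions $f*|\cdot|^p$ with $p\in\{\gamma,\gamma+1,\gamma+2s,3+\gamma+2s\}$ appearing in the kernel estimates) and by the pointwise lower bound $f\geq\delta 1_{B_r(v_m)}$ (used to invoke Lemma \ref{l:cone}); (ii) the new change of variables \eqref{e.T0-def}--\eqref{e.cov-} scales anisotropically in the $v_0$ and $v_0^\perp$ directions. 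The case $|v_0|\leq 2$ is immediate, since $\mathcal T_0$ is then a pure translation, $\bar K_f=K_f$, and all conditions follow from Lemma \ref{l:cone}, Lemmas \ref{l:K-upper-bound}--\ref{l:K-upper-bound-2}, and the computations cited in Remark \ref{r:ellipticity}. I therefore focus on $|v_0|>2$.

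Set $\beta:=(\gamma+2s)/(2s)<0$. In a $v_0$-adapted orthonormal basis, $T_0$ acts as $\mathrm{diag}(|v_0|^{\beta-1},|v_0|^\beta,|v_0|^\beta)$, with Jacobian $|v_0|^{3\beta-1}=|v_0|^{2+3\gamma/(2s)}$, exactly matching the prefactor in \eqref{e.barKf}. The upper bounds \eqref{e.upper1}, \eqref{e.upper2}, and \eqref{e.upper-bound-schauder} for $\bar K_f$ then follow by changing variables to $\bar v'=v_0+T_0 v'$ in the defining integrals, applying Lemmas \ref{l:K-upper-bound}--\ref{l:K-upper-bound-2} to $K_f$ at $\bar v\in E_1(v_0)$, and bounding $\int f(\bar v+w)|w|^{\gamma+2s}\dd w\lesssim L_0$. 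Because the integration radii transform covariantly with $\det T_0$, the $|v_0|$ powers cancel and the bounds are uniform in $|v_0|$. The cancellation conditions \eqref{e.cancellation1}--\eqref{e.cancellation2} follow analogously from the principal-value estimates of \cite[Lemmas 3.6--3.7]{imbert2016weak} together with the pointwise bounds on $f*|\cdot|^\gamma$ and $f*|\cdot|^{\gamma+1}$.

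The main obstacle is verifying the coercivity conditions \eqref{e.coercivity1}--\eqref{e.coercivity2} and \eqref{e.coercivity3}--\eqref{e.coercivity4} uniformly in $|v_0|$. Applying Lemma \ref{l:cone} to $f$ at $\bar v$ (note $|\bar v|\approx |v_0|$, since $|T_0v|\lesssim |v_0|^\beta\ll|v_0|$ for $v\in B_1$) produces a symmetric equatorial band $A(\bar v)\subset\mathbb S^2$ of $\mathcal H^2$-measure $\gtrsim |v_0|^{-1}$, consisting of directions nearly perpendicular to $v_0$, on which $K_f(\bar v,\bar v')\gtrsim |v_0|^{1+\gamma+2s}|\bar v'-\bar v|^{-3-2s}$. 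Pulling this band back through $T_0^{-1}$: writing $u=v'-v=u_1 e_0+u_\perp$ in the $v_0$-adapted basis, a direct calculation shows that the condition $T_0u/|T_0u|\in A(\bar v)$ reduces to $|u_1|\lesssim |u_\perp|$, a cone of solid angle $\gtrsim 1$ independent of $|v_0|$. Geometrically, the anisotropic stretching of $T_0^{-1}$ expands the $v_0$ direction by the exact factor $|v_0|$ needed to turn the equatorial band of width $|v_0|^{-1}$ into a non-degenerate cone. Combining this uniform cone, the quantitative lower bound on $K_f$, and the Jacobian $\det T_0=|v_0|^{3\beta-1}$, the algebraic identity $(3\beta-1)+(1+\gamma+2s)-(3+2s)\beta=0$ makes all $|v_0|$ exponents cancel, yielding $\bar K_f(v,v')\gtrsim |v'-v|^{-3-2s}$ on this cone uniformly in $|v_0|$. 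Standard calculations then verify each coercivity condition, with the symmetry $A(\bar v)=-A(\bar v)$ ensuring that the infimum over $e$ in \eqref{e.coercivity1} is non-degenerate. This final cancellation is the raison d'\^etre of the specific exponent $(\gamma+2s)/(2s)$ chosen in \eqref{e.T0-def}.
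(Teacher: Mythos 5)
Your overall strategy is the paper's: pull everything back through $\mathcal T_0$, get coercivity from the cone of Lemma \ref{l:cone} plus the exponent identity (your check $(3\beta-1)+(1+\gamma+2s)-(3+2s)\beta=0$ is exactly the cancellation in the paper's transformed-cone lemma), and treat boundedness and cancellation by change of variables. The coercivity part is essentially correct, with one imprecision: $A(\bar v)$ in Lemma \ref{l:cone} is only a \emph{subset} of the equatorial band, of measure $\gtrsim|v_0|^{-1}$, so the pullback condition does not literally ``reduce to $|u_1|\lesssim|u_\perp|$''; what you still need is a uniform lower bound on the measure of the pulled-back set, which the paper obtains by reducing to \cite[Lemma 5.6]{imbert2020smooth}, using that $T_0$ and $T_0^+$ differ only by the scalar $|v_0|^{(\gamma+2s)/(2s)}$ and hence define the same directions. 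Your geometric heuristic is the right one, but the measure estimate is the actual content of that step.

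The genuine gaps are in the boundedness and cancellation verifications. For \eqref{e.upper1} and \eqref{e.upper-bound-schauder}, the claim that ``the integration radii transform covariantly with $\det T_0$, so the $|v_0|$ powers cancel'' does not survive the anisotropy: $T_0(B_r)$ is an ellipsoid whose thin direction is shorter by a factor $|v_0|$, and the Carleman-type estimate over its complement produces the direction-dependent factor $\bigl(|v_0|^2-(v_0\cdot w/|w|)^2+1\bigr)^s$ (Lemma \ref{l:5-10}), worth an extra $|v_0|^{2s}$ in the worst directions. Absorbing it is precisely where the hypothesis $q>3+2s$ enters (the term $J_1$ in Lemma \ref{l:boundedness}, writing $w=\alpha v_0/|v_0|+b$ and using $|b|\le|v_0+w|$); your argument only invokes Lemma \ref{l:convolution} with exponent $\gamma+2s$, which needs merely $q>3+\gamma+2s$ and cannot give a bound uniform in $|v_0|$. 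Moreover \eqref{e.upper2} is not a symmetric twin of \eqref{e.upper1}, since $K_f(v,v')\neq K_f(v',v)$: the paper's separate computation produces a term of order $r^{-s}|v_0|^{(\gamma+2s)/2}\|f\|_{L^\infty_q(\R^3)}$, admissible exactly because $\gamma+2s<0$ --- the one place where this choice of change of variables genuinely uses the very soft regime, and which your sketch never sees. Finally, the cancellation conditions \eqref{e.cancellation1}--\eqref{e.cancellation2} cannot simply be quoted from \cite[Lemmas 3.6--3.7]{imbert2016weak}: those apply to $K_f$, and transferring them to $\bar K_f$ requires moving the principal value through the anisotropic change of variables, i.e.\ showing the contribution of $B_\rho\setminus T_0(B_\rho)$ vanishes as $\rho\to0$ (the paper's Lemma \ref{l:modified}); the second cancellation condition then needs a further estimate which again uses $q>3+2s$. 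So the skeleton matches the paper, but as written the boundedness and cancellation steps would not close.
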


We note that one should be able to obtain a more optimized version of \Cref{p:very-soft-kernel} by working in a weighted $L^p_v$-based space with $p < \infty$; however, this is not necessary for our work here, so we state and prove the simpler version above. It is also interesting to note that our definition \eqref{e.cov-} would not work well in the case $\gamma+2s\geq 0$, so the separate definitions seem to be unavoidable.

Let us give a more concise, and less sharp, restatement of the previous two propositions, that is sufficient for our purposes. When $q>3 + 2s$, the norm $\|f\|_{L^\infty_q(\R^3)}$ controls the mass and entropy densities, as well as the $2s$-moment and the constant $C_\gamma$, so we have the following: 
\begin{proposition}\label{p:concise-cov}
Let $z_0$ and $\mathcal E_1(z_0)$ be as in Proposition \ref{p:moderately-soft-kernel}, and assume that 
\[ \|f(t,x,\cdot)\|_{L^\infty_q(\R^3)} \leq L_0, \qquad \text{for all } (t,x) \in \mathcal E_1^{t,x}(z_0),\]
for some $q>3 + 2s$. 
Assume further that
\[
	f(t,x,v) \geq \delta,
		\qquad \text{for all } (t,x,v) \in \mathcal E_1^{t,x}(z_0)\times B_r(v_m),
\]
for some $v_m\in \R^3$, and $\delta, r>0$.

Then the kernel $\bar K_f(t,x,v,v')$ defined in \eqref{e.barKf} satisfies all the conditions of Theorem \ref{t:degiorgi} and belongs to the ellipticity class defined in Definition \ref{d:ellipticity}, with constants depending only on $L_0$, $\delta$, $r$, and $v_m$.
\end{proposition}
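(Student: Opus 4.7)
The plan is to split into the two cases $\gamma+2s\geq 0$ and $\gamma+2s<0$ and reduce to Propositions \ref{p:moderately-soft-kernel} and \ref{p:very-soft-kernel} respectively. The case $\gamma+2s<0$ is essentially immediate: the hypotheses of Proposition \ref{p:concise-cov} are exactly those of Proposition \ref{p:very-soft-kernel} (both ask for the $L^\infty_q$ bound with $q>3+2s$ and the pointwise positivity on $\mathcal E_1^{t,x}(z_0)\times B_r(v_m)$), so the conclusion transfers verbatim.

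The main content is showing that in the case $\gamma+2s\in[0,2]$, the $L^\infty_q$ bound (together with the pointwise positivity) implies the five hypotheses of Proposition \ref{p:moderately-soft-kernel}. For each $(t,x)\in \mathcal E_1^{t,x}(z_0)$, I would verify the following in turn.
\begin{itemize}
\item Upper mass bound: $\int_{\R^3} f(t,x,v)\dd v \leq L_0 \int_{\R^3} \vv^{-q}\dd v =: M_0$, finite since $q>3$.
\item Lower mass bound: the pointwise inequality $f(t,x,v)\geq \delta$ on $B_r(v_m)$ gives $\int_{\R^3} f(t,x,v)\dd v \geq \delta|B_r| =: m_0>0$.
\item $2s$-moment: $\int_{\R^3}|v|^{2s}f(t,x,v)\dd v \leq L_0 \int_{\R^3} |v|^{2s}\vv^{-q}\dd v =: \tilde E_0$, finite since $q>3+2s$.
\item Entropy: using $|x\log x|\leq C(x^{1/2}+x^2)$ for $x\in[0,\|f\|_{L^\infty}]$, one gets $\int_{\R^3} f|\log f|\dd v \lesssim \int \vv^{-q/2}\dd v + \int \vv^{-2q}\dd v$, which is finite since $q>3$; hence $\int f\log f\leq H_0$.
\item Convolution $\int f(v+u)|u|^\gamma \dd u \leq C_\gamma$: apply the standard convolution estimate (Lemma \ref{l:convolution}) to obtain $\|f\ast|\cdot|^\gamma\|_{L^\infty}\lesssim \|f\|_{L^\infty_q}$, which requires $q>3+\gamma$ and is therefore satisfied.
\end{itemize}
With these five hypotheses verified with constants depending only on $L_0$, $\delta$, $r$, and universal quantities, Proposition \ref{p:moderately-soft-kernel} gives both the conditions of Theorem \ref{t:degiorgi} for $\bar K_f$ and its membership in the ellipticity class.

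There is no real obstacle here—the content is a bookkeeping check that the single $L^\infty_q$ bound dominates all the separate hydrodynamic quantities appearing in the hypotheses of the sharper propositions. The only point requiring slight care is ensuring that the dependence of the constants in Propositions \ref{p:moderately-soft-kernel} and \ref{p:very-soft-kernel} matches what is claimed here (i.e.\ only on $L_0$, $\delta$, $r$, $v_m$ and universal parameters); this is immediate from inspecting the statements of those two propositions. Since both source propositions already assert independence of $v_0$, the resulting bounds on $\bar K_f$ are also independent of $v_0$, which is the essential feature that will later allow global regularity estimates.
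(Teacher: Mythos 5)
Your proposal is correct and is essentially the paper's own argument: the paper proves this proposition simply by observing that, when $q>3+2s$, the $L^\infty_q$ bound controls the mass, $2s$-moment, entropy, and the convolution constant $C_\gamma$ needed in Proposition \ref{p:moderately-soft-kernel} (with the pointwise lower bound supplying $m_0$), while for $\gamma+2s<0$ the hypotheses coincide with those of Proposition \ref{p:very-soft-kernel}. One small correction to your entropy step: your bound $\int f|\log f|\dd v\lesssim\int \vv^{-q/2}\dd v+\int\vv^{-2q}\dd v$ is not justified by $q>3$ alone, since the first integral requires $q>6$; however, only the one-sided bound $\int f\log f\dd v\leq H_0$ is needed, and it follows directly from $f\log f\leq f^2\leq L_0^2\vv^{-2q}$, so the conclusion is unaffected.
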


The H\"older regularity of the kernel is also required for applying the Schauder estimate of Theorem \ref{t:schauder}. In the $\gamma+2s\geq 0$ case, this regularity is given by \cite[Lemma 5.20]{imbert2020smooth}. For $\gamma+2s<0$, we prove it in Lemma \ref{l:cov-holder} below. We summarize these two results here:
\begin{lemma}\label{l:concise-holder}
For any $f:[0,T]\times \R^6 \to [0,\infty)$ such that $f\in C^\alpha_{\ell,q_1}([0,T]\times\R^6)$ with 
\[
	q_1>
		\begin{cases}
			5+\frac{\alpha}{1+2s}
				&\quad \text{ if }\gamma+2s\geq 0,
			\\
			3 + \frac{\alpha}{1+2s}
				&\quad \text{ if }\gamma+2s< 0,
		\end{cases}
\]
and for any $|v_0|>2$ and $r\in (0,1]$, let
\[
	\bar K_{f,z}(w)
		:= \bar K_f(t,x,v,v+w)
		\qquad \text{ for } z = (t,x,v) \in Q_{2r}.
\]
Then we have
\[
	\int_{B_\rho} |\bar K_{f,z_1}(w) - \bar K_{f,z_2}(w)| |w|^2 \dd w
		\leq \bar A_0 \rho^{2-2s} d_\ell(z_1,z_2)^{\alpha'}, \quad \rho > 0, z_1,z_2\in Q_{2r},
\]
with $\alpha' = \alpha \frac {2s} {1+2s}$ and
\be\label{e.A_0}
	\bar A_0 \leq C |v_0|^{P(\alpha,\gamma,s)} \|f\|_{C^\alpha_{\ell, q_1}([0,T]\times\R^6)},
\ee
where 
\[P(\gamma,s,\alpha) = \begin{cases} \frac \alpha {1+2s}(1-2s-\gamma)_+, &\gamma+2s\geq 0,\\
2+\frac \alpha {1+2s}, & \gamma+2s<0,\end{cases}\]
and the constant $C$ depends on universal quantities, $\alpha$, and $q_1$, but is independent of $|v_0|$.
\end{lemma}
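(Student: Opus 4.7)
The plan is to split the proof into the two regimes and treat them quite differently. In the case $\gamma+2s\geq 0$, the transformation $\mathcal T_0$ from~\eqref{e.cov+} is exactly the one used in~\cite{imbert2020smooth}, so the estimate is precisely their \cite[Lemma~5.20]{imbert2020smooth}. We just need to check that the definition of the weighted kinetic Hölder space used in that reference agrees with our Definition~\ref{d:C-alpha-q} (or dominates it up to a constant depending only on $\alpha$ and $q_1$), and that our lower bound $q_1 > 5+\alpha/(1+2s)$ matches the integrability condition there; the output exponent $P(\gamma,s,\alpha) = \frac{\alpha}{1+2s}(1-2s-\gamma)_+$ is exactly what that proof yields.

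The substantive case is $\gamma+2s<0$. First I would write $\bar K_{f,z}(w) = |v_0|^{2+3\gamma/(2s)} K_f(\bar t, \bar x, \bar v, \bar v + T_0 w)$ (noting $v_0 + T_0(v+w) = \bar v + T_0 w$), and then apply the Carleman formula from Lemma~\ref{l:Q1} to rewrite
\[
\bar K_{f,z}(w) \;=\; \frac{|v_0|^{2+3\gamma/(2s)}}{|T_0 w|^{3+2s}} \int_{(T_0 w)^\perp} f(\bar t,\bar x,\bar v + u)\,|u|^{\gamma+2s+1}\tilde b(\cos\theta)\,\dd u.
\]
The difference $\bar K_{f,z_1}(w)-\bar K_{f,z_2}(w)$ is then controlled pointwise by a Carleman-type integral of $|f(\bar z_1\circ(0,0,u)) - f(\bar z_2\circ(0,0,u))|$. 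Using the weighted Hölder seminorm with weight $(1+|\bar v+u|)^{-q_1}$ together with~\eqref{e.left} and~\eqref{e.right-translation}, the latter is bounded by $\|f\|_{C^\alpha_{\ell,q_1}}(1+|\bar v+u|)^{-q_1}d_\ell(\bar z_1,\bar z_2)^\alpha$ on short scales and by $2\|f\|_{L^\infty_{q_1}}(1+|\bar v+u|)^{-q_1}$ globally. One compares $d_\ell(\bar z_1,\bar z_2)$ to $d_\ell(z_1,z_2)$ by unwinding~\eqref{e.cov-} and using the bounds on $\|T_0\|$ from~\eqref{e.T0-def}, which cost a power of $|v_0|$; interpolating to the exponent $\alpha'=\alpha\cdot\frac{2s}{1+2s}$ is a standard trick because integrating $d_\ell^\alpha$ against the $|u|^{\gamma+2s+1}$ weight over a codimension-one slice localized at scale $|T_0 w|$ gains exactly the right power.

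Next I would change variables $u = T_0 \tilde u$ inside the codimension-one integral (using that the anisotropy of $T_0$ from~\eqref{e.T0-def} restricted to $(v'-v)^\perp$ has a controlled Jacobian in terms of $|v_0|^{(\gamma+2s)/(2s)}$) to convert $|u|^{\gamma+2s+1}$ into $|\tilde u|^{\gamma+2s+1}$ up to an explicit factor of $|v_0|$, and absorb the $|T_0 w|^{-3-2s}$ factor into $|w|^{-3-2s}$ the same way. At this point the integrand is independent of $v_0$ except through the weight $(1+|\bar v+u|)^{-q_1}$, and integrating over $w\in B_\rho$ against $|w|^2$ yields the desired $\rho^{2-2s}$ scaling after one last polar decomposition in $w$. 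Collecting every explicit $|v_0|$-factor produced along the way (from the prefactor $|v_0|^{2+3\gamma/(2s)}$, the Jacobian of $T_0$, the comparison of distances, and the overall normalization) gives precisely $P(\gamma,s,\alpha)=2+\alpha/(1+2s)$, and the weight constraint $q_1>3+\alpha/(1+2s)$ appears exactly where one must dominate the convolution $(1+|\bar v+\cdot|)^{-q_1}\ast|\cdot|^{\gamma+2s+1}$ uniformly in $\bar v$ via Lemma~\ref{l:convolution}.

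The main obstacle will be bookkeeping the powers of $|v_0|$ through all the steps, together with ensuring that the asymmetry of $T_0$ (which scales differently along $v_0$ and perpendicular to $v_0$) interacts cleanly with both the slice integral over $(T_0 w)^\perp$ and the cylinder $Q_{2r}$. In particular, one must verify that, for $z_1,z_2\in Q_{2r}$, the relation $d_\ell(\bar z_1,\bar z_2)\lesssim |v_0|^{(\gamma+2s)/(2s)}d_\ell(z_1,z_2)$ holds with the right exponent; any misstep here would either fail to close the estimate or degrade $P(\gamma,s,\alpha)$. Everything else reduces to variants of Lemmas~\ref{l:K-upper-bound}--\ref{l:K-upper-bound-2} and the convolution bound Lemma~\ref{l:convolution}, applied to the non-weighted part of the estimate.
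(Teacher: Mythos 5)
Your treatment of $\gamma+2s\geq 0$ (quoting \cite[Lemma 5.20]{imbert2020smooth}) is exactly what the paper does, and your skeleton for $\gamma+2s<0$ --- Carleman form, bounding $|\bar K_{f,z_1}-\bar K_{f,z_2}|$ by the kernel of the difference $\Delta f(z)=|f(z)-f(\xi\circ z)|$, the right-translation estimate \eqref{e.right-translation}, and the convolution bound of Lemma \ref{l:convolution} --- is also the paper's (Lemma \ref{l:cov-holder}). But the quantitative core of your argument rests on a distance comparison that is false. You assert $d_\ell(\bar z_1,\bar z_2)\lesssim |v_0|^{(\gamma+2s)/(2s)}\,d_\ell(z_1,z_2)$, after earlier saying, inconsistently, that the comparison ``costs a power of $|v_0|$''. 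Since the map \eqref{e.cov-} leaves the time coordinate unchanged when $\gamma+2s<0$, taking $z_1=(t_1,0,0)$ and $z_2=(t_2,0,0)$ in $Q_{2r}$ gives $d_\ell(\bar z_1,\bar z_2)=|t_1-t_2|^{1/(2s)}=d_\ell(z_1,z_2)$ (choose $w=0$, resp.\ $w=v_0$, in \eqref{e.dl}), while $|v_0|^{(\gamma+2s)/(2s)}\to 0$ as $|v_0|\to\infty$; so no such gain can hold. What is true, and all that is needed, is the plain contraction $d_\ell(\mathcal T_0z_1,\mathcal T_0z_2)\le d_\ell(z_1,z_2)$ (inequality \eqref{e.2} in the proof of Lemma \ref{l:holder-cov}), together with $|\bar t_1-\bar t_2|=|t_1-t_2|\le d_\ell(z_1,z_2)^{2s}$ for the second term of \eqref{e.right-translation}. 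With that, the distance step contributes no power of $|v_0|$ at all, and $P=2+\tfrac{\alpha}{1+2s}$ arises as $(2-\gamma-2s)+\bigl(\gamma+2s+\tfrac{\alpha}{1+2s}\bigr)$: a factor $|v_0|^{-\gamma/s}=\|T_0^{-1}\|^2$ from $|T_0^{-1}w|^2$, a factor $|v_0|^{(\gamma+2s)(2-2s)/(2s)}$ from applying Lemma \ref{l:K-upper-bound-2} on the shrunken ball $B_{\rho|v_0|^{(\gamma+2s)/(2s)}}\supset T_0(B_\rho)$, and a factor $|v_0|^{\gamma+2s+\alpha/(1+2s)}$ from the convolution against the weight $\langle\bar v_1+u\rangle^{-q_1}$. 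If the (false) gain from your distance comparison were actually inserted, the total exponent would not equal $P$; announcing that it comes out ``precisely'' to $P$ indicates the exponent was matched to the statement rather than derived from the steps as written.

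A second, related gap is the change of variables inside the Carleman slice. Since $T_0$ is symmetric, the plane $(T_0w)^\perp$ is the image under $T_0$ of $(T_0^2w)^\perp$, not of $w^\perp$, and the restriction of $T_0$ to a $2$-plane has singular values ranging between $|v_0|^{(\gamma+2s)/(2s)-1}$ and $|v_0|^{(\gamma+2s)/(2s)}$ depending on the angle between the plane and $v_0$. Hence ``converting $|u|^{\gamma+2s+1}$ into $|\tilde u|^{\gamma+2s+1}$ up to an explicit factor of $|v_0|$'' is not uniform in the direction of $w$; bounding crudely by the extreme singular value risks a power of $|v_0|$ worse than $P$ (which would propagate into $\kappa$ in Theorem \ref{t:global-schauder}), and doing it sharply requires the ellipse-geometry computations of the type carried out in Lemma \ref{l:5-10}, which your outline does not supply. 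The paper avoids the slice entirely: it changes variables only in the outer integral, $w\mapsto T_0^{-1}w$, and then invokes the already-integrated bound of Lemma \ref{l:K-upper-bound-2} for $K_{\Delta f,\bar z_1}$, which packages the anisotropic slice geometry into the convolution $\int_{\R^3}\Delta f(\bar t_1,\bar x_1,\bar v_1+u)\,|u|^{\gamma+2s}\dd u$. I recommend that route; as written, the slice-level substitution and the distance comparison are the two places where your proof does not close.
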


Next, we discuss global regularity estimates for solutions of the Boltzmann equation. 
The following is a global (in $v$) H\"older estimate that combines the local De Giorgi/Nash/Moser-type estimate of Theorem \ref{t:degiorgi} with the change of variables $\mathcal T_0$. It extends \cite[Corollary 7.4]{imbert2020smooth} to the case where $\gamma+2s$ may be negative.

\begin{theorem}\label{t:global-degiorgi}
Let $f$ be a solution of the Boltzmann equation \eqref{e.boltzmann} in $[0,T]\times \R^6$. For some domain $\Omega \subseteq \R^3$ and $\tau \in (0,T)$, assume there are $\delta, r , R>0$ such that for each $x\in \Omega$, there exists $v_x\in B_R$ with 
\[ 
f(t,x,v) \geq \delta, \quad \text{in }  [\tau/2,T]\times(B_r(x,v_x) \cap\Omega\times \R^3),
\]
and assume that $\|f\|_{L^\infty_{q_0}([0,T]\times\R^6)} \leq L_0$ for some $q_0> 3 + 2s$. Define
\begin{equation}\label{e.barc}
	\bar c
		= \begin{cases} 	1
			&\qquad \text{ if }\gamma+2s\geq 0,\\
			- \frac{\gamma}{2s}
			&\qquad \text{ if }\gamma+2s<0. 
		\end{cases} 
 \end{equation}
 
Then, for any $q>3$ such that $f\in L^\infty_{q}([0,T]\times\R^6)$, and any $\Omega'$ compactly contained in $\Omega$, there exists $\alpha_0>0$ such that for any $\alpha \in (0,\alpha_0]$, one has $f\in C^\alpha_{\ell,q-\bar c\alpha}$, with
 \begin{equation}\label{e.dg-est}
  \|f\|_{C^\alpha_{\ell,q-\bar c\alpha}([\tau,T]\times\Omega'\times\R^3)} \leq C\|f\|_{L^\infty_{q}([0,T]\times \R^6)}.
  \end{equation}
The constants $C$ and $\alpha_0$ depend on $L_0$, $q$, $\gamma$, $s$, $\delta$, $r$, $v_m$, $\tau$, $\Omega$, and $\Omega'$. 

If $\Omega = \R^3$, then we can replace $\Omega'$ with $\R^3$ in \eqref{e.dg-est}.
\end{theorem}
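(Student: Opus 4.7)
The plan is to combine the local De Giorgi estimate of \Cref{t:degiorgi} with the change of variables $\mathcal T_0$ from \eqref{e.cov+}--\eqref{e.cov-}, via a case split on the size of $|v_0|$. Specifically, for every $z_0=(t_0,x_0,v_0)\in[\tau,T]\times\Omega'\times\R^3$ and every $r\in(0,1]$, I aim to establish
\[
	(1+|v_0|)^{q-\bar c\alpha}[f]_{C^\alpha_\ell(Q_r(z_0))}\leq C\|f\|_{L^\infty_q},
\]
which yields the claimed bound after taking the supremum. For $|v_0|\leq 2$ the weight $\vvo$ is uniformly bounded, so I apply \Cref{t:degiorgi} to $f$ itself on a kinetic cylinder of size depending on $\tau$ and $\dist(\Omega',\partial\Omega)$, using \Cref{p:concise-cov} (with the trivial $\mathcal T_0(z)=z_0\circ z$) to verify the ellipticity of $K_f$ and the positivity hypothesis of the theorem.

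For $|v_0|>2$, I pass to $\bar f(t,x,v):=f(\bar t,\bar x,\bar v)$ using the nontrivial $\mathcal T_0$, after first rescaling the base cylinder by a factor $\rho=\min(1,\tau^{1/(2s)})$ in the case $\gamma+2s<0$ with $\tau<1$ so that the footprint $\mathcal E_\rho^{t,x}(z_0)$ remains inside $[\tau/2,T]\times\Omega$. The hypotheses of \Cref{p:concise-cov} then apply to $\bar K_f$: the bound $\|f\|_{L^\infty_{q_0}}\leq L_0$ with $q_0>3+2s$ gives the upper estimates, while the pointwise positivity hypothesis (used at $v_m=v_x$ for the appropriate $x$-slice of the footprint) supplies the lower-bound cone via \Cref{l:cone}. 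Since $|\bar v|\approx|v_0|$ on the transformed cylinder, one bounds $\|\bar f\|_{L^\infty}+\|\bar h\|_{L^\infty}\leq C\|f\|_{L^\infty_q}\vvo^{-q}$, using \Cref{l:convolution} and the explicit $|v_0|^{-(\gamma+2s)_+}$ prefactor in $\bar h$ to handle the convolution term. \Cref{t:degiorgi} then gives $[\bar f]_{C^\alpha_\ell(Q_{1/2})}\leq C\|f\|_{L^\infty_q}\vvo^{-q}$ for some universal $\alpha>0$; smaller exponents $\alpha'\in(0,\alpha)$ follow from interpolation against the $L^\infty_q$ bound.

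The final step is translating the estimate back to original coordinates. A direct computation with the three components of $d_\ell$ shows that $\mathcal T_0^{-1}$ stretches kinetic distance by at most $|v_0|$ in the $v_0$-direction in case $\gamma+2s\geq 0$ and by $|v_0|^{-(\gamma+2s)/(2s)}$ isotropically in case $\gamma+2s<0$. In both cases this produces a factor $\vvo^{\bar c\alpha}$ with $\bar c$ as in \eqref{e.barc}, giving
\[
	[f]_{C^\alpha_\ell(\mathcal E_{1/2}(z_0))}\leq C\|f\|_{L^\infty_q}\vvo^{-q+\bar c\alpha}.
\]
To upgrade to every $r\in(0,1]$, I patch: pairs $z_1,z_2\in Q_r(z_0)$ separated by $d_\ell$ less than the natural radius of $\mathcal E_{1/2}$ are already controlled by the previous display, while pairs at larger separation are handled by the trivial bound $|f(z_1)-f(z_2)|\leq 2\|f\|_{L^\infty_q}\vvo^{-q}$ divided by $d_\ell^\alpha$, which yields the same exponent because $\bar c\geq 1$ in both cases. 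The principal technical obstacle is case~2: tracking the three scalings of $d_\ell$ under the new definition \eqref{e.cov-} to confirm that the exponent $\bar c=-\gamma/(2s)$ emerges exactly, and verifying that $\mathcal E_\rho^{t,x}(z_0)$ really does fit inside $[\tau/2,T]\times\Omega$ so that \Cref{p:very-soft-kernel} applies for the given $\tau$, $\Omega$, and $\Omega'$.
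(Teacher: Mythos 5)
Your overall route coincides with the paper's: split on $|v_0|\le 2$ versus $|v_0|>2$, apply the local De Giorgi estimate of Theorem \ref{t:degiorgi} to $\bar f=f\circ\mathcal T_0$ with ellipticity supplied by Proposition \ref{p:concise-cov}, translate back at the cost of a factor $\vvo^{\bar c\alpha}$, and pass from twisted to standard cylinders by a patching argument (the paper does this via \cite[Lemma 3.5]{imbert2020smooth}, which is the same idea as your large/small separation split). However, there is a genuine gap at the step where you claim $\|\bar f\|_{L^\infty}+\|\bar h\|_{L^\infty}\le C\|f\|_{L^\infty_q}\vvo^{-q}$ ``since $|\bar v|\approx |v_0|$ on the transformed cylinder.'' Theorem \ref{t:degiorgi} concerns a nonlocal equation, and its right-hand side involves $\|\bar f\|_{L^\infty((-1,0]\times B_1\times\R^3)}$, i.e.\ the supremum over \emph{all} velocities, not just over the cylinder where $\bar v\approx v_0$: as $v$ ranges over $\R^3$, the transformed velocity $v_0+T_0v$ sweeps through a region containing small velocities, where $\langle v_0+T_0 v\rangle^{-q}$ is of order one. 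So this term is only controlled by $\|f\|_{L^\infty}$, and your argument as written gives $[\bar f]_{C^\alpha_\ell(Q_{1/2})}\lesssim \|f\|_{L^\infty}$ with no gain of the weight $\vvo^{-q}$; the conclusion $f\in C^\alpha_{\ell,q-\bar c\alpha}$ does not follow. The missing ingredient is a velocity localization: cut off $f$ (in the transformed frame) near $|\bar v|\approx|v_0|$ and move the tail's contribution through the nonlocal operator into the source term, which can then be bounded by $\vvo^{-q}\|f\|_{L^\infty_q}$ using Lemma \ref{l:convolution} and \cite[Lemma 6.3]{imbert2020smooth}. This is precisely the machinery behind \cite[Proposition 7.1]{imbert2020smooth}, which the paper's proof invokes, and it is carried out explicitly (with the cutoff $\varphi$ and the extra source $\bar h_2$) in the proof of Theorem \ref{t:global-schauder}.

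A secondary but real error: in the case $\gamma+2s<0$ the map $T_0^{-1}$ does not stretch isotropically by $|v_0|^{-(\gamma+2s)/(2s)}$; it stretches by that factor only orthogonally to $v_0$, and by $|v_0|^{-\gamma/(2s)}$ along $v_0$ (indeed $\|T_0^{-1}\|=|v_0|^{1-(\gamma+2s)/(2s)}$). It is this worst-case, anisotropic factor that yields $\bar c=-\gamma/(2s)$ in \eqref{e.barc}, as in Lemma \ref{l:holder-cov}; the stretching you state would instead give the smaller loss $-(\gamma+2s)\alpha/(2s)$, inconsistent with the exponent you then assert. So the computation you flag as ``to be confirmed'' requires the anisotropic bookkeeping of Lemma \ref{l:holder-cov}, not an isotropic operator-norm bound.
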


\begin{proof}

Choose a point $z_0= (t_0,x_0,v_0)$ and $r\in (0,1)$. We prove an interior estimate in the cylinder $Q_r(z_0)$, which implies the statement of the Theorem via a standard covering argument. 

We claim that for any $\bar z_1, \bar z_2 \in \mathcal E_{r/2}(z_0)$, 
\begin{equation}\label{e.holder-local}
 |f(\bar z_1) - f(\bar z_2)| \leq C (1+|v_0|)^{-q} d_\ell(z_1,z_2)^\alpha,
 \end{equation}
with $C, \alpha>0$ as in the statement of the theorem. 

If $|v_0|\leq 2$, then \eqref{e.holder-local} follows from Theorem \ref{t:degiorgi}. The dependence of the constants $C$ and $\alpha$ in this case is made clear from the discussion in Remark \ref{r:ellipticity}. Note that $3 + 2s\geq 3+\gamma+2s$.

If $|v_0|>2$, the proof of \eqref{e.holder-local} proceeds exactly as in \cite[Proposition 7.1]{imbert2020smooth}
and uses the change of variables \eqref{e.cov+} or \eqref{e.cov-}. 
Let us remark that the proof of Proposition 7.1 in \cite{imbert2020smooth} makes use of Lemma \ref{l:convolution} below, as well as \cite[Lemma 6.3]{imbert2020smooth}. Although \cite{imbert2020smooth} works under the global assumption $\gamma+2s\in [0,2]$, it is clear from the proof that \cite[Lemma 6.3]{imbert2020smooth} applies to both cases $\gamma+2s\geq 0$ and $\gamma+2s<0$, with the same statement. 
The key point is that we can apply Theorem \ref{t:degiorgi} with the constants $\lambda$ and $\Lambda$ depending on $L_0$, $\delta$, $r$, and $v_m$, and independent of $v_0$, by Proposition \ref{p:concise-cov}. We omit the details of the proof of \eqref{e.holder-local} since they are the same as in \cite{imbert2020smooth}. 

Estimate \eqref{e.holder-local} is equivalent to
\[
	[\bar f]_{C^\alpha_\ell(Q_{r/2}(z_0))}
		\lesssim (1+|v_0|)^{-q}.
\]
Using Lemma \ref{l:holder-cov} or \cite[Lemma 5.19]{imbert2020smooth} to translate from $\bar f$ to $f$, we obtain
\[
	\|f\|_{C^\alpha_\ell(\mathcal E_{r/2}(z_0))}
		\lesssim (1+|v_0|)^{\bar c\alpha-q} + \|f\|_{L^\infty(\mathcal E_{r/2}(z_0))},
\]
with $\bar c$ as in the statement of the theorem.

In order to estimate the $C^{\alpha}_{\ell,q}$ seminorm of $f$ (See Definition \ref{d:C-alpha-q}), we need to work with local seminorms of $f$ on cylinders $Q_{r/2}(z_0)$ rather than on the twisted cylinders $\mathcal E_{r/2}(z_0)$.  From the definitions \eqref{e.cov+} and \eqref{e.cov-} of the $\mathcal T_0$ change of variables, we see that $\mathcal E_{r/2}(z_0)\supset Q_{(r/2)|v_0|^{-\bar c}}(z_0)$. Using \cite[Lemma 3.5]{imbert2020smooth}, we extend the upper bound to the larger set $Q_{r/2}(z_0)$:
\[
	\begin{split}
		[f]_{C^\alpha_\ell(Q_{r/2}(z_0))} 
			&\lesssim \left((1+|v_0|)^{\bar c\alpha - q} + (|v_0|^{-\bar c})^{-\alpha}\|f\|_{L^\infty(Q_{r/2}(z_0))}\right)\\
			& \lesssim (1+|v_0|)^{\bar c\alpha - q} \|f\|_{L^\infty_q([0,T]\times\R^6)}.
\end{split}
\]
This estimate implies the desired upper bound on $[f]_{C^{\alpha}_{\ell,q-\bar c \alpha}([0,T]\times\R^6)}$, which concludes the proof.
\end{proof}

Next, we have a global Schauder estimate that improves $C^\alpha_\ell$-regularity as in Theorem \ref{t:global-degiorgi} to $C^{2s+\alpha}_\ell$-regularity. In order to apply the estimate to derivatives of $f$, this estimate is stated for the linear Boltzmann equation
\begin{equation}\label{e.linear-boltzmann}
\partial_t g + v\cdot \nabla_x g = Q_{\rm s}(f,g) + h.
\end{equation}
Choosing $g=f$ and $h = Q_{\rm ns}(f,f)$ would recover the original Boltzmann equation \eqref{e.boltzmann}.

Unlike in the previous theorem, we work out the explicit dependence on $\tau$ of the estimate in Theorem \ref{t:global-schauder}. The dependence on $\tau$ is needed in Proposition \ref{prop:holder_propagation} which is one ingredient of our proof of uniqueness.  

\begin{theorem}\label{t:global-schauder}
Let $f:[0,T]\times\R^6\to [0,\infty)$, and assume that for some domain $\Omega\subseteq \R^3$, there exist $\delta, r, R>0$ such that, if $x \in \Omega$, there is $v_x\in \R^3$ with
\[
	f(t,x,v) \geq \delta, \quad \text{ in } [0,T]\times
				\left(B_r(x,v_x) \cap (\Omega \times \R^3)\right).
\]
Assume that $\|f\|_{L^\infty_{q_0}([0,T]\times\R^6)} \leq L_0$ for some $q_0>3 + 2s$.  Furthermore, assume $f\in C^\alpha_{\ell,q_1}([0,T]\times\R^6)$ for some $\alpha\in (0,\min(1,2s))$ and $q_1$ as in Lemma \ref{l:concise-holder}.

Let $g$ be a solution of \eqref{e.linear-boltzmann} with $h\in C^{\alpha'}_{\ell,q_1}([0,T]\times\R^6)$, where $\alpha' = \frac {2s}{1+2s}\alpha$ and $2s+\alpha'\not\in\{1,2\}$. Then, for any $\tau\in (0,T)$ and $\Omega'$ compactly contained in $\Omega$, one has the estimate
\begin{equation}\label{e.2salpha}
\begin{split}
	&\|g\|_{C^{2s+\alpha'}_{\ell,q_1-\kappa}([\tau,T]\times\Omega'\times\R^3)} \\
		&\quad\leq C\left(1 + \tau^{-1 + \frac{\alpha-\alpha'}{2s}}\right)
			\|f\|_{C^\alpha_{\ell, q_1}([0,T]\times\R^6)}^\frac{\alpha -\alpha'+ 2s}{\alpha'}
			\left( \|g\|_{C^\alpha_{\ell,q_1}([0,T]\times\R^6)} + \|h\|_{C^{\alpha'}_{\ell, q_1}([0,T]\times \R^6)}\right),
\end{split}
\end{equation}
where the moment loss for higher order regularity of $g$ is
\[\begin{split}
 \kappa& := \begin{cases} (1-2s-\gamma)_+ \left(1 + \frac \alpha {1+2s} - \frac \alpha {2s}\right) + 2s+\alpha', &\gamma+2s\geq 0\\
(2 + \frac \alpha {1+2s})(\frac{1+2s}\alpha-  \frac 1 {2s}) - \gamma \left(1+\frac \alpha{1+2s}\right), &\gamma+2s <  0,\end{cases}
\end{split}\]
and the constant $C$ depends on $\gamma$, $s$, $\alpha$, $q_0$, $q_1$, $L_0$, $\delta$, $r$, $R$, $\Omega$, and $\Omega'$.

If $\Omega = \R^3$, then we can replace $\Omega'$ with $\R^3$ in \eqref{e.2salpha}.
\end{theorem}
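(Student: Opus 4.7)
The plan is to reduce Theorem \ref{t:global-schauder} to a pointwise application of the local Schauder estimate (Theorem \ref{t:schauder}) after the change of variables $\mathcal{T}_0$ from Section \ref{s:global-reg}. Fix a reference point $z_0=(t_0,x_0,v_0)$ with $t_0\in[\tau,T]$ and $x_0\in\Omega'$, and consider first the case $|v_0|>2$ (the case $|v_0|\le 2$ is a direct application of Theorem \ref{t:schauder} with constants depending on $L_0$, $\delta$, $r$, $R$, and $\Omega$, via Proposition \ref{p:concise-cov} and Lemma \ref{l:concise-holder}). Setting $\bar g(z):=g(\mathcal T_0 z)$, a direct computation shows that $\bar g$ solves a linear integro-differential equation $\partial_t \bar g + v\cdot\nabla_x \bar g = \mathcal L_{\bar K_f} \bar g + \bar h$ on $Q_{\rho_0}$, where $\rho_0>0$ is taken maximal subject to $\mathcal E_{\rho_0}(z_0)\subset [\tau/2,T]\times\Omega\times\R^3$. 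The constraint from $t_0\ge\tau$ forces $\rho_0\gtrsim \tau^{1/(2s)}|v_0|^{(\gamma+2s)_+/(2s)}$ in the case $\gamma+2s\ge 0$, and $\rho_0\gtrsim\tau^{1/(2s)}$ in the case $\gamma+2s<0$; the constraint from $x_0\in\Omega'$ is harmless because $\Omega'\Subset\Omega$.

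Next I verify the hypotheses of Theorem \ref{t:schauder} for the transformed kernel $\bar K_f$. By Proposition \ref{p:concise-cov}, $\bar K_f$ satisfies the ellipticity bounds and belongs to the ellipticity class of Definition \ref{d:ellipticity} with constants depending only on $L_0$, $\delta$, $r$, $v_m$ (\emph{independently of $|v_0|$}). By Lemma \ref{l:concise-holder}, the H\"older-in-$z$ hypothesis \eqref{e.kernel-holder} holds with $A_0\le\bar A_0\lesssim |v_0|^{P(\gamma,s,\alpha)}\|f\|_{C^\alpha_{\ell,q_1}}$. A similar but simpler computation, using the convolution estimate of Lemma \ref{l:convolution} on $[f\ast|\cdot|^\gamma]$, bounds $\bar h$ in $C^{\alpha'}_\ell(Q_{\rho_0})$ in terms of $\|f\|_{C^\alpha_{\ell,q_1}}$ and $|v_0|$ to an explicit (negative or at worst polynomial) power.

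Now I apply Theorem \ref{t:schauder} to $\bar g$ on $Q_{\rho_0/2}$ and translate back via Lemma \ref{l:holder-cov}, which converts $[\bar g]_{C^{2s+\alpha'}_\ell(Q_{\rho_0/2})}$ into $[g]_{C^{2s+\alpha'}_\ell(\mathcal E_{\rho_0/2}(z_0))}$ at the price of a factor $|v_0|^{\bar c(2s+\alpha')}$. Since $\mathcal E_{\rho_0/2}(z_0)\supset Q_{(\rho_0/2)|v_0|^{-\bar c}}(z_0)$, I then invoke the standard $C^{2s+\alpha'}_\ell$-$L^\infty$ interpolation on a smaller cylinder (as in \cite[Lemma 3.5]{imbert2020smooth}) to extend the estimate to a cylinder $Q_{r_*}(z_0)$ of $|v_0|$-independent radius $r_*$. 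Combining the factors---the $\rho_0^{-2s-\alpha'+\alpha}$ term in Theorem \ref{t:schauder} (yielding the $\tau^{-1+(\alpha-\alpha')/(2s)}$ dependence), the $\bar A_0^{(2s+\alpha'-\alpha)/\alpha'}$ term (yielding the power of $\|f\|_{C^\alpha_{\ell,q_1}}$ and contributing $|v_0|^{P(\gamma,s,\alpha)(2s+\alpha'-\alpha)/\alpha'}$), and the $|v_0|^{\bar c(2s+\alpha')}$ factor from the change-of-variables translation---determines the moment loss $\kappa$, with the two separate formulas for $\kappa$ arising from the two separate definitions \eqref{e.cov+}, \eqref{e.cov-} and the two values of $\bar c$ in \eqref{e.barc}. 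Pointwise in $z_0\in[\tau,T]\times\Omega'\times\R^3$, one reads off $\langle v_0\rangle^{q_1-\kappa}[g]_{C^{2s+\alpha'}_\ell(Q_{r_*}(z_0))}$ bounded by the right-hand side of \eqref{e.2salpha}, and a standard covering argument with radius $r_*$ yields the global estimate.

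The main obstacle is the careful bookkeeping of powers of $|v_0|$ coming from three distinct sources---the anisotropic scaling of $T_0$, the growth of the kernel-H\"older constant $\bar A_0$ in Lemma \ref{l:concise-holder}, and the extension from the twisted cylinder $\mathcal E_{\rho_0/2}(z_0)$ to the ordinary cylinder $Q_{r_*}(z_0)$---and ensuring these combine to give the claimed $\kappa$ in each of the regimes $\gamma+2s\ge 0$ and $\gamma+2s<0$. The $\gamma+2s<0$ regime is genuinely new: the change of variables \eqref{e.cov-} introduces the nontrivial exponent $|v_0|^{(\gamma+2s)/(2s)}$ in $T_0$, which in turn forces $P(\gamma,s,\alpha)$ to pick up the extra $2$ in that case, and is the ultimate source of the more complicated expression for $\kappa$ in the second line of its definition.
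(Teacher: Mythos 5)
Your overall architecture matches the paper's proof (change of variables $\mathcal T_0$, verification of the hypotheses of Theorem \ref{t:schauder} via Proposition \ref{p:concise-cov} and Lemma \ref{l:concise-holder}, translation back via Lemma \ref{l:holder-cov}, passage from twisted to kinetic cylinders, covering), and your bookkeeping of the powers of $|v_0|$ coming from $\bar A_0$ and from the factor $|v_0|^{\bar c(2s+\alpha')}$ is consistent with the paper's $\kappa$. However, there is a genuine gap at the very first step: you set $\bar g := g\circ\mathcal T_0$ with no velocity cutoff. The local Schauder estimate requires as input the seminorm $[\bar g]_{C^\alpha_\ell}$ on the full slab $(-(2r)^{2s},0]\times B_{(2r)^{1+2s}}\times\R^3_v$, i.e.\ globally in velocity. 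As $v$ ranges over $\R^3$, the point $\bar v = v_0+T_0v$ sweeps out all of $\R^3$, including the region $|\bar v|\lesssim 1$, where the weighted norm $\|g\|_{C^\alpha_{\ell,q_1}}$ gives only an $O(1)$ bound. Hence without a cutoff you can only bound $[\bar g]_{C^\alpha_\ell}\lesssim \|g\|_{C^\alpha_{\ell,q_1}}$, \emph{not} $\lesssim |v_0|^{-q_1}\|g\|_{C^\alpha_{\ell,q_1}}$. Since the Schauder output then carries the positive powers $|v_0|^{P(\gamma,s,\alpha)(1+\frac{2s-\alpha}{\alpha'})+\bar c(2s+\alpha')}=|v_0|^{\kappa}$ with no compensating decay, multiplying by $\langle v_0\rangle^{q_1-\kappa}$ blows up as $|v_0|\to\infty$, and the weighted conclusion \eqref{e.2salpha} does not follow from your argument.

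The paper fixes this by writing $\bar g=[(1-\varphi)g]\circ\mathcal T_0$ with $\varphi$ supported in $B_{|v_0|/8}$: the support of $1-\varphi$ in $\{|v|\geq |v_0|/9\}$ is exactly what produces the factor $|v_0|^{-q_1}$ in the bound of $[\bar g]_{C^\alpha_\ell}$. The price is an extra nonlocal source term $\bar h_2 = |v_0|^{-(\gamma+2s)_+}\int\varphi(v')\,g(\bar t,\bar x,v')\,K_f(\bar t,\bar x,\bar v,v')\,\dd v'$, which must itself be estimated in $L^\infty$ and $C^{\alpha'}_\ell$ with decay in $|v_0|$ (the paper uses Lemma 6.3 and Corollary 6.7 of \cite{imbert2020smooth} for this). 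Both the cutoff and the treatment of $\bar h_2$ are absent from your proposal, and they are precisely the steps that make the weighted estimate work; you would need to incorporate them (or an equivalent localization in velocity) to close the argument.
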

\begin{proof}
We follow the proof of \cite[Proposition 7.5]{imbert2020smooth} suitably modifying the argument in order to allow the case $\gamma + 2s < 0$ and to determine the explicit dependence on $\tau$.

In this proof, to keep the notation clean, any norm or seminorm given without a domain, such as $\|f\|_{C^\alpha_{\ell,q}}$, is understood to be over $[0,T]\times\R^6$.

Let $z_0 = (t_0,x_0,v_0)\in (\tau,T]\times\Omega'\times\R^3$ be fixed, and define
\[
	\rho = \min\big(1, (t_0/2)^\frac{1}{2s}\big).
\]
Since we do not track the explicit dependence of this estimate on $\Omega'$ and $\Omega$, we assume without loss of generality that $\mathcal E_\rho(z_0)\subset [0,T]\times\Omega\times\R^3$.

If $|v_0|>2$, then let $\varphi\in C_c^\infty(\R^3)$ be a cutoff function supported in $B_{|v_0|/8}$ and identically 1 in $B_{|v_0|/9}$. Define $\bar g := [(1-\varphi)g]\circ \mathcal T_0$. 
The function $\bar g$ is defined on $Q_\rho$ and satisfies 
\[ \partial_t \bar g + v\cdot \nabla_x \bar g = \int_{\R^3} (\bar g' - \bar g) \bar K_f(t,x,v,v') \dd v' + \bar h + \bar h_2,\]
where $\bar g' = \bar g(t,x,v')$, and $\bar K_f$ is defined in \eqref{e.barKf}. The source terms are defined by
\[ \begin{split}
	\bar h
		&= |v_0|^{-(\gamma+2s)_+} h \circ \mathcal T_0,\\
	\bar h_2
		&= |v_0|^{-(\gamma+2s)_+} \int_{\R^3} \left(\varphi(v') - \varphi(\bar v)\right) g(\bar t, \bar x, v') K_f(\bar t, \bar x, \bar v, v') \dd v'
		\\&
		= |v_0|^{-(\gamma+2s)_+} \int_{\R^3} \varphi(v') g(\bar t, \bar x, v') K_f(\bar t, \bar x, \bar v, v') \dd v'.
\end{split}
\]
We note that the last equality follows from the fact that $\varphi(\bar v) = 0$.  Indeed, recall that $\supp \varphi \subset B_{|v_0|/8}$ while a straightforward computation using the definition of $T_0$ (see~\eqref{e.cov-} and above) shows that $|\bar v| \geq |v_0|/2 > |v_0|/8$.

By Proposition \ref{p:concise-cov}, the kernel $\bar K_f$ lies in the ellipticity class of Definition \ref{d:ellipticity}, with constants depending on $L_0$, $\delta$, $r$, and $R$. Applying the local Schauder estimate Theorem \ref{t:schauder} to $\bar g$, we obtain
\begin{equation}\label{e.local-schauder}
\begin{split}
	[\bar g]_{C^{2s+\alpha'}_\ell(Q_{\rho/2})} 
	&\lesssim \max\left( \rho^{-2s-\alpha' + \alpha}, \bar A_0^{(2s+\alpha' - \alpha)/\alpha'}\right)
		[\bar g]_{C^\alpha_{\ell}((-\rho^{2s},0]\times B_{\rho^{1+2s}} \times\R^3)}\\
 	&\quad  + [\bar h + \bar h_2]_{C_\ell^{\alpha'}(Q_\rho)}
	 + \max(\rho^{-\alpha'},\bar A_0)\|\bar h + \bar h_2\|_{L^\infty(Q_\rho)},
\end{split}
\end{equation}
where we recall the definition of $\bar A_0$ from~\eqref{e.A_0}.
Let us estimate the terms in this right-hand side one by one. 
 To estimate $\bar A_0$, we use Lemma \ref{l:concise-holder}:
\[ \begin{split}
\bar A_0 &\lesssim |v_0|^{P(\gamma,s,\alpha)} \|f\|_{C^\alpha_{\ell,q_1}},
\end{split} \]
with $q_1$ and $P(\gamma, s, \alpha)$ as in Lemma \ref{l:concise-holder}. 
 Next, Lemma \ref{l:holder-cov} below (if $\gamma+2s<0$) or \cite[Lemma 5.19]{imbert2020smooth} (if $\gamma+2s\geq 0$) imply
\[
	[\bar g]_{C^\alpha_\ell((-\rho^{2s},0]\times B_{\rho^{1+2s}}\times\R^3)}
		\leq \|(1-\varphi) g\|_{C^\alpha_\ell}
		\lesssim |v_0|^{-q_1}\|g\|_{C^\alpha_{\ell,q_1}},
\]
with $q_1$ as above.  The last inequality follows because $1-\varphi$ is supported for $|v|\geq |v_0|/9$.

For the terms involving $\bar h$ and $\bar h_2$, note that
\[\begin{split}
\|\bar h\|_{L^\infty(Q_\rho)} &= |v_0|^{-(\gamma+2s)_+} \|h\|_{L^\infty(\mathcal E_\rho(z_0))} \leq |v_0|^{-(\gamma+2s)_+ - q_1}\|h\|_{L^\infty_{q_1}},\\
[\bar h]_{C^{\alpha'}_\ell(Q_\rho)} &\leq |v_0|^{-(\gamma+2s)_+} [h]_{C^{\alpha'}_\ell(\mathcal E_\rho(z_0))} \leq |v_0|^{-(\gamma+2s)_+-q_1} [h]_{C^{\alpha'}_{\ell,q_1}},
\end{split}\]
where the second line used Lemma \ref{l:holder-cov} or \cite[Lemma 5.19]{imbert2020smooth}. For $\bar h_2$, we use \cite[Lemma 6.3 with $q=q_1$ and Corollary 6.7 with $q = q_1 - \alpha/(1+2s)$]{imbert2020smooth}
to write
\[ \begin{split}
\|\bar h_2\|_{L^\infty(Q_\rho)} &\lesssim |v_0|^{-q_1 +\gamma - (\gamma+2s)_+}\|f\|_{L^\infty_{q_1}}\|g\|_{L^\infty_{q_1}} ,\\
[\bar h_2]_{C^{\alpha'}_\ell(Q_\rho)} &\lesssim |v_0|^{-q_1 + \gamma -(\gamma+2s)_+ + 2\alpha/(1+2s)} \|f\|_{C^\alpha_{\ell,q_1}} \|g\|_{C^\alpha_{\ell,q_1}}.
\end{split} \]
Combining all of these inequalities with \eqref{e.local-schauder}, we have
\[ \begin{split}
	& [\bar g]_{C^{2s+\alpha'}_\ell(Q_{\rho/2})}\\
		 &\lesssim \max\Big( t_0^\frac{-2s+ (\alpha -\alpha')}{2s}, |v_0|^{P(\gamma,s,\alpha)\frac{2s- \alpha + \alpha'}{\alpha'}}\|f\|_{C^\alpha_{\ell,q_1}}^\frac{2s-\alpha +\alpha'}{\alpha'}\Big)
		 	|v_0|^{-q_1} \|g\|_{C^\alpha_{\ell,q_1}}
		\\&\quad  
		 	+|v_0|^{-(\gamma+2s)_+ - q_1}[h]_{C^{\alpha'}_{\ell,q_1}}
			+ |v_0|^{-q_1+\gamma-(\gamma+2s)_+ + \frac{2\alpha}{1+2s}}\|f\|_{C^\alpha_{\ell,q_1}} \|g\|_{C^\alpha_{\ell,q_1}}\\
		 &\quad  + \max\Big(t_0^{-\frac{\alpha'}{2s}},|v_0|^{P(\gamma,s,\alpha)}\|f\|_{C^\alpha_{\ell,q_1}}\Big)
		 	\Big(|v_0|^{-(\gamma+2s)_+-q_1}\| h\|_{L^\infty} + |v_0|^{-q_1+\gamma-(\gamma+2s)_+}\|g\|_{L^\infty_{q_1}} \|f\|_{L^\infty_{q_1}}\Big).
\end{split} \]
Keeping only the largest powers of $t_0^{-1}$, $|v_0|$, and $\|f\|_{C^\alpha_{\ell,q_1}}$, we have (recall that $\alpha<2s$)
\[
	[\bar  g ]_{C^{2s+\alpha'}_\ell(Q_{\rho/2})}
			\lesssim \mathcal A,
\]
where we have introduced the shorthand
\be
	\mathcal A := \Big(1 + t_0^{-1+\frac{\alpha-\alpha'}{2s}}\Big)
			|v_0|^{-q_1 + P(\gamma,s,\alpha)\big(1+\frac{2s-\alpha}{\alpha'}\big)}
			\|f\|_{C^\alpha_{\ell,q_1}}^{1+\frac{2s-\alpha}{\alpha'}}
			\left(
				\|g\|_{C^\alpha_{\ell,q_1}}
				+ \|h\|_{C^{\alpha'}_{\ell,q_1}}
			\right).
\ee
Now, we apply Lemma \ref{l:holder-cov} or \cite[Lemma 5.19]{imbert2020smooth} to translate from $\bar g$ back to $g$:
\[
	[g]_{C^{2s+\alpha'}_\ell(\mathcal E_{\rho/2}(z_0))}
		\lesssim |v_0|^{\bar c (2s+\alpha')}
			\|\bar g\|_{C^{2s+\alpha'}_\ell(Q_{\rho/2}(z_0))} 
		\lesssim |v_0|^{\bar c(2s+\alpha')}
			\left( \mathcal A + \|g\|_{L^\infty(Q_{\rho/2}(z_0))}\right),
 \]
 with $\bar c$ as in \eqref{e.barc}.

 As in the proof of Theorem \ref{t:global-degiorgi}, we need to pass from twisted cylinders $\mathcal E_\rho(z_0)$ to kinetic cylinders $Q_\rho(z_0)$. Since $\mathcal E_\rho(z_0)\supset Q_{\rho|v_0|^{-\bar c}}(z_0)$, we use \cite[Lemma 3.5]{imbert2020smooth} to obtain
\[
	[g]_{C^{2s+\alpha'}_\ell(Q_\rho(z_0))}
		\lesssim |v_0|^{\bar c(2s+\alpha')} \mathcal A
			+ |v_0|^{\bar c(2s+\alpha')} \|g\|_{L^\infty(Q_\rho(z_0))}.
\] 
We finally have
\begin{equation}\label{e.schauder-cylinder}
	[g]_{C^{2s+\alpha'}_\ell(Q_\rho(z_0))}
		\lesssim  \Big(1 + t_0^{-1+\frac{\alpha-\alpha'}{2s}}\Big) (1+|v_0|)^{-q_1 + \kappa}
			 \|f\|_{C^\alpha_{\ell,q_1}}^{1+\frac{2s-\alpha}{\alpha'}}
			 \left(
			 	\|g\|_{C^\alpha_{\ell,q_1}}
				+ \|h\|_{C^{\alpha'}_{\ell,q_1}}
			\right),
 \end{equation}
with 
\[ \kappa := P(\gamma,s,\alpha)\Big(1+\frac{2s-\alpha}{\alpha'}\Big)+\bar c(2s+\alpha'),\]
as in the statement of the theorem. 

We have derived \eqref{e.schauder-cylinder} under the assumption $|v_0|>2$. If $|v_0|\leq 2$, then we may apply Theorem \ref{t:schauder} directly to $g$, without using the change of variables. Proceeding as above, we obtain \eqref{e.schauder-cylinder} in this case as well, using $1\lesssim ( 1+ |v_0|)^{-q_1+\kappa}$. This completes the proof, since $t_0\gtrsim \tau$.
\end{proof}

When we apply the linear estimate of Theorem \ref{t:global-schauder} to solutions of the Boltzmann equation, we obtain the following time-weighted Schauder estimate:  

\begin{proposition}\label{p:nonlin-schauder}
Let $f:[0,T]\times\R^6\to [0,\infty)$ be a classical solution to \eqref{e.boltzmann} satisfying the assumptions for $f$ in \Cref{t:global-schauder}. For any $t_0\in (0,T)$, $\alpha>0$, and $q_1$ as in Lemma \ref{l:concise-holder}, the estimate 
\[
	\|f\|_{C^{2s+\alpha'}_{\ell,q_1-\kappa}([t_0/2,t_0]\times\R^6)}
		\leq C t_0^{-1+(\alpha-\alpha')/(2s)}
			\|f\|_{C^\alpha_{\ell,q_1+{[\alpha/(1+2s)+\gamma]_+}}([0,t_0]\times\R^6)}^{1+(\alpha+2s)/\alpha'},
\]
holds whenever the right-hand side is finite, where $\alpha' = \alpha\frac{2s}{1+2s}$, $C>0$ is a constant depending on $\gamma$, $s$, $\alpha$, $q_0$, $q_1$, $L_0$, $\delta$, $r$, $R$, $\Omega$, and $\Omega'$, and $\kappa$ is the constant from Theorem \ref{t:global-schauder}.
\end{proposition}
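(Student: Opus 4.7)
The plan is to apply the linear Schauder estimate of Theorem~\ref{t:global-schauder} to the Boltzmann equation itself, using the decomposition $Q(f,f) = Q_{\rm s}(f,f) + Q_{\rm ns}(f,f)$ from Section~\ref{s:carleman}. Setting $g = f$ and $h = Q_{\rm ns}(f,f)$, and taking the time interval $[\tau, T] = [t_0/2, t_0]$ in the theorem, the factor $1 + \tau^{-1 + (\alpha - \alpha')/(2s)}$ is dominated by $t_0^{-1+(\alpha-\alpha')/(2s)}$ (absorbing the $1$ into the constant for $t_0$ bounded), so
\[
\|f\|_{C^{2s+\alpha'}_{\ell,q_1-\kappa}([t_0/2,t_0]\times\R^6)} \lesssim t_0^{-1+(\alpha-\alpha')/(2s)} \|f\|_{C^\alpha_{\ell,q_1}}^{(\alpha-\alpha'+2s)/\alpha'} \bigl(\|f\|_{C^\alpha_{\ell,q_1}} + \|Q_{\rm ns}(f,f)\|_{C^{\alpha'}_{\ell,q_1}}\bigr).
\]
What remains is a bilinear estimate of the form $\|Q_{\rm ns}(f,f)\|_{C^{\alpha'}_{\ell,q_1}} \lesssim \|f\|_{C^\alpha_{\ell,\tilde q}}^2$, with $\tilde q = q_1 + [\alpha/(1+2s)+\gamma]_+$. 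Combining this with the above, noting that the linear $\|f\|_{C^\alpha_{\ell,q_1}}$ term is dominated by the quadratic $\|Q_{\rm ns}\|$ term (up to a WLOG reduction to $\|f\|_{C^\alpha_{\ell,\tilde q}} \geq 1$, otherwise the statement is trivial), and using the arithmetic identity $\tfrac{\alpha-\alpha'+2s}{\alpha'} + 2 = 1 + \tfrac{\alpha+2s}{\alpha'}$, yields the claim.

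For the bilinear estimate, by Lemma~\ref{l:Q2} we have $Q_{\rm ns}(f,f)(t,x,v) = C f(t,x,v)(f \ast_v |\cdot|^\gamma)(t,x,v)$, so the standard product rule for kinetic H\"older norms,
\[
[F_1 F_2]_{C^{\alpha'}_{\ell,\tilde q}} \lesssim [F_1]_{C^{\alpha'}_{\ell,\tilde q}}\|F_2\|_{L^\infty} + \|F_1\|_{L^\infty_{\tilde q}}[F_2]_{C^{\alpha'}_\ell},
\]
reduces the problem to bounding $\|f \ast_v |\cdot|^\gamma\|_{L^\infty}$ and its kinetic $C^{\alpha'}_\ell$ seminorm by $\|f\|_{C^\alpha_{\ell,\tilde q}}$. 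The $L^\infty$ bound comes from a routine convolution estimate (Lemma~\ref{l:convolution}). For the seminorm, since the convolution commutes with translations in $(t,x)$, the regularity in $t$ and $x$ transfers directly from $f$, while the $v$-regularity is extracted by splitting the integral $\int f(\cdot + w)|w|^\gamma \dd w$ into a piece near $w=0$ (where the singularity of $|w|^\gamma$ is integrable and one uses pointwise bounds on $f$) and a piece far from the origin (where a H\"older-in-$v$ increment of $f$ can be taken out).

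The main obstacle is tracking the moment loss incurred in this H\"older estimate for $f \ast_v |\cdot|^\gamma$. The anisotropic scaling built into $d_\ell$ converts a kinetic displacement of size $\delta$ into a Euclidean $x$-displacement of size $\delta^{1+2s}$, so when one estimates the $x$-H\"older difference of the convolution by pulling the difference inside and bounding against $\|f\|_{C^\alpha_\ell}$, one picks up a factor $\vv^{\alpha/(1+2s)}$ from the weight rescaling that must be matched against the decay produced by convolving with $|w|^\gamma$. When $\alpha/(1+2s) + \gamma \leq 0$ this extra factor is harmless; when $\alpha/(1+2s) + \gamma > 0$ it forces an additional weight of $\vv^{\alpha/(1+2s)+\gamma}$ to close the estimate, producing exactly the $[\alpha/(1+2s) + \gamma]_+$ moment loss appearing in the statement. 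This is the delicate bookkeeping step that goes beyond the bare application of Theorem~\ref{t:global-schauder}.
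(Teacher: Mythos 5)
Your proposal is correct and follows essentially the same route as the paper: apply Theorem \ref{t:global-schauder} with $g=f$ and $h=Q_{\rm ns}(f,f)$ on $[t_0/2,t_0]$, absorb the constant $1$ into $t_0^{-1+(\alpha-\alpha')/(2s)}$, and bound $\|Q_{\rm ns}(f,f)\|_{C^{\alpha'}_{\ell,q_1}}$ bilinearly, which is exactly where the $[\alpha/(1+2s)+\gamma]_+$ moment loss enters. The only difference is that the paper simply invokes the already-available bilinear estimate Lemma \ref{l:Q2holder} for that step rather than re-deriving it as you sketch.
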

\begin{proof}
Theorem \ref{t:global-schauder} with with $g=f$ and $h=Q_{\rm ns}(f,f)$ implies
\begin{equation*}
\begin{split}
 &\|f\|_{C^{2s+\alpha'}_{\ell,q_1-\kappa}([t_0/2,t_0]\times\R^6)}\\
	 & \qquad\leq C t_0^{-1+(\alpha-\alpha')/(2s)}
	 	\|f\|_{C^\alpha_{\ell,q_1}([0,t_0]\times\R^6)}^{(\alpha-\alpha'+2s)/\alpha' }
		\left(\|f\|_{C^\alpha_{\ell,q_1}([0,t_0]\times\R^6)} + \|Q_{\rm ns}(f,f)\|_{C^{\alpha'}_{\ell,q_1}([0,t_0]\times\R^6)}\right)\\
	 &\qquad\leq C t_0^{-1+(\alpha-\alpha')/(2s)}
	 	\|f\|_{C^\alpha_{\ell,q_1+{[\alpha/(1+2s)+\gamma]_+}}([0,t_0]\times\R^6)}^{1+(\alpha+2s)/\alpha'} ,
 \end{split}
 \end{equation*}
 using Lemma \ref{l:Q2holder} to bound $Q_{\rm ns}(f,f)$. 
\end{proof}

Next, we discuss higher regularity estimates for the solution $f$. The following proposition is in some sense a restatement of the main theorem of \cite{imbert2020smooth}, using hypotheses that are convenient for our purposes ($L^\infty_q$ bounds and pointwise lower bounds for $f$, rather than the mass, energy, and entropy density bounds used in \cite{imbert2020smooth}). This result extends the higher regularity estimates to the case $\gamma+2s<0$, although we should point out that the hypotheses here are stronger than in \cite{imbert2020smooth} and it is not currently known how to prove global regularity estimates {\it depending only on mass, energy, and entropy bounds} in the case $\gamma+2s<0$.

\begin{proposition}[Higher regularity]\label{p:higher-reg}
Let $f$ be a classical solution to \eqref{e.boltzmann} on $[0,T]\times\R^6$, and let $\tau \in (0,T)$. Assume that 
for some $\Omega \subseteq \R^3$, there exist $\delta, r, R>0$ such that for any $x\in \Omega$, there is a $v_x\in B_R$ such that $f(t,x,v) \geq \delta$ whenever $(x,v)\in B_r(x,v_x)\cap(\Omega\times\R^3)$. 
Fix any $m,k \geq 0$.   
Then there exists $q(k,m)$ such that, if $f \in L^\infty_{q(k,m)}([0,T]\times \R^6)$ and $j$ is a multi-index in $(t,x,v)$ with $|j| \leq k$, then
\be
	\label{e.higher-reg-est} 
	\|D^j f\|_{L^\infty_m([\tau,T]\times\Omega'\times\R^3)} \leq C.
\ee
The constant $q(k,m)$ depends  on $k$, $m$, $\delta$, $r$, $R$,  $\tau$, $T$, $\Omega'$, and $\Omega$. 
  The constant $C$ depends on the same quantities as well as
 $\|f\|_{L^\infty_{q(k,m)}}$. Furthermore, $q(k,m)$ and $C$ are nonincreasing functions of $\tau$.

If $\Omega = \R^3$, then we can replace $\Omega'$ with $\R^3$ in \eqref{e.higher-reg-est}.
\end{proposition}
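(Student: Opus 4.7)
The strategy is a hypoelliptic bootstrap, chaining Theorem~\ref{t:global-degiorgi} with repeated applications of Theorem~\ref{t:global-schauder} and Proposition~\ref{p:nonlin-schauder}. Fix a small exponent $\alpha_0 \in (0, \min(1,2s))$ and set $\alpha'_0 = \alpha_0\, 2s/(1+2s)$. Choose an integer $N$ large enough that $N(2s + \alpha'_0) > (1+2s)k$, so that $N$ Schauder iterations produce enough kinetic H\"older regularity to control every partial derivative of Euclidean order $\leq k$ (the worst case being a pure $x$-derivative, which has kinetic order $(1+2s)k$). Also fix decreasing sequences $\tau = \tau_N < \tau_{N-1} < \cdots < \tau_0 < \tau$ and nested open sets $\Omega' \Subset \Omega_N \Subset \cdots \Subset \Omega_0 \Subset \Omega$. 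For the base case, Theorem~\ref{t:global-degiorgi} gives $f \in C^{\alpha_0}_{\ell, q_0}([\tau_0, T] \times \Omega_0 \times \R^3)$ (the lower bound assumption on $f$ is precisely what makes this theorem available), and Proposition~\ref{p:nonlin-schauder} upgrades this to $f \in C^{2s+\alpha'_0}_{\ell, q_1}$ on a slightly smaller domain, at the cost of an explicit moment loss $\kappa$ and a time factor $\tau_0^{-1 + (\alpha_0 - \alpha'_0)/(2s)}$.

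The inductive step propagates regularity from $C^{\beta_j}_{\ell, q_j}$ on $[\tau_j, T] \times \Omega_j \times \R^3$ to $C^{\beta_j + 2s + \alpha'_0}_{\ell, q_{j+1}}$ on $[\tau_{j+1}, T] \times \Omega_{j+1} \times \R^3$. For any multi-index $\lambda$ whose kinetic order $2s\lambda_t + (1+2s)|\lambda_x| + |\lambda_v|$ is at most $\beta_j - 2s - \alpha'_0$, the derivative $D^\lambda f$ satisfies a linear kinetic equation
\[
	(\partial_t + v \cdot \nabla_x) D^\lambda f = Q_{\rm s}(f, D^\lambda f) + H_\lambda,
\]
where $H_\lambda$ collects the commutators $[v \cdot \nabla_x, D^\lambda] f$ (all of strictly lower kinetic order than $D^\lambda f$), the Leibniz remainder $D^\lambda Q_{\rm s}(f, f) - Q_{\rm s}(f, D^\lambda f)$ coming from differentiating the kernel $K_f$ in its dependence on $f$ (again involving only strictly lower-order derivatives of $f$), and the derivative $D^\lambda Q_{\rm ns}(f, f)$ controlled via Lemmas~\ref{l:Q2holder} and~\ref{l:Q-polynomial}. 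The inductive hypothesis ensures $H_\lambda \in C^{\alpha'_0}_{\ell, q'_j}$ for an explicit weight $q'_j \leq q_j$, and Theorem~\ref{t:global-schauder} applied to $D^\lambda f$ yields $D^\lambda f \in C^{2s+\alpha'_0}_{\ell, q_{j+1}}$. Combining over all admissible $\lambda$ upgrades the kinetic H\"older order of $f$ by $2s + \alpha'_0$.

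The principal obstacle is bookkeeping. At each bootstrap step there are three distinct sources of moment loss (the constant $\kappa$ from Theorem~\ref{t:global-schauder}, the cost of bounding $Q_{\rm ns}$ and its derivatives via Lemma~\ref{l:Q-polynomial}, and the Leibniz remainder terms), and one must choose $q(k,m)$ large enough that the weight remaining after $N$ iterations is at least $m$; this determines $q(k,m)$ explicitly as a linear function of $m$ plus a constant depending on $k$, $\gamma$, $s$, and $\alpha_0$. After the final iteration, $f \in C^{\beta_N}_{\ell, q_N}([\tau, T] \times \Omega' \times \R^3)$ with $\beta_N > (1+2s)k$ and $q_N \geq m$, which directly yields $D^j f \in L^\infty_m([\tau, T] \times \Omega' \times \R^3)$ for every multi-index $j$ with $|j| \leq k$, establishing~\eqref{e.higher-reg-est}. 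The monotonicity of $q(k,m)$ and $C$ in $\tau$ is immediate, since smaller $\tau$ forces smaller $\tau_0$ in the base case, worsening both the required initial weight and the time-weighted Schauder constant.
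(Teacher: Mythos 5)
Your overall strategy coincides with the paper's: the paper proves this proposition by invoking the bootstrap of Sections 8--9 of \cite{imbert2020smooth}, fed with the $\gamma+2s<0$ change of variables and the global De Giorgi and Schauder estimates (Theorems \ref{t:global-degiorgi} and \ref{t:global-schauder}) developed in Section \ref{s:global-reg}, together with the bilinear H\"older estimates of Lemmas \ref{l:Q1holder} and \ref{l:Q2holder}. However, your execution of the inductive step has a genuine gap. The parenthetical claim that the commutators $[v\cdot\nabla_x, D^\lambda]f$ are ``of strictly lower kinetic order than $D^\lambda f$'' is false: since $[v\cdot\nabla_x,\partial_{v_i}] = \partial_{x_i}$, a $v$-derivative in $\lambda$ produces an $x$-derivative in the commutator, which raises the kinetic order by $2s$ (a term of kinetic degree $|\lambda_v|+(1+2s)|\lambda_x|$ generates one of degree $|\lambda_v|-1+(1+2s)(|\lambda_x|+1)$). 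This hypoelliptic mismatch is precisely what makes the bootstrap delicate; it can be absorbed only because the Schauder gain $2s+\alpha'_0$ exceeds $2s$, and the accounting must be done with that in mind rather than dismissed as lower order.

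The second, related gap is that you differentiate the equation and apply Theorem \ref{t:global-schauder} directly to $D^\lambda f$, treating these derivatives as already existing with enough H\"older regularity to make sense of the linearized equation and of the source $H_\lambda$. At the stage where the bootstrap is running, $f$ is only known to lie in some $C^{\beta_j}_\ell$, and the existence and H\"older control of derivatives of kinetic degree below $\beta_j$ (in particular the $x$-derivatives appearing in the commutators) is not automatic in the kinetic H\"older scale; this is why the argument the paper cites works with \emph{increments} of $f$ in the kinetic directions, applies the global De Giorgi/Schauder estimates to those increments, and relies on the nontrivial facts of \cite[Section 8]{imbert2020smooth} relating H\"older norms of increments to higher-order kinetic H\"older norms. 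Your proposal omits this machinery entirely, so as written the inductive step is circular. (Two minor points: the chain of times should increase toward $\tau$, i.e.\ $\tau_0<\tau_1<\cdots<\tau_N=\tau$, not as you wrote it; and the moment loss per step is not as cleanly explicit as you claim --- the paper only asserts a non-explicit finite loss per iteration, which is all that is needed to define $q(k,m)$.)
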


If we were working on a periodic spatial domain and only considering the case $\gamma+2s\geq 0$, we could remove the dependence on higher $L^\infty_q$-norms of $f$, by using decay estimates as in \cite{imbert2018decay}. However, we need to apply these estimates on the whole space $\R^3_x$ and also $\gamma+2s<0$, so we state it in the form above. We intend to apply Proposition \ref{p:higher-reg} in situations where higher $L^\infty_q$-norms of $f$ are bounded in terms of the initial data and weaker norms of $f$.

The proof of Proposition \ref{p:higher-reg} is the same as the proof of \cite[Theorem 1.2]{imbert2020smooth} and consists of the following ingredients:
\begin{itemize}
\item The change of variables and global estimates described in this section.

\item The bootstrapping procedure explained in Section 9 of \cite{imbert2020smooth}, which consists of applying Theorems \ref{t:global-degiorgi} and \ref{t:global-schauder} to partial derivatives and increments of $f$. This makes use of certain facts about increments that are proven in Section 8 of \cite{imbert2020smooth}. The analysis in Sections 8 and 9 of \cite{imbert2020smooth} does not use the sign of $\gamma+2s$ in any way. 

At each step of the bootstrapping, a certain (non-explicit) number of velocity moments are used up.  The result of \cite{imbert2020smooth} proves estimates for all partial derivatives of $f$, but one can obviously stop the process after a finite number of iterations, which gives rise to the condition $f \in L^\infty_{q(k,m)}([0,T]\times\R^6)$ in our statement.

\item Bilinear estimates for the operators $Q_{\rm s}$ and $Q_{\rm ns}$ in H\"older spaces, which are also needed in the course of bootstrapping. These lemmas are also essentially independent of the sign of $\gamma+2s$, but we record them in Appendix \ref{s:lemmas} for the sake of completeness: see Lemmas \ref{l:Q1holder} and \ref{l:Q2holder}.
\end{itemize}

Finally, we have a continuation criterion for smooth solutions, which will be used in our proof of Theorem \ref{t:existence}. It is intended mainly as an internal result and works with solutions defined on a torus of general side length.  It is certainly not sharp.

\begin{proposition}[Continuation criterion]\label{p:continuation}
Let $\T_M^3$ be the periodic torus of side length $M>0$. 
Let $f$ be a classical solution to \eqref{e.boltzmann} in $[0,T)\times \T_M^3\times \R^3$ with $T>0$. 
Suppose that initial data $f_{\rm in}$ is smooth, is rapidly decaying in $v$, and that there is $\delta, r>0$ $x_m \in \T_M^3$, and $v_m\in \R^3$ such that 
\[
	f_{\rm in}(x,v)\geq \delta, \quad (x,v)\in  B_r(x_m, v_m).
\]

Then there exists $q_{\rm cont}$ such that, if 
\be\label{e.c62701}
	\|f\|_{L^\infty_{q_{\rm cont}}([0,T)\times \T^3_M\times\R^3)} < \infty,
\ee
then $f$ can be extended to a classical solution $[0,T+\eps]\times \T_M^3\times \R^3$ for some $\eps>0$.

The decay rate $q_{\rm cont}$ depends on $M$, $\gamma$, $s$, $T$, $\delta$, $r$, and $|v_m|$. 
The constant $\eps$ depends on the same quantities as well as  $\|f(t)\|_{L^\infty_{q_{\rm cont}}([0,T)\times \T^3_M\times\R^3)}$. Furthermore, $q_{\rm cont}$ is a nonincreasing function of $T$, and $\eps$ is a nondecreasing function of $T$.
\end{proposition}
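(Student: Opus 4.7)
The plan is to combine propagation of polynomial decay, the self-generating lower bounds of Theorem~\ref{t:lower-bounds}, the higher regularity estimates of Proposition~\ref{p:higher-reg}, and finally the short-time existence result for smooth initial data (Proposition~\ref{p:prior-existence}) applied at a time $t^*$ slightly before $T$.

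First, starting from the bound~\eqref{e.c62701}, I would run a barrier argument based on Lemma~\ref{l:Q-polynomial} — using barriers $g = N e^{\beta t}\langle v\rangle^{-q}$ with $q$ incrementally larger than $q_{\rm cont}$, each step raising $q$ by at most $-\gamma$ per Lemma~\ref{l:Q-polynomial} — iterated finitely many times to conclude that $f\in L^\infty_q([0,T)\times\T^3_M\times\R^3)$ for every $q$ with $f_{\rm in}\in L^\infty_q$.  Since $f_{\rm in}$ is rapidly decaying, this produces uniform $L^\infty_q$ bounds for every $q\ge 0$. Second, the hypotheses~\eqref{e.hydro-general} are then satisfied on $[0,T)$, so Theorem~\ref{t:lower-bounds} provides a Gaussian lower bound $f(t,x,v)\ge \mu(t,x) e^{-\eta(t,x)|v|^2}$ on $(0,T)\times\T^3_M\times\R^3$. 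Restricting to $[T/4,T)$ and using the compactness of $\T^3_M$, this gives a uniform lower bound $f\ge \delta''$ on $[T/4,T)\times\T^3_M\times B_R(0)$ for some $\delta'', R>0$, which in turn yields the lower bound required by Proposition~\ref{p:higher-reg} (taking $v_x = 0$ for every $x$) after shifting the time origin to $T/4$.

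Third, Proposition~\ref{p:higher-reg} applied on the shifted interval with, say, $\tau = T/3$ then gives, for any fixed $k, m$ and any multi-index $j$ with $|j|\le k$, the bound $\|D^j f\|_{L^\infty_m([T/2,T)\times \T^3_M\times\R^3)} \le C$, provided $q_{\rm cont}$ is chosen large enough that Step~1 yields $f\in L^\infty_{q(k,m)}$. Choosing $k,m$ matching the hypotheses of Proposition~\ref{p:prior-existence} (so that $H^k_n$ and $L^\infty_p$ bounds with $k\ge 6$, $n\ge n_0$, $p\ge p_0$ follow), this supplies uniform $H^k_n\cap L^\infty_p$ control on $[T/2,T)$. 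Fixing $t^*\in [T/2,T)$ close to $T$, the datum $f(t^*)$ satisfies the hypotheses of Proposition~\ref{p:prior-existence}, which produces a solution $\tilde f$ on $[t^*, t^* + \tau_0]$ with $\tau_0$ depending only on the uniform norms, hence only on the quantities listed in the statement. Uniqueness in Proposition~\ref{p:prior-existence} forces $\tilde f = f$ on $[t^*, T)$, so $\tilde f$ extends $f$ to $[0, t^*+\tau_0]$; taking $t^*$ close enough to $T$ gives extension past $T$ by some positive $\eps$.

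The main obstacle is carefully tracking what value of $q_{\rm cont}$ is required: it must be chosen large enough that (i) the barrier iteration in Step~1 can reach the polynomial decay needed downstream (this is routine given Lemma~\ref{l:Q-polynomial}) and, more delicately, (ii) the non-explicit bootstrap hidden inside Proposition~\ref{p:higher-reg} terminates with enough derivatives and weighted decay to match the regularity hypotheses of Proposition~\ref{p:prior-existence}. The monotonicity claims — $q_{\rm cont}$ nonincreasing in $T$ and $\eps$ nondecreasing in $T$ — follow because each ingredient's constants depend monotonically on $T$: a larger $T$ permits the smoothing to accumulate longer, reducing the required decay and enlarging the extension time.
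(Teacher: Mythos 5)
Your Steps 2--4 are essentially the paper's proof: the lower bound for Proposition~\ref{p:higher-reg} comes from Theorem~\ref{t:lower-bounds} plus compactness of the torus, Proposition~\ref{p:higher-reg} (with $k=6$, weight $n+2$) gives uniform $H^6_n\cap L^\infty$ control of $f(t)$ on $[T/2,T)$, and Proposition~\ref{p:prior-existence} applied at a time $t^*$ close to $T$ produces $\tilde f$ on a time interval of uniform length $\eps'$ (uniform because the constant in Proposition~\ref{p:higher-reg} is nonincreasing in $\tau=T/2$), after which one concatenates; the monotonicity claims for $q_{\rm cont}$ and $\eps$ follow exactly as you say. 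The paper also first rescales to $M=1$ to absorb the $M$-dependence, a cosmetic step you omit.

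The genuine problem is your Step 1. The barrier argument behind Lemma~\ref{l:simple-bound} (and hence Lemma~\ref{p:upper-bounds}) needs, as a \emph{qualitative} hypothesis, that $f\in L^\infty_{q'}([0,T)\times\T^3_M\times\R^3)$ for some $q'>q$ strictly larger than the target exponent: without decay strictly faster than $\vv^{-q}$ there is no guarantee of a first crossing point between $f$ and the barrier $Ne^{\beta t}\vv^{-q}$. The hypotheses of the continuation criterion only give $f\in L^\infty_{q_{\rm cont}}$ (rapid decay is assumed for $f_{\rm in}$, not for $f$ at positive times), so each step of your iteration assumes exactly what it is trying to prove; in the paper's construction section this qualitative decay is instead obtained by re-running the short-time existence theorem in higher weighted spaces, not by barriers alone. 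The fix is simply to drop Step 1, as the paper does: define $q_{\rm cont}=\max\{p,\,q(6,n+2)\}$ with $p$ from Proposition~\ref{p:prior-existence} and $q(6,n+2)$ from Proposition~\ref{p:higher-reg}, so that hypothesis \eqref{e.c62701} by itself supplies all the decay needed downstream and no upgrading is required. As written, your plan is internally inconsistent on this point: if $q_{\rm cont}$ is already large enough for Proposition~\ref{p:higher-reg}, Step 1 is superfluous; if it is not, Step 1 does not go through.
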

\begin{proof}
First, by scaling, we may consider the case $M=1$.  Indeed, defining $f^M(t,x,v) := M^{\gamma+3}f(t,Mx,Mv)$, it is clear that: (i) $f^M$ solves~\eqref{e.boltzmann} on $[0,T)\times\T^3\times \R^3$, (ii) $f^M$ exists on $[0,T+\eps]\times\T^3\times\R^3$ if and only if $f$ exists on $[0,T+\eps]\times\T_M^3\times\R^3$, and (iii) inequality \eqref{e.c62701} holds if and only if $\|f^M\|_{L^\infty_{q_{\rm cont}}([0,T)\times\T^3\times\R^3)} < \infty$.  

Fix $k= 6$, and let $n$ and $p$ be the corresponding constants from \Cref{p:prior-existence}. Define $q_{\rm cont} = \max\{p, q(6,n+2)\}$, with $q(6,n+2)$ as in \Cref{p:higher-reg}, and note that $q_{\rm cont}$ depends on the quantities claimed in the statement, via the dependencies inherited by \Cref{p:higher-reg}.

We can apply \Cref{p:higher-reg} for all multi-indices $j$ of order at most $6$ to find 
\be\label{e.c62901}
	\sup_{|j| \leq 6} \|D^j f\|_{L^\infty_{n+2}([T/2,T]\times \T^3\times \R^3)}
		\leq C_0.
\ee
Note that the lower bound condition required to apply \Cref{p:higher-reg} follows from \Cref{t:lower-bounds} and the compactness of $\T^3$.  The constant $C_0$ in~\eqref{e.c62901} depends on $T$, $n$, $p$, $\delta$, $r$, $|v_m|$,  
and $\|f(t)\|_{L^\infty_{q_{\rm cont}}([0,T)\times \T^3\times\R^3)}$, 
again as a result of \Cref{p:higher-reg}.

We claim that if $\|f\|_{L^\infty_{q_{\rm cont}}}([0,T)\times\T^3\times\R^3)< \infty$, then $f$ can be extended past time $T$. Indeed, for any $t \in [T/2,T)$, the estimate \eqref{e.c62901} 
provides a uniform bound for
\be
	f(t) \in H^6_{n} \cap L^\infty_{q_{\rm cont}} (\T^3 \times \R^3)
\ee
depending only on the constants above.  Since $q_{\rm cont}\geq p$, and by our choice of $k$, $n$, and $p$, we may apply \Cref{p:prior-existence} at any $t\in [T/2,T)$ to obtain a solution $\tilde f$ in $C^0([t, t+\eps'), H^{k}_{n} \cap L^\infty_{q_{\rm cont}}(\T^3\times \R^3))$ with $\eps'$ depending on the constant $C_0$ in \eqref{e.c62901}. From Proposition \ref{p:higher-reg}, $C_0$ is a nonincreasing function of $\tau = T/2$, which implies $\eps'$ is nondecreasing in $T$.  Since $k= 6$, Sobolev embedding implies $\tilde f$ is a classical solution (in particular, it is twice differentiable in $x$ and $v$, and, via the equation \eqref{e.boltzmann}, once differentiable in $t$). The proof is then finished by choosing $\eps = \eps'/2$ and $t \in (T-\eps'/2,T)$ and concatenating $f$ and $\tilde f$.
\end{proof}

\section{Existence of solutions}\label{s:existence}

This section is devoted to the proof of Theorems \ref{t:existence} and \ref{t:weak-solutions}.

\subsection{Decay estimates}

We begin with novel decay estimates that are needed for our construction. These estimates are stated for any suitable solution $f$ of the Boltzmann equation \eqref{e.boltzmann} on a periodic spatial domain $\T_M^3$, i.e. the torus of side length $M>0$. In Section \ref{s:construct} below, we apply these estimates to our approximating sequence. Throughout this subsection, we assume the initial data corresponding to $f$ satisfies
\begin{equation}\label{e.initial-data-good}
f_{\rm in} \in C^\infty(\T^3_M\times \R^3) \cap L^\infty_{q'}(\T_M^3\times\R^3) \quad \text{for all } q'\geq 0.
\end{equation}
However, we do not assume {\it a priori} that $f$ satisfies polynomial decay of all orders for positive times.

First, using a barrier argument, we show that polynomial upper bounds of order larger than $3+\gamma+2s$ are propagated forward in time: 
\begin{lemma}\label{l:simple-bound}
Let $q_0 > 3+\gamma+2s$ and $q\in [q_0,q_0-\gamma]$ be fixed. Let $f$ be a solution of \eqref{e.boltzmann} on $[0,T]\times\T_M^3\times\R^3$ for some $M>0$, and assume $f\in L^\infty_{q'}([0,T]\times\T_M^3\times\R^3)$  for some $q'> q$ and that $f_{\rm in}$ satisfies \eqref{e.initial-data-good}. Then
\[
	\|f(t)\|_{L^\infty_{q}(\T^3_{M}\times \R^3)}
		\leq  \|f_{\rm in}\|_{L^\infty_{q}(\T_M^3\times \R^3)} \exp(C_0 \|f\|_{L^\infty_{q_0}([0,T]\times\T_M^3\times\R^3)} t), \quad 0\leq t\leq T,\]
for a constant $C_0>0$ depending only on universal quantities and $q$ and $q_0$. In particular, $C_0$ is independent of $f_{\rm in}$ and the norm of $f$ in $L^\infty_{q'}$.
\end{lemma}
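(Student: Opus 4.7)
The plan is to run a barrier argument using the $x$-independent comparison function
\[ g(t,v) := N e^{\beta t}\langle v\rangle^{-q}, \qquad N := \|f_{\rm in}\|_{L^\infty_q(\T_M^3\times\R^3)}, \qquad \beta := C_0\,\|f\|_{L^\infty_{q_0}([0,T]\times\T_M^3\times\R^3)},\]
where $C_0$ will be chosen strictly larger than the constant furnished by \Cref{l:Q-polynomial} for this pair $(q_0,q)$. The conclusion of the lemma is exactly $f(t,x,v)\leq g(t,v)$ on $[0,T]\times\T_M^3\times\R^3$. I will argue by contradiction. The auxiliary assumption $f\in L^\infty_{q'}$ with $q'>q$ implies that $\langle v\rangle^q f(t,x,v)/(Ne^{\beta t})$ vanishes as $|v|\to\infty$ uniformly on $[0,T]\times\T_M^3$, so by compactness of $\T_M^3$ the supremum in $(x,v)$ is attained for each $t$. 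Let $t_*$ be the first time at which this sup reaches $1$, and let $(x_*,v_*)$ realize it at $t_*$; at $t=0$ the sup is $\leq 1$ by definition of $N$, and $(0,x_*,v_*)$ is ruled out by continuity if $C_0>0$.

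At the crossing point one has $f(t_*,x_*,v_*)=g(t_*,v_*)$ and, crucially, $f(t_*,x_*,v)\leq g(t_*,v)$ for every $v\in\R^3$. Since $g$ is independent of $x$, standard first-order conditions at a maximum give $\nabla_x f(t_*,x_*,v_*)=0$, while a left-derivative argument applied to the (scalar, differentiable) function $t\mapsto \langle v_*\rangle^q f(t,x_*,v_*)-Ne^{\beta t}$ yields $\partial_t f(t_*,x_*,v_*)\geq \beta g(t_*,v_*)$. Substituting into the Boltzmann equation \eqref{e.boltzmann} produces the lower bound
\[ Q(f,f)(t_*,x_*,v_*)\;\geq\;\beta\,g(t_*,v_*). \]

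The main step is the matching upper bound on $Q(f,f)$ at the crossing point. Set $\varphi := f(t_*,x_*,\cdot)-g(t_*,\cdot)$, so $\varphi\leq 0$ on $\R^3$ and $\varphi(v_*)=0$. Bilinearity of $Q$ gives $Q(f,f)(t_*,x_*,v_*)=Q(f,g)(t_*,x_*,v_*)+Q(f,\varphi)(t_*,x_*,v_*)$. Using the Carleman representation from \Cref{l:Q1},
\[ Q_{\rm s}(f,\varphi)(v_*) \;=\; \int_{\R^3}\bigl(\varphi(v')-\varphi(v_*)\bigr)K_f(v_*,v')\dd v' \;=\;\int_{\R^3}\varphi(v')\,K_f(v_*,v')\dd v' \;\leq\; 0,\]
since $K_f\geq 0$ and $\varphi\leq 0$, while \Cref{l:Q2} combined with $\varphi(v_*)=0$ gives $Q_{\rm ns}(f,\varphi)(v_*)=0$. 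Hence $Q(f,f)\leq Q(f,g)$ at $(t_*,x_*,v_*)$, and \Cref{l:Q-polynomial} (applicable precisely because $q\in[q_0,q_0-\gamma]$) bounds
\[ Q(f,g)(t_*,x_*,v_*) \;=\; Ne^{\beta t_*}\,Q\bigl(f(t_*,x_*,\cdot),\langle\cdot\rangle^{-q}\bigr)(v_*) \;\leq\; C\,\|f\|_{L^\infty_{q_0}}\,g(t_*,v_*). \]
Comparing with the lower bound forces $\beta\leq C\|f\|_{L^\infty_{q_0}}$, contradicting the choice $C_0>C$.

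The main obstacle is producing that upper bound: because $Q$ is doubly nonlocal, pointwise information about $f$ at $v_*$ alone is useless, and one must exploit that $f-g\leq 0$ on the entire velocity space. It is also essential that \Cref{l:Q-polynomial} provides the sharp $\langle v\rangle^{-q}$ decay of $Q(f,\langle v\rangle^{-q})$ with no loss in the power of $\langle v\rangle$; any weaker bound (e.g.\ $\langle v\rangle^{-q+\delta}$) would prevent the barrier from closing. The condition $q\leq q_0-\gamma$ is the price paid for this sharp decay, and the condition $q\geq q_0$ is what allows the single $L^\infty_{q_0}$ norm of $f$ to control the kernel estimates. The higher-decay hypothesis $f\in L^\infty_{q'}$ with $q'>q$ is used only qualitatively, to ensure the supremum is attained, and does not enter the final estimate.
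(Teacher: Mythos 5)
Your argument is the paper's argument: the same barrier $Ne^{\beta t}\vv^{-q}$, the same first-crossing-point analysis with $\nabla_x f=0$ and $\partial_t f\geq \partial_t g$, the same observation that $Q(f,f)\leq Q(f,g)$ at the crossing (your splitting $Q(f,f)=Q(f,g)+Q(f,\varphi)$ with $\varphi=f-g\leq 0$, $\varphi(v_*)=0$ is just a repackaging of the paper's direct computation with $K_f\geq 0$), and the same appeal to Lemma \ref{l:Q-polynomial}, which is where the hypotheses $q_0>3+\gamma+2s$ and $q\in[q_0,q_0-\gamma]$ enter.

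The one genuine slip is at $t=0$. You take $N=\|f_{\rm in}\|_{L^\infty_q}$ exactly, so the comparison at time zero is only non-strict; generically the weighted supremum of $f_{\rm in}$ \emph{equals} $N$, so your ``first time the sup reaches $1$'' is $t_*=0$, and there the left-derivative inequality $\partial_t f\geq \beta g$ is unavailable. Your parenthetical that this case is ``ruled out by continuity if $C_0>0$'' is not a valid deduction: continuity does not prevent the sup from touching $1$ at $t=0$ and exceeding it immediately afterwards; ruling that out is precisely what the barrier argument is supposed to prove. The standard repair, and the one the paper uses, is to run the argument with $N=\|f_{\rm in}\|_{L^\infty_q}+\nu$ for $\nu>0$, which makes the initial comparison strict and (together with the $L^\infty_{q'}$ decay and periodicity guaranteeing attainment) forces the crossing time to be strictly positive, and then to let $\nu\to 0$ at the end. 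With that modification your proof is complete and coincides with the paper's.
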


\begin{proof}
Define the barrier function $g(t,x,v) =N e^{\beta t} \vv^{-q}$, with $N,\beta>0$ to be chosen later. By taking $N> \|f_{\rm in}\|_{L^\infty_{q}(\T^3_{M_\eps}\times \R^3)}$, we ensure $f(0,x,v) < g(0,x,v)$ for all $x$ and $v$. We would like to show
\begin{equation}\label{e.claim}
f(t,x,v) < g(t,x,v), \quad (t,x,v) \in [0,T]\times \T_M^3\times \R^3.
\end{equation}
If this bound fails, then because $f$ decays at a rate strictly faster than $\vv^{-q}$, and $f$ is periodic in the $x$ variable, there must be a first time $t_\rmcr\in (0,T]$ and location $(x_\rmcr,v_\rmcr)$ at which $f$ and $g$ cross. At the first crossing time, we have the following equalities and inequalities:
\[ 
\begin{split}
\partial_t f(t_\rmcr,x_\rmcr,v_\rmcr) &\geq \partial_t g(t_\rmcr,x_\rmcr,v_\rmcr),\\
 \nabla_x f(t_\rmcr,x_\rmcr,v_\rmcr) &= 0,\\
 f(t_\rmcr,x,v) &\leq g(t_\rmcr,x,v), \quad x\in \TM, v\in \R^3.
 \end{split}
 \]
Combining this with the Boltzmann equation \eqref{e.boltzmann}, we have
\begin{equation}\label{e.crossing}
\partial_t g(t_\rmcr,x_\rmcr,v_\rmcr) \leq Q(f,f) (t_\rmcr,x_\rmcr,v_\rmcr) \leq Q(f,g)(t_\rmcr,x_\rmcr,v_\rmcr).
\end{equation}
To justify the last inequality, write $Q(f,f) = Q_{\rm s}(f,f) + Q_{\rm ns}(f,f)$ and use the nonnegativity of the kernel $K_{f}$ to  obtain
\[ \begin{split}
	Q_{\rm s} (f, f)(t_\rmcr,x_\rmcr,v_\rmcr)
		&= \int_{\R^3} K_{f}(v,v') [f(t_\rmcr,x_\rmcr,v') - f(t_\rmcr,x_\rmcr,v_\rmcr)] \dd v'\\
		&\leq \int_{\R^3} K_{f}(v,v') [g(t_\rmcr,x_\rmcr,v') - g(t_\rmcr,x_\rmcr,v_\rmcr)] \dd v',
 \end{split}
 \]
since $f(t_\rmcr,x_\rmcr,v) \leq g(t_\rmcr,x_\rmcr,v)$ and $f(t_\rmcr,x_\rmcr,v_\rmcr) = g(t_\rmcr,x_\rmcr,v_\rmcr)$.
Next, realize that $Q_{\rm ns}(f,f)(v) = f(t_\rmcr,x_\rmcr,v) [f\ast |\cdot|^\gamma](t_\rmcr,x_\rmcr,v) \leq g(t_\rmcr,x_\rmcr,v)[f\ast |\cdot|^\gamma](t_\rmcr,x_\rmcr,v)$. We have established the last inequality in \eqref{e.crossing}. 

From Lemma \ref{l:Q-polynomial} we have, for some $C_0>0$ as in the statement of the lemma,
\[ 
Q(f,g)(t_\rmcr,x_\rmcr,v_\rmcr) = N e^{\beta t_\rmcr} Q(f,\langle \cdot \rangle^{-q})(t_\rmcr,x_\rmcr,v_\rmcr) \leq C_0 N e^{\beta t_\rmcr} \|f(t_\rmcr)\|_{L^\infty_{q_0}(\T_M^3 \times \R^3)} \langle v_\rmcr\rangle^{-q}.
\]
Together with \eqref{e.crossing} and $\partial_t g = N\beta e^{\beta t} \vv^{-q}$, this implies
\[ N \beta e^{\beta t_\rmcr} \langle v_\rmcr \rangle^{-q} \leq C_0 N e^{\beta t_\rmcr} \|f\|_{L^\infty_{q_0}([0,T]\times\TM\times\R^3)} \langle v_\rmcr\rangle^{-q},\]
which is a contradiction if we choose $\beta = 2C_0\|f\|_{L^\infty_{q_0}([0,T]\times\TM\times\R^3)}$. Hence,~\eqref{e.claim} must hold.

After choosing 
\[
	N = \|f_{\rm in}\|_{L^\infty_{q}([0,T]\times\TM\times\R^3)} + \nu
\] 
in \eqref{e.claim} and sending $\nu \to 0$, the proof is complete.
\end{proof}

Consider the $q= q_0$ case of Lemma \ref{l:simple-bound}. Intuitively, we would like to iterate this estimate to obtain a uniform \emph{a priori} bound on $\|f(t)\|_{L^\infty_{q_0}}$ up to some positive time. To do this precisely, we use the following technical lemma, which encodes the result of such an iteration.
\begin{lemma}{\cite[Lemma 2.4]{HST2020landau}}\label{l:annoying}
If $H:[0,T]\to \R_+$ is a continuous increasing function, and $H(t) \leq A e^{Bt H(t)}$ for all $t\in [0,T]$ and some constants $A, B > 0$, then
\[ H(t) \leq e A \quad \text{ for } \quad 0\leq t\leq  \min\left( T, \frac 1 {eAB}\right).\]
\end{lemma}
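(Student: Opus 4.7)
The plan is to use a continuity (bootstrap) argument. Since $H$ is continuous and $H(0) \le A e^{0} = A < eA$, we can define
\[
T^* := \sup\left\{ t \in [0, \min(T, 1/(eAB))] : H(s) \le eA \text{ for all } s \in [0,t] \right\},
\]
and we have $T^* > 0$.

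My aim is to show $T^* = \min(T, 1/(eAB))$. Suppose for contradiction that $T^* < \min(T, 1/(eAB))$. Then by continuity of $H$ and the definition of the supremum (if $H(T^*) < eA$ strictly, the inequality $H \le eA$ would persist on a small neighborhood to the right of $T^*$, contradicting that $T^*$ is the sup), we must have $H(T^*) = eA$. Plugging this into the hypothesis $H(T^*) \le A e^{B T^* H(T^*)}$ gives
\[
eA = H(T^*) \le A \exp(B T^* \cdot eA).
\]
But since $T^* < 1/(eAB)$ we have $B T^* \cdot eA < 1$, so $A \exp(B T^* \cdot eA) < Ae$, yielding the contradiction $eA < eA$.

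Hence $T^* = \min(T, 1/(eAB))$, and since $H$ is continuous and increasing, the bound $H(t) \le eA$ holds on the closed interval $[0, \min(T, 1/(eAB))]$. The argument is essentially elementary; the only minor subtlety is confirming that at the putative maximal time $T^*$ one actually has equality $H(T^*) = eA$, which follows from continuity plus the definition of the supremum. No deeper obstacle arises.
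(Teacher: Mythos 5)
Your proof is correct, and it is essentially the standard argument: the paper quotes this lemma from \cite{HST2020landau} without reproducing the proof, and the proof there is the same continuity/first-crossing (bootstrap) argument you give, exploiting that $H(0)\le A<eA$ and that a crossing $H(t_\ast)=eA$ at some $t_\ast<1/(eAB)$ forces $eA\le Ae^{Bt_\ast eA}<eA$. As a minor remark, your argument only uses continuity of $H$ (monotonicity is not needed), and your handling of the endpoint by passing to the limit is fine.
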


Next, we prove the key result of this subsection, which allows us to bound higher $L^\infty_q$ norms of $f(t)$ in terms of lower decay norms and the initial data:

\begin{lemma}\label{p:upper-bounds}
Let $q_{\rm base}>3+\gamma+2s$ be fixed, and let $f\in L^\infty_{q_{\rm base}+\eps}([0,T]\times\TM\times\R^3)$ be a solution to \eqref{e.boltzmann}, with $f_{\rm in}$ satisfying \eqref{e.initial-data-good} and $\eps>0$. 
There exists $C>0$ depending on universal quantities and $q_{\rm base}$, such that for $T_f$ given by 
\be\label{e.c080401}
	T_f = \frac C {\|f_{\rm in}\|_{L^\infty_{q_{\rm base}}(\TM\times \R^3)}},
\ee
the following hold: 
\begin{enumerate}
\item[(a)] The solutions $f$ satisfy
\[ 
\|f(t)\|_{L^\infty_{q_{\rm base}}(\TM\times\R^3)} \leq C\|f_{\rm in}\|_{L^\infty_{q_{\rm base}}(\TM\times \R^3)}, \quad 0\leq t\leq \min(T_f,T).
\]
\item[(b)] If $q> q_{\rm base}$ and  $f\in L^\infty_{q+\eps}([0,T]\times\TM\times\R^3)$, there holds
\begin{equation}\label{e.epsilon-bound}
	 \|f(t)\|_{L^\infty_q(\TM\times\R^3)}
	 	\leq \|f_{\rm in}\|_{L^\infty_q(\TM\times\R^3)} \exp\left[M_q\left(t,\|f_{\rm in}\|_{L^\infty_q(\TM\times\R^3)}\right) \right], \quad 0\leq t\leq \min(T_f,T),
 \end{equation}
for some increasing function $M_q:\R_+\times \R_+\to \R_+$ depending on universal constants, $q$, $q_{\rm base}$, and $\|f_{\rm in}\|_{L^\infty_{q_{\rm base}}(\T_M^3\times\R^3)}$. 
\end{enumerate}
\end{lemma}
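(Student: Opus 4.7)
The plan is to prove (a) by combining \Cref{l:simple-bound} in the diagonal case $q = q_0 = q_{\rm base}$ with the Gr\"onwall-type bound \Cref{l:annoying}, and then derive (b) by iterating \Cref{l:simple-bound} in finitely many increments of size at most $-\gamma$, all on the time interval $[0,T_f]$ fixed by part (a).

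For (a), take $q_0 = q = q_{\rm base}$ in \Cref{l:simple-bound}; this is admissible because $q_{\rm base} > 3+\gamma+2s$ by hypothesis, the inclusion $q_{\rm base} \in [q_{\rm base}, q_{\rm base}-\gamma]$ holds since $\gamma < 0$, and the decay hypothesis $f \in L^\infty_{q_{\rm base} + \eps}$ is given. Set $H(t) := \sup_{s\in[0,t]} \|f(s)\|_{L^\infty_{q_{\rm base}}}$, which is continuous and nondecreasing. Applying \Cref{l:simple-bound} on the reduced time interval $[0,t]$ and taking the supremum yields
\[
	H(t) \leq A\exp(BtH(t)),
		\quad A := \|f_{\rm in}\|_{L^\infty_{q_{\rm base}}},
		\quad B := C_0,
\]
where $C_0$ is the universal constant from \Cref{l:simple-bound}. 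Then \Cref{l:annoying} gives $H(t) \leq eA$ on $t \leq \min(T,(eAB)^{-1})$. Taking $T_f := (eC_0 \|f_{\rm in}\|_{L^\infty_{q_{\rm base}}})^{-1}$ and $C = e$ establishes (a) and puts $T_f$ into the form~\eqref{e.c080401}.

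For (b), choose a finite ladder $q_{\rm base} = p_0 < p_1 < \dots < p_n = q$ with $p_i - p_{i-1} \leq -\gamma$; the number of rungs $n$ is bounded by $\lceil (q-q_{\rm base})/(-\gamma)\rceil$, depending only on $q$, $q_{\rm base}$, and $\gamma$. Induct on $i$: given a bound $\|f(t)\|_{L^\infty_{p_{i-1}}} \leq \Phi_{i-1}(t)$ valid on $[0,\min(T_f,T)]$ (the base case $i=1$ supplied by (a)), apply \Cref{l:simple-bound} with base exponent $p_{i-1}$ and target $p_i$. The hypotheses are met: $p_{i-1} > 3+\gamma+2s$, $p_i \in [p_{i-1}, p_{i-1}-\gamma]$, and the decay requirement $f \in L^\infty_{p_i + \eps_i}$ follows from $f \in L^\infty_{q+\eps}$ and $p_i \leq q$. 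This produces
\[
	\|f(t)\|_{L^\infty_{p_i}} \leq \|f_{\rm in}\|_{L^\infty_{p_i}} \exp(C_0\,\Phi_{i-1}(t)\, t) =: \Phi_i(t),
		\quad t \in [0,\min(T_f,T)].
\]

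Since $\|f_{\rm in}\|_{L^\infty_{p_i}} \leq \|f_{\rm in}\|_{L^\infty_q}$ for all $i \leq n$, and $T_f$ is controlled solely by $\|f_{\rm in}\|_{L^\infty_{q_{\rm base}}}$, a finite induction on $n$ rungs yields at step $n$ a bound of the form~\eqref{e.epsilon-bound}, with an exponent $M_q(t, \|f_{\rm in}\|_{L^\infty_q})$ that is an increasing function of both arguments and further depends on universal quantities, $q$, $q_{\rm base}$, and $\|f_{\rm in}\|_{L^\infty_{q_{\rm base}}}$. The main subtlety is conceptual rather than computational: $T_f$ must be fixed up front using only the base norm $\|f_{\rm in}\|_{L^\infty_{q_{\rm base}}}$, so that all higher decay norms can be propagated on the \emph{same} time window. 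This is exactly what the small gain of decay in \Cref{l:Q-polynomial}---the input to \Cref{l:simple-bound}---enables: stepping the decay rate up by $-\gamma$ at each iteration incurs no shrinkage of the time interval.
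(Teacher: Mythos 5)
Your proposal is correct and follows essentially the same route as the paper: part (a) is the identical combination of Lemma \ref{l:simple-bound} (with $q=q_0=q_{\rm base}$) and the iteration Lemma \ref{l:annoying}, and part (b) is the same finite ladder of applications of Lemma \ref{l:simple-bound} in increments of size at most $|\gamma|$ on the fixed window $[0,\min(T_f,T)]$, with the recursive exponential bounds playing the role of the paper's functions $M_q$. The only cosmetic difference is that the paper takes exact $|\gamma|$-steps plus a final fractional step handled with a small parameter $\nu\to 0$, whereas you absorb this directly using the $L^\infty_{q+\eps}$ hypothesis; both are fine.
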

\begin{proof}
To prove (a), for any $t\in (0,\min(T_f,T)]$, we define 
\[
H(t):= \|f\|_{L^\infty_{q_{\rm base}}([0,t]\times \TM\times\R^3)}.
\]
Applying Lemma \ref{l:simple-bound} with $T = t$, we obtain $H(t) \leq \|f_{\rm in}\|_{L^\infty_{q_{\rm base}}(\TM\times\R^3)} \exp( C_0 H(t) t)$.   Lemma \ref{l:annoying} with $A = \|f_{\rm in}\|_{L^\infty_{q_{\rm base}}(\TM\times\R^3)}$ and $B = C_0$ implies 
\begin{equation}\label{e.propagation}
	\|f(t)\|_{L^\infty_{q_{\rm base}}(\TM\times\R^3)}
		\leq C \|f_{\rm in}\|_{L^\infty_{q_{\rm base}}(\TM\times\R^3)},
		\quad 0\leq t\leq \min(T_f,T),
 \end{equation}
 with $T_f$ as in the statement of the proposition.  This establishes (a). 

For (b), given $q>q_{\rm base}$, let $N\in \Z_{\geq 0}$ and $\theta\in [0,1)$ be such that $q = q_{\rm base} + (N+\theta)|\gamma|$. Applying Lemma \ref{l:simple-bound}, followed by \eqref{e.propagation}, we have 
\[
\begin{split}
	\|f(t)\|_{L^\infty_{q_{\rm base}+|\gamma|}(\TM\times\R^3)}
		&\leq \|f_{\rm in}\|_{L^\infty_{q_{\rm base}+|\gamma|}(\TM\times\R^3)}\exp(C_0 \|f\|_{L^\infty_{q_{\rm base}}([0,T]\times\TM\times\R^3)} t)
		\\&
		\leq \|f_{\rm in}\|_{L^\infty_{q_{\rm base}+|\gamma|}(\TM\times\R^3)}\exp(C C_0 \|f_{\rm in}\|_{L^\infty_{q_{\rm base}}(\TM\times\R^3)} t),
 \end{split}
 \]
 for $0\leq t\leq \min(T_f,T)$, where $C$ is the constant from \eqref{e.propagation}  and $C_0$ is the constant from \Cref{l:simple-bound}. 
We have shown that, up to time $\min(T_f,T)$, the bound \eqref{e.epsilon-bound} holds in the case $q= q_{\rm base} + |\gamma|$, with $M_{q_{\rm base}+|\gamma|}(t,z) = C t \|f\|_{L^\infty_{q_{\rm base}}}$.

We now iterate this argument $N$ times to obtain
 \[
 \begin{split}
	&\|f(t)\|_{L^\infty_{q_{\rm base}+N|\gamma|}(\TM\times\R^3)}\\
	  &~~\leq \|f_{\rm in}\|_{L^\infty_{q_{\rm base}+N|\gamma|}(\TM\times\R^3)}\exp(C \|f\|_{L^\infty_{q_{\rm base}+(N-1)|\gamma|}([0,T]\times\TM\times\R^3)} t)\\
	 &~~\leq \|f_{\rm in}\|_{L^\infty_{q_{\rm base}+N|\gamma|}(\TM\times\R^3)}\exp\left(C \|f_{\rm in}\|_{L^\infty_{q_{\rm base}+(N-1)|\gamma|}} \exp\left[ M_{q_{\rm base}+(N-1)|\gamma|}\left(t,\|f_{\rm in}\|_{L^\infty_{q_{\rm base}+(N-1)|\gamma|}}\right)\right] t\right)
	\\&~~
		= \|f_{\rm in}\|_{L^\infty_{q_{\rm base}+N|\gamma|}(\TM\times\R^3)}\exp\left(M_{q_{\rm base} + N |\gamma|}(t, \|f_{\rm in}\|_{L^\infty_{q_{\rm base}+N|\gamma|}})\right),
 \end{split}
 \]
where we use the recursive definition $M_{q_{\rm base}+N|\gamma|}(t,z) = C t z \exp(M_{q_{\rm base}+(N-1)|\gamma|}(t,z))$. 
Finally, for any small $\nu>0$, we apply Lemma \ref{l:simple-bound} again, with exponents $q-\nu =q_{\rm base} + (N+\theta)|\gamma| - \nu$ and $q_{\rm base}+N|\gamma|$,  and argue similarly to obtain
\begin{equation}\label{e.propagation-q}
 \|f(t)\|_{L^\infty_{q-\nu}(\TM\times\R^3)} \leq \|f_{\rm in}\|_{L^\infty_q(\TM\times\R^3)} \exp\left[ M_q\left(t,\|f_{\rm in}\|_{L^\infty_q(\TM\times\R^3)}\right)\right], \quad 0\leq t\leq \min(T_f,T),
 \end{equation}
 with $M_q(t,z) = C_0 z t  \exp(M_{q_{\rm base}+N|\gamma|}(t,z))$. 
 
 The functions $M_q(t,z)$ depend on $\|f\|_{L^\infty_{q_{\rm base}}([0,t]\times \T_{M}^3\times \R^3)}$, but this quantity is bounded in terms of $\|f_{\rm in}\|_{L^\infty_{q_{\rm base}(\T_M^3\times\R^3)}}$, by (a).
 
 Since $q-\nu< q$, our applications of Lemma \ref{l:simple-bound} are justified. The right-hand side is independent of $\nu$, so we can send $\nu\to 0$ and conclude (b). 
 \end{proof}

\subsection{Construction of approximate solutions}\label{s:construct}

Consider initial data $f_{\rm in}\in L^\infty_{q_{\rm base}}(\T_M^3\times\R^3)$ with $q_{\rm base}>3+\gamma+2s$. Our first step is to approximate $f_{\rm in}$ by smoothing, cutting off large values of $x$ and $v$, extending by $x$-periodicity, and adding a region of uniform positivity. This will give rise to a sequence of solutions $f^\eps$ that solve \eqref{e.boltzmann} with initial data $f_{\rm in}$ in the limit as $\eps\to 0$. The same construction of $f^\eps$ will be used to build classical solutions and weak solutions.

In more detail, for any $r\in (0,1)$, define the following functions.  First, let $\psi:\R^6\to \R_+$ be a standard smooth mollifier supported in $B_1(0)$ with $\int_{B_1(0)} \psi \dd x = 1$, and then denote
\be
	\psi_\eps(x,v) = \eps^{-6}\psi(x/\eps,v/\eps).
\ee
Next, for any $r>0$, let $\zeta_r:\R^3\to \R_+$ be a smooth cutoff such that $\sup_{\R^3} |\nabla \zeta_r|\lesssim 1$ and 
\be
	\zeta_r(\xi) = \begin{cases}
				1 \quad \text{ for } \xi \in B_{1/r}\\
				0 \quad \text{ for } \xi \in B_{1/r+1}^c,
			\end{cases}
\ee
Then, for any $\eps>0$, define
\begin{equation}\label{e.f0eps}
 f_{\rm in}^\eps(x,v) := \zeta_\eps(x)\zeta_\eps(v) [f_{\rm in}\ast \psi_\eps](x,v)+ \eps \psi(x,v).
\end{equation}
Letting $\TMeps$ be the three-dimensional torus of side length $M_\eps := 2(1/\eps+2)$ centered at $(0,0,0)$, we extend $f_{\rm in}^\eps$ by $x$-periodicity to obtain a smooth function on $\TMeps\times\R^3$, or equivalently, a smooth function on $\R^6$ that is $M_\eps$-periodic in the $x$ variable.
  
The following construction of an approximate solution $f^\eps$ is more intricate than one might expect. First, the time of existence provided to us by Proposition \ref{p:prior-existence} depends on the $H^k_n \cap L^\infty_p$ space one chooses, so we need an extra argument to obtain a smooth, rapidly decaying solution on a uniform time interval, even though $f_{\rm in}^\eps$ is smooth and rapidly decaying. Second, the exponent $q_{\rm cont}$ in the continuation criterion of Proposition \ref{p:continuation} may degenerate to $+\infty$ as $T\searrow 0$ (notice the dependence on $T$ of $q_{\rm cont}$ in \Cref{p:continuation}), so for small times we must perform the continuation ``by hand'' using Proposition \ref{p:prior-existence} and our decay estimates above.


For each $\eps>0$, by Proposition \ref{p:prior-existence}, there is a time $T_\eps>0$ and a solution $f^\eps(t,x,v) \geq 0$ defined on $[0,T_\eps]\times \T_{M_\eps}^3\times \R^3$, continuous down to $t=0$, with $f^\eps(0,x,v) =f_{\rm in}^\eps(x,v)$. We assume $T_\eps \leq T_f$ from Lemma \ref{p:upper-bounds}, since our goal is to show $f^\eps$ exists up to time $T_f$.  Noting that $f_{\rm in}^\eps$ is smooth in $(x,v)$ and rapidly decaying in $v$, we choose some large (fixed) values of $k$, $n$, and $p$ when we apply Proposition \ref{p:prior-existence}. We then have
\begin{equation}\label{e.epsilon-smooth}
f^\eps(t) \in H^k_{n}\cap L^\infty_{p}(\T_{M_\eps}^3\times\R^3) \quad\text{for } t\in [0,T_\eps].
\end{equation}
Choosing $T_\eps$ smaller if necessary, we also have
\begin{equation}\label{e.twice}
\|f^\eps(t)\|_{H^k_n(\T_{M_\eps}^3\times\R^3)} \leq 2\left(\|f_{\rm in}^\eps\|_{H^k_n(\T_{M_\eps}^3\times\R^3)} + \|f_{\rm in}^\eps\|_{L^\infty_p(\T_{M_\eps}^3\times\R^3)}\right), \quad t\in [0,T_\eps].
\end{equation}
Now, let $q>p$ be arbitrary. Applying Proposition \ref{p:prior-existence} a second time in the space $H^k_n \cap L^\infty_{q}$, we see there is some $T_{\eps,q}\in (0,T_\eps]$ such that $\|f^\eps(t)\|_{L^\infty_{q}(\T_{M_\eps}^3\times\R^3)} < \infty$ when $t\in [0,T_{\eps,q}]$. Lemma \ref{p:upper-bounds}(b) then implies estimate \eqref{e.epsilon-bound} holds up to time $T_{\eps,q}$. Since the right-hand side of \eqref{e.epsilon-bound} is bounded uniformly in $t\in [0,T_{\eps}]$, 
we can combine this with \eqref{e.twice} to bound the norm of $f^\eps(t)$ in the space $H^k_n \cap L^\infty_q$ by a constant depending only on $T_\eps$ and the initial data $f_{\rm in}^\eps$. We apply Proposition \ref{p:prior-existence} again to conclude $f^\eps$ lies in $L^\infty_q([0,T_{\eps,q} + T_{\eps,q}']\times\T_{M_\eps}^3\times\R^3)$ for some $T_{\eps,q}'$ depending only on the upper bound in \eqref{e.epsilon-bound}. Lemma \ref{p:upper-bounds} then implies the estimate \eqref{e.epsilon-bound} can be extended to the time interval $[0,T_{\eps,q} + T_{\eps,q}']$. Combining this with \eqref{e.twice}, the process can be iterated finitely many times until $f\in L^\infty_q([0,T_\eps]\times\T_{M_\eps}^3\times\R^3)$, with estimate \eqref{e.epsilon-bound} valid up to time $T_\eps$. Since $q>p$ was arbitrary, we conclude that all $L^\infty_q(\T_{M_\eps}^3\times\R^3)$ norms of $f^\eps(t)$ are finite, with the estimate \eqref{e.epsilon-bound} valid, up to time $T_\eps$.

Note that $f_{\rm in}^\eps$ satisfies the lower bound condition \eqref{e.mass-core} with $\delta =\eps$, $r =1$, and $(x_m, v_m) = (0,0)$. 
Additionally, we can apply \Cref{p:upper-bounds} to control the $L^\infty_q$-norm of $f^\eps$ on $[0,T_\eps]\times\T_{M_\eps}^3\times\R^3$ for any $q > q_{\rm base}$, which, since the $L^\infty_q$-norms are increasing in $q$, yields a bound on the $L^\infty_{q_{\rm cont}}$-norm of $f$ on $[0,T_\eps]\times\T_{M_\eps}^3\times\R^3$.
Combining these two ingredients, 
we can apply the continuation criterion of Proposition \ref{p:continuation} to extend the solution $f^\eps$ to a time interval $T_\eps + \eta$, with $\eta$ depending on $T_\eps$, the initial data, and the $L^\infty_{q_{\rm cont}}$-norm of $f^\eps$ on $[0,T_\eps]\times\T_{M_\eps}^3\times\R^3$. 
Since Proposition \ref{p:continuation} implies $f^\eps(t) \in L^\infty_{q_{\rm cont}}$ for $t< T_\eps + \eta$, Lemma \ref{p:upper-bounds} tells us that the bound \eqref{e.epsilon-bound} holds up to time $T_\eps+\eta$, and we can repeat this argument finitely many times until $f^\eps$ exists up to time $T_f$ and lies in $L^\infty_{q_{\rm cont}}([0,T_f]\times\T_{M_\eps}^3\times\R^3)$.


 Since Proposition \ref{p:continuation} implies $f^\eps(t) \in L^\infty_{q_{\rm cont}}$ for $t< T_\eps + \eta$, Lemma \ref{p:upper-bounds} tells us that the bound \eqref{e.epsilon-bound} holds up to time $T_\eps+\eta$, and we can repeat this argument finitely many times until $f^\eps$ exists up to time $T_f$ and lies in $L^\infty_{q_{\rm cont}}([0,T_f]\times\T_{M_\eps}^3\times\R^3)$. 

To extend higher $L^\infty_q$ estimates up to time $T_f$, we combine Proposition \ref{p:prior-existence} and Lemma \ref{p:upper-bounds} in the same way as above. We omit the details of this step. We now have a solution $\tilde  f^\eps \in L^\infty_q([0,T_f]\times\T_{M_\eps}^3\times\R^3)$ for every $q \geq q_{\rm base}$. By \Cref{p:higher-reg}, $f^\eps$ is also smooth, with regularity estimates depending on $\eps$. 

Let us summarize the results of the last two subsections:

\begin{proposition}\label{p:upper-bounds2}
Let $q_{\rm base}> 3+\gamma+2s$, and let  $T_f = C \|f_{\rm in}\|_{L^\infty_{q_{\rm base}}}^{-1}$ as in Lemma \ref{p:upper-bounds}. For any $\eps>0$, with $f_{\rm in}^\eps$ defined as in \eqref{e.f0eps}, there exist smooth, rapidly decaying solutions $f^\eps\geq 0$ to \eqref{e.boltzmann} on $[0,T_f]\times\T_{M_\eps}^3\times\R^3$ with initial data $f_{\rm in}^\eps$. These $f^\eps$ satisfy the estimates
\begin{equation}\label{e.base-est}
	\|f^\eps(t)\|_{L^\infty_{q_{\rm base}}(\TMeps\times\R^3)} 
		\leq C\|f_{\rm in}^\eps\|_{L^\infty_{q_{\rm base}}(\TMeps\times \R^3)},
			\quad 0\leq t\leq T_f.
\end{equation}
and for all $q \geq q_{\rm base}$,
\begin{equation}\label{e.higher-decay-est}
	 \|f^\eps(t)\|_{L^\infty_q(\TMeps\times\R^3)}
	 	\leq \|f_{\rm in}^\eps\|_{L^\infty_q(\TMeps\times\R^3)}
			\exp\left[M_q\left(t,\|f_{\rm in}^\eps\|_{L^\infty_q(\TMeps\times\R^3)}\right) \right],
				\quad 0\leq t\leq T_f,
\end{equation}
with $M_q$ as in Lemma \ref{p:upper-bounds}.
\end{proposition}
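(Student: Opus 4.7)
The plan is to obtain $f^\eps$ as the output of an iterative bootstrap built from three ingredients already in hand: the short-time existence in weighted Sobolev spaces (\Cref{p:prior-existence}), the decay propagation (\Cref{p:upper-bounds}), and the continuation criterion (\Cref{p:continuation}). A preliminary observation is that, by construction, $f_{\rm in}^\eps$ lies in $H^k_n\cap L^\infty_q(\TMeps\times\R^3)$ for every $k,n,q\geq 0$, and that $f_{\rm in}^\eps\geq \eps\psi$ supplies the uniform lower bound on a ball required by \Cref{p:continuation}.

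The first step is to fix $k=6$ with the corresponding $n_0, p_0$ from \Cref{p:prior-existence} and invoke that proposition to obtain a classical solution $f^\eps\in C^0([0,T_\eps^{(0)}], H^k_{n_0}\cap L^\infty_{p_0})$ for some $T_\eps^{(0)}>0$. Assuming without loss of generality that $T_\eps^{(0)}\leq T_f$, \Cref{p:upper-bounds}(a) immediately produces \eqref{e.base-est} on this interval. To promote all $L^\infty_q$-norms to the same interval, I would iterate as sketched in Section \ref{s:construct}: for each fixed $q>p_0$, reapply \Cref{p:prior-existence} in $H^k_{n_0}\cap L^\infty_q$ to obtain a (possibly shorter) subinterval on which $f^\eps\in L^\infty_q$ is finite, then apply \Cref{p:upper-bounds}(b) to get the quantitative bound \eqref{e.epsilon-bound} on that subinterval, and feed this bound back into \Cref{p:prior-existence} to step forward. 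Because the right-hand side of \eqref{e.higher-decay-est} is a fixed quantity depending only on $\|f_{\rm in}^\eps\|_{L^\infty_q}$, only finitely many extensions are required to cover $[0,T_\eps^{(0)}]$.

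The second step is to extend $f^\eps$ from $T_\eps^{(0)}$ up to $T_f$ via the continuation criterion. The subtlety here is that $q_{\rm cont}$ in \Cref{p:continuation} is non-increasing in the target time $T$, so I must fix $q_{\rm cont}$ in advance corresponding to $T=T_f$. Since all $L^\infty_q$-norms of $f^\eps$ are already finite on $[0,T_\eps^{(0)}]$ by the first step, \Cref{p:continuation} extends $f^\eps$ by some $\eta>0$ that is nondecreasing in $T$. Reapplying \Cref{p:upper-bounds} on the enlarged interval keeps $\|f^\eps\|_{L^\infty_{q_{\rm cont}}}$ controlled by the initial data alone, so finitely many extensions reach $T_f$. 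Throughout, \Cref{p:upper-bounds}(a) and (b) applied with $T=T_f$ yield exactly \eqref{e.base-est} and \eqref{e.higher-decay-est}. Smoothness and rapid decay of $f^\eps$ for positive times follow from \Cref{p:higher-reg} applied with $\Omega=\R^3$, using the lower bound inherited from $f_{\rm in}^\eps\geq\eps\psi$ (spread by \Cref{t:lower-bounds}) and the $L^\infty_q$-control just established.

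The main obstacle is the interleaving in the first step: each application of \Cref{p:prior-existence} gives an existence time depending on the ambient Sobolev norm, while \Cref{p:upper-bounds}(b) requires $f^\eps\in L^\infty_{q+\eps'}$ to propagate $L^\infty_q$. One must verify carefully that the bootstrap terminates at any prescribed $q$, which works because the quantitative bound in \eqref{e.epsilon-bound} is expressed solely in terms of $\|f_{\rm in}^\eps\|_{L^\infty_q}$ and $\|f_{\rm in}^\eps\|_{L^\infty_{q_{\rm base}}}$, so the step size in the iteration does not shrink to zero.
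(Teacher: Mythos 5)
Your proposal is essentially the paper's own construction: the same three ingredients (local existence from \Cref{p:prior-existence}, the decay propagation of \Cref{p:upper-bounds}, and the continuation criterion of \Cref{p:continuation}) interleaved in the same order, with the continuation performed ``by hand'' on the first interval and via \Cref{p:continuation} thereafter, and smoothness supplied by \Cref{p:higher-reg} using the lower bound $f_{\rm in}^\eps \geq \eps\psi$.

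Two points are stated more loosely than the argument requires. First, your termination claim for the bootstrap inside the first interval invokes only the $L^\infty_q$ bound \eqref{e.epsilon-bound}, but the existence time in \Cref{p:prior-existence} at each restart depends on the $H^k_n$-norm of the data at that time as well; the paper controls this by shrinking the first interval so that the Sobolev norm at most doubles (estimate \eqref{e.twice}), and you should either do the same or note that $\sup_{[0,T^{(0)}_\eps]}\|f^\eps(t)\|_{H^k_{n_0}}$ is finite by continuity of the solution curve, which is enough here since $\eps$-dependent constants are permitted. Second, since $q_{\rm cont}$ in \Cref{p:continuation} is \emph{nonincreasing} in the existence time, fixing it ``corresponding to $T=T_f$'' gives too small an exponent for the early continuation steps; one must use $q_{\rm cont}$ evaluated at the current (smaller) existence time, which may be larger. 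This is harmless in your scheme because all $L^\infty_q$-norms of $f^\eps$ are already finite and controlled by \Cref{p:upper-bounds} on the relevant intervals, but the monotonicity should be used in that direction. With these adjustments the proof matches the paper's.
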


Recall that $\|f_{\rm in}\|_{L^\infty_{q_{\rm base}}(\R^6)}< \infty$ by assumption. For $\eps \in (0,1)$, we clearly have
$$
	\|f_{\rm in}^\eps\|_{L^\infty_{q_{\rm base}
 }(\TMeps\times\R^3)}
 		\leq C_{q_{\rm base}}\left(\|f_{\rm in}\|_{L^\infty_{q_{\rm base}}(\R^6)} + \eps\right).
$$
Therefore, the right-hand side of \eqref{e.base-est} is independent of $\eps \in (0,1)$.  This, in turn, implies that the left-hand side of~\eqref{e.base-est} is independent of $\eps$ as well.

On the other hand, the right-hand side of \eqref{e.higher-decay-est} is independent of $\eps$ if and only if $f_{\rm in} \in L^\infty_q(\R^6)$. Even if this upper bound is not uniform in $\eps$, the quantities are still finite up to time $T_f$ (which is independent of $\eps$). 
This fact allows us to apply the continuation criterion of Proposition \ref{p:continuation} even if $f_{\rm in} \not \in L^\infty_{q_{\rm cont}}(\R^6)$.

In the next subsection, we derive regularity estimates that are uniform in $\eps$.

\subsection{Regularity of $f^\eps$ for positive times}

In the next two subsections, we prove the existence of classical solutions (Theorem \ref{t:existence}). Therefore, we work under the assumption that $f_{\rm in}$ satisfies the quantitative lower bound \eqref{e.mass-core}, and that $f_{\rm in}\in L^\infty_{q_0}(\R^6)$ with $q_0>3+2s$.

We no longer need to use the compactness of our spatial domain. From now on, we consider $f^\eps$ to be defined on $[0,T_f]\times\R^6$, periodic in $x$ with period $M_\eps$. Recall that $M_\eps\to \infty$ as $\eps\to 0$ and that $T_f$ is defined in~\eqref{e.c080401}. The estimates in this subsection apply on domains that are bounded in the $x$ variable, so for any fixed such domain, the $x$-periodicity is irrelevant for $\eps$ small enough. For brevity, we implicitly assume throughout this subsection that $\eps$ is small enough for any statement we make about a bounded $x$ domain.

In order to apply regularity estimates in an $\eps$-independent way, we first need suitable lower bounds for the solutions $f^\eps$ for positive times. 
Importantly, the bound on $\|f^\eps\|_{L^\infty_{q_0}([0,T_f]\times \R^6)}$ in~\eqref{e.base-est} is $\eps$-independent, and this is the crucial bound on which all others depend. 
 For $\eps$ sufficiently small depending on $\delta$, $|x_m|$, and $|v_m|$, the hypothesis \eqref{e.mass-core} implies
\be\label{e.c62704}
	f_{\rm in}^\eps(x,v) \geq \frac \delta 2, \quad (x,v) \in B_r(x_m,v_m).
\ee
Applying Theorem \ref{t:lower-bounds} to the smooth solutions $f^\eps$, we have
\begin{equation}\label{e.uniform-lower-b}
 f^\eps(t,x,v) \geq \mu(t,x) e^{-\eta(t,x) |v|^2},
 \end{equation}
with $\mu$ and $\eta$ uniformly positive and bounded on any compact subset of $(0,T_f]\times\R^3$, and depending only on $\delta$, $r$, $t$, $|x-x_m|$, $v_m$, and $\|f^\eps\|_{L^\infty_{q_0}([0,T_f]\times\R^6)}$. 
Here we used that $\|f^\eps\|_{L^\infty_{q_0}([0,T_f]\times \R^6)}$ controls the integral quantities~\eqref{e.hydro-general} in the conditions of \Cref{t:lower-bounds}. 
 Because of Proposition \ref{p:upper-bounds2}, the norm $\|f^\eps\|_{L^\infty_{q_0}([0,T_f]\times\R^6)}$ is bounded above by a constant times $\|f_{\rm in}\|_{L^\infty_{q_0}(\R^6)}$, and therefore $\mu$ and $\eta$ can be chosen independently of $\eps$.

Now we apply regularity estimates. Let $z_0 = (t_0,x_0,v_0)\in (0,T_f]\times\R^6$,  and let $r_0 \leq  \min(1,(t_0/2)^{1/(2s)})$.  With $\bar c$ as in Theorem \ref{t:global-degiorgi}, choose $\alpha>0$ small enough that $q_1 := q_0 - \bar c \alpha > 3+2s$. Since $q_0>3+2s>3$ and \eqref{e.uniform-lower-b} holds, Theorem \ref{t:global-degiorgi} gives
\begin{equation}\label{e.C-alpha-ell}
  \|f^\eps\|_{C^\alpha_{\ell, q_1}(Q_{r_0}^{t,x}(z_0)\times\R^3)} \leq C_{t_0} \|f^\eps\|_{L^\infty_{q_0}([0,T]\times\R^6)},
  \end{equation}
  where $Q_{r_0}^{t,x}(z_0) := (t_0-r_0^{2s},t_0]\times \{x: |x-x_0 - (t-t_0)v_0| < r_0^{1+2s}\}$.
  
The next step is to apply Schauder. Since we only assume decay of order $q_0>3+2s$ for $f_{\rm in}$, we cannot afford to use the global (in $v$) Schauder estimate of Theorem \ref{t:global-schauder}, so we proceed with the local Schauder estimate of Theorem \ref{t:schauder} instead. For  any $z=(t,x,v)\in Q_{r_0}(z_0)$, we define $K_{f^\eps,z}(w) = K_{f^\eps}(t,x,v,v+w)$. We need to check that the kernel satisfies the H\"older hypothesis in Theorem \ref{t:schauder}:

\begin{lemma}\label{l:f-eps-kernel}
With $q_1$, $\alpha$, $z_0$, $r_0$, and $f^\eps$ as above but under the additional condition that $(q_1 - 3 - \gamma-2s)(1+2s) > \alpha$, the kernel $K_{f^\eps,z}(w)$ satisfies the H\"older continuity condition \eqref{e.kernel-holder}, with constant $A_0$ depending on universal constants, $q_1$, $\alpha$, $t_0$, $x_0$, $r_0$, $\delta$, $|v_m|$, $|x_0-x_m|$, and $\|f_{\rm in}\|_{L^\infty_{q_0}}$.  It has no dependence on $\eps$ as long as $\eps$ is small enough that~\eqref{e.c62704} holds.
\end{lemma}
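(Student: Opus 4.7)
The plan is to verify \eqref{e.kernel-holder} by writing the kernel difference explicitly, using the formula from Lemma \ref{l:Q1}. For $z_1,z_2 \in Q_{r_0}(z_0)$, one has
\[
	K_{f^\eps,z_1}(w) - K_{f^\eps,z_2}(w)
		= \frac{1}{|w|^{3+2s}} \int_{w^\perp} \left[f^\eps(z_1 \circ (0,0,u)) - f^\eps(z_2 \circ (0,0,u))\right] |u|^{\gamma+2s+1} \tilde b(\cos\theta) \dd u,
\]
where I use the identity $z\circ(0,0,u) = (t,x,v+u)$ and the fact that the angular factor $\tilde b(\cos\theta)$ depends only on the direction of $w$. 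Substituting this into the target integral and swapping the order of integration reduces the task to estimating the mixed integral of differences $|f^\eps(z_1\circ(0,0,u)) - f^\eps(z_2\circ(0,0,u))|$ against $|u|^{\gamma+2s+1}$ and $|w|^{-1-2s}$.

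The key tool is the $C^\alpha_{\ell,q_1}$ bound \eqref{e.C-alpha-ell} combined with the right-translation inequality \eqref{e.right-translation}, which gives
\[
	d_\ell(z_1\circ(0,0,u),\, z_2\circ(0,0,u))
		\leq d_\ell(z_1,z_2) + d_\ell(z_1,z_2)^{\frac{2s}{1+2s}} |u|^{\frac{1}{1+2s}}.
\]
Raising to the power $\alpha$ and using $d_\ell(z_1,z_2)\leq 2r_0 \leq 2$, this yields
\[
	|f^\eps(z_1\circ(0,0,u)) - f^\eps(z_2\circ(0,0,u))|
		\leq C \langle v_1 + u\rangle^{-q_1} \left(d_\ell(z_1,z_2)^{\alpha} + d_\ell(z_1,z_2)^{\alpha'} |u|^{\frac{\alpha}{1+2s}}\right),
\]
with $\alpha' = \alpha\frac{2s}{1+2s}$, after some chaining argument to handle the fact that $z_1\circ(0,0,u)$ and $z_2\circ(0,0,u)$ may not lie in a single small kinetic cylinder when $|u|$ is large. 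This is the main obstacle: the $C^\alpha_{\ell,q_1}$ norm is defined via seminorms on unit cylinders, and we must carefully chain Hölder bounds or split into near ($|u| \lesssim 1$, where Hölder continuity applies directly on a fixed cylinder containing both points) and far ($|u| \gtrsim 1$, where we use a combination of Hölder continuity along the path and the $L^\infty_{q_1}$ bound $|f^\eps(v_i+u)|\leq C\langle u\rangle^{-q_1}$) regimes.

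Once the integrand is bounded as above, the $w$-integral factors out to give
\[
	\int_{B_\rho} \frac{\dd w}{|w|^{1+2s}}
		\leq C\rho^{2-2s},
\]
and one is left with the 2D integral
\[
	\int_{w^\perp} \langle u\rangle^{-q_1}\, |u|^{\gamma+2s+1} \left(1 + |u|^{\frac{\alpha}{1+2s}}\right) \dd u.
\]
The near-origin part converges since $\gamma+2s+3>0$, while the tail converges precisely when $q_1 > \gamma+2s+3 + \alpha/(1+2s)$, i.e.\ $(q_1 - 3 - \gamma-2s)(1+2s) > \alpha$, which is the hypothesis. Absorbing $d_\ell(z_1,z_2)^\alpha \leq C d_\ell(z_1,z_2)^{\alpha'}$ (since $d_\ell \leq 2$ and $\alpha > \alpha'$) produces the bound $A_0 \rho^{2-2s} d_\ell(z_1,z_2)^{\alpha'}$, with
\[
	A_0 \leq C\, \|f^\eps\|_{C^\alpha_{\ell,q_1}(Q_{r_0}^{t,x}(z_0)\times \R^3)}
		\leq C\, \|f^\eps\|_{L^\infty_{q_0}([0,T_f]\times\R^6)}
		\leq C\, \|f_{\rm in}\|_{L^\infty_{q_0}(\R^6)},
\]
where the last two inequalities follow from \eqref{e.C-alpha-ell} and \Cref{p:upper-bounds2}, respectively. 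All constants depend only on the quantities listed, with no dependence on $\eps$ once $\eps$ is small enough that \eqref{e.c62704} holds (so that the lower bound \eqref{e.uniform-lower-b} feeding into \eqref{e.C-alpha-ell} is $\eps$-uniform).
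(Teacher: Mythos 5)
Your argument is correct and follows essentially the same route as the paper: express the kernel difference as the kernel of the shifted difference of $f^\eps$, bound that difference via the uniform $C^\alpha_{\ell,q_1}$ estimate \eqref{e.C-alpha-ell} together with the right-translation inequality \eqref{e.right-translation}, and integrate using exactly the moment condition $(q_1-3-\gamma-2s)(1+2s)>\alpha$, with $\eps$-independence coming from Proposition \ref{p:upper-bounds2}. The only difference is cosmetic: the paper invokes Lemma \ref{l:K-upper-bound-2} to reduce the $w$-integral to a three-dimensional convolution against $|w|^{\gamma+2s}$, whereas you re-derive the same bound inline by separating the radial $w$-integration from the integral over the plane $w^\perp$ in the Carleman representation.
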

\begin{proof}
For $z_1,z_2 \in Q_{r_0}(z_0)$, we have
\[
\begin{split}
 K_{f^\eps,z_1}(w) - K_{f^\eps,z_2}(w) &= |w|^{-3-2s} \int_{\{h\cdot w = 0\}} |h|^{\gamma+2s+1} [f^\eps(z_1\circ(0,0,h)) - f^\eps(z_2\circ(0,0,h))] \tilde b(w,h) \dd h\\
 & = K_{g,z_1}(w),
 \end{split}
 \]
where $g(z) = f^\eps(z) - f^\eps((z_2\circ z_1^{-1}) \circ z)$. With Lemma \ref{l:K-upper-bound-2}, this implies, for $\rho>0$,
\begin{equation}\label{e.Brho}
\begin{split}
\int_{B_\rho}&|K_{f^\eps,z_1}(w) - K_{f^\eps,z_2}(w)||w|^2 \dd w = \int_{B_\rho} |K_{g,z_1}(w)| |w|^2\dd w\\
&\leq \left(\int_{\R^3}|w|^{\gamma+2s} |g(t_1,x_1,v_1+w) |\dd w \right) \rho^{2-2s}\\
&= \left(\int_{\R^3}|w|^{\gamma+2s} |f^\eps(z_1\circ(0,0,w)) - f^\eps(z_2\circ(0,0,w))| \dd w \right) \rho^{2-2s}.
\end{split}
\end{equation}
Next, we estimate $|f^\eps(z_1\circ(0,0,w)) - f^\eps(z_2\circ(0,0,w))|$. Note that for $w\in \R^3$, one has $z_i\circ(0,0,w) \in [t_0/4, T_f]\times\R^6$ for $i=1,2$. Once again, we recall that $T_f$ is defined in~\eqref{e.c080401} We claim that, for any $w\in \R^3$,
%
\[
\begin{split}
	 &|f^\eps(z_1\circ(0,0,w)) - f^\eps(z_2\circ(0,0,w))| 
		\\&\qquad\lesssim \|f^\eps\|_{C^\alpha_{\ell,q_1}(Q_{r_0}^{t,x}(z_0)\times\R^3)} \langle v_1 + w\rangle^{-q_1} d_\ell(z_1\circ(0,0,w),z_2\circ(0,0,w))^\alpha,
 \end{split}
 \]
 where $q_1 = q_0 -\bar c\alpha$ as above. Indeed, this formula follows by using the seminorm $[f^\eps]_{C^\alpha_{\ell,q_1}(Q_{r_0}^{t,x}(z_0)\times\R^3)}$ when $d_\ell(z_1\circ(0,0,w),z_2\circ(0,0,w))< 1$ and the norm $\|f^\eps\|_{L^\infty_{q_1}(Q_{r_0}^{t,x}(z_0)\times\R^3)}$ when $d_\ell(z_1\circ(0,0,w),z_2\circ(0,0,w))\geq 1$. We have also used $\langle v_1 + w\rangle \approx \langle v_2 + w\rangle$, since $v_1, v_2 \in B_{r_0}(v_0)$.
 
 Now, using \eqref{e.right-translation}, we have
 \[ 
\begin{split}
	|f^\eps(z_1\circ(0,0,w)) &- f^\eps(z_2\circ(0,0,w))| \\
		& \lesssim \|f^\eps\|_{C^\alpha_{\ell,q_1}(Q_{r_0}^{t,x}(z_0)\times\R^3} \langle v_1 + w\rangle^{-q_1} 
			\Big(d_\ell(z_1,z_2) + d_\ell(z_1,z_2)^\frac{2s}{1+2s} |w|^\frac{1}{1+2s}\Big)^{\alpha},
\end{split}
\]
Returning to \eqref{e.Brho}, we have
\begin{equation}
\begin{split}
	&\int_{B_\rho}|K_{f^\eps,z_1}(w) - K_{f^\eps,z_2}(w)||w|^2 \dd w\\
		 &\leq \rho^{2-2s}\|f^\eps\|_{C^\alpha_{\ell,q_1}(Q_{r_0}^{t,x}(z_0)\times\R^3)}
		 	\int_{\R^3} |w|^{\gamma+2s} \langle v_1 + w\rangle^{-q_1} 
				\Big(d_\ell(z_1,z_2) + d_\ell(z_1,z_2)^\frac{2s}{1+2s} |w|^\frac{1}{1+2s}\Big)^{\alpha} \dd w\\
		 &\leq \rho^{2-2s}\|f^\eps\|_{C^\alpha_{\ell,q_1}(Q_{r_0}^{t,x}(z_0)\times\R^3)} \langle v_0\rangle^{\gamma+2s+\frac{\alpha}{1+2s}} d_\ell(z_1,z_2)^{\alpha'},
\end{split}
\end{equation}
with $\alpha' = \alpha \frac {2s}{1+2s}$. We have used $q_1>3+2s > 3+\gamma+2s +\alpha/(1+2s)$. 
Applying \eqref{e.C-alpha-ell}, we see that $\|f^\eps\|_{C^\alpha_{\ell,q_1}(Q_{r_0}^{t,x}(z_0)\times\R^3)}$ is bounded independently of $\eps$. This implies \eqref{e.kernel-holder} holds, as in the statement of the lemma. 
\end{proof}

Because of Lemma \ref{l:f-eps-kernel}, we may apply Theorem \ref{t:schauder} to $f^\eps$ and obtain
\[ 
\|f^\eps\|_{C^{2s+\alpha'}_\ell(Q_{r_0/2}(z_0))} \leq C_0,
\]
with $C_0$ depending on $t_0$, $|v_0|$, and the initial data $f_{\rm in}$, but independent of $\eps$.

\subsection{Convergence as $\eps \to 0$ and the conclusion of \Cref{t:existence}}

For each compact subset $\Omega\subset (0,T_f]\times  \R^6$, our work above implies that $f^\eps$ is bounded in $C^{2s+\alpha'}_\ell(\Omega)$ for some $\alpha'$ depending on $\Omega$. (Note that the dependence of $\alpha'$ on $\Omega$ follows from the dependencies of $\alpha_0$ in \Cref{t:global-degiorgi}).  This implies the sequence $f^\eps$ is precompact in $C^{2s+\alpha''}_\ell(\Omega)$ for any $\alpha''\in (0,\alpha')$, and some subsequence of $f^\eps$ converges in $C^{2s+\alpha''}_\ell(\Omega)$ to a function $f$. Since $\Omega$ was arbitrary, $f$ can be defined as an element of $C^{2s}_{\ell, \rm loc}([0,T]\times \R^6)$, and for any compact $\Omega$, there is an $\alpha''$ with $f\in C^{2s+\alpha''}_\ell(\Omega)$. Since $f^\eps \to f$ pointwise, $f$ also lies in $L^\infty_{q_0}([0,T_f]\times \R^6)$, by Proposition \ref{p:upper-bounds2}.

From \cite[Lemma 2.7]{imbert2018schauder}, the norm $C^{2s+\alpha''}_\ell$ controls the material derivative $(\partial_t + v\cdot \nabla _x)f^\eps$ (but not the separate terms $\partial_t f^\eps$ and $\nabla_x f^\eps$). In particular, for each compact $\Omega$,
\[ \|(\partial_t + v\cdot \nabla_x) f^\eps\|_{C^{\alpha''}_\ell(\Omega)} \leq C\|f^\eps\|_{C^{2s+\alpha''}_\ell(\Omega)},\]
and the convergence of $f^\eps$ in $C^{2s+\alpha''}_\ell$ implies $(\partial_t+v\cdot \nabla_x)f$ is a locally H\"older continuous function.

To analyze the convergence of $Q(f^\eps,f^\eps)$ as $\eps\to 0$, we use the following lemma:  

\begin{lemma}\label{l:Q-makes-sense}
Let $g, h\in L^\infty_{q}(\R^3)$ with $q> \gamma +2s +3$. For some $v_0\in \R^3$ and $\alpha>0$, assume $h\in C^{2s+\alpha}(B_1(v_0))$. Then
\begin{equation}\label{e:Qgh}
	|Q(g,h)(v)|
		\leq C\|g\|_{L^\infty_{q}(\R^3)}( \|h\|_{C^{2s+\alpha}(B_1(v_0))}+\|h\|_{L^\infty(\R^3)}), 
		 	\quad v\in B_1(v_0).
 \end{equation}
\end{lemma}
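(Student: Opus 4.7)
The approach is to decompose $Q = Q_{\rm s} + Q_{\rm ns}$ as in Section~\ref{s:carleman}. The nonsingular piece is immediate from Lemma~\ref{l:Q2}:
\[
|Q_{\rm ns}(g,h)(v)| \leq C|h(v)|\,(g\ast|\cdot|^{\gamma})(v) \leq C\|h\|_{L^\infty(\R^3)}\|g\|_{L^\infty_q(\R^3)},
\]
where the convolution is controlled by Lemma~\ref{l:convolution} (the assumption $q>3+\gamma$ is inherited from $q>3+\gamma+2s$).

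For the singular part I use the Carleman representation (Lemma~\ref{l:Q1}) together with the $w\mapsto -w$ symmetry of $K_g(v,v+w)$, which is visible from~\eqref{e.kernel} because the inner integral is over $(v'-v)^\perp$ with a weight even in $w$. This lets me rewrite
\[
Q_{\rm s}(g,h)(v) = \tfrac{1}{2}\int_{\R^3} K_g(v,v+w)\bigl[h(v+w)+h(v-w)-2h(v)\bigr]\,dw
\]
and then split the integration at $|w|=r$ for some small fixed $r$. On the far region $|w|\geq r$, I combine the crude bound $|h(v+w)+h(v-w)-2h(v)|\leq 4\|h\|_{L^\infty}$ with the second inequality of Lemma~\ref{l:K-upper-bound-2} and Lemma~\ref{l:convolution}, producing a contribution $\lesssim \|g\|_{L^\infty_q}\|h\|_{L^\infty}$. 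On the near region $|w|<r$, Taylor expansion yields $|h(v+w)+h(v-w)-2h(v)|\lesssim \|h\|_{C^{2s+\alpha}}|w|^{\min(2s+\alpha,2)}$ (treating the cases $2s+\alpha\leq 2$ and $2s+\alpha > 2$ separately); a dyadic decomposition using Lemma~\ref{l:K-upper-bound} gives $\int_{|w|<r}K_g(v,v+w)|w|^\beta\,dw\lesssim \|g\|_{L^\infty_q}$ for any $\beta>2s$, since the resulting geometric series in $2^{-k(\beta-2s)}$ converges. This produces a contribution $\lesssim \|g\|_{L^\infty_q}\|h\|_{C^{2s+\alpha}}$.

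The main technical obstacle is that the Taylor step requires $h$ to be regular on a ball of fixed radius around $v$, while $v$ may lie arbitrarily close to $\partial B_1(v_0)$, so the second differences $h(v\pm w)$ for $|w|<r$ may probe points outside the domain of regularity. To deal with this I cut $h$ off first: pick a smooth $\chi$ with support in $B_1(v_0)$ and $\chi\equiv 1$ on a smaller ball containing $v$, and decompose $h = h_1 + h_2$ with $h_1=\chi h$ and $h_2=(1-\chi)h$. The function $h_1$ is globally $C^{2s+\alpha}$ with norm controlled by $\|h\|_{C^{2s+\alpha}(B_1(v_0))}+\|h\|_{L^\infty}$ (product rule), so the symmetrization argument above applies to $Q_{\rm s}(g,h_1)$. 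The remainder $h_2$ vanishes in a neighborhood of $v$, so $Q_{\rm s}(g,h_2)(v) = \int K_g(v,v')h_2(v')\,dv'$ is bounded by $\|h\|_{L^\infty}$ times a far-field kernel integral of the form handled by Lemma~\ref{l:K-upper-bound-2}. Choosing $\chi$ appropriately (for example via a covering of $B_1(v_0)$ by finitely many concentric balls, reducing to the case that $v$ sits well inside the set $\{\chi=1\}$) keeps the constant uniform in $v$.
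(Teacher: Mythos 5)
Your handling of $Q_{\rm ns}$ coincides with the paper's, and the core of your $Q_{\rm s}$ estimate is correct but takes a genuinely more self-contained route: the paper simply quotes \cite[Lemma 4.6]{imbert2020smooth}, which bounds $|Q_{\rm s}(g,h)(v)|$ by $(g\ast|\cdot|^{\gamma+2s})(v)\,\|h\|_{L^\infty}^{\alpha/(2s+\alpha)}[h]_{C^{2s+\alpha}(v)}^{2s/(2s+\alpha)}$ (with $[h]_{C^{2s+\alpha}(v)}$ a pointwise seminorm measuring polynomial approximation for all increments), then uses $a^{\theta}b^{1-\theta}\lesssim a+b$ and controls the pointwise seminorm by the local H\"older norm plus $\|h\|_{L^\infty}$. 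Your symmetrized second difference, the dyadic decomposition via Lemma \ref{l:K-upper-bound}, and the convolution bound of Lemma \ref{l:convolution} reprove that inequality directly from the ingredients already in the paper; the symmetry $K_g(v,v+w)=K_g(v,v-w)$ you invoke is indeed valid and used elsewhere in the paper, and your exponent bookkeeping ($\min(2s+\alpha,2)>2s$, $q>3+\gamma+2s$) is fine. As in the paper's proof, the implied constant carries a factor $\langle v_0\rangle^{(\gamma+2s)_+}$ through $\vv\approx\langle v_0\rangle$, which is harmless.

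The genuine gap is your final uniformity claim. A cutoff $\chi$ supported in $B_1(v_0)$ and equal to $1$ near $v$ must transition in a layer of width at most $d:=\dist(v,\partial B_1(v_0))$, so $\|\chi\|_{C^{2s+\alpha}}\gtrsim d^{-(2s+\alpha)}$ and $\dist(v,\supp h_2)\lesssim d$; hence your $h_1$ and $h_2$ contributions degenerate like $d^{-(2s+\alpha)}$ and $d^{-2s}$ as $v\to\partial B_1(v_0)$, and a covering by finitely many \emph{concentric} balls cannot repair this, since a point close to $\partial B_1(v_0)$ is close to the boundary of every concentric ball containing it. In fact no argument can give a constant uniform over all $v\in B_1(v_0)$ under the stated hypotheses: take $v_0=0$, $g$ a Maxwellian, and $h=\langle\cdot\rangle^{-q}\,1_{\{|\cdot|>1\}}$, so $h\in L^\infty_q$ and $h\equiv0$ on $B_1(0)$, whence $\|h\|_{C^{2s+\alpha}(B_1(0))}=0$; for $|v|=1-\eps$ one has $Q_{\rm ns}(g,h)(v)=0$ and
\[
	Q_{\rm s}(g,h)(v)=\int_{\{|v'|>1\}}K_g(v,v')\langle v'\rangle^{-q}\dd v'
		\gtrsim \int_{\{|v'|>1\}\cap B_{1/2}(v)}|v-v'|^{-3-2s}\dd v'
		\gtrsim \eps^{-2s},
\]
while the right-hand side of \eqref{e:Qgh} stays fixed. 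So what your argument actually proves (with a fixed-width cutoff) is an interior version — e.g. the bound for $v\in B_{1/2}(v_0)$ given $h\in C^{2s+\alpha}(B_1(v_0))$, or a constant depending on $\dist(v,\partial B_1(v_0))$ — which is exactly what the paper needs, since the lemma is only applied on compact subsets of the region where the regularity holds (and, in fairness, the paper's own proof has the same radius imprecision, invoking $[h]_{C^{2s+\alpha}(B_1(v))}$ with $B_1(v)\not\subset B_1(v_0)$). Delete the claim of uniformity up to the boundary and restate the conclusion accordingly; the rest of your argument then stands.
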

\begin{proof}
Writing $Q = Q_{\rm s} + Q_{\rm ns}$ as usual, the singular term is handled by \cite[Lemma 4.6]{imbert2020smooth}, which implies
\[ |Q_{\rm s}(g,h)(v)| \leq C\left(\int_{\R^3} g(w)|v+w|^{\gamma+2s} \dd w\right) \|h\|_{L^\infty(\R^3)}^{\frac \alpha {2s+\alpha}} [h]_{C^{2s+\alpha}(v)}^{\frac{2s}{2s+\alpha}},\]
where $[h]_{C^{2s+\alpha}(v)}$ denotes the smallest constant $N>0$ such that there exists a polynomial $p$ of degree less than $2s+\alpha$ with $|h(v+w) - p(w)|\leq N |w|^{2s+\alpha}$ for all $w\in \R^3$. Using $a^{\frac{\alpha}{2s+\alpha}}b^{\frac{2s}{2s+\alpha}}\lesssim a + b$, and noting that
\[ [h]_{C^{2s+\alpha}(v)} \leq [h]_{C^{2s+\alpha}(B_1(v))} + \|h\|_{L^\infty(B_1(v)^c)},\]
we see that $Q_{\rm s}(g,h)(v)$ is bounded by the right-hand side of \eqref{e:Qgh}, using the convolution estimate of \Cref{l:convolution} and $\vv\approx \langle v_0\rangle$.

For $Q_{\rm ns}(g,h)(v) = c_b h(v) [g\ast |\cdot|^\gamma](v)$, another application of Lemma \ref{l:convolution} implies 
\[
 |Q_{\rm ns}(g,h)(v)| \leq C  \|h\|_{L^\infty(\R^3)} \|g\|_{L^\infty_q(\R^3)}\vv^\gamma,
 \]
 since $q>3$. The conclusion of the lemma follows.
\end{proof}
 Using bilinearity, we write $Q(f^\eps, f^\eps) - Q(f,f) = Q(f^\eps, f^\eps-f) + Q(f^\eps-f, f)$. Let $q$ equal the average of $q_0$ and $3+\gamma+2s$. Then, since $f^\eps$ and $f$ share a common uniform bound in $L^\infty_{q_0}([0,T_f]\times\R^6)$ with $q< q_0$, and $f^\eps\to f$ uniformly on compact sets, we in fact have $f^\eps \to f$ strongly in $L^\infty_{q}([\tau,T_f]\times\Omega_x\times\R^3)$ for any $\tau\in (0,T_f)$ and $\Omega_x\subset \R^3$. Together with the convergence in $C^{2s+\alpha''}_\ell(\Omega)$ for compact $\Omega$, this is enough to apply Lemma \ref{l:Q-makes-sense} and conclude $Q(f^\eps,f^\eps) \to Q(f,f)$ locally uniformly. In particular, $Q(f,f)$ is well-defined. We have shown that $f$ satisfies the Boltzmann equation \eqref{e.boltzmann} in the pointwise sense.

To address the initial data, we multiply the equation \eqref{e.boltzmann} satisfied by $f^\eps$ by some $\varphi\in C^1_{t,x} C^2_v$ with compact support in $[0,T_f) \times \R^6$, and integrate by parts:
\begin{equation}\label{e.phi-0}
\begin{split}
 \iint_{\R^3\times\R^3} \varphi(0,x,v) f_{\rm in}^\eps(x,v) \dd v \dd x &= \int_0^{T_f} \iint_{\R^3\times\R^3} f^\eps (\partial_t \varphi + v\cdot \nabla_x \varphi)\dd v \dd x \dd t \\
 &\quad +\int_0^{T_f} \iint_{\R^3\times\R^3} \varphi Q(f^\eps, f^\eps) \dd x \dd v \dd t.
\end{split}
\end{equation}
The left-hand side converges to $\iint_{\R^3\times\R^3} \varphi(0,x,v) f_{\rm in}(x,v) \dd x \dd v$ by the convergence of $f_{\rm in}^\eps$ to $f_{\rm in}$ in $L^1(\supp(\varphi(0,\cdot,\cdot))$ (recall the definition of $f_{\rm in}^\eps$~\eqref{e.f0eps}). The convergence of the first integral on the right in \eqref{e.phi-0} is also straightforward, by the uniform upper bounds for $f^\eps$ in $L^\infty_{q_0}$ and the pointwise convergence of $f^\eps$ to $f$. 
For the second integral on the right, we need to proceed more carefully. The continuity properties needed to apply Lemma \ref{l:Q-makes-sense} and control $Q(f^\eps, f^\eps)$ pointwise may degenerate as $t\to 0$ at a potentially severe rate. Therefore, we use the weak formulation of the collision operator to bound this integral. This is made precise in the following lemma:

\begin{lemma}\label{l:weak-estimate}
For any $\varphi\in C^2(\R^3)$, and $v, v_*\in \R^3$, there holds 
\begin{equation}\label{e.sphere-est}
\left|\int_{\mathbb S^2} B(v-v_*,\sigma) [\varphi(v_*') + \varphi(v') - \varphi(v_*) - \varphi(v)] \dd \sigma\right| \leq C \|\varphi\|_{C^2(\R^3)}|v-v_*|^\gamma (1+|v-v_*|^{2s}),
\end{equation}
for a universal constant $C$. In particular, for any functions $g, h$ on $\R^3$ such that the right-hand side is finite, one has
\begin{equation*}
 \left|\int_{\R^3}W(g,h,\varphi) \dd v\right| \leq C \|\varphi\|_{C^{2}(\R^3)} \iint_{\R^3\times \R^3} g(v_*) h(v) |v-v_*|^\gamma (1+|v-v_*|^{2s}) \dd v_* \dd v,
\end{equation*}
where $W(g,h,\varphi)$ is defined as in \eqref{e.weak-collision}.
\end{lemma}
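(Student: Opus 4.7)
The plan is to expand $\varphi$ by Taylor's theorem around $v$ and $v_*$, exploit momentum conservation to get a cancelled linear piece, and split the sphere integral by angular scale. Writing
\[
\varphi(v')=\varphi(v)+\nabla\varphi(v)\cdot(v'-v)+\int_0^1(1-t)\,D^2\varphi\bigl(v+t(v'-v)\bigr)[v'-v,v'-v]\,\dd t,
\]
and the analogous identity for $\varphi(v_*')-\varphi(v_*)$, I add the two. The identity $v'+v_*'=v+v_*$ gives $v_*'-v_*=-(v'-v)$, so the two linear terms collapse into $(\nabla\varphi(v)-\nabla\varphi(v_*))\cdot(v'-v)$, while the quadratic remainder is bounded by $\|D^2\varphi\|_{L^\infty}|v-v_*|^2\sin^2(\theta/2)$ using the geometric identity $|v'-v|=|v_*'-v_*|=|v-v_*|\sin(\theta/2)$.

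Because the factor $\nabla\varphi(v)-\nabla\varphi(v_*)$ is independent of $\sigma$, I pull it out of the sphere integral. For the remaining vector integral, axial symmetry of $b(\cos\theta)$ about $\omega:=(v-v_*)/|v-v_*|$ is decisive: integrating first over the azimuthal angle $\phi$ for fixed polar angle $\theta$ annihilates the components of $v'-v$ perpendicular to $\omega$, replacing $(v'-v)$ under the $\phi$-integral by a scalar multiple of $\omega$ proportional to $(\cos\theta-1)$. This is the essential cancellation: it converts a bare $\int b(\cos\theta)\sin\theta\,\dd\theta$ (nonintegrable at the origin by \eqref{e.b-bounds}) into $\int b(\cos\theta)(\cos\theta-1)\sin\theta\,\dd\theta\lesssim \int\theta^{1-2s}\,\dd\theta$, which converges for every $s\in(0,1)$. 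Next I split $\sigma$ at $\theta_*:=\min(1,1/|v-v_*|)$, chosen so that $|v'-v|\lesssim 1$ when $\theta\leq\theta_*$. On the near range I combine the cancelled linear estimate with the quadratic Taylor remainder; on the far range $\theta>\theta_*$ I discard all Taylor information and use the trivial bound $|\varphi(v')+\varphi(v_*')-\varphi(v)-\varphi(v_*)|\leq 4\|\varphi\|_{L^\infty}$ together with the tail estimate $\int_{\theta_*}^{\pi}\theta^{-1-2s}\,\dd\theta\lesssim\theta_*^{-2s}$. A short case analysis on whether $|v-v_*|\leq 1$ or $|v-v_*|>1$ (which fixes $\theta_*$) then assembles the pieces into the claimed bound $C\|\varphi\|_{C^2}|v-v_*|^\gamma(1+|v-v_*|^{2s})$.

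The main technical obstacle is precisely that the naive estimate $|\nabla\varphi(v)-\nabla\varphi(v_*)|\,|v'-v|$, integrated against $b\sin\theta$, produces $\int\theta^{-2s}\,\dd\theta$, which diverges at the origin for $s\geq 1/2$; keeping the linear term as an inner product and relying on the azimuthal cancellation to supply the missing power of $\theta$ is therefore not a convenience but a necessity, and it is what lets the argument run uniformly for all $s\in(0,1)$. Granted \eqref{e.sphere-est}, the second inequality is immediate: plug the pointwise sphere estimate into the definition \eqref{e.weak-collision} of $W(g,h,\varphi)$, apply Fubini, and note that the resulting factor $|v-v_*|^\gamma(1+|v-v_*|^{2s})$ is symmetric in $v\leftrightarrow v_*$, so relabelling the integration variables reconciles the $g(v)h(v_*)$ appearing in \eqref{e.weak-collision} with the $g(v_*)h(v)$ on the right-hand side of the stated inequality.
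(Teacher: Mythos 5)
Your proof is correct and follows essentially the same route as the paper's: Taylor expansion combined with momentum conservation to reduce to $(\nabla\varphi(v)-\nabla\varphi(v_*))\cdot(v'-v)$ plus a quadratic remainder, azimuthal integration around $(v-v_*)/|v-v_*|$ to gain the factor $(1-\cos\theta)\approx\theta^2$ in the linear term, a split of the sphere at $\theta\sim\min(1,|v-v_*|^{-1})$ with the trivial $L^\infty$ bound on the tail, and a relabeling $v\leftrightarrow v_*$ for the weak-form consequence. The only cosmetic difference is your cutoff $\min(1,1/|v-v_*|)$ versus the paper's $1/|v-v_*|$ (with the far region empty for small $|v-v_*|$), which changes nothing.
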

Estimates of this general type are common in the Boltzmann literature, see e.g. \cite[Chapter 2, Formula (112)]{villani2002review}. However, we could not find a reference with the 
asymptotics $|v-v_*|^{\gamma+2s}$ inside the integral. The sharp asymptotics will be important in the proof of Theorem \ref{t:weak-solutions} below, where we only assume enough velocity decay to control the $L^1$ moment of order $\gamma+2s$ of our weak solutions.
\begin{proof}
Let us recall some facts about the geometry of elastic collisions:
\begin{align}
|v_*' - v_*| &= |v' - v|\label{e.pre-post}\\
v_*' + v' &= v_*+v\label{e.momentum}\\
|v' - v| &\approx \theta |v-v_*|,\label{e.cosine-law}
\end{align}
where we recall $\theta$ from~\eqref{e.B-def}. 
The second fact \eqref{e.momentum} corresponds to conservation of momentum. Let us also introduce the standard abbreviations $F' = F(v')$, $F_*' = F(v_*')$, $F_* = F(v_*)$, and $F = F(v)$ for any function $F$.

Recalling that $B(v-v_*,\sigma) = |v-v_*|^\gamma \theta^{-2-2s} \tilde b(\theta)$, with $\tilde b (\theta) \approx 1$, we divide the integral over $\mathbb S^2$ in \eqref{e.sphere-est} into two domains:
\[ D_1 := \{ \sigma : |\theta| \leq |v-v_*|^{-1}\}, \quad D_2 := \mathbb S^2  \setminus D_1,\]
where $D_2$ is empty if $|v-v_*|\leq 1/\pi$. In $D_1$, following a common method for controlling the angular singularity, we Taylor expand $\varphi$ and use the identities \eqref{e.momentum} and \eqref{e.pre-post}. We obtain 
\[\begin{split}
 \varphi' + \varphi_*' - \varphi_* - \varphi &= \nabla \varphi_*\cdot (v_*' - v_*) + \nabla \varphi\cdot (v' - v) + O(\|D^2\varphi\|_{L^\infty} |v'-v|^2)\\
&= (\nabla \varphi - \nabla \varphi_*)\cdot (v' - v) + O(\|D^2\varphi\|_{L^\infty}|v' - v|^2).
\end{split}
\]
From \eqref{e.cosine-law}, the second term on the right in the last expression is proportional to $\|D^2\varphi\|_{L^\infty} \theta^2|v-v_*|^2$. To handle the first term on the right, we parameterize $\mathbb S^2$ with spherical coordinates $\sigma = (\theta, \eta) \in [0,\pi]\times [0,2\pi]$, where $\theta=0$ corresponds to $v = v'$. A simple geometric argument shows $\left| \int_0^{2\pi} (v' - v) \dd \eta\right| \lesssim |v-v_*|\theta^2$. Therefore, we have
\[
 \left|\int_0^{2\pi} [ \varphi' + \varphi_*' - \varphi_* - \varphi] \dd \eta\right|\lesssim \|D^2\varphi\|_{L^\infty} \theta^2 |v-v_*|^2,
 \]
which implies
\[ 
\begin{split}
\Big|\int_{D_1} &B(v-v_*,\sigma)[\varphi' + \varphi_*' - \varphi_* - \varphi] \dd \sigma \dd v_* \dd v \Big|\\
&\lesssim  |v-v_*|^{\gamma+2} \int_0^{|v-v_*|^{-1}} \theta^{-2-2s} \|D^2\varphi\|_{L^\infty} \theta^2 \sin \theta \dd \theta
\lesssim \|D^2\varphi\|_{L^\infty} |v-v_*|^{\gamma+2s} .
\end{split}
\]
For the integral over $D_2$, since $|\theta|\geq |v-v_*|^{-1}$, we have 
\[ \begin{split}
\Big| \int_{D_2} &|v-v_*|^\gamma \theta^{-2-2s} \tilde b(\theta)[\varphi' + \varphi_*' - \varphi_* - \varphi] \dd \sigma \Big|\\
&\lesssim \|D^2\varphi\|_{L^\infty} |v-v_*|^{\gamma} \int_{|v-v_*|^{-1}}^\pi \int_0^{2\pi} \theta^{-2-2s} \sin\theta \dd \eta \dd \theta
\lesssim \|D^2\varphi\|_{L^\infty}  |v-v_*|^{\gamma}(1+|v-v_*|^{2s}), 
\end{split}
\]
which establishes \eqref{e.sphere-est}.

Next, recalling the weak formulation \eqref{e.weak-collision}  we have
\begin{equation*}
 \int_{\R^3} W(g,h,\varphi) \dd v = \frac 1 2 \int_{\R^3} \iint_{\R^3\times \mathbb S^2}   B(v-v_*,\sigma) g h_* [\varphi' + \varphi_*'- \varphi_* - \varphi] \dd \sigma \dd v_* \dd v,
 \end{equation*}
 and the last conclusion of the lemma follows directly from \eqref{e.sphere-est}.
\end{proof}

Returning to \eqref{e.phi-0}, 
for each $t\in (0,T_f]$, the locally uniform convergence of $Q(f^\eps,f^\eps)$ to $Q(f,f)$ implies
\[\iint_{\R^3\times\R^3} \varphi Q(f^\eps,f^\eps) \dd x \dd v  \to \iint_{\R^3\times\R^3} \varphi Q(f,f) \dd x \dd v.\]
By Lemma \ref{l:weak-estimate} and our uniform upper bound on $\|f^\eps\|_{L^\infty_{q_0}(\R^6)}$ with $q_0>3+2s$, we may apply the Dominated Convergence Theorem to the time integral of $\varphi Q(f^\eps, f^\eps)$ and conclude that $f$ agrees with the initial data $f_{\rm in}$ in the sense of \eqref{e.weak-initial-data}.

 Finally, we consider the higher regularity of $f$. The approximate solutions $f^\eps$ are smooth and rapidly decaying, so for any compact $\Omega\subset (0,T_f]\times\R^3$, partial derivative $D^k$, and $\alpha, m>0$, Proposition \ref{p:higher-reg} provides a $q(k,m)>0$ such that $\|D^k f^\eps\|_{C^\alpha_{\ell,m}(\Omega\times\R^3_v)}$ is bounded for positive times in terms of $\|f_{\rm in}^\eps\|_{L^\infty_{q(k,m)}(\R^6)}$. From Proposition \ref{p:upper-bounds2}, this bound is independent of $\eps$ if the initial data $f_{\rm in}$ is bounded in $L^\infty_{q(k,m)}(\R^6)$.  Hence, applying a standard compactness argument, the $L^\infty_m$ estimates for $D^k f^\eps$ imply $L^\infty_{m}$ estimates for $D^k f$ in the limit as $\eps\to 0$.

This concludes the proof of Theorem \ref{t:existence}.

\subsection{\Cref{t:weak-solutions}: the existence of weak solutions}

In this subsection, we prove Theorem \ref{t:weak-solutions}. The proof is based on the same approximating sequence $f^\eps$ from the proof of Theorem \ref{t:existence}. The relaxed conditions on $f_{\rm in}$ result in weaker uniform regularity for $f^\eps$, and correspondingly, a different notion of convergence as $\eps\to 0$.

In more detail, assume that $f_{\rm in}\geq 0$ lies in $L^\infty_{q_0}(\R^6)$ with $q_0>3+\gamma+2s$. This initial data may not necessarily satisfy any uniform positivity condition. Let $f^\eps_{\rm in}$ be defined as in \eqref{e.f0eps}, and as above, let $f^\eps$ be smooth solutions to \eqref{e.boltzmann} with initial data $f^\eps_{\rm in}$. By Proposition \ref{p:upper-bounds2}, these solutions exist on a uniform time interval $[0,T_f]$, and are uniformly bounded in $L^\infty_{q_0}([0,T_f]\times\R^6)$. Since $L^\infty_{q_0}([0,T_f]\times\R^6)$ is the dual of $L^1_{-q_0}([0,T_f]\times\R^6)$, some sequence of $f^\eps$ converges in the weak-$\ast$ $L^\infty_{q_0}$ sense to a function $f\in L^\infty_{q_0}([0,T_f]\times\R^6)$.

To show $f$ is a weak solution of \eqref{e.boltzmann}, note that for each $\eps>0$ and $\varphi \in C^1_{t,x} C^2_v$ with compact support in $[0,T_f)\times\R^6$, integrating by parts implies the weak formulation \eqref{e.phi-0} holds for $f^\eps$. 
The left-hand side of \eqref{e.phi-0} converges by the $L^1$-convergence of $f^\eps_{\rm in}$ to $f_{\rm in}$ on $\supp(\varphi(0,\cdot,\cdot))$, exactly as above. For the right-hand side, we have
\[ 
\int_0^{T_f} \iint_{\R^3\times\R^3} (f^\eps - f) (\partial_t\varphi+v\cdot\nabla_x\varphi) \dd v\dd x \dd t \to 0,
\]
by the weak-$\ast$ convergence of $f^\eps$ to $f$, since $(\partial_t\varphi + v\cdot\nabla_x \varphi) \in L^1_{-q_0}([0,T_f]\times\R^6)$. For the collision term, since $f^\eps$ is smooth and rapidly decaying, we may apply the identity $\int_{\R^3} \varphi Q(f^\eps,f^\eps) \dd v = \int_{\R^3} W(f^\eps,f^\eps,\varphi) \dd v$. Using bilinearity, we have, for each $t,x$,
\[
\begin{split}
\int_{\R^3}W(f^\eps,&f^\eps,\varphi) \dd v - \int_{\R^3} W(f,f,\varphi) \dd v = \int_{\R^3} W(f^\eps,f^\eps-f,\varphi) \dd v + \int_{\R^3} W(f,f^\eps-f, \varphi) \dd v.
\end{split}
\]
The first term on the right is equal to 
\begin{equation}\label{e.first-term}
\begin{split}
\frac 1 2  \int_{\R^3}f^\eps\int_{\R^3}\int_{\mathbb S^2} (f^\eps_* - f_*)  B(v-v_*,\sigma)[\varphi' + \varphi_*' - \varphi_* - \varphi] \dd \sigma \dd v_* \dd v. 
 \end{split}
\end{equation}
From \eqref{e.sphere-est} in Lemma \ref{l:weak-estimate}, we see that for fixed $v\in \R^3$,
\[ \int_{\mathbb S^2} B(v-v_*,\sigma) [\varphi' + \varphi_*' - \varphi_* - \varphi] \dd \sigma \in L^1_{-q_0}(\R^3_{v_*}),\]
since $q_0> 3+\gamma+2s$. This implies \eqref{e.first-term} converges to 0 as $\eps\to 0$. The same argument, after exchanging the $v$ and $v_*$ integrals, implies $\int_{\R^3} W(f,f^\eps- f,\varphi) \dd v \to 0$. We conclude $f^\eps$ is a weak solution to \eqref{e.boltzmann} in the sense of Theorem \ref{t:weak-solutions}.

Next, consider the additional assumption that $f_{\rm in}(x,v)\geq \delta$ for all $(x,v) \in B_r(x_m,v_m)$. 
As above, this implies lower bounds of the form \eqref{e.uniform-lower-b} for $f^\eps$ that are independent of $\eps$. Together with our uniform bound on $\|f^\eps\|_{L^\infty_{q_0}([0,T_f]\times\R^6)}$ with $q_0>3+\gamma+2s$,  this allows us to apply the local De Giorgi estimate of Theorem \ref{t:degiorgi}. (Note that under such an assumption on $q_0$, we cannot necessarily apply the global De Giorgi estimate of Theorem \ref{t:global-degiorgi}.) We obtain, for any compact $\Omega\subset (0,T_f]\times\R^6$,
\[ \|f^\eps\|_{C^\alpha(\Omega)} \leq C,\]
with $C,\alpha>0$ depending on $\Omega$, $\delta$, $r$, $v_m$, $x_m$, and $\|f_{\rm in}\|_{L^\infty_{q_0}(\R^6)}$. Since this bound is independent of $\eps$, the same conclusion applies to $f$. This concludes the proof of Theorem \ref{t:weak-solutions}.

\subsection{Continuous matching with initial data}

Here we show, under the assumption that $f_{\rm in}$ is continuous, that the solution $f$ is continuous as $t\searrow 0$. We prove this as a consequence of a more general result for linear kinetic integro-differential equations of the following form:
\begin{equation}\label{e.kide}
	\partial_t f + v\cdot \nabla_x f
		= \int_{\R^3} K(t,x,v,v') (f(t,x,v') - f(t,x,v))  \dd v'
			+ c f,
\end{equation}
where $K: [0,T]\times \dom\times\R^3\to [0,\infty)$ and $c: [0,T]\times\dom \to [0,\infty)$ satisfy, for all $(t,x,v)\in [0,T]\times\dom$, $w\in \R^3$, and $r>0$,
\be\label{e.K_c_assumptions}
	\begin{split}
	& K(t,x,v,v+w) = K(t,x,v,v-w),\\
		&\int_{B_{2r}(v)\setminus B_r(v)} 
			K(t,x,v,v')
			\dd v'
			\leq \Lambda \vv^{\gamma+2s} r^{-2s}, \quad \text{ and}
		\\
		&|c(t,x,v)|
			\leq \Lambda \vv^\gamma,
	\end{split}
\ee
for some constants $\Lambda>0$, $\gamma>-3$, and $s\in (0,1)$.  From the Carleman decomposition of $Q(f,f)$ described in Section \ref{s:carleman}, together with Lemma \ref{l:K-upper-bound}, one sees that equation \eqref{e.kide} includes the Boltzmann equation \eqref{e.boltzmann} as a special case.  



\begin{proposition}\label{p:cont-match}
Let $f \in C^2_\ell\cap L^{\infty}([0,T]\times\R^6)$ be a solution to~\eqref{e.kide} with initial data $f_{\rm in}$.  Then, for any $(x_0,v_0)$ and $\eta>0$, there exists $t_\eta, r_\eta>0$ such that, if
\[
	|(x,v) - (x_0,v_0)| < r_\eta
	\quad\text{ and }\quad
	t\in [0,t_\eta),
\]
then
\begin{equation}\label{e.c7146}
	|f(t,x,v) - f_{\rm in}(x_0,v_0)| < \eta.
\end{equation}
The constants $t_\eta$ and $r_\eta$ depend only on $|v_0|$, $\eta$, $\Lambda$, $s$, $\|f\|_{L^\infty}$ and the modulus of continuity of $f_{\rm in}$ at $(x_0,v_0)$.  In particular, the constants do not depend on $\|f\|_{C^2_\ell}$.
\end{proposition}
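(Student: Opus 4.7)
The plan is a barrier/first-crossing argument, whose essential feature is that the barriers are engineered so that any potential crossing with $f$ must occur in a \emph{bounded velocity region}, depending only on $|v_0|$. All the constants that matter are then computed from $\|f\|_{L^\infty}$, the structural bound $\Lambda$ on $K$ and $c$, and the modulus of continuity of $f_{\rm in}$ at $(x_0,v_0)$, with no quantitative role for $\|f\|_{C^2_\ell}$ beyond ensuring that the equation \eqref{e.kide} makes pointwise sense.

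Concretely, fix a smooth concave nondecreasing $\rho:[0,\infty)\to[0,1]$ with $\rho(0)=0$, $\rho'(0)=1$, $\rho\equiv 1$ on $[1,\infty)$. For $\lambda\in(0,1]$ to be chosen, set
\[
\Psi(t,x,v) := \rho\!\left(\tfrac{|x - x_0 - t v|^2 + |v - v_0|^2}{\lambda^2}\right).
\]
Since $(\partial_t+v\cdot\nabla_x)(x - x_0 - t v)=0$, one has $(\partial_t + v\cdot\nabla_x)\Psi\equiv 0$. A direct computation gives $0\le \Psi\le 1$, $\Psi\equiv 1$ outside $\{|x-x_0-tv|^2+|v-v_0|^2<\lambda^2\}$, and $\|D_v^2\Psi\|_{L^\infty}\lesssim (1+t^2)\lambda^{-2}$. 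Using the symmetry of $K$ and its annular upper bound in \eqref{e.K_c_assumptions}, a second-order Taylor cancellation (as in \Cref{l:C2Linfty}) yields
\[
|\cL_K\Psi(t,x,v)| \lesssim \vv^{\gamma+2s}\lambda^{-2s}
\]
uniformly for $t\in[0,1]$.

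Setting $B:=2\|f\|_{L^\infty}+1$ and $A:=f_{\rm in}(x_0,v_0)+\eta/2$, the upper barrier is
\[
\varphi^+(t,x,v) := A + B\,\Psi(t,x,v) + M t,
\]
with $M>0$ to be fixed. Choose $\lambda\in(0,1]$ small enough (in terms of the modulus of continuity of $f_{\rm in}$ at $(x_0,v_0)$) so that $|f_{\rm in}(x,v)-f_{\rm in}(x_0,v_0)|\le \eta/4$ on $B_\lambda(x_0,v_0)$: then $\varphi^+(0,\cdot,\cdot)\geq f_{\rm in}$ pointwise on $\R^6$, since either $\Psi(0,\cdot,\cdot)=1$ (and $\varphi^+\geq A+B > \|f\|_{L^\infty}$) or $(x,v)\in B_\lambda(x_0,v_0)$ (where $\varphi^+\geq A-\eta/4\geq f_{\rm in}(x,v)$). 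In the region $\{\Psi<1\}$ one has $|v-v_0|<\lambda\le 1$, so $\vv\lesssim\vvo$ and the bound above collapses to an $|v_0|$-dependent constant. Choosing first $T':=\min\bigl(T,\,(4\Lambda\vvo^\gamma)^{-1}\bigr)$ to absorb the $\Lambda\vvo^\gamma MT'$ contribution into $M/2$, and then $M\sim B\vvo^{\gamma+2s}\lambda^{-2s}+\Lambda\vvo^\gamma(A+B)+1$ with a sufficiently large prefactor, guarantees $(\partial_t+v\cdot\nabla_x)\varphi^+=M>\cL_K\varphi^++c\,\varphi^+$ throughout $\{\Psi<1\}\cap([0,T']\times\R^6)$. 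A first-crossing argument then shows $f\leq\varphi^+$ on $[0,T']\times\R^6$: the set $\{\Psi<1\}\cap([0,T']\times\R^6)$ is precompact (bounded in both $x$ and $v$) and $\varphi^+-f\geq 1$ off of it, so if $(t_{\rm cr},x_{\rm cr},v_{\rm cr})$ is a first vanishing point of $\varphi^+-f$, it lies in $\{\Psi<1\}$; since $K\geq 0$ and $f\leq\varphi^+$ with equality at $v_{\rm cr}$, we get $\cL_K f(t_{\rm cr},x_{\rm cr},v_{\rm cr})\leq\cL_K\varphi^+(t_{\rm cr},x_{\rm cr},v_{\rm cr})$, while $cf=c\varphi^+$ there, contradicting $\partial_t(\varphi^+-f)\leq 0$ combined with the strict inequality above. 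The symmetric construction with $\varphi^-:=f_{\rm in}(x_0,v_0)-\eta/2-B\Psi-Mt$ yields $f\geq\varphi^-$.

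To quantify closeness, take $r_\eta\in(0,\lambda]$ and $t_\eta\in(0,T']$ small enough that $Mt_\eta\leq\eta/4$ and $B\lambda^{-2}\bigl[(r_\eta+t_\eta(\vvo+1))^2+r_\eta^2\bigr]\leq\eta/4$; then from $\rho'(0)=1$ one has $\Psi(t,x,v)\lesssim\lambda^{-2}(|x-x_0-tv|^2+|v-v_0|^2)\leq \eta/(4B)$ on $\{t\leq t_\eta,\ |(x,v)-(x_0,v_0)|\leq r_\eta\}$, whence $\varphi^+-f_{\rm in}(x_0,v_0)\leq \eta/2+\eta/4+\eta/4=\eta$ and the matching lower bound gives \eqref{e.c7146}. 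The main obstacle is that $\cL_K$ is nonlocal, so \emph{a priori} the operator at $v$ senses behavior on all of $\R^3$; this is sidestepped by the design $\Psi\in[0,1]$ with $D_v^2\Psi$ bounded, which controls $\cL_K\Psi$ through only $\|\Psi\|_\infty$ and $\|D_v^2\Psi\|_\infty$, and by the fact that the crossing is forced into the region where $v\sim v_0$—precisely the reason no $C^2_\ell$-norm of $f$ appears in the constants.
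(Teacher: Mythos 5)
Your proposal is correct and follows essentially the same barrier/first-crossing strategy as the paper: a bounded cutoff in the transported variables $(x-x_0-tv,\,v-v_0)$ plus a linear-in-$t$ term, with the nonlocal term controlled by a second-order Taylor expansion together with the annular kernel bounds, and the zeroth-order term absorbed by shortening the time interval instead of the paper's exponential prefactor $e^{2\Lambda\langle v_0\rangle^\gamma t}$ (your use of the global comparison $\cL_K f\le \cL_K\varphi^+$ in place of the paper's near/far splitting of $\cL f$ is a minor, valid variation). The only adjustments needed are cosmetic: since your Hessian bound carries the factor $(1+t^2)$, include $1$ in the minimum defining $T'$ (or keep the $T'$-dependent constant in $M$), record that $\nabla_x(\varphi^+-f)=0$ at the crossing point so the transport term drops, and tighten the final $\eta/2+\eta/4+\eta/4$ budget to obtain the strict inequality in \eqref{e.c7146}.
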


To apply this proposition to the solution $f$ to \eqref{e.boltzmann} constructed in Theorem \ref{t:existence}, we use the smooth approximating sequence $f^\eps$. Proposition \ref{p:cont-match} applies to the smooth solutions $f^\eps$, with constants depending on $\Lambda \lesssim \|f^\eps\|_{L^\infty_{q_0}(\R^6)}$, which is bounded independently of $\eps$ by Proposition \ref{p:upper-bounds2}. Since $f^\eps \to f$ pointwise in $[0,T]\times\R^6$, the conclusion of Proposition \ref{p:cont-match} also applies to $f$. 

Note that we allow both cases $\gamma < 0$ and $\gamma \geq 0$ in Proposition \ref{p:cont-match}.

\begin{proof}
In the proof, we only obtain the upper bound on $f - f_{\rm in}$ in~\eqref{e.c7146}.   The lower bound can be obtained in an exactly analogous way, so we omit it.

Without loss of generality, we assume that $x_0 = 0$.  Fix $\delta\in(0,1)$ sufficiently small so that
\be\label{e.c7141}
	|f_{\rm in}(x,v) - f_{\rm in}(0,v_0)|
		< \frac{\eta}{2}
		\qquad\text{ if } |x|^2 + |v-v_0|^2 \leq \delta^2. 
\ee

Let
\be\label{e.c_t_epsilon}
	T_\delta
		= \frac{\delta}{4( |v_0| + 2\delta)}.
\ee
Our goal is to construct a supersolution $F$ for $f$ on $[0, T_\delta]\times B_\delta(0,v_0) \subset [0,T_\delta]\times \R^6$.  To begin, we let
\be\label{e.c7142}
	F(t,x,v)
		= e^{2\Lambda \langle v_0\rangle^\gamma t} \left(\|f\|_{L^\infty([0,T]\times\R^6)} \psi\left(\frac{|x-vt|^2 + |v-v_0|^2}{\delta^2} \right) + \frac{\eta}{2} + \rho t + f_{\rm in}(0,v_0)\right),
\ee
where $\psi \in C^2(\R)$ satisfies
\be\label{e.c7147}
	\psi(s)
		= \begin{cases}
			0 \qquad &\text{ if } s \leq 0,\\
			1 \qquad &\text{ if } s \geq 1/2,
		\end{cases}
	\qquad
	0 \leq \psi' \leq 4,
	\qquad\text{ and } \qquad
	|\psi''| \leq 32,
\ee
and
\be\label{e.c7148}
	\rho = A \|f\|_{L^\infty([0,T]\times\R^6)} \frac{\vvo^{\gamma + 2s}}{\delta^{2s}} e^{2\Lambda T_\delta\langle v_0\rangle^\gamma},
\ee
for a large constant $A>0$ to be chosen depending only on $\Lambda$, $\langle v_0\rangle$, $\gamma$, and $s$.  We claim that 
\begin{equation}\label{e.barf-f}
F > f \quad \text{ on } [0,T_\delta)\times \R^6.
\end{equation}
Note that all terms in $F$ except $f_{\rm in}(0,v_0)$ can be made smaller than $\eta$ by choosing $r_\eta>0$ and $t_\eta\in (0,T_\delta)$ sufficiently small.  Therefore, the proof is complete once we establish \eqref{e.barf-f}.

First we note that~\eqref{e.barf-f} holds initially.  Indeed, from~\eqref{e.c7141} and~\eqref{e.c7142}, it is clear that $F> f$ for $(t,x,v) \in \{0\}\times \R^6$.

Next, we show that~\eqref{e.barf-f} holds away from $(0,v_0)$:
\be\label{e.c62702}
	F > f
	\qquad\text{ for } (t,x,v) \in (0,T_\delta]\times B_\delta(0,v_0)^c.
\ee
Fix any $(t,x,v) \in (0,T_\delta]\times B_\delta(0,v_0)^c$.  If $|v-v_0| \geq \delta/\sqrt 2$, then, by the definition of $\psi$~\eqref{e.c7147},
\be\label{e.c62703}
	F(t, x, v)
		> \|f\|_{L^\infty([0,T]\times\R^6)} \psi\left(\frac{|x-vt|^2 + |v-v_0|^2}{\delta^2} \right)
		= \|f\|_{L^\infty([0,T]\times\R^6)}
		\geq f(t,x,v).
\ee
Hence,~\eqref{e.c62702} is established in this case.

Assume now that $|v-v_0| < \delta/\sqrt 2$.  Then
\be
	|x-tv|^2 + |v - v_0|^2
		= |x|^2 + |v-v_0|^2 - 2 t x \cdot v + t^2 |v|^2
		\geq |x|^2 + |v-v_0|^2 - 2 t x \cdot v.
\ee
Next, we use Young's inequality and then that $t< T_\delta$, where $T_\delta$ is defined in~\eqref{e.c_t_epsilon}, and $|v| \leq |v_0| + \delta/\sqrt 2$, to find
\be
	|x-tv|^2 + |v - v_0|^2
		\geq \frac{3|x|^2}{4} + |v-v_0|^2 - 4 t^2 |v|^2
		\geq \frac{3\delta^2}{4} - 4 \frac{\delta^2}{4^2(|v_0|+\delta)^2} (|v_0| + \delta/\sqrt2)^2
		\geq \frac{\delta^2}{2}.
\ee
The argument of~\eqref{e.c62703} then applies to establish~\eqref{e.c62702}.  Hence,~\eqref{e.c62702} holds in both cases.

Due to~\eqref{e.c62702}, if $F \not> f$ on $[0,T_\delta]\times \R^6$, then, defining the crossing time as
\be\label{e.c080402}
	t_\rmcr = \sup\{t : F(t',x,v) \geq f(t',x,v) \quad\text{for all}\quad (t',x,v) \in [0,t]\times \R^6\},
\ee
it follows that $t_\rmcr \in (0,T_\delta]$ by the continuity of $F$ and $f$, and there exists a crossing point $(x_\rmcr, v_\rmcr)\in B_\delta(0,v_0)$ such that
\be\label{e.matching_tch}
	F(t_\rmcr, x_\rmcr, v_\rmcr) = f(t_\rmcr, x_\rmcr, v_\rmcr).
\ee


Since $(t_\rmcr, x_\rmcr, v_\rmcr)$ is a minimum of $F- f$ on $(0,T_\delta)\times B_\delta(0,v_0)$, then, at $(t_\rmcr, x_\rmcr, v_\rmcr)$, we have
\be\label{e.c7143}
	\begin{split}
	&F = f,
		\qquad
	\partial_t F \leq \partial_t f,
		\qquad
	\nabla_x F = \nabla_x f,
		\qquad
		\text{and}
	\\&
	\int_{B_\delta(v_\rmcr)} (F' - F_\rmcr) K(t_\rmcr, x_\rmcr, v_\rmcr,v') \dd v'
		\geq \int_{B_\delta(v_\rmcr)} ( f' - f_\rmcr) K(t_\rmcr, x_\rmcr,v_\rmcr,v') \dd v'.
	\end{split}
\ee
Note that, in the last integral, we have used the notation that, for any $v'$,
\[
	f' = f(t_\rmcr, x_\rmcr, v')
	\quad\text{ and }\quad
	f_\rmcr = f(t_\rmcr, x_\rmcr, v_\rmcr)
\]
and similarly for $F$.  It follows, from~\eqref{e.kide} and~\eqref{e.c7143}, that, at $(t_\rmcr, x_\rmcr, v_\rmcr)$,
\be\label{e.c7144}
	\partial_t F
		+ v\cdot \nabla_x F \leq \partial_t f + v\cdot\nabla_x f = 
		 \mathcal L f
		+ c F.
\ee
 Recall the notation $\cL$ from~\eqref{e.c062801}. 
By a direct calculation with \eqref{e.c7142}, we also have
\[
	\partial_t F
		+ v\cdot \nabla_x F
	= 2\Lambda\langle v_0\rangle^\gamma F
		+ \rho e^{2\Lambda \langle v_0\rangle^\gamma t}.
\]
Note that $c(t,x_\rmcr,v_\rmcr)\leq \Lambda\langle v_\rmcr\rangle^\gamma$ by~\eqref{e.K_c_assumptions}.  Thus, up to decreasing $\delta$, $c(t,x_\rmcr,v_\rmcr)  < 2\Lambda \langle v_0\rangle^\gamma$.  Hence, we only need to show that, 
at $(t_\rmcr, x_\rmcr, v_\rmcr)$,
\be\label{e.c7145}
	\cL f
		\leq  \rho e^{2\Lambda \langle v_0\rangle^\gamma t_\rmcr},
\ee
in order to obtain a contradiction with \eqref{e.c7144} and conclude the proof. 

To prove \eqref{e.c7145}, we start by using~\eqref{e.c7143} to write, at $(t_\rmcr, x_\rmcr, v_\rmcr)$,
\be\label{e.c714_I1_I2}
\begin{split}
	\cL f
		&\leq \int_{B_\delta(v_\rmcr)^c} (f' - f_\rmcr) K(t_\rmcr, x_\rmcr,v_\rmcr, v') \dd v'
			+ \int_{B_\delta(v_\rmcr)} (F' - F_\rmcr) K(t_\rmcr,x_\rmcr,v_\rmcr, v') \dd v'\\
		&=: I_1 + I_2.
\end{split}
\ee
First, we bound $I_1$. 
Using~\eqref{e.K_c_assumptions}, we have, with $A_k = B_{2^{k+1}\delta}(v_\rmcr)\setminus B_{2^{k}\delta}(v_\rmcr)$,
\be\label{e.c714_I1}
	\begin{split}
	I_1
		&\lesssim \|f\|_{L^\infty([0,T]\times\R^6)} \int_{B_\delta(v_\rmcr)^c} K(t_\rmcr, x_\rmcr,v_\rmcr, v') \dd v'\\
		&\lesssim \|f\|_{L^\infty([0,T]\times\R^6)} \sum_{k=0}^\infty \int_{A_k} K(t_\rmcr, x_\rmcr, v_\rmcr, v') \dd v'
		\\&\lesssim \|f\|_{L^\infty([0,T]\times\R^6)} \vvt^{\gamma + 2s} \sum_{k=0}^\infty (\delta 2^k)^{-2s}
		\lesssim \|f\|_{L^\infty([0,T]\times\R^6)} \vvt^{\gamma+2s} \delta^{-2s}.
	\end{split}
\ee

Next, we bound $I_2$. From the integral estimate for $K$ in \eqref{e.K_c_assumptions} applied on a series of annuli, it is easy to show
\begin{equation}\label{e.K-ball}
		\int_{B_r(v)} |v-v'|^2
			K(t,x,v,v')
			\dd v'
			\leq \Lambda \vv^{\gamma+2s} r^{2-2s},
			\end{equation}
			for $r>0$. 
 Using the symmetry of $K$ with respect to $(v'-v_\rmcr)$ as in~\eqref{e.K_c_assumptions}, we see
\[
	\int_{B_\delta(v_\rmcr)} (v' - v_\rmcr) \cdot \nabla_v F(t_\rmcr,x_\rmcr,v_\rmcr) K(t_\rmcr,x_\rmcr,v_\rmcr, v') \dd v' = 0.
\]
Then, using a Taylor expansion and the definition of $F$, we see that
\[
	\begin{split}
	&\left|F' - F_\rmcr - (v' - v_\rmcr) \cdot \nabla_v F(v_\rmcr)\right|\\
	& \lesssim e^{2\Lambda\langle v_0\rangle^\gamma t_\rmcr} \|f\|_{L^\infty([0,T]\times\R^6)} \left(\frac {t_\rmcr^2 + 1}{\delta^2} + \frac{t_\rmcr^2 |x_\rmcr-v_\rmcr t_\rmcr|^2 +  |v_\rmcr-v_0|^2}{\delta^4} \right)|v' - v_\rmcr|^2
	\\&\lesssim e^{2\Lambda \langle v_0\rangle^\gamma t_\rmcr} \|f\|_{L^\infty([0,T]\times\R^6)} \frac{1}{\delta^2}|v' - v_\rmcr|^2,
	\end{split}
\]
using $|x_\rmcr|^2 + |v_\rmcr- v_0|< \delta^2$ and $t_\rmcr < T_\delta$.  Putting the above together and applying~\eqref{e.K-ball}, we find
\be\label{e.c714_I2}
	\begin{split}
	I_2
		&\lesssim e^{2\Lambda\langle v_0\rangle^\gamma t_\rmcr} \|f\|_{L^\infty([0,T]\times\R^6)} \frac{1}{\delta^2} \int_{B_\delta(v_\rmcr)}  |v' - v_\rmcr|^2 K(t_\rmcr,x_\rmcr,v_\rmcr, v')\dd v'\\
		&\lesssim e^{2\Lambda\langle v_0\rangle^\gamma t_\rmcr} \|f\|_{L^\infty([0,T]\times\R^6)}\frac{\vvt^{\gamma + 2s}}{\delta^{2s}}.
	\end{split}
\ee

Combining~\eqref{e.c714_I1_I2},~\eqref{e.c714_I1}, and~\eqref{e.c714_I2}, we find, at $(t_\rmcr, x_\rmcr, v_\rmcr)$,
\[
	\cL  f
		\lesssim \|f\|_{L^\infty([0,T]\times\R^6)}  \frac{\vvt^{\gamma + 2s}}{\delta^{2s}}\left(
			1+ e^{2\Lambda\langle v_0\rangle^\gamma t_\rmcr}\right).
\]
Using the choice of $\rho$ from~\eqref{e.c7148}, the fact that $\vvo \approx \vvt$ (recall that $|v_0 - v_\rmcr| < \delta < 1$), and increasing $A$ as necessary yields the claim~\eqref{e.c7145}.  This concludes the proof.
\end{proof}

\section{Time regularity in kinetic integro-differential equations}\label{s:time-reg}

As part of our proof of uniqueness, we need to show that solutions $f$ to \eqref{e.boltzmann} are H\"older continuous for positive times with uniform bounds as $t\searrow0$ as long as the initial data $f_{\rm in}$ is H\"older continuous. This will be accomplished in Section \ref{s:holder-xv}. To prove this, we first need to understand the following fundamental property: if $f(t,\cdot,\cdot)$ is H\"older in $(x,v)$ for each time value $t$, then they are H\"older in $(t,x,v)$? The corresponding fact for linear parabolic equations (regularity in $x$ implies regularity in $(t,x)$, in the suitable scaling) is classical, but it is a nontrivial task to extend this to kinetic integro-differential equations~\eqref{e.kide} (including Boltzmann). For future potential applications, we prove this property for general linear kinetic equations of the form \eqref{e.kide} given above, with $K$ and $c$ satisfying \eqref{e.K_c_assumptions}.


Recall the local kinetic H\"older seminorm $[f]_{C^\alpha_{\ell,x,v}(D)}$ defined for subsets $D\subset \R^6$, as defined in Section \ref{s:holder-spaces}. Since this section concerns H\"older exponents $\alpha < \min\{1,2s\}$, 
this seminorm can equivalently be defined as 
\[
	[f]_{C^\alpha_{\ell,x,v}(D)}
		= \sup_{(x,v), (x_0,v_0)\in D} \frac{|f(x,v) - f(x_0,v_0)|}{d_\ell((0,x,v),(0,x_0,v_0))^\alpha}.
\]
As usual, we define $\|f\|_{C^\alpha_{\ell,x,v}(D)} = \|f\|_{L^\infty(D)} + [f]_{C^\alpha_{\ell,x,v}(D)}$.

The main result of this section is as follows. We note that this proposition is proven in both cases $\gamma< 0$ and $\gamma\geq 0$.

\begin{proposition}\label{prop:holder_in_t}
Suppose that $f$ solves~\eqref{e.kide}, satisfies ${\vv^{q+\alpha(\gamma+2s)_+/(2s)} f} \in L^\infty_tC^\alpha_{l,x,v}([0,T]\times \R^6)$ for some $\alpha\in (0,\min\{1,2s\})$, and is continuous in all variables.  Then, we have 
\begin{equation*}
	\|f\|_{C^\alpha_{\ell,q}([0,T]\times\R^6)}
		\leq C \sup_{t\in[0,T]} \left\|\vv^{q+\alpha(\gamma+2s)_+/(2s)}f(t)\right\|_{C^\alpha_{\ell,x,v}(\R^6)}.
\end{equation*}
The constant $C$ depends only on universal constants, $\alpha$, and $\Lambda$.
\end{proposition}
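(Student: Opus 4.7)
My approach combines a geometric decomposition of the kinetic distance with a comparison argument against a smooth barrier along characteristics.

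\textbf{Step 1 (reduction to characteristic time regularity).} Fix $z_0 = (t_0,x_0,v_0)$ and a second point $z_1 = z_0 \circ (s,y,u)$ with $(s,y,u) \in Q_r$. The identity
\begin{equation*}
f(z_1) - f(z_0) = \big[f(z_1) - f(z_0\circ(s,0,u))\big] + \big[f(z_0\circ(s,0,u)) - f(z_0\circ(s,0,0))\big] + \big[f(z_0\circ(s,0,0)) - f(z_0)\big]
\end{equation*}
splits the difference into two same-time increments (in only $x$, resp.\ only $v$) plus a pure time increment along the characteristic through $z_0$. The first two brackets are bounded immediately by $Cr^\alpha$ times the $L^\infty_tC^\alpha_{\ell,x,v}$ hypothesis (after accounting for the weight $\vv^{q+\alpha(\gamma+2s)_+/(2s)}$). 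It thus suffices to prove
\begin{equation*}
|f(t_0+s,x_0+sv_0,v_0) - f(t_0,x_0,v_0)| \leq C\, s^{\alpha/(2s)}\, N\, \vvo^{-q}
\end{equation*}
uniformly in $z_0$ and $s \geq 0$, where $N$ is the norm appearing on the right-hand side of the proposition.

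\textbf{Step 2 (barrier construction).} For each fixed $s>0$, I would construct smooth sub- and supersolutions $F^\pm$ of~\eqref{e.kide} on $[t_0,t_0+s]\times\R^6$ satisfying $F^-(t_0,\cdot,\cdot) \leq f(t_0,\cdot,\cdot) \leq F^+(t_0,\cdot,\cdot)$ and $|F^\pm(t_0+s,x_0+sv_0,v_0) - f(t_0,x_0,v_0)| \leq Cs^{\alpha/(2s)}N\vvo^{-q}$; a comparison argument between the continuous solution $f$ and the smooth barriers $F^\pm$ then yields the displayed bound. The supersolution takes the form
\begin{equation*}
F^+(t,x,v) = f(t_0,x_0,v_0) + \vvo^{-q}\Big[M(t-t_0)^{\alpha/(2s)} + A\,\Psi\big(x-x_0-(t-t_0)v_0,\ v-v_0\big)\Big],
\end{equation*}
where $\Psi \geq 0$ is a smooth, symmetric profile chosen so that $\Psi(y,u) \gtrsim (|y|^{1/(1+2s)} + |u|)^\alpha$; taking $A$ proportional to $N$ (absorbing the weight factor $\vvo^{\alpha(\gamma+2s)_+/(2s)}$ coming from the hypothesis on $f$) ensures $F^+ \geq f$ at $t=t_0$, and $M$ is then selected, depending only on $A$, $\Lambda$, $\gamma$, and $s$, to force the supersolution inequality.

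\textbf{Step 3 (verifying the supersolution inequality).} Because $\Psi$ is composed with the free-transport variable, the material derivative $(\partial_t + v\cdot\nabla_x)F^+$ reduces to
\begin{equation*}
\vvo^{-q}\Big[M\tfrac{\alpha}{2s}(t-t_0)^{\alpha/(2s)-1} + A\,(u\cdot\nabla_y\Psi)(y,u)\Big]\Big|_{y=x-x_0-(t-t_0)v_0,\ u=v-v_0}.
\end{equation*}
The singular time factor $(t-t_0)^{\alpha/(2s)-1}$ is precisely what is needed to dominate the nonlocal term. Using the symmetry of $K$ together with the annular estimate in~\eqref{e.K_c_assumptions}, split at $r = (t-t_0)^{1/(2s)}$ to bound $|\cL_K F^+| \lesssim A\,\vv^{\gamma+2s}(t-t_0)^{\alpha/(2s)-1}\vvo^{-q}$; the $\vv^{\gamma+2s}$ loss is absorbed into the extra $\vvo^{\alpha(\gamma+2s)_+/(2s)}$ weight in the hypothesis by matching $|v-v_0|$ against $\vvo$, and the singular $(t-t_0)^{\alpha/(2s)-1}$ is absorbed by taking $M$ sufficiently large. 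The lower-order term $cF^+$ is subcritical since $|c| \leq \Lambda\vv^\gamma$ and $F^+$ is uniformly bounded on $[t_0,t_0+s]$.

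\textbf{Main obstacle.} The delicate design point, and what I expect to be the crux, is the profile $\Psi$. A naive choice like $\Psi(y,u) = (|y|^{1/(1+2s)} + |u|)^\alpha$ is only $C^\alpha$ in $v$, which is insufficient to render $\cL_K\Psi$ pointwise finite when $\alpha < 2s$; $\Psi$ must therefore be smoothed in $v$ on the kinetic scale $(t-t_0)^{1/(2s)}$. Crucially, this smoothing must be calibrated so that the time exponent $\alpha/(2s)$ comes out sharp---a naive isotropic smoothing mixes the $(y,u)$ and $\tau$ scales and yields only a weaker exponent like $\alpha/(2s(1+2s))$. Aligning the kinetic scales via the dilation~\eqref{e.dilation-def} is the subtlest step, consistent with the paper's characterization of the proof as ``surprisingly intricate.''
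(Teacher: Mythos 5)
Your overall philosophy (reduce to a time increment along the characteristic, then run a barrier whose time profile $(t-t_0)^{\alpha/(2s)}$ beats the nonlocal term after splitting the kernel at scale $(t-t_0)^{1/(2s)}$) is in the same spirit as the paper's Lemma on rescaled increments, but as written the comparison argument in Step 2 cannot be launched. Since $\cL_K$ is nonlocal in $v$ (and transport moves information in $x$), the crossing-point argument requires $F^+\geq f$ at $t=t_0$ on \emph{all} of $\R^6$, not just near $(x_0,v_0)$. Your barrier has the form $f(t_0,x_0,v_0)+\vvo^{-q}\,A\,\Psi$ with $\Psi$ of H\"older-type growth $\alpha$ and $A\sim N$; take $|v_0|$ large, $v=0$, $x=x_0$: then $f(t_0,x_0,0)$ can be of size $\|f\|_{L^\infty_q}\sim N$, while $F^+(t_0,x_0,0)\lesssim N\vvo^{-q}\vvo^{\alpha}\ll N$ because $q\gg\alpha$. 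So the ordering fails at time $t_0$, and it cannot be repaired by letting $\Psi$ grow like $|u|^q$, since the annular bound $\int_{B_{2r}\setminus B_r}K\lesssim\Lambda\vv^{\gamma+2s}r^{-2s}$ only tolerates growth of order $<2s$ in the tail of $\cL_K\Psi$. This is exactly the obstruction the paper's proof is built around: one must \emph{localize} the comparison, via a multiplicative cutoff $\phi\approx(1+|x|^2+|v|^2)^{-\alpha/2}$ applied to the rescaled increment, on a box of radius $R\approx\vvo/(2r)$, so that the $L^\infty_q$ decay of $f$ converts the initial and lateral boundary contributions into the desired factor $r^\alpha\vvo^{-q}$; the barrier is then an ODE supersolution in time rather than a pointwise profile in $(x,v)$. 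Your "main obstacle" paragraph identifies the smoothing of $\Psi$ as the crux, but the real crux is this global-in-$(x,v)$ domination/localization issue, which your construction does not address (nor does it explain how a first crossing point exists on the unbounded domain).

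A secondary gap: in Step 3 you bound $|\cL_K F^+|\lesssim A\,\vv^{\gamma+2s}(t-t_0)^{\alpha/(2s)-1}\vvo^{-q}$ and say the factor $\vv^{\gamma+2s}$ is "absorbed into the extra weight," but at a crossing point $v$ is unconstrained, so when $\gamma+2s>0$ this factor is unbounded unless you both confine $|v-v_0|\lesssim\vvo$ (again the localization) and restrict the time window to $s\lesssim\vvo^{-(\gamma+2s)_+}$. The latter restriction is precisely where the weight loss $\alpha(\gamma+2s)_+/(2s)$ in the statement comes from: for larger time increments the paper inflates the dilation parameter by $\vvo^{(\gamma+2s)_+/(2s)}$, which costs exactly that power in the weight. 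Your claimed clean bound $s^{\alpha/(2s)}N\vvo^{-q}$ in Step 1, uniform in $s\geq 0$, is therefore stronger than what the method (or the proposition) delivers, and the bookkeeping of this loss needs to be made explicit rather than absorbed informally.
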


The key lemma for proving Proposition \ref{prop:holder_in_t} is the following (recall the definition of the dilation $\delta_r$~\eqref{e.dilation-def}):

\begin{lemma}\label{lem:scaling_holder}
Under the assumptions of \Cref{prop:holder_in_t}, let $z_1 \in [0,T]\times \R^6$, $r\in (0,1]$ be arbitrary, and $z_2 = (t_2,x_2,v_2)$ be such that  $t_2 \in[0, \langle v_1 \rangle^{-(\gamma+2s)_+}]$,  $r^{2s} t_2 + t_1 \in [0,T]$,  and $|x_2|, |v_2| < 1$. Then we have
\begin{equation*}
\begin{split}
	|f(z_1 \circ \delta_r (z_2)) - f(z_1)|
		\lesssim |r|^\alpha \langle v_1 \rangle^{-q} \left(\|f\|_{L^\infty_q([0,T]\times \R^6)} + [\vv^q f(t_1)]_{C^\alpha_{\ell,x,v}(B_1(x_1,v_1))}\right),
\end{split}
\end{equation*}
with the implied constant depending only on universal constants, $\alpha$, and $\Lambda$.
\end{lemma}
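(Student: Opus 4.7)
I would attack this by a comparison/barrier method on a short time slab, mirroring the structure of \Cref{p:cont-match} but with quantitative rates in $r$ and $\vvo$.

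First I would reduce to the case $z_1 = (0,0,0)$ via the Galilean shift $\phi(t,x,v) := f(t_1+t,\, x_1+x+tv_1,\, v_1+v)$, which solves an equation of the form \eqref{e.kide} with kernel $\tilde K(t,x,v,v') := K(t_1+t, x_1+x+tv_1, v_1+v, v_1+v')$ and potential $\tilde c(t,x,v) := c(t_1+t, x_1+x+tv_1, v_1+v)$. Both inherit \eqref{e.K_c_assumptions} centered at $v_1$: for $|v|\le 1$,
\[
	\int_{B_{2\rho}(v)\setminus B_\rho(v)} \tilde K\,\mathrm{d}v' \,\lesssim\, \Lambda\vvo^{\gamma+2s}\rho^{-2s},\qquad |\tilde c(t,x,v)|\,\lesssim\, \Lambda\vvo^\gamma.
\]
In these coordinates $z_1 \circ \delta_r(z_2)$ becomes $(\tau,\, r^{1+2s}x_2,\, rv_2)$ with $\tau = r^{2s}t_2 \le r^{2s}\vvo^{-(\gamma+2s)_+}$, so the claim reduces to bounding $|\phi(\tau, r^{1+2s}x_2, rv_2) - \phi(0,0,0)|$; the case $\vvo \le 2$ is immediate from the global $L^\infty_q$ bound, so I will assume $\vvo > 2$.

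For the upper bound I would deploy a one-sided barrier
\[
	G(t,x,v) \,=\, e^{2\Lambda\vvo^\gamma t}\Bigl[\phi(0,0,0) + A r^\alpha \vvo^{-q} + \rho t + 2\|f\|_{L^\infty_q}\langle v_1+v\rangle^{-q}\bigl(1-\eta(t,x,v)\bigr)\Bigr],
\]
where $\eta$ is a smooth free-transport-invariant cutoff equal to $1$ on a kinetic cylinder of radius $\sim r$ containing the target and equal to $0$ outside a cylinder of radius $\sim 2r$, so that $(\partial_t + v\cdot\nabla_x)G = \rho\, e^{2\Lambda\vvo^\gamma t} + 2\Lambda\vvo^\gamma G$. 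The exponential prefactor absorbs $\tilde c G$ at contact, as in \Cref{p:cont-match}. I would take $A$ to be a universal multiple of $[\vv^q f(t_1)]_{C^\alpha_{\ell,x,v}(B_1(x_1,v_1))} + \|f\|_{L^\infty_q}$, so that the H\"older hypothesis forces $G(0,\cdot,\cdot)\ge \phi(0,\cdot,\cdot)$ in the trust region $\{\eta = 1\}$, while the envelope $2\|f\|_{L^\infty_q}\langle v_1+v\rangle^{-q}$ dominates $\phi$ outside via the $L^\infty_q$ bound.

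By an envelope argument, any potential contact point lies in a slight enlargement of the trust region, where $\langle v_1+v\rangle \approx \vvo$; there the supersolution condition $\rho \ge \mathcal L_{\tilde K}G$ can be checked using only the envelope term. The envelope vanishes throughout the trust region, so the near-diagonal $B_r(v)$ integral is zero; the tail, estimated by annular decomposition around $v$ with crossover at the scale $\vvo/r$ (the envelope's $\langle v_1+v'\rangle^{-q}$ supplies $\vvo^{-q}$ before the crossover and $1$ beyond), combined with the annular bound on $\tilde K$, gives $|\mathcal L_{\tilde K}G| \lesssim \|f\|_{L^\infty_q}\bigl[\vvo^{\gamma+2s-q}r^{-2s} + \vvo^\gamma\bigr]$. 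A first-crossing argument as in \Cref{p:cont-match} then yields $G \ge \phi$ on $[0,\tau]\times \R^6$, and evaluating at the target (where $\eta = 1$, so the envelope vanishes) produces the one-sided bound; the lower bound follows symmetrically.

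The main obstacle will be matching the precise $r^\alpha$ scaling of the final bound: the naive computation above forces $\rho \sim \|f\|_{L^\infty_q}\vvo^{\gamma+2s-q}r^{-2s}$, whence $\rho\tau \lesssim \|f\|_{L^\infty_q}\vvo^{-q}$, which is a factor of $r^{-\alpha}$ above the target $r^\alpha\vvo^{-q}$. Closing the estimate will require a more refined construction---for instance, a two-scale cutoff separating an inner barrier of amplitude $A r^\alpha \vvo^{-q}$ (the part seen at the target) from an outer envelope retaining the full $\|f\|_{L^\infty_q}\langle v_1+v\rangle^{-q}$ decay at a wider kinetic scale so that the intermediate region does not inflate $\mathcal L_{\tilde K}G$---together with careful bookkeeping of the switch between the $\vv^q$ weight of the conclusion and the $\vv^{q+\alpha(\gamma+2s)_+/(2s)}$ weight of the hypothesis on $B_1(x_1,v_1)$, and sharp use of the constraint $t_2 \le \vvo^{-(\gamma+2s)_+}$. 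Getting these exponents to balance is the central technical step.
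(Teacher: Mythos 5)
Your own closing paragraph identifies the problem, but that problem is the entire content of the lemma, and the proof is never closed: with a cutoff $\eta$ supported at kinetic scale $\sim r$, the envelope $2\|f\|_{L^\infty_q}\langle v_1+v\rangle^{-q}(1-\eta)$ jumps from $0$ to full size $\approx \|f\|_{L^\infty_q}\vvone^{-q}$ across a velocity scale $\sim r$, so from inside the trust region the tail of $\cL G$ is necessarily of order $\|f\|_{L^\infty_q}\vvone^{\gamma+2s-q}r^{-2s}$, forcing $\rho$ of that size; after integrating over the admissible time $\tau\le r^{2s}\vvone^{-(\gamma+2s)_+}$ this contributes $\approx\|f\|_{L^\infty_q}\vvone^{-q}$, a factor $r^{-\alpha}$ above the target, exactly as you compute. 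The failure is structural rather than a matter of bookkeeping: a two-level barrier (flat inner amplitude $A r^\alpha\vvone^{-q}$, full outer envelope) discards the H\"older information at every intermediate scale between $r$ and $1$, and shifting the single transition scale only redistributes the loss; the ``two-scale cutoff'' you gesture at is precisely the hard step and is not carried out. A secondary error: the reduction ``the case $\vvone\le 2$ is immediate from the global $L^\infty_q$ bound'' is false, since for small $r$ the claimed bound is $\lesssim r^\alpha$, which no $L^\infty$ bound can give; low velocities require the same argument as large ones.

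For comparison, the paper's proof avoids any competition between a forcing term and the nonlocal operator. It first rescales by $\delta_r$ (so the zeroth-order coefficient automatically carries the factor $r^{2s}\le r^\alpha$ and the kernel bounds become $\langle rv+v_1\rangle^{\gamma+2s}$), and then, instead of a sharp cutoff, multiplies by the slowly varying weight $\phi\approx(1+|x|^2+|v|^2)^{-\alpha/2}$, setting $F=\phi\,(f_r-f(0,0,v_1))$ on the large box $B_R\times B_R$ with $R=\vvone/(2r)$ and comparing $F$ with a barrier $\overline F(t)$ depending on time only, carrying the factor $e^{C_0\vvone^{(\gamma+2s)_+}t}$. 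At a first crossing, the maximum structure of $F$ together with the fact that $1/\phi$ grows only at rate $\alpha<2s$ yields $\phi\,\cL(F/\phi)\le \tfrac{C_0}{2}\vvone^{(\gamma+2s)_+}F$, i.e.\ the nonlocal term is proportional to $F$ itself and is absorbed Gr\"onwall-style by the exponential over the short window $t\le \vvone^{-(\gamma+2s)_+}$; no forcing of size $r^{-2s}$ ever appears. The factor $r^\alpha$ then comes solely from evaluating the data: at $t=0$ the $\alpha$-decay of $\phi$ exactly cancels the growth of the H\"older modulus out to rescaled distance $\sim 1/r$ and of the $L^\infty_q$ bound beyond it; on the lateral boundary $\max\{|x|,|v|\}=R$ the weight gives $R^{-\alpha}\lesssim r^\alpha$; and the source inherits $r^{2s}\le r^\alpha$ from the rescaling. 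This interpolating polynomial weight, which exploits the H\"older modulus at all scales between $r$ and $1$ simultaneously, is the idea your construction is missing.
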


\begin{proof}
The proof is based on a barrier argument.  Let $z_1$ be as in the statement of the lemma.  Without loss of generality, we may assume that $t_1 = 0$ and $x_1 = 0$.  Then $z_2 \in [0, \min\{\langle v_1\rangle^{-(2s+\gamma)_+}, r^{-2s}T\}]\times B_1(0) \times B_1(0)$.  

\medskip

\noindent {\em Step 1: An auxiliary function and its equation.} 
Let us set some useful notation.  For any $r\geq 0$ and any function $g$, let 
\[
	g_r(z) = g(z_1 \circ \delta_r(z)).
\]
Then, let
\[
	\tilde F(z) = f_r(z) - f(0, 0, v_1).
\]
It is straightforward to check that
\[
	\partial_t \tilde F + v\cdot \nabla_x \tilde F - \cL f_r
		= r^{2s} c_r f_r,
\]
where we have the defined the nonlocal operator and kernel
\be
	\begin{split}
		&\cL f_r
			= \int_{\R^3} f_r(v') - f_r(v)) \cK (t,x,v,v') \dd v'
		\\&\text{and } \quad
		\cK(t,x,v,v')
			= r^{3+2s} K(r^{2s} t, r^{1+2s} x + r^{2s} v_1 t, rv+v_1, rv' + v_1). 
	\end{split}
\ee
We first notice the following bounds derived from~\eqref{e.K_c_assumptions}: for any $L>0$ and $v$, 
\be\label{e.cK2}
	\begin{split}
		\int_{B_{2L}(v) \setminus B_L(v)} &\cK(v,v') \dd v'
			= r^{3+2s}\int_{B_{2L}(v)\setminus B_L(v)} K(rv + v_1, rv' + v_1) \dd v'
			\\&= r^{2s} \int_{B_{2rL}(rv+v_1)\setminus B_{rL}(rv+v_1)} K(rv +v_1, w) \dd w
			\lesssim L^{-2s}
				\langle rv + v_1\rangle^{\gamma+2s} ,
	\end{split}
\ee
and, applying \eqref{e.cK2} on a decreasing sequence of annuli,
\be\label{e.cK1}
	\begin{split}
		&\int_{B_L(v)} |v'-v|^2 \cK(v,v') \dd v'
			=r^{3+2s}\int_{B_L(v)} |v'-v|^2 K(rv + v_1, rv' + v_1) \dd v'
			\\&\quad= r^{-(2-2s)} \int_{B_{rL}(rv+v_1)} |w - (rv + v_1)|^2 K(rv +v_1, w) \dd w
			\lesssim L^{2-2s}
				\langle rv + v_1\rangle^{\gamma+2s} ,
	\end{split}
\ee
where, for brevity, we have omitted the dependence on $(t,x)$. We also have via \eqref{e.K_c_assumptions} the symmetry property 
\[ \mathcal K(v,v+w) = r^{3+2s} K(r v + v_1, rv + v_1+rw) = r^{3+2s} K(rv+v_1,rv + v_1 - rw) = \mathcal K(v,v-w).\]

Our goal is to obtain a local upper bound on $\tilde F$; that is, a bound at $(t_2, x_2, v_2)$ satisfying the smallness assumption in the statement of the lemma.  Hence, we use a suitable multiplicative cutoff function.  Let $\phi \in C_c^\infty(\R^6)$  be a cut-off function such that, for all $i,j \in \{1,2,3\}$,
\begin{equation}\label{e.cutoff}
	\begin{split}
		&\phi\approx \left( |v|^2 + |x|^2 + 1 \right)^{-\alpha/2}
		\\&|\partial_{x_i} \phi|\lesssim |x|\left(|v|^2+|x|^2+1 \right)^{-\alpha/2-1},
		\\&|\partial_{x_ix_j} \phi| \lesssim |x|^2\left(|v|^2+|x|^2+1 \right)^{-\alpha/2-2}, \quad\text{ and}
	\end{split}
	\qquad
	\begin{split}
		&|\partial_{v_i} \phi|\lesssim |v|\left(|v|^2+|x|^2+1 \right)^{-\alpha/2-1},
		\\&|\partial_{v_iv_j} \phi| \lesssim |v|^2\left(|v|^2+|x|^2+1 \right)^{-\alpha/2-2},
		\\&|\partial_{x_iv_j} \phi| \lesssim |x||v|\left(|v|^2+|x|^2+1 \right)^{-\alpha/2-2}.
	\end{split}
\end{equation}
Define
\be\label{e.psi_twist}
	F = \phi \tilde F.
\ee
Note that this is not the same function $F$ from the proof of Proposition \ref{p:cont-match}. After a straightforward computation, we find
\be\label{e.F_equation}
\begin{split}
	\partial_t F + v\cdot \nabla_x F
		= \frac{v\cdot \nabla_x \phi}{\phi} F + \phi \cL \tilde F
			+ r^{2s}\phi c_r f_r.
\end{split}
\ee
The goal is now to estimate $F$ from above.

\medskip

\noindent{\em Step 2: An upper barrier for $F$.} 
Fix
\be\label{e.R}
	R = \frac {\langle v_1 \rangle} {2r}.
\ee
For $C_0$ to be determined, let
\be\label{e.overline F}
	\begin{split}
		\overline F(t) = 2 e^{t C_0 \langle v_1\rangle^{(\gamma+2s)_+}}
		\Big(
			&\|F(0,\cdot,\cdot)\|_{L^\infty(B_R\times  B_R)}
			+ \sup_{s\in[0,t_2], \max\{|x|, |v|\} = R} F(s,x,v)_+
			\\&+ 
				A r^{2s} t \Lambda \langle v_1\rangle^{\gamma_+}\|f\|_{L^\infty([0,T]\times \R^3\times B_{rR}(v_1)}
		\Big),
	\end{split}
\ee
where $\Lambda$ is the constant from \eqref{e.K_c_assumptions}
and $A>0$ is sufficiently large so that, for any $v$ such that $|v| \leq R$,
\be\label{e.c070301}
	\langle rv + v_1\rangle^{\gamma_+}
		< \frac{A}{\sup \phi} \vvone^{\gamma_+}.
\ee
Our goal is to show that $F \leq \overline F$ on $[0,t_2]\times B_R \times B_R$.  Notice that, by construction,
\[
	\sup_{(x,v)\in \overline B_R \times \overline B_R} F(0,x,v)
		< \overline F(0).
\]
Hence, if $F \leq \overline F$ does not always hold, we can take the first crossing time $t_\rmcr$ that $\sup F(t_\rmcr) = \overline F(t_\rmcr)$.  By the assumed uniform continuity of $f$ in time, we immediately see that $t_\rmcr>0$.

On the $(x,v)$  boundary, that is, when $\max\{|x|,|v|\} = R$, one has $F < \overline F$ by construction.  Hence, any crossing point must occur in the interior, and we can find $(x_\rmcr,v_\rmcr) \in B_R(0) \times  B_R(0)$ such that
\be\label{e.c6241}
	F(t_\rmcr,x_\rmcr,v_\rmcr) = \overline F(t_\rmcr).
\ee
Using that $(t_\rmcr,x_\rmcr,v_\rmcr)$ is the first crossing point, we find the following: 
\be\label{e.c6242}
	\begin{split}
		\partial_t F(t_\rmcr,x_\rmcr,v_\rmcr)
			&\geq \partial_t \overline F(t_\rmcr),\\
		\nabla_{x,v} F(t_\rmcr,x_\rmcr,v_\rmcr) &= 0,\\
		F(t_\rmcr,x_\rmcr,v_\rmcr)
			&\geq F(t_\rmcr,x,v) ~~\text{for all }(x,v)\in\R^3\times B_R,\\
			\cL F(t_\rmcr,x_\rmcr,v_\rmcr) &\leq 0.
	\end{split}
\ee
These facts imply, at the point $(t_\rmcr, x_\rmcr,v_\rmcr)$, 
\[
\begin{split}
	\partial_t F &+ v\cdot \nabla_x F
		\geq \partial_t \overline F
		\\&= C_0 \langle v_1\rangle^{(\gamma+2s)_+} \overline F
			+  e^{t C_0 \vvo^{(\gamma+2s)_+}}
				r^{2s}
				2A \Lambda 
				\langle v_1 \rangle^{\gamma_+}
				\|f\|_{L^\infty([0,T]\times \R^3\times B_{rR}(v_1)}\\
		&\geq 
			C_0 \langle v_1\rangle^{(\gamma+2s)_+} F
			+ 
				r^{2s} \phi c_r
				\|f\|_{L^\infty([0,T]\times \R^3\times B_{rR}(v_1)}.
\end{split}
\]
Above we used~\eqref{e.K_c_assumptions}, the choice of $A$ (recall that $|v_\rmcr|\leq R$), and the smallness assumption on $t_2$.  
This, combined with the equation \eqref{e.F_equation} satisfied by $F$, yields
\[ 
	C_0 \langle v_1\rangle^{(\gamma+2s)_+} F
		+  r^{2s} \phi c_r 
		\|f\|_{L^\infty([0,T]\times\R^6)}
			\leq \frac{v_\rmcr \cdot \nabla_x \phi}{\phi} F
				+ \phi \cL \tilde F + r^{2s} \phi c_r f_r.
\]
By~\eqref{e.cutoff} and Young's inequality, we see that $|(v\cdot\nabla_x \phi) / \phi|$ is bounded uniformly.  
Therefore, up to increasing $C_0$, we will reach a contradiction if we
can show that, at $(t_\rmcr, x_\rmcr, v_\rmcr)$,
\begin{equation}\label{e.holder1}
	\phi \cL \tilde F
		< \frac{C_0}{2}\langle v_1 \rangle^{(\gamma+2s)_+} F.
\end{equation}
Once~\eqref{e.holder1} is established, we can conclude that $F\leq \overline F$ on $[0,t_2]\times B_R\times B_R$.

To keep the proof clean, we adopt the notation $h' = h(t_\rmcr,x_\rmcr,v')$ and $h_\rmcr = h(t_\rmcr, x_\rmcr, v_\rmcr)$ for any function $h$. Recall from~\eqref{e.c6242} that $F' \leq F_\rmcr$. Then, since $F = \phi \tilde F$, we have
\begin{equation}\label{e.max-trick}
\begin{split}
	\phi_\rmcr \cL \tilde F_\rmcr = \int_{\R^3} \phi_\rmcr (\tilde F' - \tilde F_\rmcr) \cK \dd v'
	 &= \int_{\R^3} \phi_\rmcr\left( \frac{F'}{\phi'} - \frac{F_\rmcr}{\phi_\rmcr}\right) \cK \dd v'\\
			&\leq \int_{\R^3} \phi_\rmcr F_\rmcr \left( \frac{1}{\phi'} - \frac{1}{\phi_\rmcr}\right) \cK \dd v' = I.
\end{split}
\end{equation}
Fix $L = \langle v_\rmcr \rangle / 2$ and decompose
\be\label{e.c6243}
	I
		= \int_{B_L(v_\rmcr)} \phi_\rmcr F_\rmcr \left( \frac{1}{\phi'} - \frac{1}{\phi_\rmcr}\right)  \cK \dd v'
			+ \int_{B_L(v_\rmcr)^c} \phi_\rmcr F_\rmcr \left( \frac{1}{\phi'} - \frac{1}{\phi_\rmcr}\right)  \cK \dd v'
		= I_1 + I_2.
\ee
Consider the first term $I_1$.  Expanding $1/\phi'$ to second order in the $v'$ variable, and noting that the first-order term vanishes because of the symmetry of the kernel $\cK$, we have
\be\label{e.c6254}
	\begin{split}
		I_1
		&\leq \frac{1}{2}\phi_\rmcr F_\rmcr \sup_{v' \in B_L(v_\rmcr)} |D_v^2(1/\phi')| \int_{B_L(v_\rmcr)} |v'-v_\rmcr|^2 \cK \dd v'.
	\end{split}
\ee
Now, using the properties ~\eqref{e.cutoff} of $\phi$, as well as the upper bound ~\eqref{e.cK1} for $\cK$, we find
\be\label{e.c6251}
	I_1
		\lesssim  F_\rmcr
			(|v_\rmcr|^2+|x_\rmcr|^2+1)^{-\alpha/2} 
			 \langle rv_\rmcr + v_1\rangle^{\gamma+2s} L^{2-2s}
			\sup_{v' \in B_L(v_\rmcr)} |v'|^2\left( |v'|^2 + |x_\rmcr|^2 + 1 \right)^{\alpha/2-2}.
\ee
First notice that, since $|v_\rmcr|<R = \langle v_1\rangle/(2r)$, we have $\langle rv_\rmcr + v_1\rangle \approx \langle v_1 \rangle$.  Next, by the choice of $L$, we have $\vvp \approx \vvt$. Therefore, \eqref{e.c6251} becomes (up to increasing $C_0$) 
\be\label{e.c6253}
	I_1 \lesssim  \frac{F_\rmcr \langle v_1\rangle^{\gamma + 2s} \langle v_\rmcr\rangle^{2-2s} \langle v_\rmcr\rangle^2}{(|v_\rmcr|^2 + |x_\rmcr|^2 + 1)^2}
		< \frac{C_0}{2} F_\rmcr \langle v_1\rangle^{(\gamma+2s)_+},
\ee
as desired.

We now turn to the second term $I_2$ in~\eqref{e.c6243}.  
Since the $-1/\phi_\rmcr$ term in the integrand has a good sign, we immediately see
\[
	I_2
		\leq \phi_\rmcr F_\rmcr \int_{B_L(v_\rmcr)^c} \frac{1}{\phi'} \cK \dd v'.
\]
Using the asymptotics~\eqref{e.cutoff} of $\phi$ we find
\[
		I_2
		\lesssim \phi_\rmcr F_\rmcr \int_{B_L(v_\rmcr)^c} (|v'|^2+|x_\rmcr|^2+1)^{\alpha/2} \cK \dd v'
		= \phi_\rmcr F_\rmcr \sum_{k=0}^\infty \int_{A_{k,L}} (|v'|^2+|x_\rmcr|^2+1)^{\alpha/2} \cK \dd v',
\]
where we define
\[
	A_{k,L}
		= B_{2^{k+1}L}(v_\rmcr) \setminus B_{2^kL}(v_\rmcr).
\]

On the annulus $A_{k,L}$, we have $|v'| \lesssim | v_\rmcr| + 2^{k+1}L$ so that $(|v'|^2+|x_\rmcr|^2+1)^{\alpha/2} \lesssim \langle v_\rmcr \rangle^\alpha + \langle x_\rmcr \rangle^\alpha + 2^{\alpha k} L^\alpha$. Using the bound~\eqref{e.cK2} for $\cK$ and the fact that $\alpha < 2s$ yields

\be\label{e.c6255}
	\begin{split}
	I_2
		&\lesssim \phi_\rmcr F_\rmcr \sum_{k=0}^\infty \left( \langle v_\rmcr \rangle^\alpha + \langle x_\rmcr \rangle^\alpha + 2^{\alpha k}L^\alpha \right) \int_{A_{k,L}} \cK \dd v' \\
		&\lesssim \phi_\rmcr F_\rmcr \sum_{k=0}^\infty \left( \langle v_\rmcr \rangle^\alpha + \langle x_\rmcr \rangle^\alpha + 2^{\alpha k}L^\alpha \right) \langle r v_\rmcr + v_1 \rangle^{\gamma+2s} 2^{-2s k} L^{-2s}\\
		&\lesssim \frac{\langle v_\rmcr \rangle^\alpha + \langle x_\rmcr \rangle^\alpha + L^{\alpha-2s}}{(|v_\rmcr|^2+|x_\rmcr|^2+1)^{\alpha/2}} F_\rmcr \langle v_1 \rangle^{\gamma+2s} \lesssim F_\rmcr \langle v_1 \rangle^{\gamma + 2s}
	\end{split}
\ee
where in the second-to-last step we used that $r|v_\rmcr| \leq rR \leq |v_1|/2$.

Using~\eqref{e.c6253} and~\eqref{e.c6255} in~\eqref{e.c6243}, we find
\[
	\phi_\rmcr \cL \tilde F
		\leq I_1 + I_2
		< \frac{C_0}{2}  F(t_\rmcr) \langle v_1 \rangle^{(\gamma+2s)_+},
\]
which concludes the proof of~\eqref{e.holder1} and allows us to deduce the upper bound
\be\label{e.c62801}
	F \leq \bar F
		\qquad\text{ on } [0,t_2]\times B_R\times B_R.
\ee

\medskip

\noindent{\em Step 3: Quantitative bounds on $\overline F$.}
We establish here the upper bound on $\overline F$: 
\be\label{e.c6258}
	~~~~~~
	\overline F
		\lesssim r^\alpha \vvone^{-q} \left(\|f\|_{L_q^\infty([0,T]\times \R^6)}
			+ [\vv^q f(0,\cdot,\cdot)]_{C^{\alpha}_{\ell,x,v}(B_2(0,v_1))} \right) ~~\quad \text{ in } [0,t_2]\times B_R\times B_R.
\ee
To this end, fix any $(t,x,v)$ with $t \in [0,t_2]$.  The first step is to notice that the exponential term in the definition~\eqref{e.overline F} of $\overline F$ can be bounded by $e^{C_0}$ since $t\leq t_2 \leq \langle v_1\rangle^{-(\gamma + 2s)_+}$.  Similarly, we have $t\Lambda \langle 2v_1\rangle^{\gamma_+} \lesssim 1$. 
Using these two observation and that $2s > \alpha$ yields
\be\label{e.c62503}
	\begin{split}
	\overline F(t,x,v)
		&\lesssim \|F(0,\cdot,\cdot)\|_{L^\infty(B_R\times B_R)}
			+ \sup_{t\in[0,t_2], \max\{|x'|, |v'|\} = R} F(t',x',v')_+
			\\&
			\qquad + r^\alpha \|f\|_{L^\infty([0,T]\times\R^3\times B_{rR}(v_1))}.
	\end{split}
\ee

We now bound the $F(0,\cdot,\cdot)$ term in~\eqref{e.c62503}.  Fixing any $(x',v') \in B_R\times B_R$, we have
\[ \begin{split}
	F(0,x',v')
		&= \phi(x',v') (f(z_1\circ(\delta_r(0,x',v'))) - f(0,0,v_1))\\
		&= \phi(x',v') (f(0,r^{1+2s}x',rv'+v_1) - f(0,0,v_1)).
		\end{split}
\]
If $r^{1+2s}|x'|, r|v'| \leq 1$, then recalling the asymptotics of $\phi$ from~\eqref{e.cutoff} and the definition  \eqref{e.dl} of $d_\ell$, we have
\be\label{e.c6259}
	\begin{split}
	|F(0,x',v')|
		&\lesssim (|v'|^2+|x'|^2+1)^{-\frac{\alpha}{2}} r^\alpha \left(\max\{|x'|^{\frac{1}{1+2s}}, |v'|\}\right)^\alpha \vvone^{-q}[\vv^q f(0,\cdot,\cdot)]_{C^{\alpha}_{\ell,x,v}(B_1(0,v_1))}
		\\&\leq r^\alpha  \vvone^{-q} [\vv^q f(0,\cdot,\cdot)]_{C^{\alpha}_{\ell,x,v}(B_1(0,v_1))}.
	\end{split}
\ee
On the other hand, if either $r^{1+2s}|x'|$ or $r|v'| \geq 1$, we find
\be\label{e.c62501}
	\begin{split}
	|F(0,x',v')|
		&\lesssim (|v'|^2+|x'|^2+1)^{-\alpha/2} \vvone^{-q}\|f\|_{L_q^\infty([0,T]\times B_{rR}(0)) \times B_{rR}(v_1)}
		\\&
		\leq r^\alpha \vvone^{-q}\|f\|_{L^\infty_q([0,T]\times\R^6)},
	\end{split}
\ee
since $r^{\alpha(1+2s)} \leq r^\alpha$ and $|v'-v_1| \leq rR$ implies that $\vv \approx \vvone$. 
Combining~\eqref{e.c6259} and~\eqref{e.c62501}, we obtain
\be\label{e.c62502}
	|F(0,x',v')|
		\lesssim r^\alpha \vvone^{-q}\left([\vv^q f(0,\cdot,\cdot)]_{C^{\alpha}_{\ell,x,v}(B_1(x_1,v_1))} + \|f\|_{L_q^\infty([0,T]\times\R^6)}\right),
\ee
as desired.

Next, we turn to the middle term on the right hand side of~\eqref{e.c62503}.  Fix any $(t',x',v') \in [0,t_2]\times \overline B_R \times \overline B_R$ where $\max\{|x'|,|v'|\} = R$.  Again, recalling the properties ~\eqref{e.cutoff} and~\eqref{e.psi_twist} of $\phi$, we have
\[
	\begin{split}
	|F(t',x',v')|
		&\lesssim (|v'|^2+|x'|^2+1)^{-\alpha/2} \|f\|_{L^\infty([0,T]\times\R^3\times B_{rR}(v_1))}
	\\&
		\lesssim \min \left( \vvp^{-\alpha}, \langle x' \rangle^{-\alpha} \right) \vvone^{-q}\|f\|_{L_q^\infty([0,T]\times\R^6)}.
	\end{split}
\]
Recall from~\eqref{e.R} that $R =  \langle v_1\rangle/(2r)$.  Then clearly,
\be\label{e.c62504}
	|F(t',x',v')|
		\lesssim r^{\alpha} \vvone^{-q} \|f\|_{L_q^\infty([0,T]\times\R^6)}.
\ee
Combining~\eqref{e.c62503}, \eqref{e.c62502}, and~\eqref{e.c62504}, we find that \eqref{e.c6258} holds true.

\medskip

\noindent{\em Step 4: Conclusion of the proof.} 
Recalling that $|t_2| \leq \vvone^{-(\gamma+2s)_+} \leq 1$ and $|x_2|, |v_2| \leq 1$, we have $\phi(t_2,x_2,v_2) \approx 1$.  From Step 2, we have $F(t_2,x_2,v_2) \leq \overline F(t_2)$.  This yields
\[
	\begin{split}
	f(z_1 \circ \delta_r(z_2)) - f(z_1)
		= \frac{1}{\phi(t_2,x_2,z_2)} F(t_2,x_2,z_2)
		\lesssim \overline F(t_2).
	\end{split}
\]
Combining this with~\eqref{e.c6258}, we find
\[
	f((z_1 \circ \delta_r(z_2)) - f(z_1)
		\lesssim r^\alpha \vvone^{-q} \left([\vv^{-q} f(0,\cdot,\cdot)]_{C^{\alpha}_{\ell,x,v}(B_1(0,v_1))} + \|f\|_{L_q^\infty([0,T]\times\R^6)} \right).
\]
The same proof with $-\overline F$ as a lower barrier of $F$ gives
\[
	f((z_1 \circ \delta_r(z_2)) - f(z_1)
		\gtrsim - r^\alpha \vvone^{-q} \left([\vv^q f(0,\cdot,\cdot)]_{C^{\alpha}_{\ell,x,v}(B_1(0,v_1))} + \|f\|_{L_q^\infty([0,T]\times\R^6)} \right).
\]
We deduce
\[
	|f(z_1\circ\delta_r(z_2)) - f(z_1)|
		\lesssim r^\alpha \vvone^{-q}\left([\vv^q f(0,\cdot,\cdot)]_{C^{\alpha}_{\ell,x,v}(B_1(0,v_1))} + \|f\|_{L_q^\infty([0,T]\times\R^6)} \right),
\]
which concludes the proof.
\end{proof}

Now we are ready to use \Cref{lem:scaling_holder} to prove \Cref{prop:holder_in_t}:

\begin{proof}[Proof of \Cref{prop:holder_in_t}]
We will show
\begin{equation}\label{e.unweighted}
	\begin{split}
		&\|f\|_{C^{\alpha}_\ell(Q_1(z_0)\cap ([0,T]\times \R^6))}
			\\&\qquad
			\lesssim \langle v_0\rangle^{-q + \alpha \frac{(\gamma+2s)_+}{2s}} \Big(\|f\|_{L_q^\infty([0,T]\times \R^6)} +  \sup_{t \in [(t_0-1)_+,t_0]} [\vv^q f(t,\cdot,\cdot)]_{C_{\ell, x,v}^{\alpha}(B_2(x_0,v_0))}\Big).
	\end{split}
\end{equation}
The conclusion of the Proposition follows from \eqref{e.unweighted} in a straightforward way. 

Fix any $z_1$ and $z_2$ in $Q_1(z_0)\cap([0,T]\times\R^6)$, and assume without loss of generality that $t_2\geq t_1$.  
If $d_\ell(z_2, z_1) \geq \frac 1 2 \langle v_1\rangle^{-(\gamma+2s)_+/2s}$, then we simply have
\[
	\frac{|f(z_2) - f(z_1)|}{d_\ell(z_1,z_2)^{\alpha}}
		\lesssim \langle v_0\rangle^{\frac{(\gamma+2s)_+}{2s}\alpha} \vvo^{-q}\| f\|_{L_q^\infty([0,T]\times \R^6)}
\]
since $\langle v_0\rangle \approx \vvone$. Therefore, for the rest of the proof, we assume
\be\label{e.c1231}
	d_\ell(z_1,z_2) < \frac 1 2 \langle v_1 \rangle^{-\frac{(\gamma + 2s)_+}{2s}}.
\ee
We set the notation
\[
	s_2 = \frac{t_2-t_1}{r^{2s}}, 
		\quad
	w_2 = \frac{v_2-v_1}{r},
		\quad\text{ and }\quad
	y_2 = \frac{x_2 - x_1 - r^{2s} s_2 v_1}{r^{1+2s}}.
\]
where $r \in (0,1]$ is to be chosen based on two cases below.  We immediately notice that
\be\label{e.c1233}
	z_2 = z_1 \circ \delta_r(s_2,y_2,w_2).
\ee

The first, simpler case is when
\be\label{e.c1232}
	t_2-t_1 \leq \langle v_1 \rangle^{-(\gamma + 2s)_+} d_\ell(z_1,z_2)^{2s}.
\ee
In this case, we let
\[
	r = 2d_\ell(z_1,z_2).
\]
Let us check that the hypotheses of Lemma \ref{lem:scaling_holder} are satisfied. From \eqref{e.c1231}, we have $r\leq 1$.  Also, \eqref{e.c1232} implies
\[s_2
		= \frac{t_2-t_1}{2^{2s}d_\ell(z_1,z_2)^{2s}} < \langle v_1\rangle^{-(2s+\gamma)_+}.
		\]
From the definition \eqref{e.dl} of $d_\ell$, we have 
\[ |w_2| = \frac{|v_2-v_1|}{2d_\ell(z_1,z_2)}
		\leq 1, \qquad |y_2| = \frac{|x_2 - x_1 - (t_2-t_1)v_1|}{2^{1+2s}d_\ell(z_1,z_2)^{1+2s}}
		\leq 1.\]
Therefore, we can apply~\Cref{lem:scaling_holder} to find
\[
	|f(z_2) - f(z_1)|
		\lesssim r^\alpha \vvo^{-q} \left(\|f\|_{L_q^\infty([0,T]\times \R^6)} + [\vv^q f(t_1,\cdot,\cdot)]_{C^\alpha_{\ell,x,v}(B_2(x_0,v_0))}\right),
\]
using $B_1(x_1,v_1)\subset B_2(x_0,v_0)$. Since $r\approx d_\ell(z_1,z_2)$, this finishes the proof of \eqref{e.unweighted} in this case.

Next, we consider the case where
\be\label{e.c1234}
	t_2-t_1 > \langle v_1 \rangle^{-(\gamma + 2s)_+} d_\ell(z_1,z_2)^{2s}.
\ee
In this case, we let
\[
	r = 2(t_2-t_1)^\frac{1}{2s} \langle v_1 \rangle^\frac{(\gamma+2s)_+}{2s}.
\]
Notice that $r > 2^{1/(2s)} d_\ell(z_1,z_2)$ by~\eqref{e.c1234}. Once again, we would like to apply Lemma \ref{lem:scaling_holder}. From \eqref{e.c1231} and the definition of $d_\ell$, we have
\be
	r
		\leq 2d_\ell(z_1,z_2) \langle v_1\rangle^{(\gamma+2s)_+/(2s)}
		\leq 1.
\ee
We also have $s_2 < \langle v_1\rangle^{-(\gamma+2s)_+}$ by construction. Using $r> 2^{1/(2s)} d_\ell(z_1,z_2)$ and the definition of $d_\ell$, we have
\[  |w_2|
		< \frac{|v_2-v_1|}{2d_\ell(z_1,z_2)}
		\leq 1, \qquad |y_2| = \frac{|x_2 - x_1 - (t_2-t_1)v_1|}{r^{1+2s}} < \frac{|x_2 - x_1 - (t_2-t_1)v_1|}{2^{1+\frac{1}{2s}}d_\ell(z_1,z_2)^{1+2s}}\leq 1.\]
Therefore, we can apply~\Cref{lem:scaling_holder} as above to find
\[
	|f(z_2) - f(z_1)|
		\lesssim r^\alpha \vvone^{-q} \left(\|f\|_{L_q^\infty([0,T]\times \R^6)} + [\vv^{-q} f(t_1,\cdot,\cdot)]_{C^\alpha_{\ell,x,v}(B_2(x_0,v_0))}\right),
\]
which concludes the proof of \eqref{e.unweighted}, since $r\lesssim \langle v_1 \rangle^{(1+\gamma/(2s))_+} d_\ell(z_1,z_2)$ and $\langle v_1 \rangle \approx \langle v_0\rangle$.
\end{proof}

\section{Propagation of H\"older regularity}\label{s:holder-xv}

In this section and the next, we need to place extra assumptions on our initial data as in the statement of Theorem \ref{t:uniqueness}. We recall here the lower bound condition \eqref{e.uniform-lower}: there are $\delta$, $r$, and $R>0$ such that 
\begin{equation*}
\text{for each } x\in \R^3, \, \exists \, v_x\in B_R(0) \text{ such that } f_{\rm in} \geq \delta 1_{B_r(x,v_x)}.
\end{equation*}
Let $f$ be the solution with initial data $f_{\rm in}$ constructed in Theorem \ref{t:existence}, as the limit of the sequence $f^\eps$. Note that the estimate \eqref{e.uniform-lower-b} is independent of $\eps$. In fact, since the initial lower bound \eqref{e.uniform-lower} is now taken to be uniform in $x$, the small-time lower bound \eqref{e.small-t-lower} holds at the $\eps$ level. Since the sequence $f^\eps$ is locally bounded in $C^\alpha$, pointwise convergence implies that $f$ also satisfies the corresponding lower bounds: 
\begin{equation}\label{e.f-lower}
f(t,x,v) \geq \mu, \quad (t,x,v) \in (0,T]\times B_{r/2}(x,v_x),
\end{equation}
for some $\mu>0$ depending on $T$, $\delta$, $r$, $R$, $\|f\|_{L^\infty_q[0,T]\times \R^6)}$, and $q> 3 + \gamma + 2s$. 
The uniformity of these lower bounds as $t\to 0$ will allow us to control the time-dependence of the constants when we apply Schauder estimates.

We also assume the initial data is H\"older continuous.  The main result of this section propagates this H\"older regularity to positive times:

\begin{proposition}\label{prop:holder_propagation}
	Let $f:[0,T]\times\R^6\to \R$ be the solution to \eqref{e.boltzmann} constructed in Theorem \ref{t:existence}.  Suppose that $f_{\rm in}$ satisfies \eqref{e.uniform-lower} for some $\delta$, $r$, and $R$, and that $\langle v\rangle^{m} f_{\rm in} \in C^{\alpha(1+2s)}_\ell(\R^6)$ for some $\alpha\in (0,\min\{1,2s\})$ and $f_{\rm in} \in L^\infty_q(\R^6)$, and that $m$, $q$, and $q-m$ are sufficiently large, depending on $\gamma$, $s$, and $\alpha$.
	
	Then there exists $T_U \in (0,T]$ such that 
	\begin{equation}\label{e.holder-prop-bound}
		\|\langle v\rangle^{m-\alpha(\gamma+2s)_+/(2s)} f\|_{C^\alpha_\ell([0,T_U]\times \R^6)}
			\leq C \|\langle v\rangle^{m} f_{\rm in}\|_{C^{\alpha(1+2s)}_{\ell,x,v}(\R^6)}.
	\end{equation}
	The constants $C$ and $T_U$ depend only on universal constants, $m$, $q$, $\alpha$, $\delta$, $r$, $R$, $[f_{\rm in}]_{C^\alpha_{\ell,q}}$, and $\|f\|_{L^{\infty}_q([0,T]\times\R^6)}$.  
\end{proposition}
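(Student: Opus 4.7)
The plan is to follow the strategy outlined in \Cref{s:intro-uniqueness}. Introduce the weighted finite-difference function
\[
  g(t,x,v,\chi,\nu) := \langle v\rangle^{m-\alpha(\gamma+2s)_+/(2s)}\,\frac{f(t,x+\chi,v+\nu)-f(t,x,v)}{(|\chi|^2+|\nu|^2)^{\alpha/2}},
\]
defined on $[0,T]\times \R^6\times\R^6$, and propagate a uniform $L^\infty$ bound for $g$ forward from $t=0$. The increment is measured in the \emph{Euclidean} norm $(|\chi|^2+|\nu|^2)^{1/2}$, not via $d_\ell$; this choice is forced by the analysis of $Q(f,f)(\cdot+\chi,\cdot+\nu)-Q(f,f)(\cdot,\cdot)$ below. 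The hypothesis $\langle v\rangle^m f_{\rm in}\in C^{\alpha(1+2s)}_\ell$, together with the comparison $d_\ell((0,x+\chi,v+\nu),(0,x,v))\lesssim |\chi|^{1/(1+2s)}+|\nu|$, ensures $\|g(0,\cdot)\|_{L^\infty(\R^{12})}\lesssim \|\langle v\rangle^m f_{\rm in}\|_{C^{\alpha(1+2s)}_\ell(\R^6)}$ on the unit ball in $(\chi,\nu)$, while for larger increments the $L^\infty_q$ norm of $f_{\rm in}$ suffices. A uniform bound on $g$ yields an $\alpha$-H\"older estimate for $f$ in $(x,v)$ with Euclidean scaling; the conversion to the $d_\ell$-based bound~\eqref{e.holder-prop-bound} is obtained by splitting into the regimes $|\chi|^{1/(1+2s)}\gtrless |\nu|$ and then appealing to \Cref{prop:holder_in_t} to upgrade the $(x,v)$ regularity to kinetic $(t,x,v)$ regularity.

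Differentiating \eqref{e.boltzmann} at the shifted point and subtracting, and using the bilinearity of $Q$, I would show that $g$ satisfies a nonlocal kinetic equation of the schematic form
\[
  (\partial_t+(v+\nu)\cdot\nabla_x+v\cdot\nabla_\chi)g
    = \cL_{K^\nu_f} g + \mathcal R(f,g;\chi,\nu),
\]
where $\cL_{K^\nu_f}$ is a nonlocal diffusion in $v$ with kernel built from $f$ evaluated along the shifted trajectory, and $\mathcal R$ collects the commutators arising from both the difference of the two copies of $Q(f,f)$ and from the mismatch between the weights $\langle v+\nu\rangle^{m-\alpha(\gamma+2s)_+/(2s)}$ and $\langle v\rangle^{m-\alpha(\gamma+2s)_+/(2s)}$. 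Splitting $Q=\Qs+\Qns$, applying the Carleman formula of \Cref{l:Q1}, and performing repeated annular decompositions of the velocity integrals (separating scales $|w|\lesssim|\nu|$, $|\nu|\lesssim|w|\lesssim\langle v\rangle$, and $|w|\gtrsim\langle v\rangle$), the goal is a pointwise bound $|\mathcal R|\le C\|f\|_{C^\alpha_\ell}\|g\|_{L^\infty}+ S(t)$, where the source $S$ depends on a slightly stronger H\"older seminorm of $f$. A barrier $G(t)=Ne^{\beta t}$ with $N\approx \|g(0,\cdot)\|_{L^\infty}$ and $\beta$ large then yields the conclusion: at a first crossing point, the nonlocal term $\cL_{K_f^\nu}g$ has a favorable sign via the coercivity supplied by the uniform lower bound~\eqref{e.f-lower} and the cone of nondegeneracy (\Cref{l:cone}), the commutator term is absorbed by $\beta$, and the source $S$ is rendered time-integrable via \Cref{p:nonlin-schauder}, whose $t^{-1+(\alpha-\alpha')/(2s)}$ factor with $\alpha'=2s\alpha/(1+2s)$ is integrable on a short interval $[0,T_U]$, closing the Gr\"onwall loop and determining $T_U$.

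The main obstacle will be controlling the remainder $\mathcal R$ in the presence of the ``double nonlocality'' of $Q$ highlighted in \Cref{s:landau}: every difference $K_f(v+\nu,v'+\nu)-K_f(v,v')$ has to be estimated using the H\"older regularity of $f$ in the auxiliary velocity variable that appears in the Carleman representation~\eqref{e.kernel}, and the shift $\nu$ couples nontrivially both to the weight $\langle v\rangle^{m-\alpha(\gamma+2s)_+/(2s)}$ and to the decay at large $|v'|$. The bookkeeping required to ensure that every piece can be absorbed either by the coercivity on the left or by a Schauder factor with an admissible power of $t$ on the right is the technical heart of the proof, and it dictates the sharp relation between the initial H\"older exponent $\alpha(1+2s)$ and the propagated exponent $\alpha$, as well as the moment-loss $\alpha(\gamma+2s)_+/(2s)$ and the need for $q-m$ to be sufficiently large. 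The Euclidean (rather than kinetic) scaling in $g$ is essential here, since Euclidean shifts interact cleanly with the pre/post-collisional change of variables, while a kinetic shift would couple $\chi$ with the transport operator in a way that obstructs the first-crossing analysis.
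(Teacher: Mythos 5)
Your overall architecture is the same as the paper's: the finite-difference quotient $g$ in \eqref{e.c010404} with Euclidean increments (for exactly the reason you give, the term $\chi\cdot\nu/(|\chi|^2+|\nu|^2)\,g$), the bilinear splitting of $\tau Q(f,f)-Q(f,f)$ into $\Qs(\delta f,\tau f)+\Qs(f,\delta f)+\Qns(\delta f,\tau f)+\Qns(f,\delta f)$, annular decompositions of the Carleman kernels, the time-weighted Schauder estimate of Proposition \ref{p:nonlin-schauder}, a first-crossing barrier argument, and the final conversion to kinetic H\"older regularity via \eqref{e.holder-compare} and Proposition \ref{prop:holder_in_t} (which is where the moment loss $\alpha(\gamma+2s)_+/(2s)$ actually arises; the paper keeps the weight $\vv^m$ in $g$).

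However, the closing step as you describe it would fail. The contribution of $\Qs(\delta f,\tau f)$ is not a ``source $S(t)$'' that can be made time-integrable independently of the unknown: to handle the singularity of $K_{\delta f}$ one needs $C^{2s+\alpha'/n}$ regularity of $\tau f$, and the only way to get it is Proposition \ref{p:nonlin-schauder} together with Proposition \ref{prop:holder_in_t} and the interpolation Lemma \ref{l:moment-interpolation}, whose right-hand side is a \emph{superlinear} power of the quantity you are estimating, namely $t^{\mu}\|g\|_{L^\infty}^{\theta}$ with $\mu\in(-1,0)$ and $\theta>1$ (this is the content of Lemma \ref{lem:holder_prop}). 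Treating it as a known integrable source is circular, and your barrier $G(t)=Ne^{\beta t}$ with fixed $N,\beta$ cannot dominate this term: at a crossing time $t_\rmcr$ close to $0$ the required inequality forces $\beta\gtrsim t_\rmcr^{\mu}N^{\theta-1}$, which is unbounded. The paper instead takes the barrier $\overline G$ to be the local solution of the nonlinear ODE \eqref{e.g_supersoln}, $\overline G'=N(\overline G+t^{\mu}\overline G^{\theta})$; its finite lifespan is precisely what produces $T_U$ and its dependence on $\|\vv^m f_{\rm in}\|_{C^{\alpha(1+2s)}}$. Two smaller inaccuracies: the lower bound \eqref{e.f-lower} and the cone of nondegeneracy are not used to give the nonlocal term a ``favorable sign'' at the crossing point --- nonnegativity of $K_f$ together with $g(v')\le g(v_\rmcr)$ at the maximum suffices, and the positivity enters only through the hypotheses of the Schauder estimate; and the existence of a genuine maximum point of $g$ is not automatic (the supremum need not be attained), so one needs the translation-compactness argument and the qualitative smoothness/decay of the approximating solutions $f^\eps$ that the paper invokes before running the crossing argument.
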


To control the H\"{o}lder continuity of $f$, we adapt an idea from a previous work on well-posedness for the Landau equation \cite{HST2020landau}, originally inspired by a method of \cite{constantin2015SQG} to obtain regularity for the SQG equation.
For $(t,x,v,\chi,\nu) \in \R_+ \times \R^3 \times \R^3 \times B_1(0)^2$ and $m \in \N$, define
\begin{equation}\label{e.c010404}
\begin{split}
&\tau f(t,x,v,\chi,\nu) := f(t,x+\chi, v+\nu),\\
&\delta f (t,x,v,\chi,\nu) = \tau f(t,x,v,\chi,\nu) - f(t,x,v),\\
&g(t,x,v,\chi,\nu) = \frac{\delta f (t,x,v,\chi,\nu)}{(|\chi|^{2} + |\nu|^2)^{\alpha/2}} \vv^{m},
\end{split}
\end{equation}
Note that, if $f \in C^\infty$, then $\lim_{(\chi,\nu)\rightarrow (0,0)} g(t,x,v,\chi,\nu)$ exists for every $(t,x,v)$. The function $g$, defined by this limit
on $\R_+ \times \R^3 \times \R^3 \times \lbrace 0 , 0 \rbrace$, is then $C^\infty$.  By symmetry, the maximum of $g$ and the minimum of $g$ are the same magnitude.  Thus, it is equivalent to the $L^\infty_{x,v,\chi,\nu}$ norm of $g$, which is, in turn, equivalent to the weighted $C^\alpha_{x,v}$ semi-norm of $f$, as remarked in this elementary lemma:

\begin{lemma}\label{lem:holder_char}
	Fix any $f : \R^3 \times \R^3 \to \R$ and let $g: \R^3 \times \R^3 \times B_1\times B_1 \to \R$ be defined by
	\[
		g(x,v,\chi,\nu) = \vv^m \frac{\delta f(x,v,\chi,\nu) }{(|\chi|^2+|\nu|^2)^{\alpha/2}}.
	\]
	Then
	\[
		\max_{(x,v,\chi,\nu) \in \R^6\times B_1^2} g
		= \|g\|_{L^\infty(\R^6\times B_1^2)} 
		\approx \|\langle v \rangle^m f\|_{C_{x,v}^\alpha(\R^6)}
		\approx \sup_{(x,v)\in\R^6} \vv^{m} \|f\|_{C_{x,v}^\alpha(B_1(x,v))}
	\]
	where the implied constants depend only on $m$ and $\alpha$.  Here, $C_{x,v}^\alpha$ denotes the standard H\"older space on $\R^6$.
\end{lemma}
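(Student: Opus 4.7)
The plan is to verify the three equivalences in turn, relying essentially on three observations: an antisymmetry of $g$, the fact that $\vv \approx \langle v'\rangle$ whenever $|v-v'|\leq 1$, and the trivial inequality $|\nu|\leq (|\chi|^2+|\nu|^2)^{\alpha/2}$ valid for $|\nu|\leq 1$ and $\alpha \in (0,1]$.

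First, for the identity $\max g = \|g\|_{L^\infty}$, observe the symmetry
\[
  g(x+\chi,v+\nu,-\chi,-\nu) = -g(x,v,\chi,\nu),
\]
obtained directly from the definition of $\delta f$ in~\eqref{e.c010404} (the weight $\vv^m$ changes from $\vv^m$ to $\langle v+\nu\rangle^m$, but the corresponding change of sign of $\delta f$ persists). Thus the image of $g$ is symmetric about $0$, so $\sup g = \sup|g| = \|g\|_{L^\infty}$.

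Next, I would establish $\|g\|_{L^\infty} \approx \|\langle v\rangle^m f\|_{C^\alpha_{x,v}(\R^6)}$ by writing $F := \vv^m f$ and splitting
\[
  F(x+\chi,v+\nu) - F(x,v)
  = \vv^m \,\delta f(x,v,\chi,\nu) + (\langle v+\nu\rangle^m - \vv^m)\, f(x+\chi,v+\nu).
\]
Since $|\nu|\leq 1$, the mean value theorem gives $|\langle v+\nu\rangle^m - \vv^m|\lesssim \vv^{m-1}|\nu|$, and hence the second term is bounded by $C\vv^{-1}|\nu|\,\|F\|_{L^\infty}$. Using $|\nu|\leq (|\chi|^2+|\nu|^2)^{\alpha/2}$, both terms are controlled by a multiple of $(|\chi|^2+|\nu|^2)^{\alpha/2}$, yielding $[F]_{C^\alpha_{x,v}}\lesssim \|g\|_{L^\infty} + \|F\|_{L^\infty}$. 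The reverse inequality follows by inverting the same decomposition after multiplying by $\vv^m/(|\chi|^2+|\nu|^2)^{\alpha/2}$. Adding the $L^\infty$ piece (which is clearly controlled by $\sup_{(x,v)}\vv^m\|f\|_{L^\infty(B_1(x,v))}$ and hence by the right-hand side of the third equivalence) closes the full norm comparison.

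Finally, for the equivalence with the localized quantity $\sup_{(x,v)} \vv^m\|f\|_{C^\alpha_{x,v}(B_1(x,v))}$, the direction $\gtrsim$ is immediate by restriction, while $\lesssim$ is handled in two regimes: if $|(x-x',v-v')|\leq 1/2$, the two points lie in a common ball $B_1(x,v)$ and one applies the argument above with $\vv\approx\langle v'\rangle$; if $|(x-x',v-v')|\geq 1/2$, the ratio $|F(x,v)-F(x',v')|/|(x-x',v-v')|^\alpha$ is simply bounded by $C\|F\|_{L^\infty}$, which is in turn bounded by $\sup_{(x,v)}\vv^m\|f\|_{L^\infty(B_1(x,v))}$. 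The only mildly delicate point throughout is keeping track of the $\vv^m$ weight as it interacts with the H\"older differences; this is the step where the inequality $|\nu|\leq (|\chi|^2+|\nu|^2)^{\alpha/2}$ (for $|\nu|\leq 1$) is used crucially to absorb the lower-order contribution coming from the derivative of the weight.
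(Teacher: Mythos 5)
The paper offers no proof of this lemma (it is asserted as ``elementary''), so the only question is whether your argument is correct, and your basic toolkit --- $\langle v+\nu\rangle\approx\vv$ for $|\nu|\le 1$, the mean value theorem applied to the weight, the inequality $|\nu|\le(|\chi|^2+|\nu|^2)^{\alpha/2}$, and the near/far splitting of pairs --- is indeed the natural elementary machinery. However, two of your steps do not hold as written. First, the antisymmetry identity is false: a direct computation gives
\[
g(x+\chi,v+\nu,-\chi,-\nu)\;=\;-\,\frac{\langle v+\nu\rangle^m}{\vv^m}\,g(x,v,\chi,\nu),
\]
and you in fact notice that the weight changes, yet still conclude that the image of $g$ is symmetric about $0$; that conclusion does not follow, and the exact identity $\max g=\|g\|_{L^\infty}$ can genuinely fail for $m>0$ (a compactly supported radial bump with a steep inner side and gentle outer decay has its extremal difference quotient weighted more heavily when the base point is the outer one, so $\sup(-g)>\sup g$). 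What your observation does give, since the ratio $\langle v+\nu\rangle^m/\vv^m$ is bounded above and below by constants depending on $m$, is $\max g\approx\|g\|_{L^\infty}$, which is all that the barrier argument of \Cref{prop:holder_propagation} needs --- but it should be stated as a comparison, not an equality.

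Second, and more substantively, the direction $\|\vv^m f\|_{C^\alpha_{x,v}(\R^6)}\lesssim\|g\|_{L^\infty}$ is not closed by your argument, and cannot be: your decomposition yields $[\vv^m f]_{C^\alpha_{x,v}}\lesssim\|g\|_{L^\infty}+\|\vv^m f\|_{L^\infty}$, and the extra $L^\infty$ term is not controlled by $\|g\|_{L^\infty}$ (for $f\equiv 1$ one has $g\equiv 0$ while both H\"older quantities on the right-hand side of the lemma are infinite). Your remark that the $L^\infty$ piece is ``controlled by the right-hand side of the third equivalence'' only shows that the two H\"older quantities control each other; it does not produce the claimed two-sided comparison with $\|g\|_{L^\infty}$. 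The correct statement --- and the one actually used later, cf.\ the sentence preceding the lemma which speaks of the weighted \emph{semi}-norm, and the fact that $\|f\|_{L^\infty_q}$ with $q\ge m$ is carried separately in \Cref{lem:holder_prop} and in \eqref{e.schauder-application} --- is an equivalence of $\|g\|_{L^\infty}$ with the localized seminorm $\sup_{(x,v)}\vv^m[f]_{C^\alpha_{x,v}(B_1(x,v))}$, equivalently with the global weighted norm after adding $\|\vv^m f\|_{L^\infty}$ to the left-hand side. Your estimates, combined with a one-step chaining argument to pass from displacements $(\chi,\nu)\in B_1\times B_1$ to arbitrary pairs lying in a common unit ball, prove exactly this corrected version; you should state and prove it in that form rather than asserting the literal norm identity.
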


We emphasize that $g$ measures the H\"older continuity of $f$ in the Euclidean metric of $\R^6$, rather than the metric $d_\ell$ that matches the scaling of the equation. This choice is imposed on us by the proof: see the term $\chi\cdot \nu/(|\chi|^2 + |\nu|^2) g$ in \eqref{e.c010401} below. This term needs to be uniformly bounded, which would not be the case if the displacements $\chi$ and $\nu$ were given the natural exponents according to the kinetic scaling. It is straightforward to show that the two H\"older norms control each other, although with a loss of exponent: for any suitable function $h$ and domain $\Omega\subset\R^6$, 
\begin{equation}\label{e.holder-compare}
	\|h\|_{C^\frac{\alpha}{1+2s}_{\ell,x,v}(\Omega)}
		\lesssim \|h\|_{C_{x,v}^\frac{\alpha}{1+2s}(\Omega)}
		\lesssim \|h\|_{C^\alpha_{\ell,x,v}(\Omega)},
\end{equation}
where the $C^\alpha_{\ell,x,v}$ norm has been defined in Section \ref{s:holder-spaces}.

Our strategy to prove Proposition \ref{prop:holder_propagation} is to bound $g$ from above using a barrier argument. The defining equation for the barrier $\overline G$ will correspond to the estimates that are available for $g$ at a first crossing point, so that we can derive a contradiction at that point. Therefore, we present the upper bounds for $g$ in the following key lemma, before explaining the barrier argument.

\begin{lemma}\label{lem:holder_prop}
	Let $f$, $\alpha$, $m$, and $q$ be as in Proposition~\ref{prop:holder_propagation}, and let $g$ be defined as in \eqref{e.c010404}. For some $t_\rmcr>0$, if $g$ has a global maximum over $[0,t_\rmcr]\times \R^6\times B_1(0)^2$ at $(t_\rmcr, x_\rmcr,v_\rmcr, \chi_\rmcr, \nu_\rmcr)$, then
	\be \label{e.g-ineq}
		\partial_t g
			\leq  C \left(g + t_{\rmcr}^{\mu(\alpha,s)} g^{\theta(\alpha,s)}\right)
			\qquad\text{ at } (t_\rmcr,x_\rmcr, v_\rmcr, \chi_\rmcr,\nu_\rmcr),
	\ee
	where $\alpha' = \alpha \frac{2s}{1+2s}$, $n\geq 2$ is such that $2s + \alpha'/n < 2$, and,
\[ \begin{split}
	\mu(\alpha,s) &:= \left( - 1 + \frac{\alpha-\alpha'}{2s}\right)\left( \frac{2s+\alpha'/n}{2s+\alpha'}\right) \in (-1,0),\\
	\theta(\alpha,s) &:= 1 +  \left(1+\frac{\alpha+2s}{\alpha'}\right) \left( \frac{2s+\alpha'/n}{2s+\alpha'}\right),
\end{split} \]
and the constant $C>0$ depends on universal constants, $\alpha$, $q$, $m$, $\delta$, $r$, $R$, $n$, and $\|f\|_{L^\infty_q([0,T]\times\R^6)}$.
\end{lemma}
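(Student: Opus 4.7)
The strategy is to derive an evolution inequality for $g$ at its maximum and close it by combining the pointwise maximum conditions with careful estimates on the increment of the collision operator. Because an elastic collision translates all velocities by the same vector, one has the identity $\tau Q(f,f) = Q(\tau f, \tau f)$, which combined with bilinearity gives
\begin{equation*}
	(\partial_t + v\cdot\nabla_x)\delta f
		= Q(\delta f, \tau f) + Q(f, \delta f) - \nu\cdot\nabla_x\tau f.
\end{equation*}
Computing $\partial_t g$ and applying the conditions $\partial_{x_i} g = \partial_{\chi_i} g = 0$ at the global maximum $(t_\rmcr, x_\rmcr, v_\rmcr, \chi_\rmcr, \nu_\rmcr)$, one finds $\nabla_x\delta f = 0$ and $\nabla_x \tau f = \alpha\chi_\rmcr\delta f/r^2$. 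The transport-type remainder $-\nu\cdot\nabla_x\tau f$ then contributes $-\alpha(\nu_\rmcr\cdot\chi_\rmcr)g/r^2$, bounded in modulus by $(\alpha/2)g$ via Cauchy--Schwarz, reducing the estimate to
\begin{equation*}
	\partial_t g
		\leq Cg + \frac{\langle v_\rmcr\rangle^m}{r^\alpha}
		\bigl[Q(\delta f, \tau f) + Q(f, \delta f)\bigr]
		(t_\rmcr, x_\rmcr, v_\rmcr).
\end{equation*}

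The first increment $Q(\delta f, \tau f)$ is handled using only the maximum bound $|\delta f(t_\rmcr, x_\rmcr, w)| \leq g(\max) r^\alpha\langle w\rangle^{-m}$ valid for every $w\in\R^3$. Carleman's decomposition $Q = Q_{\rm s} + Q_{\rm ns}$ from Section~\ref{s:carleman} reduces this to controlling the convolution $\delta f \ast |\cdot|^\gamma$ against $\tau f$ (via Lemma~\ref{l:convolution}), and to controlling the singular integral against $K_{\delta f}$, whose explicit formula~\eqref{e.kernel} transfers the pointwise bound on $\delta f$ directly into $|K_{\delta f}| \lesssim g(\max) r^\alpha K_{\langle\cdot\rangle^{-m}}$. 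A standard annular decomposition in $v'$, together with a second-order Taylor expansion around $v_\rmcr$ that exploits the reflection symmetry $K_{\delta f}(v_\rmcr, v_\rmcr+w) = K_{\delta f}(v_\rmcr, v_\rmcr-w)$, gives a total contribution of size $Cg$ after the $\langle v_\rmcr\rangle^m/r^\alpha$ prefactor is absorbed.

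The remaining term $Q_{\rm s}(f, \delta f)(t_\rmcr, x_\rmcr, v_\rmcr) = \int(\delta f(v') - \delta f(v_\rmcr))K_f(v_\rmcr, v')\,\dd v'$ is the main obstacle, since the Hölder exponent $\alpha<2s$ available for $\delta f$ in the velocity variable is insufficient to tame the $|v'-v_\rmcr|^{-3-2s}$ singularity of $K_f$. I will split the $v'$-integral at a radius $\rho>0$. On the far set the pointwise bound on $\delta f$ and Lemma~\ref{l:K-upper-bound} give $\lesssim g\,\rho^{-2s}$ (after absorbing polynomial weights, using that $m$ is large). On the near set, the reflection symmetry of $K_f(v_\rmcr,\cdot)$ allows subtraction of the linear Taylor term $(v'-v_\rmcr)\cdot\nabla_v\delta f(v_\rmcr)$, leaving a remainder controlled by $\|\delta f\|_{C^{2s+\alpha'/n}_v(B_\rho(v_\rmcr))}|v'-v_\rmcr|^{2s+\alpha'/n}$, whose integral against $K_f$ is of order $\rho^{\alpha'/n}\|f\|_{C^{2s+\alpha'/n}_\ell}$. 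To produce this higher-regularity bound, I combine Lemma~\ref{lem:holder_char} and Proposition~\ref{prop:holder_in_t} (which convert $g(\max)$ into a kinetic $C^\alpha_\ell$ bound on $f$ in all of $(t,x,v)$, using that Euclidean $C^\alpha_{x,v}$ is stronger than kinetic $C^\alpha_\ell$), apply Proposition~\ref{p:nonlin-schauder} to get
\[
	\|f\|_{C^{2s+\alpha'}_\ell([t_\rmcr/2, t_\rmcr]\times\R^6)}
		\lesssim t_\rmcr^{-1+(\alpha-\alpha')/(2s)}\, g(\max)^{1+(\alpha+2s)/\alpha'},
\]
and then interpolate against $L^\infty$ at exponent $\lambda = (2s+\alpha'/n)/(2s+\alpha')$ to lower the regularity index to $2s+\alpha'/n<2$, producing exactly the exponents $\mu$ and $\theta$ in the statement. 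Optimizing $\rho$ to balance the near and far contributions yields $\partial_t g \lesssim g + t_\rmcr^\mu g^\theta$.

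The main obstacle is the bookkeeping of polynomial velocity weights: each annular decomposition introduces factors of $\langle v_\rmcr\rangle$, and the outer $\langle v_\rmcr\rangle^m/r^\alpha$ prefactor must ultimately cancel these against the decay provided by the $\langle w\rangle^{-m}$ bound on $\delta f$ and the kernel bounds. This explains why $m$, $q$, and $q-m$ must each be taken sufficiently large depending on $\alpha$, $\gamma$, $s$: the precise thresholds come from the interpolation exponents, the weight shift $m \mapsto m - \alpha(\gamma+2s)_+/(2s)$ arising in Proposition~\ref{prop:holder_in_t}, and the weight loss $\kappa$ in Proposition~\ref{p:nonlin-schauder}.
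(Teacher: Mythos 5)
Your Step 1 reduction (the equation for $g$, the use of $\nabla_x g=\nabla_\chi g=0$ at the maximum, the bound on the $\chi\cdot\nu$ commutator, and the Carleman/bilinear splitting into $\Qs(\delta f,\tau f)+\Qs(f,\delta f)+\Qns(\delta f,\tau f)+\Qns(f,\delta f)$) agrees with the paper, as does the treatment of the two $\Qns$ terms and the general mechanism (Schauder via Proposition \ref{p:nonlin-schauder}, time-regularity transfer via Proposition \ref{prop:holder_in_t}, interpolation via Lemma \ref{l:moment-interpolation}) that generates $t_\rmcr^{\mu}g^{\theta}$. The gap is that you have assigned the two singular terms to each other's tools, and neither assignment works as written. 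For $\Qs(\delta f,\tau f)$ you claim a contribution of size $Cg$ using ``a second-order Taylor expansion'' of $\tau f$ around $v_\rmcr$: this requires pointwise bounds on $D_v^2 f$ (or at least $C^{2s+\beta}_v$ with $t$-uniform constants), which are exactly what is \emph{not} available for H\"older initial data. The only control near $t=0$ is the Schauder bound, of order $2s+\alpha'$ (and one must even drop to $2s+\alpha'/n<2$), with constant $t_\rmcr^{-1+(\alpha-\alpha')/(2s)}$. In the paper's argument this is precisely the term that produces the superlinear, time-singular contribution: the kernel $K_{\delta f}$ supplies one factor of $\|g\|_{L^\infty}$ (cancelling the $(|\chi|^2+|\nu|^2)^{-\alpha/2}$ prefactor), and the Taylor remainder of $\tau f$ at order $2s+\alpha'/n$ supplies $t_\rmcr^{\mu}\|g\|^{\eta}$, giving $t_\rmcr^{\mu}g^{1+\eta}=t_\rmcr^{\mu}g^{\theta}$. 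So labelling this term as $Cg$ is not a shortcut but an error.

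Conversely, your treatment of $\Qs(f,\delta f)$ does not close. You bound the near-singularity remainder by $[\delta f]_{C^{2s+\alpha'/n}_v}\lesssim \|f\|_{C^{2s+\alpha'/n}}$, which carries no smallness in the displacement; after multiplying by the prefactor $\vv^m(|\chi_\rmcr|^2+|\nu_\rmcr|^2)^{-\alpha/2}$ the resulting bound blows up as the maximizing displacement $(\chi_\rmcr,\nu_\rmcr)\to 0$, and no choice of the splitting radius $\rho$ removes this (even the refined estimate $[\delta f]_{C^{2s+\alpha'/n}}\lesssim r^{\alpha'(1-1/n)}[f]_{C^{2s+\alpha'}}$ gains only $r^{\alpha'(1-1/n)}\ll r^{\alpha}$, since $\alpha'<\alpha$); moreover the $\rho$-optimization would not reproduce the stated exponents $\mu,\theta$. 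The paper avoids regularity of $\delta f$ entirely here by using the maximum structure in the $v$-slot: since $g(t_\rmcr,x_\rmcr,v',\chi_\rmcr,\nu_\rmcr)\le g(t_\rmcr,x_\rmcr,v_\rmcr,\chi_\rmcr,\nu_\rmcr)$ for every $v'$, one gets
\[
\frac{\vvt^m}{(|\chi_\rmcr|^2+|\nu_\rmcr|^2)^{\alpha/2}}\,\Qs(f,\delta f)(v_\rmcr)
\le \|g\|_{L^\infty}\int_{\R^3}\Big(\frac{\vvt^m}{\langle v'\rangle^m}-1\Big)K_f(v_\rmcr,v')\,\dd v',
\]
and the explicit smooth weight ratio, vanishing at $v'=v_\rmcr$, is handled by symmetry of $K_f$, Lemma \ref{l:K-upper-bound-2}, and the pointwise kernel bound for $|v'|\le|v_\rmcr|/3$, yielding a bound $C\|g\|_{L^\infty}\|f\|_{L^\infty_q}$ that is linear in $g$ and independent of the displacement. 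In short: the superlinear term $t_\rmcr^{\mu}g^{\theta}$ must come from $\Qs(\delta f,\tau f)$, and $\Qs(f,\delta f)$ must be handled with the maximum-principle trick rather than with regularity of $\delta f$.
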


Before proving Lemma~\ref{lem:holder_prop}, we show how to use it to conclude Proposition~\ref{prop:holder_propagation}.
\begin{proof}[Proof of Proposition~\ref{prop:holder_propagation}]
We begin by noting that we can assume, without loss of generality, that
\be\label{e.c12231}
	\|\vv^{q'} D^k f\|_{L^\infty([0,T]\times\R^6)} < \infty
		\qquad\text{ for every $q'$ and multi-index $k$ sufficiently large.}
\ee
Indeed, if not, we may use the approximating sequence $f^\eps$ from the proof of Theorem \ref{t:existence}, which is sufficiently smooth, and the bound \eqref{e.holder-prop-bound}, which does not depend quantitatively on norms of order higher than $\alpha$, is inherited by $f$ in the limit.

First, we claim that, with $f$, $m$, $\alpha$, and $T_U \in (0,T]$ as in the statment of the proposition, 
	\begin{equation}\label{e.holder-prop-bound-xv}
		\|\langle v\rangle^{m} f\|_{L^\infty_t([0,T_U], C^\alpha(\R^6))}
			\leq 2 \|\langle v\rangle^{m} f_{\rm in}\|_{C^\alpha(\R^6)}.
	\end{equation}
To prove \eqref{e.holder-prop-bound-xv}, we use the function $g$ defined in \eqref{e.c010404} and construct a barrier $\overline G$ on a small time interval, that controls $g$ from above. 
With $N > 0$ to be chosen later, define $\overline G$ to be the unique solution to
\begin{equation}\label{e.g_supersoln}
\begin{cases}
	\frac{d}{dt} \overline G(t) = N \left(\overline G + t^{\mu(\alpha,s)}\overline G(t)^{\theta(\alpha,s)}\right),\\
	\overline G(0) = 1 + \|g(0,\cdot)\|_{L^\infty (\R^6 \times B_1(0)^2)} + N \| f \|_{L^\infty_q([0,T] \times \R^6)}.
\end{cases}
\end{equation}
where $\mu(\alpha,s)$ and $\theta(\alpha,s)$ are as in Lemma \ref{lem:holder_prop}. 

This solution $\overline G$ exists on some time interval $[0,T_G]$ with $T_G$ depending only on $\alpha$, $s$, $N$, $\|g(0,\cdot)\|_{L^\infty}$, and $\| f \|_{L^\infty_q}$.  Later, we will choose $N$ depending only on $\| f \|_{L^\infty_q}$. 

Our goal is to show that $g(t,x,v,\chi,\nu) < \overline G(t)$ for all $t\in [0,T_G]$. Let $t_\rmcr$ be the first time that $\|g\|_{L^\infty([0,t_\rmcr]\times \R^3 \times \R^3 \times B_1(0)^2)} = \overline G(t_\rmcr)$. It is clear from \eqref{e.c12231} that $t_\rmcr >0$. We seek a contradiction at $t=t_\rmcr$.

Next, we claim that we may assume existence of a point $(x_\rmcr,v_\rmcr,\chi_\rmcr,\nu_\rmcr) \in \R^3\times\R^3\times \overline B_1(0)^2$ so that
\be\label{e.c12233}
	g(t_\rmcr,x_\rmcr,v_\rmcr, \chi_\rmcr, \nu_\rmcr)
			= \|g(t_\rmcr,\cdot,\cdot,\cdot,\cdot)\|_{L^\infty}
			= \overline G(t_\rmcr).
\ee
Indeed, if not we may take any sequence $(t_\rmcr, x_n, v_n, \chi_n, \nu_n)$ such that 
\be
		g(t_\rmcr,x_n,v_n, \chi_n, \nu_n) \to 
			\|g(t_\rmcr,\cdot,\cdot,\cdot,\cdot)\|_{L^\infty}
			= \overline G(t_\rmcr).
\ee
Then define
\be
	f_n(t,x,v,\chi,\nu)
		= f(t, x+x_n, v, \chi, \nu)
	\quad\text{ and }\quad
	g_n(t,x,v,\chi,\nu)
		= g(t, x+x_n, v, \chi, \nu).
\ee
Due to the fast decay of $f$ as $|v|\to\infty$ and its smoothness, see~\eqref{e.c12231}, it follows that, up to passing to a subsequence, there exist $\tilde f$ and $\tilde g$ such that $f_n \to \tilde f$ and $g_n \to \tilde g$ in $C^{2+\alpha}_{\ell}$ locally uniformly.  

Using again the fast decay and smoothness of $f$, it follows that $|v_n|$ is bounded, in which case, up to taking a subsequence, $v_n \to v_\rmcr$ for some $v_\rmcr\in \R^3$.  Similarly, $(\chi_n, \nu_n) \to (\chi_\rmcr,\nu_\rmcr) \in \overline B_1(0)^2$.  It follows that
\be\label{e.c12232}
	\tilde g(t_\rmcr, x_\rmcr, v_\rmcr, \chi_\rmcr, \nu_\rmcr)
		= \lim_{n\to\infty} g_n(t_\rmcr, x_\rmcr, v_n, \chi_n, \nu_n)
		= \overline G(t_\rmcr).
\ee

Notice that $\tilde f$ inherits all the of the same (global) bounds as $f$ and satisfies the Boltzmann equation~\eqref{e.boltzmann}, by the locally uniform convergence in $C^{2+\alpha}_\ell$. Therefore, without loss of generality, a crossing point exists as in \eqref{e.c12233}. 

Now, we show that, up to increasing $N$ if necessary, $(\chi_\rmcr,\nu_\rmcr) \in B_1(0)^2$; that is, $\chi_\rmcr$ and $\nu_\rmcr$ lie in the interior of $B_1(0)$.  Indeed, if $(\chi_\rmcr,\nu_\rmcr)$ were located on the boundary of $B_1(0)^2$, then a direct calculation using \eqref{e.c010404} and $\chi_\rmcr^2 + \nu_\rmcr^2 \geq 1$ shows
\[
	 g(t_\rmcr, x_\rmcr, v_\rmcr, \chi_\rmcr, \nu_\rmcr)
		\leq \|f\|_{L^\infty_m}
		< \overline G(t_\rmcr),
\]
which contradicts~\eqref{e.c12233}.  It follows that $(\chi_\rmcr,\nu_\rmcr)\in B_1(0)^2$.

From Lemma~\ref{lem:holder_prop}, we find, at $(t_\rmcr, x_\rmcr, v_\rmcr, \chi_\rmcr, \nu_\rmcr)$,
\[
	\partial_t g
		\leq C\left( g + t_0^{\mu(\alpha,s)} g^{\theta(\alpha,s)}\right).
\]
Since $g = \overline G$ at this point, we have
\[
	\partial_t g
		\leq C\left( \overline G + t_0^{\mu(\alpha,s)} \overline G^{\theta(\alpha,s)}\right).
\]
Hence, by increasing $N$ if necessary, we have, due to~\eqref{e.g_supersoln},
\be\label{e.c12234}
	\partial_t g
		< \frac{d\overline G}{dt}.
\ee
On the other hand, since $\overline G - g$ has a minimum at $(t_\rmcr, x_\rmcr, v_\rmcr, \chi_\rmcr, \nu_\rmcr)$, one has
\be\label{e.c12235}
	\partial_t( \overline G - g) \leq 0.
\ee
The inequalities~\eqref{e.c12234} and~\eqref{e.c12235} contradict each other, which implies the time $t_\rmcr$ does not exist.

This establishes that $g< \overline G$ on $[0,T_G]$. The inequality \eqref{e.holder-prop-bound-xv} therefore holds up to a time $T_U = \min\{T, T_G\}$, which implies the existence of $T_U$ as in the statement of the proposition.  

Since the statement we want to prove is in terms of H\"older norms $C^\alpha_\ell$ with kinetic scaling, we apply \eqref{e.holder-compare} to both sides of \eqref{e.holder-prop-bound-xv} and obtain 
\[
	\|\langle v\rangle^{m} f\|_{L^\infty_t([0,T_U], C^\alpha_{\ell,x,v}(\R^6))}  
		\lesssim \|\vv^m f_{\rm in}\|_{C^{\alpha(1+2s)}_{\ell,x,v}(\R^6)}.	
\]
Finally, we apply Proposition \ref{prop:holder_in_t} to conclude \eqref{e.holder-prop-bound}, and the proof is complete.
\end{proof}

We now prove the key estimate of Lemma~\ref{lem:holder_prop}.

\begin{proof}[Proof of Lemma \ref{lem:holder_prop}]
We proceed in several steps.  First, we convert the classical derivative terms in the equation for $f$ to derivatives of $g$.  Next, we obtain bounds on the collision operator using the bounds on $g$ and the Schauder estimates for $f$.  This involves an intricate decomposition of the collision kernel, each portion of which we write as a separate step.

\smallskip

\noindent{}{\it Step 1: The equation for $g$.} If we take a finite difference of the Boltzmann equation \eqref{e.boltzmann}, we get that
\begin{equation}
\label{e.delta}
\partial_t \delta f + v \cdot \nabla_x \delta f + \nu \cdot \nabla_\chi \delta f = \tau Q(f,f) - Q(f,f).
\end{equation}
Multiplying by $\vv^{m} (|\chi|^2 + |\nu|^2)^{-\alpha/2}$ and commuting the derivative operators yields
\be\label{e.c010401}
	\partial_t g
		+ v \cdot \nabla_x g
		+ \nu \cdot \nabla_\chi g
		+ \alpha \frac{\chi \cdot \nu}{|\chi|^2 + |\nu|^2} g
		= \frac{\vv^m}{(|\chi|^2 + |\nu|^2)^\frac{\alpha}{2}} \left( \tau Q(f,f) - Q(f,f)\right).
\ee
We next consider the right hand side of~\eqref{e.c010401}. 
%
Applying the Carleman decomposition $Q = Q_{\rm s} + Q_{\rm ns}$ as usual, we write
\begin{equation}\label{e.320}
	\tau(Q(f,f)) - Q(f,f)
		= Q_\text{s}(\delta f, \tau f)
			+ Q_\text{s}(f, \delta f)
			+ Q_\text{ns}(\delta f, \tau f)
			+ Q_\text{ns}(f, \delta f).
\end{equation}
Since $(t_\rmcr, x_\rmcr,v_\rmcr,\chi_\rmcr,\nu_\rmcr)$ is the location of an interior maximum, we have $\nabla_x g = \nabla_\chi g = 0$ at this point. Hence~\eqref{e.c010401} becomes
\be\label{e.c010402}
	\begin{split}
		\partial_t g
		&+ \alpha \frac{\chi \cdot \nu}{|\chi|^2 + |\nu|^2} g
		= \frac{\vv^m}{(|\chi|^2 + |\nu|^2)^\frac{\alpha}{2}} \left( Q_\text{s}(\delta f, \tau f) + Q_\text{s}(f, \delta f) + Q_\text{ns}(\delta f, \tau f) + Q_\text{ns}(f, \delta f)\right).
	\end{split}
\ee
Since $|\chi\cdot \nu|/(|\chi|^2+|\nu|^2) \leq 1$, the conclusion of the lemma follows if we can find suitable upper bounds for the terms on the right-hand side of \eqref{e.c010402}. We estimate these four terms one by one.

\medskip

\noindent{}{\it Step 2: Bounding the $\Qs(\delta f, \tau f)$ term in~\eqref{e.c010402}.} As usual, since $Q_{\rm s}$ acts only in the velocity variable, we omit the dependence on $(t_\rmcr, x_\rmcr)$ in the following calculation. 

First, we make note of a useful fact often used in the sequel: since $\nu \in B_1(0)$, it follows that
\be\label{e.c010403}
	\tau f (v) \vv^k
		\lesssim \| f \|_{L^\infty_k}
			\qquad\text{ for any $k\geq 0$}.
\ee

Next, we recall that $(t_\rmcr, x_\rmcr, v_\rmcr, \chi_\rmcr, \nu_\rmcr)$ is crucially the location of a maximum of $g$.  
For ease of notation, we drop the `cr' subscript and simply refer to the point as $(t,x,v,\chi,\nu)$. 

We set
\be
	J_1 = \frac{\vv^m}{(|\chi|^2 + |\nu|^2)^{\alpha/2}} \Qs(\delta f, \tau f).
\ee
Using \eqref{e.kernel}, this can be rewritten as:
\[
	\begin{split}
	J_1
			&= \int_{\R^3} (\tau f(v) - \tau f(v')) \frac{\vv^m K_{\delta f}(v,v')}{(|\chi|^2 + |\nu|^2)^{\alpha/2}} \dd v' 
		= \int_{\R^3} (\tau f(v) - \tau f(v')) \tilde{K}_g (v,v') \dd v',
\end{split}
\]
where, recalling the definition~\eqref{e.c010404} of $g$,
\[
	\tilde{K}_g(v,v')
	= \frac{\vv^m K_{\delta f}(v,v')}{(|\chi|^2 + |\nu|^2)^{\alpha/2}}
	\approx |v-v'|^{-3-2s} \int_{w \perp v-v'} g(v+w) \frac{\vv^m}{\langle v+w \rangle^m} |w|^{\gamma+2s+1} \dd w.
\]

Let us record some useful upper bounds for $\tilde K_g$: from Lemma \ref{l:K-upper-bound}, we have, for any $r>0$,
\begin{equation}
\label{e.K_annuli}
\begin{split}
\int_{B_{2r} \setminus B_r} &|\tilde{K}_g(v,v+z)|\dd z \lesssim
\frac{\vv^m}{(|\chi|^2+|\nu|^2)^{\alpha/2}} \left( \int_{\R^3} \delta f(v+w) |w|^{\gamma+2s}\dd w\right) r^{-2s}\\
&\lesssim r^{-2s} \int_{\R^3} \frac{ |g(v+w)| |w|^{\gamma+2s}\vv^m}{ \langle v+w \rangle^m} \dd w
\leq C r^{-2s} \| g \|_{L^\infty}\vv^{m+(\gamma+2s)_+},
\end{split}
\end{equation}
since $m> 3+\gamma+2s$ and by \Cref{l:convolution}. Applying \eqref{e.K_annuli} on an infinite union of annuli $B_{2r}\setminus B_r$, $B_{4r}\setminus B_{2r}$, etc., we obtain
\begin{equation}
\label{e.K_tail}
	 \int_{B_r^c}
		|\tilde{K}_g(v,v+z)| \dd z
			\lesssim r^{-2s} \| g \|_{L^\infty} \vv^{m+(\gamma+2s)_+}.
\end{equation}
Finally, from \cite[Lemma 2.4]{henderson2021existence} (which holds for all ranges of $\gamma+2s$), we obtain the following pointwise upper bound: if $|v'| \leq |v|/3$, then
\begin{equation}
\label{e.K_near_zero}
	|\tilde K_g(v,v')|
		\lesssim \frac{\| g \|_{L^\infty}}{|v-v'|^{3+2s}} \vv^{\gamma + 3 + 2s}
		\lesssim \frac{\| g \|_{L^\infty}}{| v|^{3+2s}} \vv^{\gamma + 3 + 2s}.
\end{equation}
We require this estimate since we work in uniform spaces with weights in $v$, which means we sometimes encounter the quantity
$\vv / \langle v' \rangle$. This is bounded, except when $|v'|$ is small compared to $|v|$, so we need the extra moment decay of \eqref{e.K_near_zero}
to compensate in that case.

The analysis now proceeds in two slightly different ways, based on two cases.

\smallskip

\noindent{}{\it Case 1: $|v| \geq 1$.} Setting $R = 4 \vv / 3$ and $r = |v|/3$, we write
\[
\begin{split}
J_1 &= \int_{B_R^c(v)} (\tau f(v) - \tau f(v')) \tilde K_g(v,v') \dd v' + \int_{B_r(0)} (\tau f(v) - \tau f(v')) \tilde K_g(v,v') \dd v' \\
&\quad\quad\quad\quad+ \int_{B_R(v) \setminus B_r(0)} (\tau f(v) - \tau f(v')) \tilde K_g(v,v') \dd v' =: J_{1,1} + J_{1,2} + J_{1,3}.
\end{split}
\]
Notice that the only term involving the singularity at $v=v'$ is $J_{1,3}$ due to the choice of $R$ and $r$.

For the long-range term, using \eqref{e.K_tail} and the fact that $\langle v'\rangle\gtrsim \vv$ when $v'\in B_R^c$, we have
\[
\begin{split}
J_{1,1} &\leq \int_{B_R^c(v)} \left(|\tau f(v)|\vv^{m+(\gamma+2s)_+} + |\tau f(v')| \langle v'\rangle^{m+(\gamma+2s)_+}\right) \frac{|\tilde K_g(v,v')|}{\vv^{m+(\gamma+2s)_+}} \dd v'\\
&\lesssim \| f \|_{L^\infty_q} \int_{B_R^c(v)} \frac{|\tilde K_g(v,v')|}{\vv^{m+(\gamma+2s)_+}} \dd v'
\lesssim \| f \|_{L^{\infty}_q} \vv^{-2s} \| g \|_{L^\infty}.
\end{split}
\]
In the second inequality, we used that $q > m + (\gamma + 2s)_+$, by assumption.

The near-zero term is bounded using \eqref{e.K_near_zero} as follows:
\[
\begin{split}
J_{1,2} &\lesssim \int_{B_r(0)} \frac{|\tau f (v)| + |\tau f (v')|}{|v|^{3+2s}} \vv^{\gamma + 3 + 2s} \| g \|_{L^\infty} \dd v' \\
&\lesssim \| g \|_{L^\infty} \| f \|_{L^\infty_q}
\int_{B_r(0)}\left( \frac{\vv^{3+\gamma+2s-q} + \vv^{3+\gamma+2s} \langle v'\rangle^{-q}}{ |v|^{3+2s}}\right) \dd v'.
\end{split}
\]
Recalling that $|v| \geq 1$ so $|v|\approx \vv$, we then find
\[
	J_{1,2}
		\lesssim \|g\|_{L^\infty} \|f\|_{L^\infty_q}\int_{B_r(0)} \left(\vv^{\gamma-q} + \vv^\gamma \langle v'\rangle^{-q}\right) \dd v'
		\lesssim \|g\|_{L^\infty} \|f\|_{L^\infty_q} (1+\vv^\gamma),\]
since $q>3$.

The short-range term $J_{1,3}$ is the most difficult to bound because it contains the singularity at $v=v'$. To handle this, we use smoothness of $\tau f$, which follows from the Schauder estimate of \Cref{p:nonlin-schauder}: with $p\geq 0$ to be chosen later,
\begin{equation}\label{e.schauder1}
	\|f\|_{C^{2s+\alpha'}_{\ell,m-p-\kappa}([t_0/2,t_0]\times\R^6)}
		\lesssim t_0^{-1+(\alpha-\alpha')/(2s)} \|f\|_{C^\alpha_{\ell,m-p+[\alpha/(1+2s)+\gamma]_+}([0,t_0]\times\R^6)}^{1+(\alpha+2s)/\alpha'}.
 \end{equation}
Notice that \Cref{p:nonlin-schauder} requires a lower bound on $m-p,$ which holds after further increasing $m$ if necessary.
 
Since we need to bound $J_{1,3}$ in terms of $g$ (which corresponds to the weighted $C^\alpha$ norm of $f$ in $(x,v)$-variables) rather than the $(t,x,v)$-H\"older norm, we combine \eqref{e.schauder1} with Proposition \ref{prop:holder_in_t} to write 
\begin{equation}\label{e.schauder-application}
\begin{split}
\|f\|_{C^{2s+\alpha'}_{\ell,m-p-\kappa}([t_0/2,t_0]\times\R^6)}
	&\leq C t_0^{-1+(\alpha-\alpha')/(2s)}  \sup_{t\in[0,t_0]} \left\|\vv^{m} f(t)\right\|_{C^\alpha_{\ell,x,v}(\R^6)}^{1+(\alpha+2s)/\alpha'}\\
&\leq C t_0^{-1+(\alpha-\alpha')/(2s)} \|g\|_{L^\infty}^{1+(\alpha+2s)/\alpha'},
\end{split}
\end{equation}
using Lemma \ref{lem:holder_char} and \eqref{e.holder-compare}. Here, we have chosen
\[
p :=\left[\frac{\alpha}{1+2s}+\gamma\right]_+ +\frac{\alpha}{2s}(\gamma+2s)_+.
\]
The decay exponent $m-p-\kappa$ on the left in \eqref{e.schauder-application} is too weak for our estimates below. To get around this, we use interpolation to trade regularity for decay: recalling that $\alpha' = \alpha \frac {2s}{1+2s}$, for 
\[
	q \geq m
		+ \frac{2s + \alpha'}{\frac{n-1}{n} \alpha'} \left( (\gamma+2s)_+ + \alpha'\right)
		+ \left(\frac{1}{n-1} + \frac{2s}{\frac{n-1}{n} \alpha'}\right) (p+\kappa),
\]
Lemma \ref{l:moment-interpolation} implies
\begin{equation}\label{e.schauder-application2}
\begin{split}
	\|f\|_{C^{2s+\alpha'/n}_{\ell,m+(\gamma+2s)_+ + \alpha'}([t_0/2,t_0]\times\R^6)} 
		&\lesssim
			\|f\|_{C^{2s+\alpha'}_{\ell,m-p-\kappa}([t_0/2,t_0]\times\R^6)}^{\frac{2s+\alpha'/n}{2s+\alpha'}}
			\|f\|_{L^\infty_{q}}^{\frac{n-1}{n}\frac{\alpha'}{2s+\alpha'}} \\
		&\lesssim
			\left(t_0^{-1+(\alpha-\alpha')/(2s)}
				\|g\|_{L^\infty}^{1+(\alpha+2s)/\alpha'}\right)^{\frac{2s+\alpha'/n}{2s+\alpha'}}
				\|f\|_{L^\infty_{q}}^{\frac{n-1}{n}\frac{\alpha'}{2s+\alpha'}} \\
		&\lesssim t_0^{\mu(\alpha,s)} \|g\|_{L^\infty}^{\eta(\alpha,s)},
\end{split}
\end{equation}
where 
\[ \begin{split}
	\mu(\alpha,s) &:= \left( - 1 + \frac{\alpha-\alpha'}{2s}\right)\left( \frac{2s+\alpha'/n}{2s+\alpha'}\right) \in (-1,0),\\
	\eta(\alpha,s) &:= \left(1+\frac{\alpha+2s}{\alpha'}\right) \left( \frac{2s+\alpha'/n}{2s+\alpha'}\right),
\end{split} \]
and we have absorbed the dependence on $\|f\|_{L^\infty_{q}}$ into the implied constant.

Now, we apply \eqref{e.schauder-application2} to the term $J_{1,3}$.   The argument differs slightly based on whether $2s +\alpha'/2 > 1$ or not.  We consider the former case, as it is more complicated.
Recall, by hypothesis, that $2s + \alpha'/n < 2$.  
For $v'\in B_R(v)\setminus B_r(0)$, since $|v+\nu| \approx |v|$, estimate \eqref{e.schauder-application2} implies
\begin{equation}\label{e.tau-est}
	\begin{split}
	 &\frac{|\tau f (v') - \tau f (v) - (v-v')\cdot \nabla_v (\tau f)(v)|}{|v-v'|^{2s+\alpha'/n}} \vv^{m+(\gamma+2s)_+ +\alpha'}
		\lesssim  t_0^{\mu(\alpha,s)} \|g\|_{L^\infty}^{\eta(\alpha,s)}.
	\end{split}
\end{equation}
Note that we do not require any $(\partial_t + v\cdot\nabla_x)\tau f$ terms in the above Taylor expansion because the shift is only in $v$. 
 Let $\mathcal{A}_j := B_{R/2^j}(v) \setminus B_{R/2^{j+1}}(v)$ with the added condition that
$\mathcal{A}_0 = (B_R(v) \setminus B_{R/2}(v)) \setminus B_r(0)$. 

We treat the cases $j=0$ and $j\geq 1$ separately.  The simpler case is $j=0$, where the regularity of $f$ is not required because $v'$ is bounded away from $v$.  In this case, using that $\vvp \approx \vv$ and the inequality~\eqref{e.K_annuli}, we find
\be
	\begin{split}
	\int_{\mathcal{A}_0} &|\tau f(v) - \tau f(v')| |\tilde K_g(v,v')| dv'
		\lesssim \vv^{-q} \int_{\mathcal{A}_0} |\tilde K_g(v,v')| dv'
		\\&
		\lesssim \vv^{-q} \int_{B_R(v) \setminus B_{R/2}(v)} |\tilde K_g(v,v')| dv'
		\lesssim \vv^{-q - 2s} \vv^{m + (\gamma + 2s)_+} \|g\|_{L^\infty}
		\lesssim \|g\|_{L^\infty}.
	\end{split}
\ee
Next we consider the general $j$ case.  Here we use the symmetry of $\tilde{K}_g(v,v')$ around $v$ (i.e., $\tilde{K}_g(v,v+h) = \tilde{K}_g(v,v-h)$), the regularity estimate~\eqref{e.tau-est}, and the fact that $\vvp \approx \vv$ to obtain
\[
\begin{split}
	\sum_{j \geq 1} &\Big|\int_{\mathcal{A}_j} (\tau f(v') - \tau f(v)) \tilde K_g(v,v') \dd v'\Big| \\
		&= \sum_{j \geq 1} \Big|\int_{\mathcal{A}_j} (\tau f(v') - \tau f(v)  - (v-v')\cdot \nabla_v (\tau f)(v) ) \tilde K_g(v,v') \dd v'\Big| \\
		&\lesssim t_0^{\mu(\alpha,s)}\|g\|_{L^\infty}^{\eta(\alpha,s)}\vv^{-m-(\gamma+2s)_+-\alpha'}\sum_{j \geq 0}  
\int_{\mathcal{A}_j} |v-v'|^{2s+\alpha'/n} |\tilde K_g(v,v')| \dd v' \\
		&\lesssim t_0^{\mu(\alpha,s)}\|g\|_{L^\infty}^{\eta(\alpha,s)} 
 \vv^{-\alpha'} \sum_{j \geq 0} \frac{R^{2s+\alpha'/n}}{2^{j(2s+\alpha'/n)}}\| g \|_{L^\infty} \frac{2^{2s j}}{R^{2s}} 
	\lesssim t_0^{\mu(\alpha,s)}\|g\|_{L^\infty}^{1+\eta(\alpha,s)},
\end{split}
\]
where we again used~\eqref{e.K_annuli} in the second to last inequality.
Putting together both estimates above, we find
\be
	J_{1,3} \lesssim \|g\|_{L^\infty} + t_0^{\mu(\alpha,s)}\|g\|_{L^\infty}^{1 + \eta(\alpha,s)}.
\ee

Putting the estimates of $J_{1,i}$ together, we have that
\begin{equation}
\label{e.J1}
J_1 \lesssim \|g\|_{L^\infty} + t_0^{\mu(\alpha,s)} \|g\|_{L^\infty}^{1+\eta(\alpha,s)}.
\end{equation}

\noindent{}{\it Case 2: $|v| < 1$.} This case is similar to Case 1, but more simple. It is necessary because the estimates
we used for $J_{1,2}$ above relied on $|v|$ being bounded away from zero. 
In this case, however, we 
do not need the near-zero term at all. Instead, with $R = 4\vv/3$ as above, we write
\[
J_1 = \int_{B_R^c(v)} (\tau f(v) - \tau f(v')) \tilde K_g(v,v') \dd v' +
 \int_{B_R(v)} (\tau f(v) - \tau f(v')) \tilde K_g(v,v') \dd v' =: J_{1,1} + J_{1,4},
\]
observing that the first term is indeed the same as $J_{1,1}$ in the previous case (and is bounded in the same way). The new term $J_{1,4}$ 
is bounded in the same way as $J_{1,3}$ in the previous case, now taking $\mathcal{A}_0 = B_R(v) \setminus B_{R/2}(v)$ and observing that, 
since $|v| < 1$, the smoothness estimate \eqref{e.tau-est} for $\tau f$ holds even when $v'$ is close to zero. 
Thus \eqref{e.J1} holds in both cases.

\smallskip

\noindent{}{\it Step 3: Bounding the $\Qs(f, \delta f)$ term in~\eqref{e.c010402}.}  Define
\[
J_2 := \frac {\vv^m}{(|\chi|^2+|\nu|^2)^{\alpha/2}} Q_{\rm s}(f,\delta f) =  \int_{\R^3} \left( g(v') \frac{\vv^m}{\langle v' \rangle^m} - g(v) \right) K_f(v,v') \dd v',
\]
where the second equality used the definition of $g$. 

Since $g(t_\rmcr, x_\rmcr, v_\rmcr,\chi_\rmcr, \nu_\rmcr) = \overline G(t_\rmcr) > 0$ and this is the maximum of $g$ in all variables, we can proceed as in \eqref{e.max-trick} from the proof of Lemma \ref{lem:scaling_holder} to write $g(v') \leq g(v_\rmcr)$, which yields
\[
J_2 \leq  \| g \|_{L^\infty} \int_{\R^3} \left( \frac{\vv^m}{\langle v' \rangle^m} - 1 \right) K_f(v,v') \dd v'.
\]
Defining $\phi(v') := \vv^m / \langle v' \rangle^m - 1$, we have
\[ J_2 \leq \|g\|_{L^\infty} \int_{\R^3} \phi(v') K_f(v,v') \dd v'.\]
Once again, the analysis is slightly different depending on the size of $v$.

\medskip

\noindent{}{\it Case 1: $|v| \geq 1$.}
Define $R = |v|$ and $r = |v|/3$. Note that, for $v' \in B_R^c(0)$, one has  $\phi(v') \leq 0$. Since we seek an upper bound, we can discard the integral over $B_R^c(0)$ from $J_2$. Setting $\mathcal{B} := (B_R(0)\setminus B_r(0)) \setminus B_r(v)$, we then split $J_2$ as
\[
\begin{split}
J_2 &\lesssim \| g \|_{L^\infty} \left( \int_{B_r(v)} \phi(v') K_f(v,v') \dd v' + \int_{B_r(0)} \phi(v') K_f(v,v') \dd v'
+ \int_{\mathcal{B}} \phi(v') K_f (v,v') \dd v' \right)\\
&=: \| g \|_{L^\infty} \left( J_{2,1} + J_{2,2} + J_{2,3} \right).
\end{split}
\]
For $J_{2,1}$, we Taylor expand $\phi$ to second order around $v' = v$ and note that (as in the proof of Lemma \ref{lem:scaling_holder}) the first order term vanishes due to the symmetry of the kernel, since the domain of integration is a ball centered at $v_\rmcr$. This yields %
\[
	J_{2,1} \lesssim \int_{B_r(v)} E(v,v') K_f (v,v') \dd v',
\]
where $|E(v,v')| \lesssim |v-v'|^2 \vv^m \sup_{w \in B_r(v)} \langle w \rangle^{-m-2} \lesssim |v-v'|^2 \vv^{-2}$, since $\langle w\rangle\approx \vv$ for $w \in B_r(v)$. Using Lemma \ref{l:K-upper-bound-2} to bound $K_f$, we have 
\[
\begin{split}
J_{2,1} &\lesssim \vv^{-2} \int_{B_r(v)} |v-v'|^2 K_f(v,v') \dd v'\\
&\lesssim  \vv^{-2} \left( \int_{\R^3} f(v+w)|w|^{\gamma+2s} \dd w \right) r^{2-2s} \lesssim  \| f \|_{L^{\infty}_q} \vv^{(\gamma+2s)_+} \vv^{-2s}\lesssim \|f\|_{L^\infty_q},
\end{split}
\]
since $q> 3+\gamma+2s$.

For $J_{2,2}$, within $B_r(0)$, we have no better estimate than $\phi(v') \leq \vv^m$. However, we can use the pointwise upper bound of \cite[Lemma 2.4]{henderson2021existence}, and the fact that $|v| \geq 1$, to obtain
\[
J_{2,2} \lesssim \int_{B_r(0)} \frac{\vv^m}{|v-v'|^{3+2s}} \| f \|_{L^{\infty}_q} \vv^{3+\gamma+2s-q} \dd v'
\lesssim \vv^{3+m+\gamma-q} \| f \|_{L^{\infty}_q},
\]
since $q\geq m+\gamma+3$.

Lastly, if $v' \in \mathcal{B}$, then $\phi(v')$ is bounded by a constant independent of $v$. Since $\mathcal{B} \subset B_r^c(v)$, we use
the tail estimate for $K_f$ (Lemma \ref{l:K-upper-bound-2}) to obtain
\[
J_{2,3} \lesssim \int_{B_r^c(v)} K_f (v,v') \dd v' \lesssim \| f \|_{L^{\infty}_q} \vv^{-2s}.
\]

Combining the above estimates yields
\begin{equation}
\label{e.J2}
J_2 \lesssim \| g \|_{L^\infty} \| f \|_{L^{\infty}_q}.
\end{equation}

\medskip

\noindent{\it Case 2: $|v| < 1$.} As in Step 2 above, this step is less delicate than the large-$v$ case, but necessary since the estimate used above for $J_{2,2}$ degenerates
as $|v| \rightarrow 0$. In this case, we remark that $\phi$ is bounded uniformly, so we can use the splitting
\begin{equation}\label{e.J2small}
J_2 \lesssim \| g \|_{L^\infty} \left( \int_{B_1(v)} E(v,v') K_f(v,v') \dd v' + \int_{B_1^c(v)} K_f(v,v') \dd v' \right).
\end{equation}
with $E(v,v')$ defined as in Case 1. Since $|E(v,v')| \lesssim |v-v'|^2$ on $B_1(v)$, the same calculation as for $J_{2,1}$ gives that the first term in \eqref{e.J2small} is bounded by $\| f \|_{L^{\infty}_q}$. The second term is also bounded by $\| f \|_{L^{\infty}_q}$ by Lemma \ref{l:K-upper-bound-2}, and we conclude \eqref{e.J2} holds in this case as well.

\medskip

\noindent{}{\it Step 4: Bounding the $\Qns$ terms in~\eqref{e.c010402}.} The last two terms are more straightforward. Recalling that $\tau f(v) \vv^q \lesssim \| f \|_{L^\infty_q}$,
we have
\[ \begin{split}
\frac{\vv^m}{(|\chi|^2+|\nu|^2)^{\alpha/2}} \Qns(\delta f, \tau f) &= c_b \tau f(v_\rmcr) \int_{\R^3} g(v+w) \frac{\vv^m}{\langle v+w\rangle^m} |w|^\gamma \dd w \\
 &\lesssim \| g \|_{L^\infty}\| f \|_{L^\infty_q}\vv^{m-q} \int_{\R^3} \frac{|w|^\gamma}{\langle v + w \rangle^m} \dd w \lesssim \| g \|_{L^\infty} \| f \|_{L^\infty_q},
\end{split}\]
since $m>\gamma+3$ and $q> m+\gamma$. We have used \Cref{l:convolution} to estimate the convolution in the last line.

Finally,
\[ \begin{split}
\frac{\vv^m}{(|\chi|^2+|\nu|^2)^{\alpha/2}} \Qns(f,\delta f)&\approx g(v) \int_{\R^3} f(v+w) |w|^\gamma \dd w\\ 
&\lesssim \| g \|_{L^\infty} \| f \|_{L^\infty_q} \int_{\R^3} \frac{|w|^\gamma}{\langle v+w \rangle^q} \dd w
\lesssim \| g \|_{L^\infty} \| f \|_{L^\infty_q},
\end{split}\]
since $q>\gamma+3$.

Combining our upper bounds for the four terms on the right in \eqref{e.c010402}, the proof of the lemma is complete, setting $\theta(\alpha,s) = 1+\eta(\alpha,s)$.
\end{proof}

\section{Uniqueness}\label{s:uniqueness}

In this section, we complete the proof of Theorem \ref{t:uniqueness}. Letting $f$ be the classical solution guaranteed by Theorem \ref{t:existence} and $g$ a weak solution in the sense of Theorem \ref{t:weak-solutions}, the goal is to establish a Gr\"onwall-type inequality for $h := f-g$ in a space-localized, velocity-weighted, $L^2$-based norm.  

Following \cite{morimoto2015polynomial}, we define our cutoff as follows: let
\[ 
\phi(x,v) := \frac{1}{1+|x|^2+|v|^2}, 
\]
and for $a \in \R^3$, let
\[
 \phi_a(x,v) := \phi(x-a,v).
 \]
Note that
\begin{equation}
\label{e.uniq_kin}
\left| v \cdot \nabla_{x} \phi_a(x,v) \right| = \left| \frac{2 (x-a) \cdot v}{(1+|x-a|^2 + |v|^2)^2} \right| \lesssim \phi_a(x,v).
\end{equation}
For given $n\geq 0$, we define the space-localized, velocity-weighted, $L^2$-based space $X_n$ in terms of the following norm:
\[ 
\| f \|_{X_n}^2 := \sup_{a \in \R^3} \iint_{\R^3\times\R^3} \phi_a(x,v) \vv^{2n} f^2(x,v) \dd v \dd x.
\]


We begin with two auxiliary lemmas. First, we have a modification of Lemma 4.2 from \cite{henderson2021existence}:
\begin{lemma}\label{lem:HW21}
Suppose that $\mu>-3$, $n > 3/2 + \mu$,  and $l > 3/2 + \mu + (3/2 - n)_+$. If $g \in X_n$, then
\begin{equation}
\label{HW21 4.2}
\sup_{a \in \R^3} \iint_{\R^3\times\R^3} \phi_a(x,v) \vv^{-2l} \left( \int_{\R^3} g(x,w) |v-w|^\mu \dd w \right)^2 \dd v \dd x
\lesssim \| g \|_{X_n}^2
\end{equation}
\end{lemma}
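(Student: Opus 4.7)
The plan is to apply Cauchy-Schwarz to the inner $w$-integral and then use the convolution bound of Lemma \ref{l:convolution} together with Fubini.

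First, for fixed $a$ and $x$, I would split $|v-w|^\mu = |v-w|^{\mu/2}\cdot |v-w|^{\mu/2}$ and apply Cauchy-Schwarz with the weight $\langle w\rangle^{2n}\phi_a(x,w)$ distributed into the $g^2$ factor:
\begin{equation*}
\left(\int g(x,w)|v-w|^\mu \dd w\right)^{2} \leq \left(\int g^2(x,w)\langle w\rangle^{2n}\phi_a(x,w)|v-w|^\mu \dd w\right) C(v,x,a),
\end{equation*}
where $C(v,x,a) = \int \langle w\rangle^{-2n}\phi_a^{-1}(x,w)|v-w|^\mu \dd w$. The symmetric split is what allows $\mu$ to be taken all the way down to $\mu > -3$ (rather than $-3/2$), since $|v-w|^\mu$ — not $|v-w|^{2\mu}$ — appears in the kernel $C$.

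Next, I would bound $C(v,x,a)$. Writing $\phi_a^{-1}(x,w) = 1+|x-a|^2+|w|^2$ and using $|w|^2 \lesssim \vv^2 + |v-w|^2$ reduces $C$ to a sum of convolutions of the form $\int \langle w\rangle^{-2n}|v-w|^\mu \dd w$ and $\int \langle w\rangle^{-2n}|v-w|^{\mu+2}\dd w$, each of which I would estimate via Lemma \ref{l:convolution}. The output is roughly
\begin{equation*}
C(v,x,a) \lesssim \left( 1+|x-a|^2+\vv^2 \right) \vv^{\,\mu + 2(3/2-n)_+},
\end{equation*}
where the $(3/2-n)_+$ correction arises because $\langle w\rangle^{-2n}$ ceases to be integrable when $n \leq 3/2$ and the convolution then gains a power of $\vv$.

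Then, I would substitute this bound and integrate over $(v,x)$, using $\phi_a(x,v)(1+|x-a|^2)\leq 1$ and $\phi_a(x,v)\vv^2 \leq 1$ to absorb the $1+|x-a|^2+\vv^2$ prefactor against the outer $\phi_a$. After Fubini in $v$ and $w$, the relevant $v$-integral is
\begin{equation*}
\int \vv^{\,-2l+\mu+2(3/2-n)_+} \,|v-w|^\mu \dd v,
\end{equation*}
which is finite uniformly in $w$ precisely under the hypothesis $l > 3/2 + \mu + (3/2 - n)_+$, and can be controlled by a low power of $\langle w\rangle$ that gets absorbed into $\langle w\rangle^{2n}$. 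The remaining integral is $\iint g^2(x,w)\langle w\rangle^{2n}\phi_a(x,w)\dd w\dd x \leq \|g\|_{X_n}^2$.

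The main obstacle I expect is keeping the integrability thresholds honest through all three steps simultaneously, especially in the regime $n < 3/2$, where Lemma \ref{l:convolution} genuinely produces growth in $\vv$ that must be absorbed by $\vv^{-2l}$. The two stated hypotheses are designed with essentially no slack for this purpose: $n > 3/2+\mu$ ensures the convolution in Step 2 at least converges, and the $(3/2-n)_+$ correction in the $l$-hypothesis is exactly the amount of $\vv$-growth that Step 2 can produce, which Step 3 must then kill.
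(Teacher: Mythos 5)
Your global Cauchy--Schwarz step fails because the complementary factor $C(v,x,a)=\int \langle w\rangle^{-2n}\phi_a^{-1}(x,w)|v-w|^\mu \dd w$ is not finite under the stated hypotheses. Since $\phi_a^{-1}(x,w)=1+|x-a|^2+|w|^2\gtrsim \langle w\rangle^2$, finiteness at $|w|\to\infty$ requires $2n>3+\mu$ and, for the piece carrying $|w|^2$, even $2n>5+\mu$; neither follows from $n>3/2+\mu$ when $\mu<0$ (resp.\ $\mu<2$). Concretely, with $n=0$ and $\mu=-2$ — which satisfies the hypotheses and is exactly the regime in which the lemma is invoked in Case 2 of Lemma \ref{l:Chris_lemma} ($\mu=\gamma+2s\le -2$, $n=0$) — one already has $C(v,x,a)\ge\int_{\R^3}|v-w|^{-2}\dd w=\infty$, since $|v-w|^\mu$ with $-3<\mu<0$ is locally integrable but not integrable at infinity. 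Even for the larger values $n\ge 3/2$ used elsewhere (e.g.\ $\mu=\gamma=-1$, $n=3/2$), the term $\int\langle w\rangle^{2-2n}|v-w|^\mu\dd w$ diverges. For the same reason your invocation of Lemma \ref{l:convolution} in Step 2 is not permitted: that lemma requires $q>3+p$ (here $2n>3+\mu$ and $2n>5+\mu$), and the $(3-q)_+$ correction does not rescue the divergent case, so the claimed bound $C\lesssim(1+|x-a|^2+\vv^2)\vv^{\mu+2(3/2-n)_+}$ is unjustified. (A smaller point: an extra \emph{positive} power of $\langle w\rangle$ produced in Step 3 could not be ``absorbed into $\langle w\rangle^{2n}$'', since $X_n$ carries exactly that weight; fortunately for $\mu<0$ the $v$-integral is uniformly bounded in $w$, so this is not where the problem lies.)

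The missing idea is the splitting of the $w$-integral at the ball $B_{|v|/10}(v)$, which is how the paper reconciles the local singularity with decay at infinity. On the near region, $\langle w\rangle\approx\vv$ and $\phi_a(x,w)\approx\phi_a(x,v)$, so the symmetric split $|v-w|^{\mu/2}\cdot|v-w|^{\mu/2}$ is harmless: the complementary integral is $\int_{B_{|v|/10}(v)}|v-w|^\mu\dd w\approx\vv^{3+\mu}$, with no issue at infinity because the region is bounded, and this is the step that only needs $\mu>-3$. On the far region, the paper instead places the \emph{full} kernel $|v-w|^{2\mu}$ against the weight $\langle w\rangle^{-2n}\phi_a^{-1}(x,w)$: for $\mu<0$ this gives strictly more decay as $|w|\to\infty$ than your $|v-w|^\mu$ does (this is where $n>3/2+\mu$ enters), while the excluded ball $B_{|v|/10}(v)$ keeps the $w\approx v$ singularity under control, the resulting $|v|$-dependence being integrable in $v$ precisely because $\mu>-3$ and $l$ is large. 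Your single global Cauchy--Schwarz cannot achieve both effects simultaneously, so the region decomposition (or some equivalent device) is genuinely needed.
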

\begin{proof}
Without loss of generality, $g \geq 0$. We split the inner-most (convolutional) integral into two regions according to the ball $B_{|v|/10}(v)$. That is,
\[
\begin{split}
&\sup_{a \in \R^3} \iint_{\R^3\times\R^3} \phi_a \vv^{-2l} \left( \int_{\R^3} g(w) |v-w|^\mu \dd w \right)^2 \dd v \dd x \\
&\quad\quad \lesssim\sup_{a \in \R^3} \iint_{\R^3\times\R^3} \phi_a \vv^{-2l} \left(
	\left( \int_{B_{|v|/10}(v)} g(w) |v-w|^\mu \dd w \right)^2 \right.\\
	&\quad \quad \qquad \qquad\qquad \qquad \left.+ \left( \int_{B_{|v|/10}(v)^C} g(w) |v-w|^\mu \dd w \right)^2 \right) \dd v \dd x
	=: I_1 + I_2.
\end{split}
\]
Note that, when $w \in B_{|v|/10}(v)$, we have that $\langle w \rangle \approx \vv$ and $v \in B_{|w|/2}(w)$; in particular, $\sup_a
\phi_a(x,v) / \phi_a(x,w) \approx 1$. Then, Cauchy-Schwarz and
Fubini yield
\[
\begin{split}
I_1 &\lesssim \sup_{a \in \R^3} \iint_{\R^3\times\R^3}\phi_a(x,v) \vv^{-2l} \vv^{3+\mu} \int_{B_{|v|/10}(v)} g(w)^2 |v-w|^\mu \dd w \dd v \dd x \\
&\lesssim \sup_{a \in \R^3} \int_{\R^3} \int_{\R^3} g(w)^2 \int_{B_{|w|/2}(w)} \vv^{-2l+3+\mu} \phi_a(x,v) |v-w|^\mu \dd v \dd w \dd x \\
&\lesssim \sup_{a \in \R^3} \int_{\R^3}\int_{\R^3} \phi_a(x,w) g(w)^2 \langle w \rangle^{-2l+3+\mu} \int_{B_{|w|/2}(w)} |v-w|^\mu \dd v \dd w \dd x \\
&\lesssim \sup_{a \in \R^3}  \int_{\R^3}\int_{\R^3} \phi_a(x,w) g(w)^2 \langle w \rangle^{-2l + 2(3+\mu)} \dd w \dd x \lesssim \| g \|_{X_n}^2,
\end{split}
\]
where we also used that $-l+3+\mu < n$.

For the remaining term, we again use Cauchy-Schwarz, followed by H\"older's inequality in $x$, to obtain
\[
\begin{split}
I_2 &\lesssim \sup_{a \in \R^3} \iint_{\R^3\times\R^3} \phi_a(x,v) \vv^{-2l} \left( \int_{B_{|v|/10}(v)^C} \phi_a(x,w) \langle w \rangle^{2n} g(w)^2 \dd w \right)\\
&\qquad \qquad\times \left( \int_{B_{|v|/10}(v)^C} \frac{|v-w|^{2\mu}}{\langle w \rangle^{2n} \phi_a(x,w)} \dd w \right) \dd v \dd x\\
&\lesssim \| g \|_{X_n}^2 \sup_{a \in \R^3} \sup_{x \in \R^3} \int_{\R^3} \phi_a(x,v) \vv^{-2l}\int_{B_{|v|/10}(v)^C} \frac{|v-w|^{2\mu}}{\langle w \rangle^{2n}}
\left( |x-a|^2 + \langle w \rangle^2 \right) \dd w \dd v \\
&\lesssim \| g \|_{X_n}^2 \sup_{z \in \R^3} \int_{\R^3} \vv^{-2l}
\frac{\vv^{2\mu + (3-2n)_+}|z|^2 + \vv^{2\mu + (5-2n)_+}}{|z|^2 + \vv^2} \dd v \lesssim \| g \|_{X_n}^2.
\end{split}
\]
\end{proof}

We also recall the following result from \cite{HST2020boltzmann}:
\begin{lemma}{\cite[Lemma A.1]{HST2020boltzmann}}\label{l:A1}
For any $\rho > 0$ and $v_0 \in \R^3$ such that $\rho \geq 2|v_0|$, and any $H: \R^3 \rightarrow [0,\infty)$ such that the
right-hand side is finite, we have
\begin{equation}\label{HST A1}
\int_{\partial B_\rho(0)} \int_{(z-v_0)^\perp} H(z+w) \dd w \dd z \lesssim \rho^2 \int_{B_{\rho/2}^C(0)} \frac{H(w)}{|w|} \dd w.
\end{equation}
\end{lemma}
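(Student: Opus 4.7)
The plan is to swap the order of integration via the change of variables $y=z+w$ and then bound the resulting push-forward kernel. First, I would represent the planar integral using a one-dimensional $\delta$-function on $\R^3$,
\[
	\int_{(z-v_0)^\perp} H(z+w) \dd w
		= \int_{\R^3} H(z+w)\,|z-v_0|\,\delta\bigl(w\cdot(z-v_0)\bigr) \dd w,
\]
where the factor $|z-v_0|$ comes from the fact that the unit normal to the plane is $(z-v_0)/|z-v_0|$. After translating $y=z+w$ and using Fubini, the left-hand side becomes $\int_{\R^3} H(y) A(y) \dd y$, where, using $(y-z)\cdot(z-v_0) = z\cdot(y+v_0) - \rho^2 - y\cdot v_0$ on $\partial B_\rho$,
\[
	A(y)
		= \int_{\partial B_\rho} |z-v_0|\,\delta\bigl(z\cdot(y+v_0) - \rho^2 - y\cdot v_0\bigr) \dd z.
\]

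Next, I would evaluate $A(y)$ using the co-area formula on $\partial B_\rho$ applied to the linear function $g(z) = z\cdot n$ with $n := y+v_0$ and $c := \rho^2 + y\cdot v_0$. The tangential gradient on the sphere is $|\nabla_{\partial B_\rho}g|(z) = |n|\sin\theta$, where $\theta$ is the angle between $z$ and $n$. On the fiber $\{z\cdot n = c\}\cap \partial B_\rho$---a planar circle of radius $r(y) := \sqrt{\rho^2 - c^2/|n|^2}$ centered on the $n$-axis---we have $\cos\theta = c/(\rho|n|)$, hence $|\nabla_{\partial B_\rho}g| = |n|\,r(y)/\rho$. Using the trivial bound $|z-v_0| \leq |z|+|v_0|\leq 3\rho/2$ and the arclength $2\pi r(y)$ of the fiber, the co-area formula gives
\[
	A(y)
		= \frac{\rho}{|n|\,r(y)}\int_{\text{fiber}} |z-v_0|\dd\sigma
		\leq \frac{3\pi\rho^2}{|y+v_0|}.
\]

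Finally, I would verify the support condition and upgrade $|y+v_0|$ to $|y|$. The relation $w\cdot(z-v_0)=0$ gives $z\cdot w = v_0\cdot w$, so
\[
	|y|^2 = |z+w|^2 = \rho^2 + 2w\cdot v_0 + |w|^2 = \rho^2 - |v_0|^2 + |w+v_0|^2
		\geq \tfrac{3\rho^2}{4}
\]
by the hypothesis $|v_0|\leq \rho/2$; hence $|y|\geq \sqrt{3}\rho/2 > \rho/2$, and $A$ is supported in $B_{\rho/2}^c$. The same hypothesis yields $|v_0|\leq \rho/2 \leq |y|/\sqrt{3}$, whence $|y+v_0|\geq |y| - |v_0| \geq (1-1/\sqrt{3})|y| \gtrsim |y|$. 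Combining, $A(y)\lesssim \rho^2/|y|$ on its support, and Fubini produces the desired estimate.

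The main obstacle is the bookkeeping of the co-area / $\delta$-function reduction, in particular computing $|\nabla_{\partial B_\rho} g|$ and correctly identifying the fiber as a planar section of the sphere whose radius and arclength combine to cancel the $1/r(y)$ coming from the Jacobian. The hypothesis $|v_0|\leq \rho/2$ enters essentially twice: once to force the support of $A$ into $B_{\rho/2}^c$, and once to convert the bound $1/|y+v_0|$ into $1/|y|$. Without it the estimate would degenerate near $y=-v_0$, where $|y+v_0|\to 0$ (although, suggestively, the fiber is then empty by the same degree-of-freedom count).
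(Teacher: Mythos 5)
Your argument is correct. The paper itself does not reprove this lemma (it is quoted from \cite[Lemma A.1]{HST2020boltzmann}), so I judge the proposal on its own terms: the pushforward-kernel computation checks out in every step --- the coarea evaluation giving $|\nabla_{\partial B_\rho}g| = |n|r(y)/\rho$ so that the fiber radius cancels and $A(y)\leq 3\pi\rho^2/|y+v_0|$, the support bound $|y|^2=\rho^2-|v_0|^2+|w+v_0|^2\geq \tfrac34\rho^2$ forcing $A\equiv 0$ on $B_{\sqrt3\rho/2}$, and the use of $\rho\geq 2|v_0|$ to get $|y+v_0|\gtrsim|y|$ there. The only cosmetic point is that the $\delta$-function bookkeeping should be phrased directly through the coarea formula (or a mollified delta) to be fully rigorous, which is exactly the computation you already perform when evaluating $A(y)$, so nothing of substance is missing.
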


We are now ready to proceed with the proof of uniqueness. With $f$ and $g$ as above, we define $h = f-g$ and observe that
\begin{equation}
\label{e.uniq_dif}
\partial_t h + v \cdot \nabla_x h = Q(h,f) + Q(g,h), 
\end{equation}
in the weak sense. We integrate (in $x$ and $v$) \eqref{e.uniq_dif} against $\phi_a(x,v) h \vv^{2n}$ for some $n\geq \frac 3 2$ to be determined later. Even though this is not an admissible test function for the weak solution $g$, these calculations can be justified by a standard approximation procedure, which we omit. Next, we take a supremum over $a \in \R^3$ to yield
\[
\begin{split}
\frac{1}{2} \frac{d}{dt} \| h \|_{X_n}^2 
	&\leq  \frac 1 2 \sup_{a \in \R^3} \iint_{\R^3\times\R^3} v \cdot \nabla_x \phi_a h^2 \vv^{2n} \dd v \dd x 
	+ \sup_{a \in \R^3}  \iint_{\R^3\times\R^3} \phi_a Q(h,f) h \vv^{2n} \dd v \dd x\\
&\qquad  + \sup_{a \in \R^3}  \iint_{\R^3\times\R^3} \phi_a Q(g,h) h \vv^{2n} \dd v \dd x.
\end{split}
\]
Using \eqref{e.uniq_kin}, we bound the first term on the right by
\[
	\frac 1 2 \sup_{a \in \R^3} \iint_{\R^3\times\R^3}  |v \cdot \nabla_x \phi_a | h^2 \vv^{2n} \dd v \dd x
		\leq \frac 1 2 \sup_{a \in \R^3} \iint_{\R^3\times\R^3}  \phi_a h^2 \vv^{2n} \dd v \dd x
		=  \frac 1 2 \| h \|_{X_n}^2,
\]
which yields
\begin{equation}
\label{e.uinq_gron}
\begin{split}
\frac {d}{dt} \| h \|_{X_n}^2 
	&\leq  \| h \|_{X_n}^2
		+ 2 \sup_{a \in \R^3} \iint_{\R^3\times\R^3} \phi_a \left( Q_\text{s}(h,f) + Q_\text{ns}(h,f) + Q(g,h) \right) h \vv^{2n} \dd v \dd x \\
&=:  \| h \|_{X_n}^2 +  I_1 + I_2 + I_3.
\end{split}
\end{equation}
We bound the terms in this right-hand side one by one. Since $t$ plays no role in these estimates, we prove them for general functions $f, g, h$ defined for $(x,v)\in \R^6$, with the integrals $I_1$, $I_2$, and $I_3$ defined as in \eqref{e.uinq_gron}. As above, norms such as $\|\cdot\|_{L^\infty_q}$ with no specified domain are understood to be over $\R^6$ throughout this section.

For $I_1$, we need the following intermediate lemma:

\begin{lemma}\label{l:Chris_lemma}
For $v\in \R^3$, let
\be\label{e.c062804}
	{\mathcal K}(x,v) := \int_{B_{\vv/2}} K_{|h|}(x,v,v') f(x,v') \dd v',
\ee
where $K_{|h|}(x,v,v')$ is defined in terms of the function $|h|(x,v)$ according to formula \eqref{e.kernel}. Assume that 
$n>\max(\frac 3 2,\gamma+2s+\frac 5 2)$. If $\gamma +2s > -2$, suppose that $q > 3$, and, if $\gamma +2s \leq -2$, suppose that $q > (3 +\gamma)/s$. 
Then there holds
\begin{equation}\label{e.Chris_lemma}
\sup_{a \in \R^3} \int_{\R^3} \int_{B_{10}(0)^C} \phi_a \vv^{2n} {\mathcal K}^2 \dd v \dd x \leq C \| h \|_{X_n}^2 \| f \|_{L^\infty_q}^2,
\end{equation}
for a universal constant $C>0$ that tends to $\infty$ as $\gamma \nearrow 0$. 
\end{lemma}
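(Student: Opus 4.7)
The plan is to first derive a pointwise bound on $\mathcal{K}(x,v)$ exhibiting a convolution structure in $|h|$, and then apply the weighted convolution estimate \Cref{lem:HW21}.

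For the pointwise bound, starting from the Carleman representation \eqref{e.kernel}, I would use that $|v-v'|\approx \vv$ uniformly on $\{v'\in B_{\vv/2}\}$ (since $|v|\geq 10$) to factor out $\vv^{-3-2s}$. Substituting $u = v-v'$ turns the integration over $v'\in B_{\vv/2}$ into one over $u\in B_{\vv/2}(v)$, on which $|u|\approx \vv$. Recognizing the resulting double integral in $(u,w)$ as a 5-dimensional integral over the incidence manifold $\{(u,w)\in\R^6 : u\cdot w = 0\}$, I would apply the Carleman-type measure identity $\dd u\, \dd w_{u^\perp} = (|u|/|w|)\, \dd w\, \dd u_{w^\perp}$ to swap the roles of $u$ and $w$: the $u$-integration then runs over the disk $w^\perp \cap B_{\vv/2}(v)$ of area at most $\pi \vv^2/4$, while the $w$-integration becomes a free integration over $\R^3$. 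Collecting the factor $|w|^{\gamma+2s+1}/|u|^{3+2s}$ with the Jacobian and the disk area produces
\[
\mathcal{K}(x,v) \lesssim \|f\|_{L^\infty_q}\,\vv^{-2s} \int_{\R^3}|h|(x,z)\,|z-v|^{\gamma+2s}\dd z,
\]
where the crude bound $f\leq \|f\|_{L^\infty_q}$ supplies the prefactor.

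For the weighted $L^2$ estimate, squaring the pointwise bound yields $\vv^{2n}\mathcal{K}^2 \lesssim \|f\|_{L^\infty_q}^2\,\vv^{2n-4s}\bigl(\int |h|(x,z)|z-v|^{\gamma+2s}\dd z\bigr)^2$. Multiplying by $\phi_a$, integrating in $(x,v)$, and taking $\sup_a$, the right-hand side is exactly the object controlled by \Cref{lem:HW21} with $g=|h|$, $\mu = \gamma+2s$, and $l = 2s-n$. The assumed lower bound $n > \max(3/2, \gamma+2s+5/2)$ implies the lemma's condition $n > 3/2 + \mu$, and $\gamma>-3$ trivially gives $\mu > -3$, producing the desired bound $\lesssim \|h\|_{X_n}^2 \|f\|_{L^\infty_q}^2$.

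The main obstacle is handling the remaining hypothesis of \Cref{lem:HW21}, namely $l > 3/2 + \mu + (3/2-n)_+$, which under the above substitution becomes borderline as $\gamma \nearrow 0$ --- consistent with the stated blow-up of the constant $C$ in that limit. In the very soft potential regime $\gamma+2s\leq -2$, where the hypothesis on $q$ strengthens to $q > (3+\gamma)/s$, I would expect the crude estimate $f\leq \|f\|_{L^\infty_q}$ to be too lossy; instead I would retain the factor $\langle v'\rangle^{-q} = \langle v-u\rangle^{-q}$ inside the $u$-integral and use the explicit evaluation
\[
\int_{w^\perp \cap B_{\vv/2}(v)} \langle v-u\rangle^{-q}\,\dd u \lesssim (1+(v\cdot \hat w)^2)^{1-q/2},
\]
which produces anisotropic decay concentrated near $w \perp v$ and yields an improved effective convolution kernel to which \Cref{lem:HW21} can then be applied.
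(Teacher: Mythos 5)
Your pointwise bound $\mathcal K(x,v)\lesssim \|f\|_{L^\infty_q}\vv^{-2s}\int_{\R^3}|h|(x,z)|v-z|^{\gamma+2s}\dd z$ is correct (it is just \Cref{l:K-upper-bound} applied to $B_{\vv/2}(0)$, which lies in an annulus of radii $\approx\vv$ about $v$), but the reduction to \Cref{lem:HW21} that you build on it does not go through. With $g=|h|$, $\mu=\gamma+2s$ and $l=2s-n$, the hypothesis $l>\tfrac32+\mu+(\tfrac32-n)_+$ reads, since $n>\tfrac32$, $2s-n>\tfrac32+\gamma+2s$, i.e.\ $n<-\tfrac32-\gamma<\tfrac32$, which contradicts $n>\tfrac32$; so the hypothesis fails for \emph{every} $\gamma\in(-3,0)$, by a margin larger than $3+\gamma$ — this is not a borderline issue as $\gamma\nearrow0$. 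Transferring the weight into $H=\langle\cdot\rangle^n|h|$ and invoking the lemma with $n=0$, $l=2s$ fares no better (one would need $2s>3+\gamma+2s$, i.e.\ $\gamma<-3$). The root of the loss is the pointwise bound itself: it pretends $\mathcal K$ sees $|h|$ on all of $\R^3$, whereas for $|v|\ge10$ and $|v'|\le\vv/2$ the kernel $K_{|h|}(v,v')$ samples $|h|$ only on planes through $v$ whose points $v+w$ satisfy $|v+w|\gtrsim|v|$; discarding this geometric information is exactly what makes the estimate unclosable via \Cref{lem:HW21}.

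The paper's proof is organized around recovering that information. In the regime $\gamma+2s>-2$ it never passes through a pointwise convolution bound: it keeps the decay $f(x,v')\le\|f\|_{L^\infty_q}\langle v'\rangle^{-q}$ inside, applies Cauchy--Schwarz twice with the weight $H_a=h^2\langle\cdot\rangle^{2n}|\cdot|\,\phi_a$ evaluated on the planes, and then integrates in $v$ over spheres $\partial B_\rho$, using \Cref{l:A1} to convert the sphere-times-plane integral into $\int_{B_{\rho/2}^c}$; the resulting factor $\rho^{2\gamma-1}$ is integrable on $[10,\infty)$ precisely because $\gamma<0$, and this — not any hypothesis of \Cref{lem:HW21} — is where the constant blows up as $\gamma\nearrow0$. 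In the regime $\gamma+2s\le-2$ the paper first transfers the weight ($\vv^nK_{|h|}\lesssim K_{\langle\cdot\rangle^n|h|}$, legitimate thanks to the same plane geometry) and then improves on $\vv^{-2s}$ by splitting $B_{\vv/2}$ into $B_{(\vv/2)^s}$ and its complement, using the decay $q>(3+\gamma)/s$ on the outer piece and \Cref{l:A1} on the thin shell containing the inner piece, to gain $\vv^{-\min\{1+s,\,qs+2s\}}$; only with this gain do the hypotheses of \Cref{lem:HW21} (with $n=0$, $l=\min\{1+s,qs+2s\}$) hold. Your anisotropic-kernel sketch for the very soft case gestures toward the right mechanism, but it is not carried out (the evaluation of $h$ on the planes still has to be converted into an $X_n$ quantity, e.g.\ via \Cref{l:A1}), and in any case the repair is needed in both regimes, since your main estimate already fails when $\gamma+2s>-2$.
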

\begin{proof}
The proof is divided into two cases depending on $\gamma + 2s$.

\medskip

\noindent {\it Case 1: $\gamma + 2s > -2$.} This argument is inspired by Proposition 3.1(i) of \cite{HST2020boltzmann},
but requires some modification due to the presence of
the space-localizing weight $\phi_a$. Let $r = \vv / 2$ and define
\[
H_a(x,z) := h(x,z)^2 \langle z \rangle^{2n} |z| \phi_a(x,z) \quad \text{ and } \quad
\Phi_a(x,v,w) := \frac{|w|^{2\gamma+4s+2}}{\langle v+w \rangle^{2n} |v+w| \phi_a(x,v+w)} .
\]
Using Cauchy-Schwarz twice, we obtain
\begin{equation}\label{uniq: gjiurhub}
\begin{split}
	\phi_a(x,v)^{\frac 1 2} {\mathcal K}(x,v)
		&\leq \| f \|_{L^\infty_q} \int_{B_r(0)} 
			\phi_a(x,v)^{\frac 1 2}
			\langle v' \rangle^{-q} K_{|h|} (x,v,v') \dd v' \\
		&\lesssim \| f \|_{L^\infty_q}
			\int_{B_r(0)} \frac{\langle v' \rangle^{-q}}{|v-v'|^{3+2s}}
			\int_{(v-v')^\perp} \phi_a(x,v)^{\frac 1 2} |h(v+w)||w|^{\gamma+2s+1} \dd w \dd v' \\
		& \lesssim \| f \|_{L^\infty_q} \int_{B_r(0)} \frac{\langle v' \rangle^{-q}}{|v-v'|^{3+2s}}
			\bigg( \int_{(v-v')^\perp} \phi_a(x,v) \Phi_a(x,v,w) \dd w \bigg)^{\frac 1 2}\\
		&\qquad \qquad \qquad \times \bigg( \int_{(v-v')^\perp} H_a(x,v+w) \dd w \bigg)^{\frac 1 2} \dd v'\\
		& \lesssim \| f \|_{L^\infty_q}
			\bigg( \int_{B_r(0)} \frac{\langle v' \rangle^{-q}}{|v-v'|^{6+4s}} \bigg( \int_{(v-v')^\perp} \phi_a(x,v) \Phi_a(x,v,w) \dd w \bigg)\\
		&\qquad \qquad \qquad \times \bigg( \int_{(v-v')^\perp} H_a(x,v+w) \dd w \bigg) \dd v' \bigg)^{\frac 1 2}.
\end{split}
\end{equation}
Note that, in the last inequality we used that $q>3$ to deduce that $\int_{\R^3} \langle v' \rangle^{-q} \dd v' \lesssim 1$. 

At this point we observe a few important algebraic relations between the variables $v$, $v'$, and $w$. Since $|v| \geq 10$, we
have $|v| \approx \vv$,  and since $|v'| < \vv / 2$ (recall~\eqref{e.c062804}), we also have $|v-v'| \approx \vv$. Furthermore, since $w \perp (v-v')$, we have
that 
\be\label{e.c062803}
	|v+w| \approx |v| + |w| \approx \vv + \langle w \rangle
\ee
(that is, a converse to the triangle inequality for those two variables);
see \cite[Lemma~2.4]{henderson2021existence}. With these relations in hand, we have
\[
\begin{split}
&\int_{(v-v')^\perp} \phi_a(x,v) \Phi_a(x,v,w) \dd w
\\&\quad\quad\quad=
\int_{(v-v')^\perp} \frac{|w|^{2\gamma + 4s+2}(1+|x-a|^2+|v+w|^2)}{\langle v + w \rangle^{2n} |v+w| (1+|x-a|^2+|v|^2)} \dd w \\
&\quad\quad\quad \lesssim \int_{(v-v')^\perp} \frac{|w|^{2\gamma + 4s+2}}{\vv^{2n+1} + \langle w \rangle^{2n+1}} \dd w
+ \int_{(v-v')^\perp} \frac{|w|^{2\gamma + 4s + 4}}{(\vv^{2n+1}+\langle w \rangle^{2n+1})\vv^2} \dd w \\
&\quad\quad\quad \lesssim \vv^{2\gamma+4s+3-2n},
\end{split}
\]
where we needed $\gamma + 2s > -2$ so that the singularities at $w=0$ are integrable, and $n > \gamma + 2s + \frac 5 2$ so the tails converge. Since $|v'| < \vv /2$ and $|v| \geq 10$,
we have that $|v-v'| \approx \vv$. Thus, \eqref{uniq: gjiurhub} becomes
\begin{equation}\label{uniq: simplified gjiurhub}
	\phi_a(x,v) {\mathcal K}(x,v)^2
		\lesssim \| f \|_{L^\infty_q}^2 \vv^{2\gamma - 3 -2n} \int_{B_r(0)} \langle v' \rangle^{-q}
\int_{(v-v')^\perp} H_a(x,v+w) \dd w \dd v'.
\end{equation}
This implies, using Lemma \ref{l:A1} on $H_a$, and spherical coordinates for the $v$-integral, that
\[
\begin{split}
&\sup_{a \in \R^3} \int_{\R^3} \int_{B_{10}^C} \phi_a \vv^{2n} {\mathcal K}(x,v)^2 \dd v \dd x
		\approx \sup_{a \in \R^3}\int_{10}^\infty \rho^{2n} \int_{\R^3} \int_{\partial B_\rho(0)} \phi_a(x,z) {\mathcal K}(x,z)^2 \dd z \dd x \dd \rho\\
	&\quad
		\lesssim \| f \|_{L^\infty_q}^2
\sup_{a \in \R^3}\int_{10}^\infty \rho^{2\gamma-3}   \int_{B_r(0)} \langle v' \rangle^{-q} \int_{\R^3} \int_{\partial B_\rho(0)}
\int_{(z-v')^\perp} H_a(x,z+w) \dd w \dd z \dd x \dd v' \dd \rho \\
	&\quad
		\lesssim \| f \|_{L^\infty_q}^2
\sup_{a \in \R^3} \int_{10}^\infty \rho^{2\gamma-1}  \left( \int_{\R^3} \langle v' \rangle^{-q} \dd v' \right)
 \int_{\R^3} \int_{B_{\rho/2}^C(0)} \frac{h(x,w)^2 \langle w \rangle^{2n} |w| \phi_a(x,w)}{|w|} \dd w \dd x \dd \rho \\
 	&\quad
		\lesssim \| f \|_{L^\infty_q}^2
\sup_{a \in \R^3} \int_{10}^\infty \rho^{2\gamma-1}  \int_{\R^3} \int_{B_{\rho/2}^C(0)} h(x,w)^2 \langle w \rangle^{2n} \phi_a(x,w) \dd w \dd x \dd \rho \\
	&\quad \lesssim  \| f \|_{L^\infty_q}^2 \int_{10}^\infty \rho^{2\gamma - 1}
\| h \|_{X_n}^2 \dd \rho \lesssim \| f \|_{L^\infty_q}^2 \| h \|_{X_n}^2,
\end{split}
\]
as desired.  Note that the last inequality uses that $\gamma < 0$ in an essential way and $q>3$ was used in the second-to-last inequality to ensure the integrability of $\vvp^{-q}$.  Additionally, note that no volume element was needed in the change to spherical coordinates in the first line because we are integrating over $\partial B_\rho(0)$, not $\partial B_1(0) = \mathbb S^2$.

\medskip

\noindent {\it Case 2: $\gamma + 2s \leq -2$.} This case requires a detailed analysis of the integral kernel to deal with the more severe singularity. 

As in Case 1, whenever $|v| \geq 10$, $|v'| < \vv/2$ (recall~\eqref{e.c062804}), and $w \perp (v-v')$, we have that
$\vv / \langle v+w \rangle \lesssim 1$; see~\eqref{e.c062803}. Therefore,
\[
\vv^n K_{|h|}(x,v,v') \approx \frac{1}{|v-v'|^{3+2s}} \int_{(v-v')^\perp} \vv^n |h(x,v+w)||w|^{\gamma+2s+1} \dd w
\lesssim K_{\langle \cdot \rangle^n |h|}(x,v,v').
\]
Let us define $H(x,v) = \vv^n |h(x,v)|$.
To properly estimate ${\mathcal K}(x,v)$, we need to take advantage of the decay available from $f$. However, since the integral is
over $B_r(0)$, we cannot exploit this smallness directly. Instead, we split the domain of integration $B_r(0)$ into $B_{r^s}(0)$ and
$B_r(0) \setminus B_{r^s}(0)$ (recall that $r = \vv / 2$ and $|v| \geq 10$, so that the splitting makes sense). Then we have 
\begin{equation}\label{e.uniq-lemma-1}
\begin{split}
\int_{B_r(0) \setminus B_{r^s}(0)} K_H(x,v,v') f(x,v') \dd v' &\lesssim \| f \|_{L^\infty_q} \vv^{-sq}\int_{B_r(0)} K_H(x,v,v') \dd v'\\
&\lesssim \| f \|_{L^\infty_q} \vv^{-sq-2s} \int_{\R^3} H(x,v+w) |w|^{\gamma + 2s} \dd w,
\end{split}
\end{equation}
using $B_r(0)\subset B_{2\vv}(v) \setminus B_{\vv/8}(v)$ and Lemma \ref{l:K-upper-bound}.

Next, we note that $B_{r^s(0)} \subset B_{|v|+r^s}(v) \setminus B_{|v|-r^s}(v)$, so that
\[
\int_{B_{r^s}(0)} K_H(x,v,v') f(x,v') \dd v' \lesssim \| f \|_{L^\infty_q} \int_{B_{|v|+r^s}(v) \setminus B_{|v|-r^s}(v)} K_H(x,v,v') \dd v'.
\]
Expanding into spherical coordinates centered at $v$ (i.e. $v' = v+\rho z$ with $z\in \partial B_1$), we have
\begin{equation}\label{e.uniq-lemma-2}
\begin{split}
	&\int_{B_{r^s}} K_H(x,v,v') f(x,v') \dd v' \lesssim \| f \|_{L^\infty_q} \int_{|v|-r^s}^{|v|+r^s} \frac{\rho^2}{\rho^{3+2s}} \int_{\partial B_1} \int_{z^\perp} H(x,v+w)|w|^{\gamma+2s+1} \dd w \dd z \dd\rho \\
	&\quad\quad\quad \lesssim \| f \|_{L^\infty_q} \vv^{-1 - 2s} r^s 
	\int_{\partial B_1} \int_{z^\perp} H(x,v+w)|w|^{\gamma+2s+1} \dd w \dd z \\
	&\quad\quad\quad \lesssim \| f \|_{L^\infty_q} \vv^{-1-s} \int_{\R^3} H(x,v+w)|w|^{\gamma+2s} \dd w,
\end{split}
\end{equation}
where we used Lemma \ref{l:A1} with $\rho=1$ in the last line. Combining \eqref{e.uniq-lemma-1} and \eqref{e.uniq-lemma-2}, 
we now have
\[
\vv^n {\mathcal K}(v) \lesssim \| f \|_{L^\infty_q} \vv^{-\min\{1+s, qs+2s\}} \int_{\R^3} H(x,v+w)|w|^{\gamma+2s} \dd w.
\]
In order to again use \Cref{lem:HW21} with $\mu = \gamma + 2s \leq -2$, $n= 0 > 3/2 + \mu$, and $l = \min\{1+s, qs+2s\}$, we must verify that $l> 3/2 + \mu + (3/2-n)_+ = 3 + \gamma + 2s$. Since we assume that $q > (3+\gamma)/s$, the inequality holds. Hence, we obtain 
\[
\begin{split}
\sup_{a \in \R^3} \int_{\R^3} \int_{B_{10}^C} &\phi_a \left( \vv^n {\mathcal K}(x,v) \right)^2 \dd v \dd x\\
 &\lesssim
\| f \|_{L^\infty_q}^2 \sup_{a \in \R^3} \iint_{\R^3\times\R^3} \phi_a \vv^{-2(1+s)} \left( \int_{\R^3} H(x,v+w) |w|^{\gamma+2s}\dd w \right)^2 \dd v \dd x \\
& \lesssim \| f \|_{L^\infty_q}^2 \| h \|_{X_n}^2,
\end{split}
\]
using $\|H\|_{X_0} = \|h\|_{X_n}$. Altogether, this yields \eqref{e.Chris_lemma}.
\end{proof}

Now we are ready to bound the singular term in \eqref{e.uinq_gron}:

\begin{lemma}[Bound on $I_1$]\label{l:uniqI1}
Let $q$ and $n$ be as in Lemma \ref{l:Chris_lemma}, and assume in addition that $n> \frac 3 2 +(\gamma+2s)_+$ and $q>  n+3+\gamma+4s$. Assume also that $\vv^m f\in L^\infty_x C^{2s+\alpha}_v$ for some $\alpha\in (0,1)$, and that $m > n + \frac 3 2 + \gamma+2s+\alpha$. With $I_1$ defined as in \eqref{e.uinq_gron}, we have
\[
I_1 \leq  C\| h \|_{X_n}^2 \left( \| f \|_{L^{\infty}_q} + \| \vv^{m} f \|_{L^\infty_xC^{2s+\alpha}_v} \right), 
\]
for any $h$ and $f$ such that the right-hand side is finite. The constant $C>0$ is universal.
\end{lemma}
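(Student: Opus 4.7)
The approach is to decompose $\Qs(h,f)(v) = \int_{\R^3} K_h(v,v')(f(v')-f(v))\,\dd v'$ according to the location of $v'$ relative to $v$ (the source of the singularity of $K_h$) and to the origin (where the polynomial decay of $f$ is useful). The case $|v| < 10$ can be handled by uniform estimates, so I focus on $|v| \geq 10$. Setting $r = \vv/8$, I split $\Qs(h,f)(v)$ into a near-singular piece supported on $B_r(v)$ and a far piece on $B_r(v)^c$; the far piece is further split at $|v'| = \vv/2$. In all cases the final step is an application of Cauchy--Schwarz in the outer $(x,v)$ integration, followed by \Cref{lem:HW21} with $\mu = \gamma+2s$, which reduces everything to $\|h\|_{X_n}^2$ times universal constants.

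On the far piece with $|v'| < \vv/2$, one has $|v-v'|\approx \vv$ and the contribution of $f(v')$ is precisely the quantity $\mathcal K(x,v)$ appearing in \Cref{l:Chris_lemma}; Cauchy--Schwarz against $\phi_a h \vv^{2n}$ and \Cref{l:Chris_lemma} directly yield a term bounded by $\|h\|_{X_n}^2 \|f\|_{L^\infty_q}$. On the far piece with $|v'| \geq \vv/2$, one has $\langle v'\rangle \approx \vv$, so $|f(v')| \lesssim \|f\|_{L^\infty_q} \vv^{-q}$; combined with the tail estimate $\int_{B_r(v)^c} K_h(v,v')\,\dd v' \lesssim \vv^{-2s}\int |h(v+w)||w|^{\gamma+2s}\,\dd w$ from \Cref{l:K-upper-bound-2}, and the analogous contribution coming from $f(v)$, the outer integration feeds into \Cref{lem:HW21} with $\mu = \gamma+2s$ and $l = q+2s-n$; the hypothesis $q > n+3+\gamma+4s$ comfortably satisfies the required $l > 3/2+\gamma+2s$.

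The near-singular piece is where the regularity of $f$ is essential. On $B_r(v)$ one has $\langle v'\rangle \approx \vv$, so the H\"older hypothesis implies $|f(v') - f(v)| \lesssim \|\vv^m f\|_{L^\infty_x C^{2s+\alpha}_v}\vv^{-m}|v-v'|^{2s+\alpha}$ when $2s+\alpha \leq 1$. When $2s+\alpha > 1$, I use the symmetry $K_h(v,v+w) = K_h(v,v-w)$ to kill the first-order Taylor term, so the integrand becomes the second-order residual $f(v')-f(v)-(v'-v)\cdot \nabla_v f(v)$, controlled by the same H\"older quantity times $|v-v'|^{2s+\alpha}$. An annular decomposition $B_{r2^{-k}}(v)\setminus B_{r2^{-k-1}}(v)$ together with iteration of \Cref{l:K-upper-bound} yields
\[
\int_{B_r(v)} K_h(v,v') |v-v'|^{2s+\alpha}\,\dd v' \lesssim r^\alpha \int_{\R^3} |h(v+w)||w|^{\gamma+2s}\,\dd w,
\]
and hence the pointwise bound
\[
|\Qs^{\text{near}}(h,f)(v)| \lesssim \|\vv^m f\|_{L^\infty_x C^{2s+\alpha}_v} \vv^{-m+\alpha} \int_{\R^3} |h(v+w)||w|^{\gamma+2s}\,\dd w.
\]
Cauchy--Schwarz against $\phi_a h \vv^{2n}$ in $(x,v)$ and \Cref{lem:HW21} with $\mu = \gamma+2s$ and $l = m-\alpha-n$ then close the estimate, and this is admissible precisely under $m > n + 3/2 + \gamma + 2s + \alpha$.

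The main obstacle will be the near-singular piece: the symmetry of $K_h$ neutralizes the singularity at $v'=v$, but the $\vv^\alpha$ from the H\"older estimate consumes $\alpha$ moments of $f$, and ensuring that the $\vv$-powers produced by the pointwise bound on $\Qs^{\text{near}}$ match the hypothesis required by \Cref{lem:HW21} is what forces the threshold condition $m > n + 3/2 + \gamma+2s+\alpha$. The constraints on $n$ and $q$ in the statement are in turn exactly what is needed for \Cref{lem:HW21} to apply in the far piece and for \Cref{l:Chris_lemma} to be usable at all.
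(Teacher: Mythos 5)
Your proposal is correct and follows essentially the same route as the paper's proof: the near-singular region is handled by the weighted H\"older regularity of $f$ together with the symmetry of $K_h$ and an annular iteration of Lemma \ref{l:K-upper-bound}, the region $|v'|\gtrsim \vv$ uses the decay of $f$ plus the tail bound on $K_h$, the region $|v'|<\vv/2$ is delegated to Lemma \ref{l:Chris_lemma}, and everything is closed by Cauchy--Schwarz and Lemma \ref{lem:HW21} with exactly the exponent choices ($l=m-n-\alpha$, $l=q+2s-n$) that generate the stated hypotheses on $m$, $n$, $q$. The only difference is cosmetic: you use a single two-scale split at $r=\vv/8$ with one pointwise bound for the near piece, whereas the paper sums over the full family of annuli $A_k(v)$ in four cases, but the ingredients and bookkeeping are the same.
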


\begin{proof} 
We use the annular decomposition (defining $A_k(v) :=  B_{2^k|v|}(v) \setminus B_{2^{k-1}|v|}(v)$) to write
\[
Q_\text{s}(h,f) = \sum_{k \in \Z} \int_{A_k(v)} K_h(x,v,v')(f(x,v')-f(x,v)) \dd v'.
\]
Then we have
\[
I_1 \lesssim \sup_{a \in \R^3} \sum_{k \in \Z} \iint_{\R^3\times\R^3} \phi_a \vv^{2n} h \int_{A_k(v)} K_h(x,v,v') (f(x,v')-f(x,v)) \dd v' \dd v \dd x.
\]
The analysis is divided into four cases, based on range and the relative sizes of $v$ and $v'$.

\medskip

\noindent {\it Case 1: $k \leq -1$.} If $2s+\alpha \leq 1$, then
we have
\[
|f(x,v') - f(x,v)| \leq \|f(x,\cdot)\|_{C^{2s+\alpha}_v(B_1(v))} |v'-v|^{2s+\alpha} \leq \|\vv^{m} f(x,\cdot)\|_{C_v^{2s+\alpha}(\R^3_v)} \vv^{-m} (2^k|v|)^{2s+\alpha}.
\]
With Lemma \ref{l:K-upper-bound}, we have for each $x\in \R^3$,
\begin{equation}\label{e.uniq-Holder-annulus}
\begin{split}
&\left| \int_{A_k(v)} K_h(x,v,v') (f(x,v')-f(x,v)) \dd v' \right|\\
& \qquad\qquad\lesssim \vv^{-m} (2^k |v|)^{\alpha} \|\vv^m f(x,\cdot) \|_{C_v^{2s+\alpha}(\R^3)}
\int_{\R^3} |h(x,w)| |v-w|^{\gamma+2s} \dd w.
\end{split}
\end{equation}
On the other hand, if $2s+\alpha \in  (1,2)$ (we may always assume $2s+\alpha< 2$ by taking $\alpha$ smaller if necessary), we have
\[
\begin{split}
|f(x,v') - f(x,v) - \nabla_v f(x,v)\cdot (v'-v)| &\leq \|f(x,\cdot)\|_{C^{2s+\alpha}_v(B_1(v))} |v'-v|^{2s+\alpha}\\
& \lesssim \|\vv^{m} f(x,\cdot)\|_{C_v^{2s+\alpha}(\R^3_v)} \vv^{-m} (2^k|v|)^{2s+\alpha}.
\end{split}
\]
As usual, the symmetry of $K_h(x,v,v')$ implies the first-order term integrates to zero over $A_k(v)$, and we obtain \eqref{e.uniq-Holder-annulus} in this case as well. 

From \eqref{e.uniq-Holder-annulus}, using Lemma \ref{lem:HW21} with $\mu = \gamma+2s$  and $l = m - n - \alpha$, we have
\begin{equation}
\begin{split}
&\sup_{a \in \R^3} \iint_{\R^3\times\R^3} \phi_a(x,v) \vv^{2n} \left( \int_{A_k(v)} K_h(v,v') (f(x,v')-f(x,v)) \dd v' \right)^2 \dd v \dd x \\
&\quad \lesssim \| f \|_{L^\infty_x C^{2s+\alpha}_{m,v}}^2 2^{2k\alpha}
\sup_{a \in \R^3} \iint_{\R^3\times\R^3}\phi_a \vv^{-2(m - n-\alpha)}
\left( \int_{\R^3} |h(x,w)| |v-w|^{\gamma+2s} \dd w \right)^2 \dd v \dd x \\
&\quad \lesssim \| f \|_{L^\infty_xC^{2s+\alpha}_{m,v}}^2 2^{2k\alpha} \| h \|_{X_n}^2.
\end{split}
\end{equation}
The terms for $k \leq 1$ are summable, and we find that
\begin{equation}\label{unique I1 close range}
\begin{split}
&\sup_{a \in \R^3} \sum_{k \leq 1} \iint_{\R^3\times\R^3} \phi_a \vv^{2n} h \int_{A_k(v)} K_h(v,v') (f(x,v')-f(x,v))\dd v' \dd v \dd x \\
&\quad \lesssim
\| h \|_{X_n} \sup_{a \in \R^3} \sum_{k \leq 1} \Big( \iint_{\R^3\times\R^3} \phi_a \vv^{2n} \Big( \int_{A_k(v)} K_h(v,v')(f(x,v')-f(x,v)) \dd v' \Big)^2 \dd v \dd x \Big)^{\frac 1 2}\\
&\quad \lesssim \| f \|_{L^\infty_xC^{2s+\alpha}_{m,v}} \| h \|_{X_n}^2.
\end{split}
\end{equation}

\medskip

\noindent {\it Case 2: $k \geq K_0$ for $K_0 = \log_2(\vv/|v|)$.}
Notice that the choice of $K_0$ yields $|v'| \geq \vv/2$.  
Indeed, by the construction, if $v' \in A_k$, then
\[
	|v'| \geq 2^{k-1} |v|
		\geq 2^{K_0-1} |v|
		= \frac{1}{2} 2^{K_0} |v|
		= \frac{1}{2} \vv.
\]
Thus,
\begin{equation}\label{e.case2}
\begin{split}
&\left| \int_{A_k(v)} K_h(x,v,v')(f(x,v')-f(x,v)) \dd v' \right| \lesssim \vv^{-q} \| f \|_{L^{\infty}_q} \int_{A_k(v)} |K_h(x,v,v')| \dd v' \\
&\quad\quad\quad \lesssim \vv^{-q} \| f \|_{L^{\infty}_q} (2^k \vv)^{-2s} \left( \int_{\R^3} |h(x,w)| |v-w|^{\gamma+2s} \dd w \right).
\end{split}
\end{equation}
Here we used that $\vvp \approx \vv$ 
to obtain the $\vv^{-q}\| f \|_{L^{\infty}_q}$ decay from $f(x,v')$. Therefore,
\begin{equation}\label{unique I1 far large v'}
\begin{split}
&\sup_{a \in \R^3} \sum_{k \geq K_0} \iint_{\R^3\times\R^3} \phi_a \vv^{2n} h \int_{A_k(v)} K_h(v,v')(f(x,v')-f(x,v)) \dd v' \dd v \dd x\\
&\quad\quad\lesssim
\| h \|_{X_n} \| f \|_{L^\infty_q} \sup_{a \in \R^3} \sum_{k > K_0} 2^{-2ks}
\left( \iint_{\R^3\times\R^3} \phi_a \vv^{2(n-q-2s)} \left( \int_{\R^3} |h(x,w)| |v-w|^{\gamma+2s} \dd w \right) \dd v \dd x \right)^{\frac 1 2} \\
&\quad\quad \lesssim \| f \|_{L^\infty_q} \| h \|_{X_n}^2,
\end{split}
\end{equation}
from Lemma \ref{lem:HW21}. We used $q >n+  3 + 2\gamma+2s$.

\medskip

\noindent {\it Case 3: $k \in [0,K_0]$ and $|v| < 10$.}  Notice that $|v'| \leq \vv/2$.  
This case is a formality, and uses exactly the same estimates
as in Case 1.  The only difference is in how we deduce that $\vvp \approx \vv$.  In Case 1, this is because $|v-v'| \leq |v|/2$ from the condition on $k$ and the definition of $A_k$.  Here, it is due to the choice of $K_0$ (the $\lesssim$ direction) and the smallness of $|v|$ (the $\gtrsim$ direction).  Hence, we omit the details.

\medskip

\noindent {\it Case 4: $k \in [0,K_0]$ and $|v| \geq 10$.} This case uses estimates similar to Case 2. With $r = \vv/2$, we decompose as
\be
	\begin{split}
	\sum_{k = 0}^{K_0} &\int_{A_k(v)} K_h(x,v,v')(f(x,v')-f(x,v)) \dd v'\\
		&= \sum_{k = 0}^{K_0} \left(\int_{A_k(v) \cap B_r(0)} + \int_{A_k(v) \cap B_r(0)^c}\right) K_h(x,v,v')(f(x,v')-f(x,v)) \dd v'
		=: J_1 + J_2.
	\end{split}
\ee
We begin with $J_1$:
\[
\begin{split}
J_1
	&= \sum_{k = 0}^{K_0} \int_{A_k(v) \cap B_r(0)} K_h(x,v,v')(f(x,v')-f(x,v)) \dd v' \\
& \lesssim f(v) \int_{B_r(0)} K_{|h|}(x,v,v') \dd v' + \int_{B_r(0)}K_{|h|}(x,v,v') f(x,v') \dd v' =: J_{11} + J_{12}.
\end{split}
\]
  For $J_{11}$, arguing as in \eqref{e.case2}, we note that
\[
\int_{B_r(0)} K_{|h|}(x,v,v') \dd v' \leq \int_{B_{2\vv}(v) \setminus B_{\vv/4}(v)} K_{|h|}(x,v,v') \dd v' \lesssim \vv^{-2s} \int_{\R^3} |h(x,v')| |v-v'|^{\gamma+2s} \dd v'.
\]
Then once more using Lemma \ref{lem:HW21} yields
\begin{equation}\label{uniq: dktluls}
\begin{split}
&\sup_{a \in \R^3} \sum_{k>0} \int_{\R^3} \int_{B_{10}(0)^C} \phi_a \vv^{2n} h \int_{A_k(v) \cap B_r(0)} K_h(x,v,v')f(x,v)\dd v'\dd v \dd x \\
&\quad\quad\quad \lesssim \| h \|_{X_n} \| f \|_{L^\infty_q} \sup_{a \in \R^3}
\left( \int_{\R^3} \int_{\R^3} \vv^{-2q + 2n - 4s} \left( \int_{\R^3} |h(x,w)| |v-w|^{\gamma+2s} \dd w \right)^2 \dd v \dd x \right)^{\frac 1 2} \\
&\quad\quad\quad \lesssim \| f \|_{L^\infty_q} \| h \|_{X_n}^2.
\end{split}
\end{equation}

To estimate $J_{12}$, we recall the notation ${\mathcal K}(x,v) := \int_{B_r(0)} K_{|h|}(x,v,v') f(x,v') \dd v'$. Then
\begin{equation}\label{uniq: brgrt}
\begin{split}
\sup_{a \in \R^3} \sum_{k>0} \int _{\R^3}\int_{B_{10}^C} &\phi_a \vv^{2n} h \int_{A_k(v) \cap B_r(0)} K_{|h|}f(x,v')\dd v' \dd v \dd x\\
&\lesssim \| h \|_{X_n} \left( \sup_{a \in \R^3} \int_{\R^3} \int_{B_{10}^C} \phi_a \vv^{2n} {\mathcal K}^2 \dd v \dd x \right)^{\frac 1 2}
\lesssim \|h\|_{X_n}^2 \|f\|_{L^\infty_q},
\end{split}
\end{equation}
by Lemma \ref{l:Chris_lemma}.

We now consider $J_2$.  Here, we have $|v'| \geq \vv/2$, so the arguments of Case 2 apply verbatim.  Hence, we omit the argument.

This completes the desired estimate for Case 4, which together with the first three cases 
establishes the conclusion of the lemma.
\end{proof}

\begin{lemma}[Bound on $I_2$]\label{l:uniqI2}
With $I_2$ defined as in \eqref{e.uinq_gron}, there holds
\[ 
I_2 \leq C \|h\|_{X_n}^2\|f\|_{L^\infty_q},
\]
whenever $n\geq \frac 3 2$ and $q>n+\frac 3 2 + \gamma$. The constant $C>0$ is universal.
\end{lemma}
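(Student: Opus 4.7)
The plan is to exploit the simple convolutional structure of $Q_{\rm ns}$ given by \Cref{l:Q2}, namely $Q_{\rm ns}(h,f)(x,v) = c_b\, f(x,v) \int_{\R^3} h(x,v+w)|w|^\gamma \dd w$, and then close the estimate via \Cref{lem:HW21} in essentially one step. Because $Q_{\rm ns}$ is a zeroth-order convolution with no singularity playing against $h$ itself, there is no need for annular decompositions or Schauder-type regularity; the only ingredient that does real work is the weighted convolution bound.

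First, I would write
\[
	I_2
	= 2 c_b \sup_{a\in\R^3} \iint_{\R^3\times\R^3} \phi_a(x,v)\, \vv^{2n}\, h(x,v)\, f(x,v) \int_{\R^3} h(x,v+w)|w|^\gamma \dd w \dd v \dd x,
\]
and use the pointwise bound $|f(x,v)|\leq \|f\|_{L^\infty_q}\vv^{-q}$ on the single $f$ factor. This yields
\[
	|I_2|
	\lesssim \|f\|_{L^\infty_q}\, \sup_{a\in\R^3} \iint_{\R^3\times\R^3} \phi_a\, \vv^{2n-q}\, |h(x,v)| \left(\int_{\R^3} |h(x,v+w)||w|^\gamma \dd w\right) \dd v \dd x.
\]

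Next, I would split the weight as $\vv^{2n-q} = \vv^n \cdot \vv^{n-q}$, factor $\phi_a = \phi_a^{1/2}\cdot \phi_a^{1/2}$, and apply the Cauchy–Schwarz inequality in $(x,v)$ with respect to $\dd v \dd x$. The first factor is precisely controlled by $\|h\|_{X_n}$:
\[
	\left(\iint \phi_a\, \vv^{2n}\, h^2 \dd v \dd x\right)^{1/2} \leq \|h\|_{X_n}.
\]
The second factor, after the substitution $w' = v+w$ so that $|w|^\gamma = |v-w'|^\gamma$, becomes
\[
	\left(\iint \phi_a\, \vv^{-2(q-n)} \left(\int_{\R^3} |h(x,w')||v-w'|^\gamma \dd w'\right)^{2} \dd v \dd x\right)^{1/2},
\]
which is exactly in the form controlled by \Cref{lem:HW21} applied to $g = |h|$ with $\mu = \gamma$ and $l = q-n$. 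The hypotheses of that lemma require $n > 3/2 + \gamma$ (automatic from $n\geq 3/2$ and $\gamma<0$) and $l > 3/2 + \gamma + (3/2 - n)_+$; since $n\geq 3/2$, the positive part vanishes and the condition reduces to $q > n + 3/2 + \gamma$, which is precisely the assumption of the lemma. Thus the second factor is $\lesssim \|h\|_{X_n}$.

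Combining these two Cauchy–Schwarz factors gives $|I_2|\lesssim \|f\|_{L^\infty_q}\|h\|_{X_n}^2$, which is the desired conclusion. There is no genuine obstacle in this argument—unlike the bound on $I_1$, no regularity of $f$ or delicate cancellation in $K_h$ is required, so the decay threshold $q > n + 3/2 + \gamma$ is sharp and dictated entirely by the convolution estimate in \Cref{lem:HW21}.
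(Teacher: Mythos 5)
Your proof is correct and follows essentially the same route as the paper: Cauchy–Schwarz against the $\phi_a\vv^{2n}$ measure to pull out one factor of $\|h\|_{X_n}$, the pointwise bound $|f|\leq \|f\|_{L^\infty_q}\vv^{-q}$, and then Lemma \ref{lem:HW21} with $\mu=\gamma$ and $l=q-n$, whose hypotheses reduce (since $n\geq 3/2$) exactly to $q>n+3/2+\gamma$. The only cosmetic difference is that the paper first writes $I_2\lesssim \|h\|_{X_n}\|Q_{\rm ns}(h,f)\|_{X_n}$ and then estimates the second factor, whereas you insert the decay of $f$ before applying Cauchy–Schwarz; the content is identical.
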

\begin{proof}
 With the simple observation that
\[ I_2 \lesssim \| h \|_{X_n} \| Q_{\text{ns}}(h,f) \|_{X_n}
\]
we use the definition of $Q_{\text{ns}}$ and Lemma \ref{lem:HW21} with $\mu = \gamma$ and $l = q-n$, to immediately find
\[
\begin{split}
\| Q_{\text{ns}}(h,f) \|_{X_n}^2 &\lesssim \sup_{a \in \R^3} \iint_{\R^3\times\R^3} \phi_a \vv^{2n} f(x,v)^2 \left( \int_{\R^3} |h(x,z)| |v-z|^\gamma \dd z \right)^2 \dd v \dd x \\
& \lesssim \| f \|_{L^\infty_q}^2 \sup_{a \in \R^3} \iint_{\R^3\times\R^3} \phi_a \vv^{2(n-q)} \left( \int_{\R^3} |h(x,z)| |v-z|^\gamma \dd z \right)^2 \dd v \dd x
\lesssim \| f \|_{L^\infty_q}^2\| h \|_{X_n}^2,
\end{split}
\]
which yields the desired bound for $I_2$.
\end{proof}

\begin{lemma}[Bound on $I_3$]\label{l:uniqI3}
With $I_3$ defined as in \eqref{e.uinq_gron}, there exists a universal $C>0$ such that
\[
I_3 \leq C \|h\|_{X_n}^2 \|g\|_{L^\infty_q},
\]
whenever $n\geq 2$ and $q>2n+\gamma+5$.
\end{lemma}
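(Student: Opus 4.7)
The plan is to split $Q(g,h) = Q_{\rm s}(g,h) + Q_{\rm ns}(g,h)$ and to bound the contribution of each to $I_3$ separately.  The nonsingular part is handled by a direct pointwise estimate: using \Cref{l:Q2} one has $Q_{\rm ns}(g,h)(v) = c_b\, h(v)(g*|\cdot|^\gamma)(v)$, and since $q>3+\gamma$ the convolution estimate \Cref{l:convolution} gives $(g*|\cdot|^\gamma)(v) \lesssim \|g\|_{L^\infty_q}\vv^{\gamma}$.  Because $\gamma<0$, this immediately yields
\[
	\sup_{a\in\R^3}\iint \phi_a\, Q_{\rm ns}(g,h)\,h\,\vv^{2n}\dd v\dd x
		\lesssim \|g\|_{L^\infty_q}\sup_{a\in\R^3}\iint \phi_a\, h^2\,\vv^{2n+\gamma}\dd v\dd x
		\leq \|g\|_{L^\infty_q}\|h\|_{X_n}^2.
\]

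The singular part is more delicate because $g$ has no regularity and the ``input'' $h$ (not $g$) has only $L^2$ type control.  Since we cannot hope to bound $Q_{\rm s}(g,h)$ pointwise, the idea is to symmetrize.  Using the algebraic identity $2a(b-a) = (b^2-a^2) - (b-a)^2$ with $a = h(v)$, $b = h(v')$, followed by the swap $v\leftrightarrow v'$ in the resulting integral of $h(v')^2\Psi(v)K_g(v,v')$, we obtain
\[
	2\iint \phi_a\, \vv^{2n}\, h\, Q_{\rm s}(g,h)\dd v\dd x
		= -\iiint \Psi(v)(h(v')-h(v))^2 K_g(v,v')\dd v'\dd v\dd x
			+ \iint h(v)^2\, S(x,v)\dd v\dd x,
\]
where $\Psi(x,v) := \phi_a(x,v)\vv^{2n}$ and
\[
	S(x,v) := \int_{\R^3}\Big[\Psi(x,v')K_g(x,v',v) - \Psi(x,v)K_g(x,v,v')\Big]\dd v'.
\]
The first term on the right is non-positive and is discarded.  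Thus the task reduces to proving the pointwise estimate
\begin{equation}\label{e.commutator_goal}
	|S(x,v)|\leq C\|g\|_{L^\infty_q}\,\Psi(x,v),
\end{equation}
since then the contribution of the singular part is bounded by $C\|g\|_{L^\infty_q}\|h\|_{X_n}^2$.

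To prove \eqref{e.commutator_goal}, decompose $S = S_A + S_B$, where $S_A = \int K_g(v,v')[\Psi(v')-\Psi(v)]\dd v'$ and $S_B = \int [K_g(v',v)-K_g(v,v')]\Psi(v')\dd v'$ (suppressing $x$).  For $S_A$, split at the scale $\rho \sim \min(1,\vv/2)$.  On $|v'-v|\leq\rho$, Taylor expand $\Psi(v') = \Psi(v) + \nabla_v\Psi(v)\cdot(v'-v) + O(|v'-v|^2)$; the linear term vanishes by the symmetry $K_g(v,v+w) = K_g(v,v-w)$, leaving a quadratic contribution controlled by $\|D_v^2\Psi\|_{L^\infty(B_\rho(v))}$ times $\int_{B_\rho(v)}|v'-v|^2 K_g(v,v')\dd v' \lesssim \|g\|_{L^\infty_q}\vv^{(\gamma+2s)_+}\rho^{2-2s}$.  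On $|v'-v|>\rho$, split further into $|v'| \lesssim \vv$ and $|v'|\gg \vv$: in the first subregion the bounds on $\Psi(v')$ and on the tails of $K_g$ (\Cref{l:K-upper-bound-2}) suffice, while in the second one exploits the decay $\Psi(v')\lesssim \langle v'\rangle^{2n-2}$ together with the fast decay of $g(v+w)$ for $w$ such that $v+w$ is near the origin, which is where $g(v_*')$ is sampled for the far $v'$ regime.  For $S_B$, the bound $|K_g(v',v)-K_g(v,v')|\leq K_g(v,v')+K_g(v',v)$ is too crude near the singularity; instead, one uses the explicit Carleman representation~\eqref{e.kernel} and bounds the difference using $|g(v'+w)-g(v+w)|\leq \|g\|_{L^\infty_q}(\langle v+w\rangle^{-q}+\langle v'+w\rangle^{-q})$ combined with \Cref{l:A1} to compute the resulting integral over $(v-v')^\perp$.

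The main obstacle is establishing~\eqref{e.commutator_goal} uniformly in $a\in\R^3$.  The weight $\Psi(x,v)=\phi_a(x,v)\vv^{2n}$ behaves like $\vv^{2n-2}$ for $|v|\gg\sqrt{1+|x-a|^2}$ and like $\vv^{2n}/(1+|x-a|^2)$ otherwise, and its $v$-derivatives involve both factors.  The integrals appearing in the commutator analysis can a priori produce a factor $\vv^{(\gamma+2s)_+}$ which, since $\gamma+2s<2$, is absorbed into the baseline $\vv^{2n-2}$ growth of $\Psi$ exactly because of the hypothesis $n\geq 2$.  The condition $q>2n+\gamma+5$ arises from the far-field splitting: when $|v'|$ is much larger than $|v|$, the integrand carries a factor $\langle v'\rangle^{2n-2}$ from $\Psi(v')$ which must be defeated by the $\langle v+w\rangle^{-q}$ decay of $g$ inside the definition of $K_g(v',v)$, after the change of variables $w\mapsto w-(v'-v)$ on the hyperplane $(v-v')^\perp$ and the use of \Cref{l:A1}.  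This moment counting, which is essentially sharp, is the main technical difficulty of the argument.
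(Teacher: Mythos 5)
Your treatment of $Q_{\rm ns}(g,h)$ is fine, and your overall architecture is genuinely different from the paper's: you symmetrize in the Carleman (kernel) form and try to reduce everything to a pointwise weighted cancellation bound $|S(x,v)|\lesssim \|g\|_{L^\infty_q}\Psi(x,v)$ for $S=\int[\Psi(v')K_g(v',v)-\Psi(v)K_g(v,v')]\dd v'$, whereas the paper works in the $\sigma$-representation with the square-root weight $J_a=\phi_a^{1/2}\vv^n h$, uses the pre/post-collisional change of variables, an exact algebraic cancellation between the two commutator pieces, and then only a Taylor expansion of $\Psi_a^2$ inside the $\sigma$-integral, never needing a pointwise bound of the form \eqref{e.commutator_goal}. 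The target inequality \eqref{e.commutator_goal} is plausibly true under $q>2n+\gamma+5$, but your proof of it has two genuine gaps.

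First, the decomposition $S=S_A+S_B$ is ill-posed: both pieces pair the growing weight $\Psi(v')\approx\langle v'\rangle^{2n-2}$ with the kernel $K_g(v,v')$, and this kernel does \emph{not} decay as $|v'|\to\infty$. By \eqref{e.kernel}, $K_g(v,v')$ samples $g$ on the hyperplane through $v$ orthogonal to $v'-v$, so for fixed $v$ it is of size $\approx|v'-v|^{-3-2s}\vv^{1+\gamma+2s}$ uniformly in $|v'|$; your claim that for far $v'$ ``$g(v_*')$ is sampled near the origin'' misreads the Carleman geometry (it is the swapped kernel $K_g(v',v)$, whose hyperplane passes through $v'$, that decays like $\langle v'\rangle^{3+\gamma+2s-q}$ by \cite[Lemma 2.4]{henderson2021existence}). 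Consequently $\int_{|v'|\gg\vv}K_g(v,v')\Psi(v')\dd v'=\infty$ whenever $2n-2>2s$, i.e.\ for every $n\geq 2$, so $S_A$ and $S_B$ are individually divergent; only the combination $S$, in which $\Psi(v')$ stays attached to $K_g(v',v)$, is convergent, and any correct splitting must respect that. Second, your near-diagonal treatment of $S_B$ fails: bounding $|K_g(v',v)-K_g(v,v')|$ through $|g(v'+w)-g(v+w)|\leq\|g\|_{L^\infty_q}(\langle v+w\rangle^{-q}+\langle v'+w\rangle^{-q})$ produces no positive power of $|v'-v|$ (the weak solution $g$ has no modulus of continuity), so the resulting integrand still carries the non-integrable factor $|v'-v|^{-3-2s}$ against $\Psi(v')\approx\Psi(v)$ on $B_\rho(v)$, and Lemma \ref{l:A1} does not help with that. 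Near the diagonal one must use the genuine principal-value cancellation structure — the analogues of \eqref{e.cancellation1}--\eqref{e.cancellation2}, i.e.\ \cite[Lemmas 3.6--3.7]{imbert2016weak} — together with a Taylor expansion of $\Psi$ whose first-order term is paired with the second cancellation condition when $s\geq 1/2$. Until these two steps are repaired (and the uniformity in $a$ of the weight ratios is carried out, as the paper does through its three-case analysis of $\Theta_a$), the proposal does not establish the lemma.
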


\begin{proof}
First, define
\[ \Psi_a(x,v) := \phi_a(x,v)^{\frac 1 2} \vv^n \quad \text{ and } \quad J_a(x,v) = \Psi_a(x,v) h(x,v). \]
We begin by splitting $I_3$ into a coercive part and a commutator:
\[
\begin{split}
I_3 &= \sup_{a \in \R^3} \iint_{\R^3\times\R^3} \phi_a \vv^{2n} Q(g,h)h \dd v \dd x \\
&= \sup_{a \in \R^3} \left( \iint_{\R^3\times\R^3} J_a Q(g, J_a) \dd v \dd x + \iint_{\R^3\times\R^3} J_a \left( Q(g,h) \Psi_a - Q(g, J_a) \right) \dd v \dd x \right) \\
&=: \sup_{a \in \R^3} \left( I_{31} + I_{32} \right).
\end{split}
\]
We need to keep the supremum in $a$ on the outside since the "coercive" term $I_{31}$ will contribute a strong negative
component which is needed to control $I_{32}$. Specifically, using a well-known symmetrization technique (see, e.g. \cite[Lemma 4.1]{amuxy2011bounded}), we have
\[
I_{31} = -\frac 1 2 D_a
+ \iint_{\R^3\times\R^3} Q(g, J_a^2) \dd v \dd x,
\]
where
\[
D_a := \iint_{\R^9 \times {\mathbb S}^2} (J_a(x,v')-J_a(x,v))^2 g(x,v_*) B(|v-v_*|,\cos \theta) \dd \sigma \dd v_* \dd v \dd x.
\]
For the second term in $I_{31}$, recalling $Q = Q_{\rm s} + Q_{\rm ns}$, we use a change of variables and the Cancellation Lemma \cite[Lemma 1]{alexandre2000entropy} to write
\[
\begin{split}
\iint_{\R^3\times\R^3} Q_{\rm s}(g, J_a^2) \dd v \dd x &= \iint_{\R^3\times\R^3} J_a^2 \int_{\R^3} (K_g(x,v',v) - K_g(x,v,v')) \dd v' \dd v \dd x\\
&\lesssim \iint_{\R^3\times\R^3} \phi_a \vv^{2n} h^2 \int_{\R^3} g(x,z) |v-z|^\gamma \dd z \dd v \dd x\lesssim \|h\|_{X_n}^2 \|g\|_{L^\infty_q},
\end{split}
\]
since $q>2n+\gamma+5> \gamma+3$. The nonsingular term is handled similarly:
\[
\begin{split}
\iint_{\R^3\times\R^3} Q_{\rm ns}(g, J_a^2) \dd v \dd x &\lesssim \iint_{\R^3\times\R^3} J_a^2 \int_{\R^3} g(x,z) |v-z|^\gamma \dd z \dd v \dd x\lesssim \|h\|_{X_n}^2 \|g\|_{L^\infty_q}.
\end{split}
\]
We conclude
\begin{equation}\label{uinq: coercive I31}
I_{31}+\frac 1 2 D_a \lesssim \| h \|_{X_n}^2 \| g \|_{L^\infty_q}.
\end{equation}

For $I_{32}$, recalling the abbreviations $F = F(x,v)$, $F_* = F(x,v_*)$, $F' = F(x,v')$, and $F_*' = F(x,v_*')$ for any function $F$, and writing $B = B(|v-v_*|,\cos\theta)$, we have
\[
\begin{split}
I_{32} &= \iint_{\R^3\times\R^3} \iint_{\R^3\times\mathbb S^2} B J_a \left[ (g_*' h' - g_*h) \Psi_a - (g_*' J_a' - g_* J_a)\right] \dd \sigma \dd v_* \dd v \dd x\\
&= \iint_{\R^3\times\R^3} \iint_{\R^3\times\mathbb S^2} B J_a g_*' h' (\Psi_a - \Psi_a')\dd \sigma \dd v_* \dd v \dd x,
\end{split}
\]
since $J_a = \Psi_a h$. Next, we apply the pre-post-collisional change of variables: $v\leftrightarrow v'$, $v_* \leftrightarrow v_*'$, $\sigma \mapsto \sigma' := (v-v_*)/|v-v_*|$. 
 This transformation has unit Jacobian and leaves $B(|v-v_*|, \sigma)$ invariant. This gives
\begin{equation}\label{e.I32}
\begin{split}
I_{32} &= \iint_{\R^3\times\R^3} \iint_{\R^3\times\mathbb S^2} B   J_a' g_* h (\Psi_a'-\Psi_a)\dd \sigma \dd v_* \dd v \dd x \\
&= \iint_{\R^3\times\R^3} \iint_{\R^3\times\mathbb S^2} B   J_a g_* h (\Psi_a'-\Psi_a)\dd \sigma \dd v_* \dd v \dd x \\
&\quad + \iint_{\R^3\times\R^3} \iint_{\R^3\times\mathbb S^2} B   (J_a' - J_a) g_* h (\Psi_a'-\Psi_a)\dd \sigma \dd v_* \dd v \dd x.\\
&=: I_{321} + I_{322}.
\end{split}
\end{equation}
We consider the term $I_{322}$ first. We want to extract a ``singular'' piece (in the form of $D_a$, as defined in \eqref{uinq: coercive I31}) which will cancel with
the coercive part of $I_{31}$. Specifically, from the inequality $ab\leq \frac 1 2 (a^2+b^2)$, we have
\[
\begin{split}
I_{322} &\leq \frac 1 2 \iint_{\R^3\times\R^3}  \iint_{\R^3\times{\mathbb S}^2} B (J_a'-J_a)^2 g_*  \dd\sigma \dd v_* \dd v \dd x \\
&\quad\quad\quad + \frac 1 2\iint_{\R^3\times\R^3}  \iint_{\R^3\times{\mathbb S}^2} B h^2 g_* (\Psi_a' - \Psi_a)^2  \dd\sigma \dd w \dd v \dd x,
\end{split}
\]
so that
\[
I_{322} - \frac 1 2 D_a \leq \frac 1 2 \iint_{\R^3\times\R^3}  \iint_{\R^3\times{\mathbb S}^2} B h^2 g_* (\Psi_a' - \Psi_a)^2  \dd\sigma \dd v_* \dd v \dd x.
\]
Next, write $(\Psi'_a- \Psi_a)^2 = 2 \Psi_a (\Psi_a - \Psi_a') + (\Psi_a')^2 - \Psi_a^2$ to obtain
\begin{equation}
\begin{split}
I_{322} - \frac 1 2 D_a &\leq  \iint_{\R^3\times\R^3}  \iint_{\R^3\times{\mathbb S}^2} B h^2 g_* \Psi_a (\Psi_a - \Psi_a') \dd\sigma \dd v_* \dd v \dd x\\
&\quad + \frac 1 2 \iint_{\R^3\times\R^3}  \iint_{\R^3\times{\mathbb S}^2} B h^2 g_*  ((\Psi_a')^2 - \Psi_a^2) \dd\sigma \dd v_* \dd v \dd x.
\end{split}
\end{equation}
Since $J_a = \Psi_a h$, the first term on the right is the negative of $I_{321}$. Returning to \eqref{e.I32}, we now have
\begin{equation}\label{e.uniq3}
I_{32} - \frac 1 2 D_a \leq  \frac 1 2  \iint_{\R^3\times\R^3}  \iint_{\R^3\times{\mathbb S}^2} B h^2 g_*  ((\Psi_a')^2 - \Psi_a^2) \dd\sigma \dd v_* \dd v \dd x.
\end{equation}
It only remains to bound this right-hand side. To do this, we start with the Taylor expansion for $\Psi_a^2 = \phi_a(x,v) \vv^{2n}$ in $v$ (in this proof, subscipts such as $\partial_i$ always denote differentiation in $v$):
\[ \Psi_a^2(x,v')-\Psi_a^2(x,v) = \partial_{i} \Psi_a^2(x,v) (v'-v)_i + \frac 1 2 \partial_{ij}^2 \Psi_a^2(x,\tilde{v}) (v'-v)_i (v'-v)_j, \]
where we sum over repeated indices, and $\tilde{v} = \tau v' + (1-\tau) v$ for some $\tau \in (0,1)$. By a direct calculation, we have 
\[
|\partial_i \Psi_a^2(x,v)| \lesssim \Psi_a^2(x,v) \vv^{-1} \quad \text{ and } \quad
|\partial_{ij}^2 \Psi_a(x,\tilde{v})| \lesssim \Psi_a^2(x,\tilde{v}) \left( \frac{1}{ \langle \tilde{v} \rangle^2} + \phi_a(x,\tilde{v}) \right).
\]
Noting that $v'-v = \frac 1 2 |v-v_*|(\sigma-\sigma' \cos \theta) + \frac 1 2 |v-v_*| (\cos\theta -1) \sigma'$, we have
\begin{equation}\label{e.uniq4}
\begin{split}
&\int_{{\mathbb S}^2} B ((\Psi_a')^2-\Psi_a^2)  \dd \sigma =
\frac{|v-v_*|}{2} \nabla_v \Psi_a^2(x,v) \cdot \int_{{\mathbb S}^2}B (\sigma - (\sigma \cdot \sigma') \sigma')  \dd\sigma\\
&\quad\quad\quad + \frac{|v-v_*|}{2} \nabla_v \Psi_a^2(x,v) \cdot \sigma' \int_{{\mathbb S}^2}B (\cos\theta-1)  \dd\sigma
+\frac 1 2 \int_{{\mathbb S}^2}B \partial_{ij}^2 \Psi_a^2(x,\tilde{v}) (v'-v)_i (v'-v)_j \dd\sigma.
\end{split}
\end{equation}
The first term on the right is zero by symmetry. Since $B \approx \theta^{-2-2s}|v-v_*|^\gamma$, the second term is bounded by $\lesssim \Psi_a^2(x,v) |v-v_*|^{1+\gamma} \vv^{-1}$.
Noting that $|v-v'|^2 = \frac 1 2 |v-v_*|^2 (1-\cos\theta)$ and that $|v|^2 + |v_*|^2 = |v'|^2 + |v_*'|^2$, we bound the third term
by
\[
\Psi_a^2(x,v) |v-v_*|^{2+\gamma} \int_{{\mathbb S}^2} \Theta_a(x,v,\tilde{v})
(1-\cos\theta) |\theta|^{-2-2s} \dd \sigma,
\]
with
\[
\Theta_a(x,v,\tilde v) := \frac{\langle \tilde{v} \rangle^{2n}}{\vv^{2n}}
\frac{\phi_a(x,\tilde{v})}{\phi_a(x,v)} ( \langle \tilde{v} \rangle^{-2} + \phi_a(x,\tilde{v})).
\]
Note that, since $\tilde{v}$ depends on $v'$, it also implicitly depends on $\sigma$. To estimate $\Theta_a$, we split into three cases:

\medskip

\noindent {\it Case 1: $|\tilde{v}| \geq |v|/2$.} In this case, $\phi_a(x,\tilde{v}) / \phi_a(x,v) \lesssim 1$. Using this, as well as $\langle \tilde{v} \rangle^2 \lesssim \vv^2 + \langle v_* \rangle^2$, we obtain
\[ \Theta_a(x,v,\tilde{v}) \lesssim \left(1 + \frac{\langle v_* \rangle^{2n}}{\vv^{2n}} \right) \vv^{-2}. \]

\medskip

\noindent {\it Case 2: $|\tilde{v}| < |v|/2$ and $|x+a| \geq |v|$.} Here again $\phi_a(x,\tilde{v}) / \phi_a(x,v) \lesssim 1$ (due to the size of $|x+a|$) and
also $\phi_a(x,\tilde{v}) \leq \vv^{-2}$, so we have
\[ \Theta_a(x,v,\tilde{v}) \lesssim \vv^{-2}. \]

\medskip

\noindent {\it Case 3 ($|\tilde{v}| < |v|/2$ and $|x+a| < |v|$).} First, note that 
\[
\langle \tilde{v} \rangle^{2n} \phi_a(x,\tilde{v}) (\langle \tilde{v} \rangle^{-2} + \phi_a(x,\tilde{v})) \lesssim \langle \tilde{v} \rangle^{2n-4} \lesssim \vv^{2n-4} + \langle v_*\rangle^{2n-4},\] 
which is always true, but in this case we also have that $\vv^{-2n} \phi_a(x,v)^{-1} \lesssim \vv^{2-2n}$. Thus
\[ 
\Theta_a(x,v,\tilde{v}) \lesssim \frac{\vv^{2n-4} + \langle v_* \rangle^{2n-4}}{\vv^{2n-2}} = \left( 1 + \frac{\langle v_* \rangle^{2n-4}}{\vv^{2n-4}} \right) \vv^{-2}
\lesssim \left( 1 + \frac{\langle v_*\rangle^{2n}}{\vv^{2n}} \right) \vv^{-2}. 
\]
The last inequality followed from $(\frac a b)^{2n-4} \leq 1 + (\frac a b)^{2n}$, since $n\geq 2$.

 Putting all three cases together yields, for all $x$, $v$, $v_*$, and $\sigma$,
\[
 \Theta_a(x,v,\tilde{v}) \lesssim \frac{\vv^{2n} + \langle v_*\rangle^{2n}}{\vv^{2n+2}}. 
\]
With \eqref{e.uniq3} and \eqref{e.uniq4}, we then have 
\begin{equation}\label{uniq: I321}
\begin{split}
I_{32} - \frac 1 2 D_a &\lesssim \iiint_{\R^9} J_a^2 g_* \left( |v-v_*|^{1+\gamma}\vv^{-1} + |v-v_*|^{2+\gamma}
\int_{{\mathbb S}^2} \Theta_a(x,v,\tilde{v}) \frac{1-\cos\theta}{|\theta|^{2+2s}} \dd\sigma \right)\dd v_* \dd v \dd x \\
&\lesssim \| h \|_{X_n}^2 \| g \|_{L^\infty_q} \sup_{v \in \R^3} \left( \int_{\R^3} \frac{|v-v_*|^{1+\gamma}}{\langle v_* \rangle^q \vv} \dd v_*
+ \int_{\R^3} \frac{|v-v_*|^{2+\gamma} (\vv^{2n} + \langle v_* \rangle^{2n})}{\langle v_* \rangle^q \vv^{2n+2}} \dd v_* \right) \\
&\lesssim \| h \|_{X_n}^2 \| g \|_{L^\infty_q}.
\end{split}
\end{equation}
We used $q>\gamma+4$ in the first term, and $q>2n+\gamma+5$ in the second term.

Putting \eqref{uniq: I321} together with \eqref{uinq: coercive I31}
yields the desired bound on $I_3$.
\end{proof}

We are now able to complete the proof of the proof of uniqueness.

\begin{proof}[Proof of \Cref{t:uniqueness}]

First, note that, instead of the assumption $f_{\rm in} \in C^\alpha_{\ell, x, v}$, we may assume that $\vv^{m} f_{\rm in} \in C^\alpha_{\ell,x,v}$, for any fixed $m>0$.  Indeed, up to decreasing $\alpha$ and increasing the exponent $q$ from our hypotheses $f_{\rm in} \in L^\infty_q$, we can interpolate using \Cref{l:moment-interpolation} to trade regularity for velocity decay.

Define $\beta = \frac{\alpha}{1+2s}$ and $\beta' = \beta \frac {2s}{1+2s} = \frac {2s\alpha}{(1+2s)^2}$. Next, choose $m>0$ large enough to satisfy the hypotheses of Lemma \ref{l:uniqI1} and Proposition \ref{prop:holder_propagation}, and define
\[
m' := m-\kappa-{[\beta/(1+2s) + \gamma]_+}-\beta(\gamma+2s)_+/(2s),
\]
where $\kappa>0$ is the constant from Theorem \ref{t:global-schauder}.

Now we apply the Schauder estimate of Proposition \ref{p:nonlin-schauder} (which relies on \eqref{e.f-lower}), followed by the small-time H\"older estimate of Proposition \ref{prop:holder_propagation} to obtain, for $t\in [0,T_U]$,
\begin{equation}\label{e.f-estimate-uniq}
\begin{split}
\| \vv^{m'} f (t)\|_{L^\infty_x C^{2s+\beta'}_{v}} &\lesssim \|f\|_{C^{2s+\beta'}_{\ell,m'}([t/2,t]\times\R^6)}\\
&\lesssim t^{-1+(\beta-\beta')/(2s)}\|f\|_{C^\beta_{\ell,m' + \kappa + [\beta/(1+2s) + \gamma]_+}([0,t]\times\R^6)}^{1+(\beta+2s)/\beta'}\\
&\lesssim t^{-1+(\beta-\beta')/(2s)} \|\vv^{m}f_{\rm in}\|_{C^{\alpha}_{\ell,x,v}(\R^6)}^{1+(\beta+2s)/\beta'},
\end{split}
\end{equation}
since $\alpha = \beta(1+2s)$.

Now, combining Lemma \ref{l:uniqI1} (with $\beta'$ playing the role of $\alpha$), Lemma \ref{l:uniqI2}, and Lemma \ref{l:uniqI3} with inequality \eqref{e.uinq_gron}, we have
\[
\frac{d}{dt} \| h(t) \|_{X_n}^2 - \| h(t) \|_{X_n}^2 \lesssim \| h(t) \|_{X_n}^2 \left( \| g(t) \|_{L^\infty_q} + \| f(t) \|_{L^\infty_q} + \| \vv^m f (t)\|_{L^\infty_x C^{2s+\beta'}_{v}} \right).
\]
Using \eqref{e.f-estimate-uniq} for the last term on the right, and absorbing the norm of $f_{\rm in}$ into the implied constant, we now have
\[
\frac{d}{dt} \| h \|_{X_n}^2 \leq C \left(  \| g(t) \|_{L^\infty_q} + \| f(t) \|_{L^\infty_q}  + t^{- 1 + (\beta-\beta')/2s} \right) \| h \|_{X_n}^2.
\]
By our assumptions that $\|g(t)\|_{L^\infty_q(\R^6)} \in L^1([0,T_U])$ and $\|f(t)\|_{L^\infty_q} \in L^\infty([0,T_U])$, and $\|h(0)\|_{X_n}^2 = 0$, we conclude that $\|h(t)\|_{X_n} \equiv 0$ for all $t\in [0,T_U]$ by Gr\"onwall's inequality.  
After replacing $\alpha(1+2s)$ with $\alpha$, we obtain the statement of Theorem \ref{t:uniqueness}.
\end{proof}

\section{Global existence near equilibrium}\label{s:global}

In this section, we prove Corollary \ref{c:global}. The proof mainly follows the approach of \cite{silvestre2022nearequilibrium}. To pass from the local existence result of Theorem \ref{t:existence} to a global existence result near equilibrium, we must first show that the time of existence depends on the distance of $f_{\rm in}$ to the Maxwellian $M$:

\begin{lemma}\label{l:Max}
Let $M(x,v) = (2\pi)^{-3/2} e^{-|v|^2/2}$, and let $q>3+\gamma+2s$ be fixed. Given $T>0$ and $\eps\in (0,\frac 1 2)$, there exists $\delta>0$ such that if 
\[ \|f_{\rm in} - M\|_{L^\infty_{q}(\T^3\times\R^3)} < \delta,\]
then the solution $f$ to \eqref{e.boltzmann} guaranteed by Theorem \ref{t:existence} exists up to time $T$, and satisfies
\[ \|f(t,\cdot,\cdot) - M \|_{L^\infty_q(\T^3\times\R^3)} < \eps, \quad t\in [0,T].\]
\end{lemma}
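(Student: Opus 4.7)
The plan is to combine the short-time existence from Theorem~\ref{t:existence}, the continuation criterion of Proposition~\ref{p:continuation}, and a barrier argument in the spirit of Lemma~\ref{l:simple-bound}. Setting $h := f - M$ and using $Q(M,M)=0$, the perturbation satisfies
\begin{equation*}
(\partial_t + v\cdot \nabla_x) h = Q(M, h) + Q(h, M) + Q(h, h).
\end{equation*}
Since $\|f_{\rm in}\|_{L^\infty_q} \leq \|M\|_{L^\infty_q} + \delta$ and $f_{\rm in} \geq M/2$ on a fixed neighbourhood of $v=0$ for $\delta$ small (so that the positivity condition \eqref{e.mass-core} holds with parameters independent of $\delta$), Theorem~\ref{t:existence} supplies a solution on some $[0,T_0]$ with $T_0$ independent of $\delta$. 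Let $T_{\max}$ be the maximal existence time and set
\begin{equation*}
T^* := \sup\{\, t \in [0, T_{\max}) : \|f(s) - M\|_{L^\infty_q(\T^3\times\R^3)} < \eps \text{ for all } s \in [0,t]\,\}.
\end{equation*}
By continuity $T^* > 0$, and the goal becomes to show $T^* \geq T$ once $\delta$ is chosen small.

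The heart of the matter is a barrier estimate $|h(t,x,v)| \leq \delta e^{Kt}\vv^{-q}$ on $[0,T^*]$ for some $K$ depending on $q$, $\gamma$, $s$, $c_b$, and weighted norms of $M$, but not on $\delta$. At a putative first crossing of the barrier $g(t,v) := \delta e^{Kt}\vv^{-q}$ by $|h|$ at a point $(t_\rmcr, x_\rmcr, v_\rmcr)$, the first-order conditions combined with \eqref{e.boltzmann} yield
\begin{equation*}
K\, g(t_\rmcr, v_\rmcr) \leq |Q(M, h) + Q(h, M) + Q(h, h)|(t_\rmcr, x_\rmcr, v_\rmcr),
\end{equation*}
which is to be contradicted by bounding each term on the right by $C g(t_\rmcr, v_\rmcr)$ for a fixed $C$ and then taking $K>C$. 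The term $Q(M,h)$ is handled exactly as in Lemma~\ref{l:simple-bound}: the nonnegativity of $K_M$ and the crossing inequalities reduce the estimate to $|Q(M,h)| \leq Q(M, g) \lesssim \|M\|_{L^\infty_{q_0}}\, g(t_\rmcr, v_\rmcr)$, which is Lemma~\ref{l:Q-polynomial} with $M$ in place of $f$. The quadratic term $Q(h,h)$ carries an extra factor $\|h\|_{L^\infty_q} < \eps$, so it is strictly of lower order and absorbed.

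The hard step is $Q(h, M)$, where $h$ (only polynomially decaying) sits in the coefficient slot of a nonlocal operator whose argument $M$ is Gaussian. For $Q_{\rm ns}(h, M)(v_\rmcr) = c_b\, M(v_\rmcr)\,(h * |\cdot|^\gamma)(v_\rmcr)$, the Gaussian factor $M(v_\rmcr)$ dominates the required $\vvt^{-q}$. For $Q_{\rm s}(h,M)$ one uses $|K_h| \leq K_{|h|} \leq K_g$ (which holds on all of $\R^3_v$ at the crossing time) and splits the $v'$-integral into $B_1(v_\rmcr)$ and its complement. On $B_1(v_\rmcr)$, Taylor expand $M$ about $v_\rmcr$: the first-order term vanishes by the symmetry of $K_g$, and the quadratic remainder is bounded by $\|D^2 M\|_{L^\infty(B_1(v_\rmcr))}$ (Gaussian-small in $v_\rmcr$) times the second-moment integral from Lemma~\ref{l:K-upper-bound-2}. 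On the complement, bound $|M(v') - M(v_\rmcr)| \leq M(v') + M(v_\rmcr)$ and use the tail estimate of Lemma~\ref{l:K-upper-bound} together with pointwise Gaussian bounds on $M$. Each piece gains the $\vvt^{-q}$ factor needed to close the inequality.

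Finally, pick $\delta \leq \tfrac{\eps}{2} e^{-KT}$, so the barrier implies $\|h(t)\|_{L^\infty_q} \leq \eps/2 < \eps$ on $[0, \min(T, T^*)]$. This strict inequality, together with the uniform control of $\|f(t)\|_{L^\infty_q}$ and the continuation criterion of Proposition~\ref{p:continuation} (after propagating forward whatever additional decay is required by the continuation threshold via Lemma~\ref{p:upper-bounds2}), rules out $T^* < T$: neither saturation of the $\eps$-bound nor breakdown of the solution at $T^*$ is possible. The main obstacle throughout is the $Q_{\rm s}(h, M)$ estimate at large $|v_\rmcr|$, where the mismatch between the polynomial decay of $h$ and the Gaussian decay of $M$ forces the careful near/far splitting described above; everything else essentially recycles the barrier machinery already developed for Lemma~\ref{l:simple-bound}.
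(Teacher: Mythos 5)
Your decomposition $Q(f,f)=Q(M,h)+Q(h,M)+Q(h,h)$ creates a genuine gap at the term $Q(h,h)$. The barrier/comparison trick you use for $Q(M,h)$ relies on the kernel being nonnegative: since $M\geq 0$, $K_M\geq 0$, and at the crossing point $h\leq g$ with equality gives $Q_{\rm s}(M,h)\leq Q_{\rm s}(M,g)$, after which Lemma \ref{l:Q-polynomial} applies because the second slot is the explicit smooth function $\langle\cdot\rangle^{-q}$. For $Q(h,h)$ neither mechanism is available: $K_h$ changes sign, so the crossing inequalities do not give $Q_{\rm s}(h,h)\leq Q_{\rm s}(h,g)$, and the naive bound $|Q_{\rm s}(h,h)(v_\rmcr)|\leq \int K_{|h|}(v_\rmcr,v')\,|h(v')-h(v_\rmcr)|\,\dd v'$ diverges, since near $v'=v_\rmcr$ the increment is only $O(g(v_\rmcr))$ and $K_{|h|}$ has the non-integrable $|v'-v_\rmcr|^{-3-2s}$ singularity. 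Extracting the "extra factor $\|h\|_{L^\infty_q}<\eps$" you invoke would require $C^{2s}_v$-regularity of $h$ (or of $f$) at the crossing point uniformly down to $t=0$, which is exactly what is not available for merely $L^\infty_q$ initial data. The paper avoids this by never isolating $Q(h,h)$: it writes the right-hand side as $Q(M+\tilde f,\tilde f)+Q(\tilde f,M)=Q(f,\tilde f)+Q(\tilde f,M)$, keeps the nonnegative full solution $f$ in the kernel slot, uses $Q(f,\tilde f)\leq Q(f,g)$ at the crossing, and then Lemma \ref{l:Q-polynomial} with $\|f(t_\rmcr)\|_{L^\infty_{q}}$ controlled because $\tilde f(t_\rmcr)$ sits below the barrier. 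Your treatment of $Q(h,M)$ (symmetry cancellation plus near/far splitting against the Gaussian) is fine in outline and parallels the paper's use of Lemma \ref{l:C2Linfty} and the cited estimate from the near-equilibrium literature, so that part is not the issue.

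A secondary problem is your continuation-to-time-$T$ mechanism. Proposition \ref{p:continuation} requires smooth, rapidly decaying initial data and finiteness of $\|f\|_{L^\infty_{q_{\rm cont}}}$ where $q_{\rm cont}$ may greatly exceed $q$; under the hypotheses of the lemma $f_{\rm in}$ is only in $L^\infty_q$, and Lemma \ref{p:upper-bounds} cannot manufacture higher moments that the initial data does not possess, so "propagating forward whatever additional decay is required by the continuation threshold" is not justified. The paper's route is simpler and stays within the hypotheses: the barrier argument does not degrade over the existence interval $[0,T_f]$, so $\|f(T_f)-M\|_{L^\infty_q}<\eps<\tfrac12$, and one re-applies Theorem \ref{t:existence} with initial data $f(T_f,\cdot,\cdot)$ (whose time of existence depends only on the uniformly controlled $L^\infty_q$-norm), iterating finitely many times until the total time exceeds $T$. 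Replacing your maximal-time/continuation-criterion step by this restart argument, and regrouping $Q(M,h)+Q(h,h)=Q(f,h)$ as above, would repair the proof.
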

\begin{proof}
To begin, we make the restriction $\|f_{\rm in} - M\|_{L^\infty(\T^3\times\R^3)}< \frac 1 2$. From Theorem \ref{t:existence}, the solution $f$ exists on a time interval $[0,T_f]$, with $T_f$ depending only on $\|f_{\rm in}\|_{L^\infty_q(\T^3\times\R^3)} \leq \|M\|_{L^\infty_q(\T^3\times\R^3)} + \frac 1 2$.  In particular, $T_f$ is bounded below by a constant depending only on $q$.

Writing $f = M+\tilde f$, we have the following equation for $\tilde f$:
\[ \partial_t \tilde f + v\cdot \nabla_x \tilde f = Q(M+\tilde f, M+\tilde f) = Q(M+\tilde f, \tilde f) + Q(\tilde f, M), \quad (t,x,v) \in [0,T_f]\times\T^3\times\R^3,\]
since $Q(M,M) = 0$. We will derive an upper bound for $\|\tilde f\|_{L^\infty_q}$ using a barrier argument similar to the proof of Lemma \ref{l:simple-bound}. 

With $T$ and $\eps$ as in the statement of the lemma, let $\delta, \beta>0$ be two constants such that 
\[ \delta e^{\beta T} < \eps.\]
The specific values of $\delta$ and $\beta$ will be chosen later. Defining $g(t,x,v) =\delta e^{\beta t} \vv^{-q}$, and taking $\delta > \|f_{\rm in}-M\|_{L^\infty_{q}(\T^3\times \R^3)}$, we have $|\tilde f(0,x,v)| < g(0,x,v)$ for all $x$ and $v$.  We claim $\tilde f(t,x,v)<g(t,x,v)$ in $[0,\min(T,T_f)]\times\T^3\times\R^3$.  
If not, then by making $q$ slightly smaller, but still larger than $3+\gamma+2s$, we ensure the function $f$ decays in $v$ at a polynomial rate faster than $\vv^{-q}$. Together with the compactness of the spatial domain $\T^3$, this implies there is a first crossing point $(t_\rmcr, x_\rmcr, v_\rmcr)$ with $t_\rmcr>0$, where $\tilde f(t_\rmcr,x_\rmcr,v_\rmcr) = g(t_\rmcr,x_\rmcr,v_\rmcr)$. At this point, one has, as in the proof of Lemma \ref{l:simple-bound},
\begin{equation}\label{e.crossing2}
\partial_t g\leq Q(M+\tilde f,\tilde f) + Q(\tilde f, M) \leq Q(M+\tilde f,g) + Q(\tilde f, M).
\end{equation} 
Lemma \ref{l:Q-polynomial} implies, at $(t_\rmcr, x_\rmcr,v_\rmcr)$,
\begin{equation}
 Q(M+ \tilde f,g) = \delta e^{\beta t_\rmcr} Q(M+\tilde f,\langle \cdot \rangle^{-q}) \leq C \delta e^{\beta t_\rmcr} \|M+\tilde f(t_\rmcr, \cdot,\cdot)\|_{L^\infty_{q}(\T^3 \times \R^3)} \langle v_\rmcr\rangle^{-q}.
 \end{equation}
 Since $\tilde f(t_\rmcr,x_\rmcr,v_\rmcr) = g(t_\rmcr, x_\rmcr,v_\rmcr) = \delta e^{\beta t} \langle v_\rmcr\rangle^{-q} < \frac 1 2 \langle v_\rmcr\rangle^{-q}$, and $(t_\rmcr, x_\rmcr,v_\rmcr)$ is the location of a maximum of $\tilde f$ in $(x,v)$ space, we conclude $\|\tilde f(t_\rmcr,\cdot,\cdot)\|_{L^\infty_q(\T^3\times\R^3)} < \frac 1 2$. This implies  $\|M+\tilde f(t_\rmcr,\cdot,\cdot)\|_{L^\infty_q(\T^3\times\R^3)} \leq C$ for a constant $C$ depending only on $q$. We therefore have
\begin{equation}\label{e.C1}
 Q(M+ \tilde f,g) \leq C \delta e^{\beta t_\rmcr} \langle v_\rmcr\rangle^{-q}.
 \end{equation}

For the term $Q(\tilde f,M)$, which appeared as a result of recentering around $M$, we write $Q(\tilde f,M) = Q_{\rm s}(\tilde f,M) + Q_{\rm ns}(\tilde f,M)$. The singular part is handled by \cite[Lemma 3.8]{silvestre2022nearequilibrium}, whose proof does not depend on the sign of $\gamma+2s$ and is therefore valid under our assumptions. This lemma gives
\begin{equation}\label{e.C2}
 |Q_{\rm s}(\tilde f, M)(t_\rmcr, x_\rmcr,v_\rmcr)|\leq  C \delta e^{\beta t} \langle v_\rmcr\rangle ^{-q+\gamma}, \quad \text{ if } |v_\rmcr|\geq R,
 \end{equation}
for universal constants $C, R>0$. On the other hand, if $|v_\rmcr|< R$, the more crude estimate of Lemma \ref{l:C2Linfty} yields
\begin{equation}\label{e.C3}
\begin{split}
 |Q_{\rm s}(\tilde f,M)(t_\rmcr, x_\rmcr,v_\rmcr)| &\leq C \left(\int_{\R^3} \tilde f(v_\rmcr+w)|w|^{\gamma+2s}\dd w\right) \|M\|_{L^\infty(\R^3)}^{1-s} \|D_v^2 M\|_{L^\infty(\R^3)}^s \\
 &\leq C \|\tilde f\|_{L^\infty_q} \langle R\rangle^{(\gamma+2s)_+} \leq C\delta e^{\beta t_\rmcr}  \langle v_\rmcr\rangle^{-q},
 \end{split}
 \end{equation}
 since $\langle R\rangle^{(\gamma+2s)_+} \approx 1 \approx \langle v_\rmcr\rangle^{-q}$ and $\|\tilde f(t_\rmcr,\cdot,\cdot)\|_{L^\infty_q(\T^3\times\R^3)} \leq \delta e^{\beta t_\rmcr}$.  For the nonsingular part, we have
\begin{equation}\label{e.C4}
\begin{split}
 |Q_{\rm ns}(\tilde f, M)(t_\rmcr,x_\rmcr,v_\rmcr)| &\leq C M(v_\rmcr) \int_{\R^3} \tilde f(t_\rmcr, x_\rmcr, w) |v_\rmcr+w|^\gamma \dd w\\
& \leq C \|\tilde f(t_\rmcr,\cdot,\cdot)\|_{L^\infty_{q}(\T^3\times\R^3)} \langle v_\rmcr\rangle^{-q} \leq C  \delta e^{\beta t_\rmcr}\langle v_\rmcr\rangle^{-q},
 \end{split}
 \end{equation}
since $q>3+\gamma+2s>\gamma+3$ and $M$ decays much faster than $\vv^{-q}$.

Collecting all our inqualities and recalling \eqref{e.crossing2} and $\partial_t g = \delta\beta e^{\beta t} \vv^{-q}$, we now have
\[ \delta \beta e^{\beta t_\rmcr} \langle v_\rmcr \rangle^{-q} \leq C_0 \delta e^{\beta t_\rmcr} \langle v_\rmcr\rangle^{-q},\]
where $C_0$ is the maximum among the constants in \eqref{e.C1}, \eqref{e.C2}, \eqref{e.C3}, and \eqref{e.C4}. This implies a contradiction if we choose $\beta = 2C_0$.

We conclude $f(t,x,v)< g(t,x,v)$ on $[0,\min(T,T_f)]\times\T^3\times\R^3$, as claimed. A similar argument using $-g$ as a lower barrier for $\tilde f$ gives 
\[|\tilde f(t,x,v)|< g(t,x,v) = \delta e^{2C_0 t} \vv^{-q}, \quad 0\leq t \leq \min(T,T_f).\]
Finally, we choose $\delta = \min(\frac 1 4, \eps e^{-2 C_0 T})$, so that $\vv^q \tilde f(t,x,v) < \delta e^{2C_0 T} < \eps$ whenever $t\leq \min(T,T_f)$, and $\|f_{\rm in} - M\|_{L^\infty_q(\T^3\times\R^3)} < \delta < \frac 1 2$.

The above barrier argument does not depend quantitatively on the time of existence $T_f$. If $T> T_f$, then we have shown $\|\tilde f(T_f, \cdot, \cdot)\|_{L^\infty_q(\T^3\times\R^3)} < \eps < \frac 1 2$, and by applying Theorem \ref{t:existence} again with initial data $f(T_f,\cdot,\cdot)$, we can continue the solution to a time interval $[0,2T_f]$. Repeating finitely many times, we continue the solution to $[0,NT_f]\times\T^3\times\R^3$ where $NT_f>T$, with $\|\tilde f\|_{L^\infty_q([0,T]\times\T^3\times\R^3)} < \eps$, as desired.
\end{proof}

Next, we need the main result of \cite{desvillettes2005global}. The result in \cite{desvillettes2005global} is stated for solutions defined on $[0,\infty)\times\T^3\times\R^3$, and gives conditions under which solutions converge to a Maxwellian $M$ as $t\to\infty$. Since the estimates at a fixed time $t$ do not depend on any information about the solution for times greater than $t$, we easily conclude (as in \cite{silvestre2022nearequilibrium}) the following restatement that applies to solutions defined on a finite time interval:

\begin{theorem}\label{t:dv}
Let $f\geq 0$ be a solution to \eqref{e.boltzmann} on $[0,T]\times \T^3\times \R^3$ satisfying, for a family of positive constants $C_{k,q}$, 
\[ \|f\|_{L^\infty([0,T],H^{k}_{q}(\T^3\times \R^3))} \leq C_{k,q} \quad \text{ for all } k,q\geq 0,\]
and also satisfying the pointwise lower bound
\[ f(t,x,v) \geq K_0 e^{-A_0 |v|^2}, \quad \text{ for all } (t,x,v).\]
Then for any $p>0$ and for any $k,q>0$, there exists $C_p>0$ depending on $\gamma$, $s$, $A_0$, $K_0$, the constant $c_b$ in \eqref{e.b-bounds}, and $C_{k',q'}$ for sufficiently large $k'$ and $q'$, such that for all $t \in [0,T]$,
\[   \|f(t,\cdot,\cdot)- M\|_{H^{k}_{q}(\T^3\times \R^3)} \leq C_p t^{-p},\]
where $M$ is the Maxwellian with the same total mass, momentum, and energy as $f$.
\end{theorem}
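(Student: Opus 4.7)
The plan is to deduce this statement directly from the original result of \cite{desvillettes2005global}, essentially by observing that every estimate in that paper is \emph{time-local}, i.e.\ the bound on $\|f(t) - M\|_{H^k_q}$ at a given time $t$ depends only on norms of $f$ on $[0,t]$. Since we are given uniform $H^k_q$ bounds on $[0,T]$ and a global-in-time Maxwellian lower bound, all the hypotheses needed to run the Desvillettes-Villani machinery up to any $t \leq T$ are in force, and the polynomial trend-to-equilibrium estimate at time $t$ follows.

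More concretely, I would proceed in three steps. First, I would recall the conservation laws: since $f$ solves \eqref{e.boltzmann} on $[0,T]\times\T^3\times\R^3$, mass, momentum and energy are conserved, so $M$ is a well-defined time-independent Maxwellian with finite moments determined by $f(0,\cdot,\cdot)$; combined with the uniform Sobolev bounds, this lets one reduce to the normalized case of $M = (2\pi)^{-3/2} e^{-|v|^2/2}$ after a Galilean boost and rescaling. Second, I would invoke the main apparatus of \cite{desvillettes2005global}: the entropy dissipation functional produces an \emph{almost closed} differential inequality for the relative entropy $H(f(t)\,|\,M)$ of the form
\[
	\frac{d}{dt} H(f(t)\,|\,M) \leq - \Psi\bigl(H(f(t)\,|\,M)\bigr) + \text{(hypocoercive correction)},
\]
where $\Psi$ is a super-linear rate and the correction is controlled by the uniform Sobolev norms. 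Integrating this inequality on $[0,t]$ yields the polynomial decay $H(f(t)\,|\,M) \leq \tilde C_p t^{-p}$ for every $p>0$, with $\tilde C_p$ depending only on the $H^{k'}_{q'}$-bounds $C_{k',q'}$ for sufficiently large $k',q'$ and on the constants $K_0, A_0$ of the Gaussian lower bound (which is what provides the coercivity in the entropy dissipation estimates via the cone-of-nondegeneracy technology). Third, I would upgrade from relative-entropy control to the $H^k_q$-norm via a Csisz\'ar-Kullback-Pinsker inequality to pass from $H(f(t)\,|\,M)$ to $\|f(t)-M\|_{L^1}$, followed by Sobolev interpolation between the low-order $L^1$ bound and the high-order uniform $H^{k'}_{q'}$ bound. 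Since $p>0$ is arbitrary in the relative entropy estimate, the loss of a power of $t$ during interpolation is absorbed into a redefinition of $p$, yielding the claimed estimate $\|f(t) - M\|_{H^k_q} \leq C_p t^{-p}$.

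The main obstacle, and essentially the only substantive point beyond citing \cite{desvillettes2005global}, is verifying that none of the estimates in the Desvillettes-Villani argument actually require solutions to be defined on $[0,\infty)$: the differential inequality for the entropy is pointwise in $t$; the integration is over $[0,t] \subset [0,T]$; and the hypocoercive correction terms are estimated using $H^{k'}_{q'}$ norms at the current time. Once this time-locality is made explicit, the statement follows by running the original argument on the interval $[0,T]$ rather than $[0,\infty)$, exactly as in the analogous reduction performed in \cite{silvestre2022nearequilibrium}.
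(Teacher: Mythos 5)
Your proposal is correct and follows essentially the same route as the paper: the paper also obtains this statement simply by citing the main result of \cite{desvillettes2005global} and observing that the estimates at a fixed time $t$ use no information about the solution at later times, so the argument runs unchanged on a finite interval $[0,T]$, exactly as in the analogous reduction of \cite{silvestre2022nearequilibrium}. Your additional sketch of the Desvillettes--Villani internals (entropy dissipation system, Csisz\'ar--Kullback--Pinsker, interpolation with the uniform $H^{k'}_{q'}$ bounds) is more detail than the paper provides but is consistent with it.
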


Along with Lemma \ref{l:Max} and Theorem \ref{t:dv}, the proof of Corollary \ref{c:global} relies on the global regularity estimates of \cite{imbert2020smooth}, which we extended to $\gamma +2s< 0$ in Proposition \ref{p:higher-reg}. 

We are now ready to give the proof:

\begin{proof}[Proof of Corollary \ref{c:global}]

First, let us assume $f_{\rm in}\in L^\infty_q(\T^3\times\R^3)$ for all $q>0$, i.e. $f$ decays pointwise faster than any polynomial. 

For $q_0>5$ fixed, use Lemma \ref{l:Max} to select $\delta_1>0$ such that $f$ exists on $[0,1]\times \T^3\times\R^3$ with $\|f(t)-M\|_{L^\infty_{q_0}(\T^3\times\R^3)} < \eps$ for all $t\in [0,1]$, whenever $\|f_{\rm in}-M\|_{L^\infty_{q_0}(\T^3\times\R^3)} < \delta_1$. By our Theorem \ref{t:existence} and the rapid decay of $f_{\rm in}$, the solution $f$ is $C^\infty$ in all variables. 

Now, if the conclusion of the theorem is false, there is a first time $t_\rmcr >1$ such that 
\begin{equation}\label{e.equilibrium-crossing}
 \|f(t_\rmcr,\cdot,\cdot)-M\|_{L^\infty_{q_0}(\T^3\times\R^3)} = \eps.
 \end{equation}
Since $\|f-M\|_{L^\infty_{q_0}([0,t_\rmcr]\times\T^3\times\R^3)} \leq \eps$, the solution $f$ satisfies uniform lower bounds for $(t,x,v)\in [0,t_\rmcr]\times B_1(0)\times B_1(0)$. These lower bounds, together with the bound on $\|f\|_{L^\infty_{q_0}}$, imply via Proposition \ref{p:higher-reg} that $f$ satisfies uniform estimates in $H^k_q([1,t_\rmcr]\times \T^3\times\R^3)$ for all $k,q>0$, with constants independent of $t$ and depending only on $q_0$ and the norms $\|f_{\rm in}\|_{L^\infty_q}$ of the initial data for $q>0$.

On the time interval $[1,t_\rmcr]$, the function $f$ satisfies the hydrodynamic bounds 
\begin{equation}\label{e.hydro}
0< m_0\leq \int_{\R^3} f(t,x,v) \dd v \leq M_0, \,\int_{\R^3} |v|^2f(t,x,v) \dd v \leq E_0, \, \int_{\R^3} f(t,x,v) \log f(t,x,v) \dd v \leq H_0,
\end{equation}
uniformly in $t$ and $x$, for some constants $m_0, M_0, E_0, H_0$ depending only on $q_0$,  as a result of the inequality $\|f-M\|_{L^\infty_{q_0}(\T^3\times\R^3)}\leq \eps< \frac 1 2$. (This follows from a quick computation, or we may apply \cite[Lemma 2.3]{silvestre2022nearequilibrium}.) From \cite{imbert2020lowerbounds}, this implies 
the lower Gaussian bound $f(t,x,v)\geq c_1 e^{-c_2|v|^2}$, with $c_1,c_2>0$ depending only on $q_0$. 
We note that the work in~\cite{imbert2020lowerbounds} works under the global assumption that $\gamma + 2s \in [0,2]$, but it is clear that, when $\gamma+2s<0$, the lower bounds of \cite{imbert2020lowerbounds} are valid given a bound on the $L^\infty$-norm in addition to the bounds in \eqref{e.hydro}. This $L^\infty$-bound also clearly follows from the inequality $\|f-M\|_{L^\infty_{q_0}} \leq \eps$.

The hypotheses of \cite{desvillettes2005global}, restated above as Theorem \ref{t:dv}, are satisfied, so choosing $p=1$, $k=4$, and using the Sobolev embedding theorem, we have
\[ \|f(t_\rmcr,\cdot,\cdot)\|_{L^\infty_{q_0}(\T^3\times\R^3)} \leq C_1 t_\rmcr^{-1},\]
where $C_1>0$ depends only on $\gamma$, $s$, $c_b$, $q_0$, and the norm of $f_{\rm in}$ in $L^\infty_{q_1}(\T^3\times\R^3)$ for some $q_1$ depending on $q_0$. Combining this with \eqref{e.equilibrium-crossing} gives $t_\rmcr \leq C_1/\eps$.

Letting $T = C_1/\eps+1$, we use Lemma \ref{l:Max} again to select $\delta_2>0$ such that $f$ exists on $[0,T]\times \T^3\times\R^3$, with 
\[\|f(t,\cdot,\cdot)-M\|_{L^\infty_{q_0}(\T^3\times\R^3)}< \eps,\]
for all $t\in [0,T]$. This inequality implies the first crossing time $t_\rmcr> C_1/\eps+1$, a contradiction with $t_\rmcr\leq C_1/\eps$. Therefore, if $\|f_{\rm in}- M\|_{L^\infty_{q_0}(\T^3\times\R^3)} < \delta := \min(\delta_1, \delta_2)$, we conclude there is no crossing time $t_\rmcr$, and $\|f(t,\cdot,\cdot)-M\|_{L^\infty_{q_0}(\T^3\times\R^3)}< \eps$ holds for all $t$ such that the solution $f$ exists. In particular, $\|f(t)\|_{L^\infty_{q_0}(\T^3\times\R^3)}$ is bounded by a constant independent of $t$, and Theorem \ref{t:existence} implies the solution can be extended for all time.

Next, we consider the general case, where $f_{\rm in}$ decays at only a finite polynomial rate. Looking at the proof of the previous case, we see that the choice of $\delta$ depends only on $\eps$, $q_0$, and the size of $f_{\rm in}$ in the $L^\infty_{q_1}$ norm for some $q_1$ depending on $q_0$. Therefore, if $f_{\rm in} \in L^\infty_{q_1}(\T^3\times\R^3)$ and satisfies all the hypotheses of Corollary \ref{c:global}, we can approximate $f_{\rm in}$ by cutting off large velocities, apply the rapid-decay case considered above to obtain global solutions, and take the limit as the cutoff vanishes. We omit the details of this standard approximation procedure.
\end{proof}

\appendix

\section{Change of variables}\label{s:cov-appendix}

This appendix is devoted to the proof of Proposition \ref{p:very-soft-kernel}, which establishes the properties of the integral kernel $\bar K_f$ defined in \eqref{e.barKf} for the case $\gamma + 2s < 0$.  Noting that the case $\gamma + 2s \geq 0$ has received a full treatment in~\cite{imbert2020smooth}, we only consider the regime 
\be
	\gamma+2s<0
\ee
throughout this appendix.

In order to prove Proposition \ref{p:very-soft-kernel}, we need to verify the following for the kernel $\bar K_f(t,x,v,v')$: coercivity, boundedness, cancellation, and H\"older continuity in $(t,x,v)$. The proof strategies and notation broadly follow \cite[Section 5]{imbert2020smooth}, which addressed the case $\gamma+2s\in [0,2]$. However, the details are sufficiently different that it is necessary to provide full proofs. 

When $|v_0|\leq 2$, the change of variables is defined as $\mathcal T_0z = z_0\circ z$, i.e. a simple recentering around the origin. Therefore, $\bar K_f$ inherits the properties of $K_f$, which satisfies suitable ellipticity properties on any bounded velocity domain, see Remark \ref{r:ellipticity}. Therefore, in this appendix we focus only on the case $|v_0|> 2$.

Recalling the definition \eqref{e.T0-def} of the linear transformation $T_0$, we see that in the current regime,
\[ 
T_0 (av_0+w) = |v_0|^{\frac{\gamma+2s}{2s}}\left( \frac a {|v_0|} v_0 + w\right), \quad \text{where } a\in \R, w\cdot v_0 = 0, \gamma+2s< 0.
\]
When we import facts involving this linear transformation from \cite{imbert2020smooth}, we use the notation $T_0^+$ for the transformation $T_0$ as it is defined in the case $\gamma+2s\geq 0$. Then one has
\begin{equation}\label{e.T0-cases}
T_0 v = |v_0|^{\frac{\gamma+2s}{2s}} T_0^+ v, \quad \text{ when } \gamma +2s < 0.
\end{equation}
Note that the definition of $T_0^+$ as a linear transformation does not depend on $\gamma$ or $s$. The $T_0^+$ notation is intended only for use in the current appendix.

In the following lemmas about the change of variables, we omit the dependence of $\bar f$ and $\bar K_f$ on $\bar t$ and $\bar x$, since the conditions all hold uniformly in $t$ and $x$.

\subsection{Coercivity}
 With $A(v) \subset \mathbb S^2$ the subset of the unit sphere given by Lemma \ref{l:cone}, define $\Xi(v)$ to be the corresponding cone in $\R^3$, $\Xi(v) := \{w\in \R^3 : \frac w{|w|} \in A(v)\}$. 
\begin{lemma}[Transformed cone of non-degeneracy]
Let $f$, $\delta$, $r$, and $v_m$ satisfy the assumptions of Lemma \ref{l:cone}. 
 Fix $v_0 \in \R^3$ and $v\in B_2$, and define 
\[ \begin{split}
	\bar A(v)
		&= \{\sigma \in \mathbb S^2 : T_0\sigma/|T_0\sigma| \in A(v_0+T_0v)\},\\
	\bar \Xi(v)
		&= \{w \in \R^3 : T_0 w \in \Xi(v_0+ T_0v)\}.
 \end{split}
 \]
 Then there are constants $\lambda, k>0$, depending only on $\delta$, $r$, and $v_m$ (but not on $v_0$ or $v$), such that  
 \begin{itemize}
	 \item $\bar K_f(v,v+w) \geq \lambda |w|^{-3-2s}$ whenever $w \in \bar \Xi(v)$;
	 \item$\mathcal H^2(\bar A(v)) \geq k$, where $\mathcal H^2$ is the $2$-dimensional Hausdorff measure.
 \end{itemize}
\end{lemma}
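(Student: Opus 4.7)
The plan is to reduce both bullets to Lemma \ref{l:cone} applied to the unscaled kernel $K_f$ at the shifted base point $\bar v = v_0 + T_0 v$, and then to verify that the powers of $|v_0|$ produced by the anisotropic map $T_0$ combine exactly with the prefactor $|v_0|^{2+3\gamma/(2s)}$ built into $\bar K_f$ in \eqref{e.barKf}. For the first bullet, fix $v \in B_2$ and $w \in \bar \Xi(v)$, so that $T_0 w / |T_0 w| \in A(\bar v)$. The pointwise bound of Lemma \ref{l:cone}, whose hypothesis $f(\bar t, \bar x, \cdot) \geq \delta \mathbf 1_{B_r(v_m)}$ is supplied by the standing assumption of the proposition, yields
\be
  \bar K_f(v, v+w)
    = |v_0|^{2+\frac{3\gamma}{2s}} K_f(\bar v, \bar v + T_0 w)
    \geq \lambda |v_0|^{2+\frac{3\gamma}{2s}} (1+|\bar v|)^{1+\gamma+2s} |T_0 w|^{-3-2s}.
\ee
Writing $v = a \hat v_0 + W$ with $W \perp v_0$, the definition \eqref{e.T0-def} gives $|T_0 v|^2 = |v_0|^{(\gamma+2s)/s}(a^2/|v_0|^2 + |W|^2) \leq |v_0|^{(\gamma+2s)/s}|v|^2$, so $|T_0 v| \leq 2$ and hence $1+|\bar v|\approx |v_0|$ whenever $|v_0|$ is large enough (the regime of bounded $|v_0|$ falls under the standard ellipticity of $K_f$ on a fixed ball via Remark \ref{r:ellipticity}). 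Applying the same identity to $w$ produces the direction-independent upper bound $|T_0 w| \leq |v_0|^{(\gamma+2s)/(2s)} |w|$. Substituting back yields $\bar K_f(v, v+w) \geq \lambda |v_0|^\kappa |w|^{-3-2s}$, where a direct algebraic computation gives
\be
  \kappa
    = \Big(2 + \tfrac{3\gamma}{2s}\Big) + (1+\gamma+2s) - (3+2s)\tfrac{\gamma+2s}{2s}
    = 0,
\ee
establishing the first bullet with a constant $\lambda$ independent of $v_0$ and $v$.

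For the second bullet, the key observation is the scalar identity \eqref{e.T0-cases}: since $T_0$ and $T_0^+$ differ only by the factor $|v_0|^{(\gamma+2s)/(2s)}$, the normalized direction $T_0 \sigma / |T_0 \sigma|$ equals $T_0^+ \sigma / |T_0^+ \sigma|$, so $\bar A(v)$ is precisely the preimage set that arises in the $\gamma+2s\geq 0$ analysis. Parametrizing $\sigma = \cos\phi\, \hat v_0 + \sin\phi\, \hat u$ with $\hat u \perp \hat v_0$ on $\mathbb S^2$, one computes
\[
  \cos\theta
    = \frac{\cos\phi/|v_0|}{\sqrt{\cos^2\phi/|v_0|^2 + \sin^2\phi}},
\]
where $\theta$ is the angle between $T_0^+\sigma$ and $\hat v_0$. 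The requirement $T_0^+\sigma/|T_0^+\sigma| \in A(\bar v)$ forces $|\cos\theta| \lesssim 1/|\bar v| \approx 1/|v_0|$, which rearranges to $|\tan\phi| \gtrsim 1$; this carves out a range of $\phi$ of length bounded below independently of $|v_0|$. Combining with the area-formula stretching picture (an equatorial band of $\mathcal H^2$-thickness $\sim |v_0|^{-1}$ in the image pulls back to a band of thickness $\sim 1$ in the source), one obtains $|\bar A(v)|_{\mathcal H^2} \geq k$ with $k$ depending only on $\delta$, $r$, and $|v_m|$.

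The main obstacle is the bookkeeping in the first bullet: because $T_0$ scales the $v_0$-direction by $|v_0|^{(\gamma+2s)/(2s)-1}$ and its orthogonal complement by $|v_0|^{(\gamma+2s)/(2s)}$, a naive estimate of $|T_0 w|^{-3-2s}$ could produce a direction-dependent power of $|v_0|$ that obstructs the cancellation. The simplification comes from using only the monotone, direction-independent bound $|T_0 w| \leq |v_0|^{(\gamma+2s)/(2s)} |w|$, which is what Lemma \ref{l:cone}'s monotone dependence on $|v' - v|^{-3-2s}$ requires; the exponent $2+3\gamma/(2s)$ in the prefactor of \eqref{e.barKf} is in fact designed precisely so that this worst-case scaling produces $\kappa = 0$.
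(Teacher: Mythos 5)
Your first bullet is essentially the paper's own proof: the same identity $\bar K_f(v,v+w)=|v_0|^{2+\frac{3\gamma}{2s}}K_f(\bar v,\bar v+T_0w)$, the same lower bound from Lemma \ref{l:cone}, the same direction-independent estimate $|T_0w|\leq |v_0|^{\frac{\gamma+2s}{2s}}|w|$ together with $1+|\bar v|\approx|v_0|$, and the same exponent cancellation (your $\kappa=0$ computation is exactly the bookkeeping the prefactor in \eqref{e.barKf} is designed to produce).

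For the second bullet you take a genuinely different route. The paper does not reprove anything geometric: it observes that $T_0\sigma/|T_0\sigma|=T_0^+\sigma/|T_0^+\sigma|$ and that $T_0v=T_0^+\tilde v$ with $\tilde v=|v_0|^{\frac{\gamma+2s}{2s}}v\in B_2$, so that $\bar A(v)$ coincides with the set treated in the $\gamma+2s\geq 0$ case, and then simply cites \cite[Lemma 5.6]{imbert2020smooth} (noting its proof is independent of $\gamma$ and $s$). You instead sketch a self-contained proof of that cited lemma via spherical coordinates; this is legitimate and arguably more transparent, but one step needs repair. The statement that membership in $A(\bar v)$ ``forces $|\cos\theta|\lesssim 1/|v_0|$, which carves out a range of $\phi$ of length bounded below'' only shows that $\bar A(v)$ is \emph{contained} in a fixed band of the source sphere; containment gives no lower bound on $\mathcal H^2(\bar A(v))$. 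Moreover $A(\bar v)$ is not the full equatorial band: Lemma \ref{l:cone} only guarantees $|\sigma\cdot\bar v|\leq C$ on $A(\bar v)$ and $\mathcal H^2(A(\bar v))\geq \mu(1+|\bar v|)^{-1}$, and the set may be an irregular subset of the band. The correct conclusion of your ``stretching picture'' is obtained via the area formula: on the region $\{|\cos\phi|\leq c\}$ the direction map $\sigma\mapsto T_0^+\sigma/|T_0^+\sigma|$ has Jacobian $\lesssim |v_0|^{-1}$ (it preserves azimuth and compresses the polar variable), its image covers the band $|\sigma'\cdot \hat v_0|\lesssim |v_0|^{-1}$ containing $A(\bar v)$, and hence $\mathcal H^2(\bar A(v))\gtrsim |v_0|\,\mathcal H^2(A(\bar v))\geq |v_0|\,\mu(1+|\bar v|)^{-1}\gtrsim \mu$, using $|\bar v|\approx|v_0|$. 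With that replacement (Jacobian bound plus the measure lower bound from Lemma \ref{l:cone}, rather than ``a band pulls back to a band''), your argument is complete and matches the content of the cited result.
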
 
\begin{proof}
For the first bullet point, Lemma \ref{l:cone} and the definition \eqref{e.barKf} of $\bar K_f$ imply that, for $w\in \bar \Xi(v)$,
\begin{equation}
\begin{split}
	\bar K_f(v,v+w)
		&= |v_0|^{2+\frac{3\gamma}{2s}} 
			K_f(v_0+T_0 v, v_0 + T_0 v + T_0 w)\\
		& \geq \lambda  |v_0|^{2+(3\gamma)/(2s)} |\bar v|^{\gamma+2s+1}|T_0 w|^{-3-2s}
 \geq \lambda |w|^{-3-2s},
\end{split}
\end{equation} 
since $|\bar v|\approx |v_0|$ and $|T_0w|\leq |v_0|^{\frac{\gamma+2s}{2s}}|w|$. 

For the second bullet point, use \eqref{e.T0-cases} to write $v_0 + T_0 v = v_0 + T_0^+ \tilde v$ with $\tilde v = |v_0|^{\frac{\gamma+2s}{2s}} v \in B_2$. Next, recall the following fact from \cite[Lemma 5.6]{imbert2020smooth}: for any $\tilde v\in B_2$, 
\begin{equation}\label{e.H2}
\mathcal H^2(\{\sigma \in \mathbb S^2 : T_0^+\sigma/|T_0^+\sigma| \in A(v_0 + T_0^+ \tilde v)\}) \geq k,
\end{equation}
for some $k>0$ depending on the constants of Lemma \ref{l:cone}, and independent of $v_0$. We note that the statement and proof of estimate \eqref{e.H2} do not depend on the values of $\gamma$ and $s$. For any $v\in B_2$, we conclude from \eqref{e.H2}, using $T_0^+ \tilde v = T_0 v$ and  $T_0^+\sigma/|T_0^+\sigma| = T_0\sigma/|T_0\sigma|$, that
\[ 
\mathcal H^2 (\bar A(v)) = \mathcal H^2(\{\sigma \in \mathbb S^2 : T_0\sigma/|T_0\sigma| \in A(v_0 + T_0v)\}) \geq k, 
\]
as desired. 
\end{proof}

\subsection{Boundedness conditions}

Next, we address the upper ellipticity bounds for the kernel $\bar K_f$. The following lemma corresponds to \cite[Lemma 5.10]{imbert2020smooth}, but the proof must be modified to account for the extra powers of $|v_0|$ in the definition \eqref{e.barKf} of $\bar K_f$.

\begin{lemma}\label{l:5-10}
 For $v_0 \in \R^3\setminus B_2$, $v\in B_2$, and $r>0$,
\[
\int_{\R^3\setminus B_r(v)} \bar K_f(v,v') \dd v' \leq \bar \Lambda r^{-2s},
\]
with 
\[
\bar \Lambda := |v_0|^{-\gamma-2s} \int_{\R^3} f(\bar v + w)\left( |v_0|^2 - \left(v_0\cdot \frac w {|w|}\right)^2 + 1\right)^s |w|^{\gamma+2s} \dd w.
\]
\end{lemma}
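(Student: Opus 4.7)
First, I change variables $u = T_0(v'-v)$. The linear map $T_0$ has eigenvalues $|v_0|^{\gamma/(2s)}$ along $\hat v_0$ and $|v_0|^{(\gamma+2s)/(2s)}$ on $\hat v_0^\perp$, so $|\det T_0| = |v_0|^{(3\gamma+4s)/(2s)}$. The prefactor $|v_0|^{2+3\gamma/(2s)}$ in the definition \eqref{e.barKf} of $\bar K_f$ is calibrated so that all powers of $|v_0|$ cancel after accounting for $|\det T_0|^{-1}$, yielding
\begin{equation*}
\int_{\R^3 \setminus B_r(v)} \bar K_f(v,v')\dd v' \;=\; \int_{\R^3 \setminus T_0 B_r(0)} K_f(\bar v, \bar v + u)\dd u.
\end{equation*}
Expanding $K_f$ via the Carleman formula \eqref{e.kernel} rewrites this as a double integral in $(u,h)$ with $u \in \R^3\setminus T_0 B_r(0)$, $h\in u^\perp$, and integrand $f(\bar v+h)|h|^{\gamma+2s+1}|u|^{-3-2s}\tilde b$.

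Next, I would exchange the roles of $u$ and $h$. A direct polar computation (with $u = \rho\sigma$, $h = r\eta$, $\sigma\perp\eta$) reveals that the two ``fibered'' product measures $\dd u\,\dd\mathcal H^2(h)$ (with $h\in u^\perp$) and $\dd h\,\dd\mathcal H^2(u)$ (with $u\in h^\perp$) on the five-dimensional orthogonality manifold $\{(u,h):u\perp h\}$ differ by exactly the factor $|u|/|h|$, yielding the ``twisted'' Fubini identity
\begin{equation*}
\int_{\R^3} \int_{u^\perp} F(u,h)\dd h\dd u \;=\; \int_{\R^3}\int_{h^\perp} F(u,h)\frac{|u|}{|h|}\dd u\dd h.
\end{equation*}
Applying this with $\tilde b$ bounded yields
\begin{equation*}
\int_{\R^3\setminus B_r(v)}\bar K_f(v,v')\dd v' \;\lesssim\; \int_{\R^3} f(\bar v+h)|h|^{\gamma+2s}I(h)\dd h, \quad I(h) := \int_{h^\perp\setminus T_0 B_r(0)}\frac{\dd u}{|u|^{2+2s}}.
\end{equation*}

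The inner integral $I(h)$ is taken over the complement of the ellipse $T_0 B_r(0)\cap h^\perp$. In polar coordinates $u=\tau\hat u$ on $h^\perp$, the ellipse boundary is $\tau = r/|T_0^{-1}\hat u|$, giving $I(h) = \tfrac{r^{-2s}}{2s}\int_{\mathbb S^1\cap h^\perp}|T_0^{-1}\hat u|^{2s}\dd\ell(\hat u)$. Writing $\hat u = \tilde a\hat v_0 + w$ with $w\perp v_0$ and $|\hat u|=1$, a direct computation gives $|T_0^{-1}\hat u|^{2s}=|v_0|^{-(\gamma+2s)}(\tilde a^2(|v_0|^2-1)+1)^s$. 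Parameterizing the circle so that $\tilde a=\sin\theta\cos\phi$ (where $\theta$ is the angle between $h$ and $v_0$) and using the monotonicity bound $(\tilde a^2(|v_0|^2-1)+1)^s\leq(\sin^2\theta\,|v_0|^2+1)^s$ uniformly in $\phi$ reduces the circle integral to at most $2\pi(|v_0|^2\sin^2\theta+1)^s$, matching, via $|v_0|^2\sin^2\theta = |v_0|^2 - (v_0\cdot h/|h|)^2$, the integrand of $\bar \Lambda$.

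The main obstacle is the twisted Fubini identity in the second paragraph. Naive Fubini genuinely fails here: a quick test with $F(u,h)=G(|u|)H(|h|)$ already exhibits the $|u|/|h|$ discrepancy. The identity is most cleanly established by spherical-coordinate bookkeeping, which is the only step of the argument that is non-routine; once it is in hand, the remaining steps are elementary polar-coordinate estimates, and the delicate $|v_0|$ bookkeeping is forced upon us by the exponent structure of $T_0$ in the very-soft-potentials regime.
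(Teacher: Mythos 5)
Your proof is correct and follows essentially the same route as the paper: the $T_0$ change of variables whose Jacobian cancels the prefactor in \eqref{e.barKf}, the Carleman representation \eqref{e.kernel}, the exchange of the order of integration with the $|u|/|h|$ factor, and a geometric estimate of the planar integral over the complement of the ellipse $T_0(B_r)\cap h^\perp$. The only cosmetic difference is in that last step: you compute the planar integral exactly in polar coordinates and bound $|T_0^{-1}\hat u|$ pointwise on the circle, whereas the paper bounds the elliptical cross-section from inside by the ball whose radius is its smallest semi-axis — the same geometric quantity, producing the identical factor $\bigl(|v_0|^2-(v_0\cdot \tfrac{w}{|w|})^2+1\bigr)^s$.
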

\begin{proof}
From the definition \eqref{e.barKf} of $\bar K_f$, we have
\[
\begin{split}
	\int_{\R^3\setminus B_r(v)} \bar K_f(v,v') \dd v'
 		&= |v_0|^{2+(3\gamma)/(2s)} \int_{\R^3\setminus B_r(v)} K_f(\bar v, \bar v') \dd v'\\
		 &= \int_{\R^3\setminus T_0(B_{ r})} K_f(\bar v, \bar v + u) \dd u,
\end{split}
\]
from the change of variables $u = \bar v' - \bar v = 
T_0(v' - v)$.   Following \cite{imbert2020smooth}, we use \eqref{e.kernel} to write
\begin{equation}\label{e.Kf-Er}
 \begin{split}
	\int_{\R^3\setminus B_r(v)} \bar K_f(v,v') \dd v'
		&\lesssim \int_{u\in \R^3\setminus T_0(B_{ r})} |u|^{-3-2s} \int_{w\perp u} f(\bar v+w) |w|^{1+\gamma+2s} \dd w \dd u\\
		 &= \int_{w\in \R^3} \left(\int_{u\perp w, u\in \R^3\setminus T_0(B_{ r})} |u|^{-2-2s} \dd u\right) f(\bar v+w)|w|^{\gamma+2s} \dd w,
\end{split} 
\end{equation}
where we used
\[
	\int_u \int_{w\perp u} (\ldots) \dd w \dd u = \int_w \int_{u\perp w} (\ldots) \frac {|u|}{|w|} \dd u \dd w.
\]
%
Recall that $T_0(B_{r})$ is an ellipsoid with radius $\bar r := r |v_0|^\frac{\gamma+2s}{2s}$ in directions orthogonal to $v_0$ and radius $\bar r/|v_0| = r|v_0|^{\gamma/(2s)}$ in the $v_0$ direction. Its intersection with the plane $\{u\perp w\}$ is an ellipse, whose smallest radius is 
\[
	\rho
		:=  \frac {r|v_0|^\frac{\gamma+2s}{2s}}{ \sqrt{|v_0|^2\left(1 - \left(\frac{v_0\cdot w}{|v_0||w|}\right)^2\right) + \left(\frac{v_0\cdot w}{|v_0| |w|}\right)^2}}.
\]
This follows from formula (5.10) in \cite{imbert2020smooth}, with $\bar r = r|v_0|^\frac{\gamma+2s}{2s}$ replacing $r$. We therefore have $\R^3\setminus E_r \subset \R^3\setminus B_\rho$, and
 \[ \begin{split}
  \int_{u\perp w, u\in \R^3\setminus E_r} |u|^{-2-2s} \dd u \lesssim \rho^{-2s}&\leq r^{-2s} |v_0|^{-\gamma-2s} \left( |v_0|^2 \left(1-\left(\frac{v_0\cdot w}{|v_0||w|}\right)^2\right) + \left(\frac{v_0\cdot w}{|v_0| |w|}\right)^2\right)^s\\
  &\leq r^{-2s} |v_0|^{-\gamma-2s} \left( |v_0|^2 - \left( v_0\cdot \frac w {|w|}\right)^2 + 1\right)^s.
  \end{split} \]
  Combining this expression with \eqref{e.Kf-Er}, the conclusion of the lemma follows.
\end{proof}

\begin{lemma}[Boundedness conditions]\label{l:boundedness}
If $\|f\|_{L^\infty_q(\R^3)}< \infty$ for some $q> 3+2s$, then the kernel $\bar K_f$ satisfies the two conditions
\begin{align}
 \int_{\R^3\setminus B_r(v)} \bar K_f(v,v') \dd v' \leq \Lambda r^{-2s}, \quad \text{ for all } v \in B_2 \text{ and } r>0,\label{e.first-boundedness}\\
 \int_{\R^3\setminus B_r(v')} \bar K_f(v,v') \dd v \leq \Lambda r^{-2s}, \quad \text{ for all } v' \in B_2 \text{ and } r>0,\label{e.second-boundedness}
 \end{align}
 for a constant $\Lambda\lesssim \|f\|_{L^\infty_q(\R^3)}$. In particular, $\Lambda$ is independent of the base point $v_0$.
\end{lemma}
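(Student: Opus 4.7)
The plan is to verify \eqref{e.first-boundedness} and \eqref{e.second-boundedness} separately, each by reducing to a careful integral estimate on the polynomial decay of $f$ that exploits the precise anisotropy of the transformation $T_0$.

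For \eqref{e.first-boundedness}, Lemma~\ref{l:5-10} already gives the estimate with the explicit constant
\[
\bar \Lambda = |v_0|^{-\gamma-2s}\int_{\R^3} f(\bar v+w)\bigl(|v_0|^2 - (v_0\cdot w/|w|)^2 +1\bigr)^s |w|^{\gamma+2s}\dd w,
\]
so the task reduces to showing $\bar\Lambda\lesssim \|f\|_{L^\infty_q}$ uniformly in $v_0$. After using the pointwise decay $f(\bar v+w)\leq \|f\|_{L^\infty_q}\langle v_0+w\rangle^{-q}$ (valid since $|\bar v - v_0|=|T_0 v|\leq 2$ for $|v_0|>2$, using $(\gamma+2s)/(2s)<0$) together with the elementary splitting $(a+1)^s\leq a^s+1$, the integral decomposes into two pieces. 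The piece without the $|v_0|^{2s}$-growth is immediate from the convolution estimate Lemma~\ref{l:convolution}, giving $|v_0|^{-\gamma-2s}|v_0|^{\gamma+2s} = 1$. For the other piece, the substitution $u = v_0+w$ puts it in the form $|v_0|^{-\gamma}\int \langle u\rangle^{-q}|u_\perp|^{2s}|u-v_0|^\gamma\dd u$ (with $u_\perp$ denoting the component of $u$ perpendicular to $v_0$). Decomposing into the regions $|u|\le|v_0|/2$, $|u-v_0|\le|v_0|/2$, and the remainder, and working in cylindrical coordinates along $v_0$, the dominant contribution comes from the region $|u|\le|v_0|/2$, where $|u-v_0|^\gamma\approx|v_0|^\gamma$ and the residual integral $\int \langle u\rangle^{-q}|u_\perp|^{2s}\dd u$ is a $v_0$-independent constant thanks to $q>3+2s$; the factor $|v_0|^\gamma$ cancels the prefactor $|v_0|^{-\gamma}$. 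The remaining two regions produce terms of size $|v_0|^{-q+2s+3}$, which are controlled by $q>3+2s$.

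For \eqref{e.second-boundedness}, I would start with the change of variable $u = \bar v - \bar v' = T_0(v-v')$. A direct computation on the eigenvalues of $T_0$ gives $\det T_0 = |v_0|^{3(\gamma+2s)/(2s)-1}$, so the prefactor $|v_0|^{2+3\gamma/(2s)}$ in the definition of $\bar K_f$ exactly cancels the Jacobian $|\det T_0|^{-1}$, yielding
\[
\int_{\R^3\setminus B_r(v')}\bar K_f(v,v')\dd v = \int_{\{|T_0^{-1}u|\ge r\}} K_f(\bar v'+u,\bar v')\dd u.
\]
Since $B_{r|v_0|^{\gamma/(2s)}}\subset T_0(B_r)$, this is bounded above by $\int_{|u|\ge \tilde r} K_f(\bar v'+u,\bar v')\dd u$ with $\tilde r = r|v_0|^{\gamma/(2s)}$. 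Using the Carleman formula \eqref{e.kernel} and applying the pointwise bound $f(\bar v'+u+h)\le \|f\|_{L^\infty_q}\langle v_0+u+h\rangle^{-q}$ reduces the problem to a purely geometric integral, which I would handle by a dyadic splitting in $|u|$ and, within each shell, decomposing the planar $h$-integral analogously to \eqref{e.first-boundedness}. The $|v_0|^{-\gamma}$ arising from $\tilde r^{-2s}= r^{-2s}|v_0|^{-\gamma}$ is matched by losses of $|v_0|^\gamma$ in the planar integral, leaving the clean bound $\|f\|_{L^\infty_q}r^{-2s}$.

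The main obstacle is the control of $\bar\Lambda$ in \eqref{e.first-boundedness}. The crude bound $|w_\perp|\le|w|$ loses a factor of $|v_0|^{2s}$ that cannot be absorbed by the $|v_0|^{-\gamma-2s}$ prefactor; overcoming this requires genuinely exploiting that $|u_\perp|$ is controlled by the distance from $u$ to the line $\R v_0$, not by $|u|$, and that the main mass of the weight $\langle u\rangle^{-q}|u_\perp|^{2s}$ sits at bounded distance from the origin. This anisotropic cancellation, together with the careful book-keeping of $|v_0|$-powers under $T_0$, is the essential new input needed in the present regime $\gamma+2s<0$; once it is in place, the proof of \eqref{e.second-boundedness} proceeds by directly importing the same regional decomposition.
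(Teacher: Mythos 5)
Your treatment of \eqref{e.first-boundedness} is fine and is essentially the paper's argument: both hinge on the observation that the perpendicular component satisfies $|w_\perp|=|(v_0+w)_\perp|\leq |v_0+w|$, so the factor $|v_0|^{2s}\sin^{2s}\theta$ can be absorbed into the decay of $f$ (the paper does this in one line via $|b|\leq |v_0+w|$ and Lemma \ref{l:convolution}, you do it by a three-region decomposition in $u=v_0+w$; the outcome is the same).

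The gap is in \eqref{e.second-boundedness}. After the (correct) change of variables $u=T_0(v-v')$, you replace the excluded ellipsoid $T_0(B_r)$ by its \emph{inscribed} ball $B_{\tilde r}$, $\tilde r=r|v_0|^{\gamma/(2s)}$, and claim that $\int_{|u|\geq \tilde r} K_f(\bar v'+u,\bar v')\,\dd u\lesssim \|f\|_{L^\infty_q}r^{-2s}$ follows by a dyadic/planar decomposition ``analogously to \eqref{e.first-boundedness}''. This bound is false in general: take $f$ a unit bump supported in $B_1(0)$ and $|v_0|=R$ large. For $u$ with $|u|=t\ll R$, the Carleman plane $\{\bar v'+u+h:\,h\perp u\}$ meets $\supp f$ only when $|(\bar v'+u)\cdot \hat u|\lesssim 1$, i.e.\ for a set of directions $\hat u$ of measure $\approx R^{-1}$ (these are directions nearly orthogonal to $v_0$), and there $|h|\approx R$, so $K_f(\bar v'+u,\bar v')\approx t^{-3-2s}R^{\gamma+2s+1}$. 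The shell $|u|\approx t$ thus contributes $\approx t^{-2s}R^{\gamma+2s}$, and summing over $t\geq \tilde r$ gives $\tilde r^{-2s}R^{\gamma+2s}=r^{-2s}R^{2s}$, off from the desired bound by the unbounded factor $|v_0|^{2s}$. The point is that the dominant directions $\hat u\perp v_0$ are exactly those in which the ellipsoid $T_0(B_r)$ has its \emph{long} semi-axis $\bar r=r|v_0|^{(\gamma+2s)/(2s)}$; excluding only $B_{\tilde r}$ throws away precisely the anisotropy that makes the estimate close ($\bar r^{-2s}R^{\gamma+2s}=r^{-2s}$, whereas $\tilde r^{-2s}R^{\gamma+2s}=r^{-2s}R^{2s}$). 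Note also that \eqref{e.second-boundedness} is an integral over the \emph{first} argument of $K_f$, so the plane in \eqref{e.kernel} moves with the integration variable and the geometry is genuinely different from \eqref{e.first-boundedness}; this is why the paper does not rerun the same decomposition but instead imports the computation from the proof of \cite[Lemma 5.13]{imbert2020smooth}, which retains the full angular dependence (a term $\bar r^{-2s}|u|^{2s}\big(1+|v_0|^2-(v_0\cdot u)^2/|u|^2\big)^s$ plus a term $\bar r^{-s}|u|^s|\hat u\cdot v_0|^s$), and then uses $\gamma+2s<0$ to absorb the leftover power $|v_0|^{(\gamma+2s)/2}$ in the second term. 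To repair your argument you would have to keep the ellipsoidal exclusion (or equivalently the angular weights) rather than pass to the inscribed ball.
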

\begin{proof}
The proof of \eqref{e.first-boundedness} begins with estimating the expression $\bar \Lambda$ in Lemma \ref{l:5-10} from above. First, note that 
\[\left(|v_0|^2 - \left(v_0 \cdot \frac w {|w|}\right)^2 + 1\right)^s \lesssim \left(|v_0|^2 - \left(v_0 \cdot \frac w {|w|}\right)^2\right)^{s} + 1,\]
and we have
\[ \bar \Lambda \leq \int_{\R^3} f(\bar v + w) \left(|v_0|^2 - \left(v_0 \cdot \frac w {|w|}\right)^2\right)^{s}\left(\frac {|w|}{|v_0|}\right)^{\gamma+2s} \dd w + \int_{\R^3} f(\bar v + w) \left(\frac {|w|}{|v_0|}\right)^{\gamma+2s} \dd w =: J_1 + J_2.\]
To bound $J_2$, the convolution estimate of Lemma \ref{l:convolution} gives $J_2 \lesssim \|f\|_{L^\infty_q} |\bar v|^{\gamma+2s} |v_0|^{-\gamma-2s} \lesssim \|f\|_{L^\infty_q}$, since $|\bar v| \approx |v_0|$ and $q > 3+\gamma+2s$.

For $J_1$, letting $w = \alpha v_0/|v_0| + b$ with $b\cdot v_0 = 0$, one has
\[ \left(\frac{|w|}{|v_0|}\right)^{\gamma+2s}\left( |v_0|^2 - \left( v_0 \cdot \frac w {|w|}\right)^2\right)^{s} = |v_0|^{-\gamma} |b|^{2s}|w|^\gamma.\]
Noting that $|b|\leq |v_0+w|$, we have 
\[ 
\begin{split}
\int_{\R^3} f(\bar v+w) |v_0|^{-\gamma} |b|^{2s} |w|^\gamma \dd w &\leq |v_0|^{-\gamma} \|f\|_{L^\infty_q(\R^3)} \int_{\R^3} \langle \bar v+w\rangle^{-q} |v_0+w|^{2s} |w|^{\gamma} \dd w\\
&\lesssim   |v_0|^{-\gamma} \|f\|_{L^\infty_q(\R^3)} \int_{\R^3} \langle v_0+w\rangle^{-q+2s} |w|^{\gamma} \dd w \lesssim \|f\|_{L^\infty_q(\R^3)},
\end{split} 
\]
using $\langle \bar v+w\rangle \approx \langle v_0+w\rangle$ and $q> 3+\gamma+2s$.  
This establishes the upper bound $\bar \Lambda \lesssim \|f\|_{L^\infty_q(\R^3)}$. Combining this with Lemma \ref{l:5-10} concludes the proof of the first boundedness condition \eqref{e.first-boundedness}.

To establish \eqref{e.second-boundedness}, we assume as usual that $|v_0|>2$. For any $v'\in B_2$ and $r>0$, changing variables with $\bar v = v_0 + T_0 v$, we have
\[
	\begin{split}
		\int_{\R^3\setminus B_r(v')} \bar K_f(v,v') \dd v
			&= |v_0|^{2+\frac{3\gamma}{2s}} \int_{\R^3\setminus B_r(v')} K_f(\bar v, \bar v') \dd v
			= \int_{\R^3\setminus E_r(\bar v')} K_f(\bar v, \bar v') \dd \bar v. 
	\end{split}
\]
The last integral is estimated in the proof of \cite[Lemma 5.13]{imbert2020smooth}, up to choosing a different value of $r$. More specifically, our $E_r$ would be $E_{\bar r}$ with $\bar r = r|v_0|^{\frac{\gamma+2s}{2s}}$ in the notation of \cite{imbert2020smooth}. Therefore, their calculation (which does not depend on the sign of $\gamma+2s$) implies
\[
	\begin{split}
		& \int_{\R^3\setminus B_r(v')} \bar K_f(v,v') \dd v\\
		& \leq \int_{\R^3} f(\bar v'+u)|u|^\gamma
		\Big( (r|v_0|^\frac{\gamma+2s}{2s})^{-2s} |u|^{2s}
		\Big( 1+ |v_0|^2 - \frac{(v_0\cdot u)^2}{|u|^2} \Big)^s
		+ (r|v_0|^\frac{\gamma+2s}{2s})^{-s} 
		|u|^s
		\Big| \frac u {|u|}\cdot v_0\Big|^s \Big) \dd u \\
		&\leq I_1 + I_2,
	\end{split}
\]
%
where 
\[
	\begin{split}
		I_1
		&= r^{-2s} |v_0|^{-\gamma-2s} \int_{\R^3} f(\bar v' + u) |u|^{\gamma+2s} \Big(1+|v_0|^2 - \frac{(v_0\cdot u)^2}{|u|^2}\Big)^s \dd u,\\
		I_2
		&= r^{-s} |v_0|^{-\gamma/2} \int_{\R^3} f(\bar v' + u) |u|^{\gamma+s} \dd u.
\end{split}
\]
The term $I_1$ is bounded by a constant times $\|f\|_{L^\infty_q(\R^3)}r^{-2s}$, by our estimate of $\bar\Lambda$ in the beginning of the current proof. For $I_2$, we have
\[
	I_2
		\leq r^{-s} |v_0|^{-\gamma/2} \|f\|_{L^\infty_q(\R^3)} \int_{\R^3} \langle \bar v' + u\rangle^{-q} |u|^{\gamma+s} \dd u
		\lesssim r^{-s} |v_0|^{-\gamma/2} \|f\|_{L^\infty_q(\R^3)} \langle \bar v'\rangle^{\gamma+s},
\]
since $q >3+3+\gamma+2s>3+\gamma+s$. By assumption, $v' \in B_2$, which implies $|\bar v'| = |v_0 + T_0v'|\lesssim |v_0|$. Since $\gamma+2s<0$, the factor $|v_0|^{(\gamma+2s)/2}$ is bounded by 1, and we conclude $I_2 \lesssim r^{-s} \|f\|_{L^\infty_q(\R^3)}$. Note that values of $r$ greater than $2$ are irrelevant for \eqref{e.second-boundedness}, because the kernel $\bar K_f(v,v')$ is only defined for $v \in B_1$. For large $r$, the domain of integration in \eqref{e.second-boundedness} is empty. Therefore, we have $r^{-s} \lesssim r^{-2s}$, and the proof is complete.
\end{proof}

We remark that the bound of $I_2$ in the proof of the previous lemma is the only place where our definition \eqref{e.cov-} of the change of variables would not easily generalize to the case $\gamma+2s\geq 0$. 

We also have the following alternative characterization of the upper bounds for $\bar K_f$, which is needed as one of the hypotheses of Theorem \ref{t:schauder}. It follows from \eqref{e.first-boundedness} in Lemma \ref{l:boundedness} in the same way that Lemma \ref{l:K-upper-bound-2} above follows from Lemma \ref{l:K-upper-bound}:

\begin{corollary}\label{c:w-squared}
Let $f\in L^\infty_q([0,T]\times\R^6)$ for some $q>3 + 2s$. For $|v_0|>2$ and $z= (t,x,v)\in Q_1$, let $\bar K_{f,z}(w) = \bar K_f(t, x,v, v+w)$. Then for any $r>0$, there holds
\[ \int_{B_r} \bar K_{f,z} (w) |w|^2 \dd w \leq C \|f(\bar t,\bar x,\cdot)\|_{L^\infty_q(\R^3)} r^{2-2s},\]
The constant $C$ depends only on $\gamma$ and $s$.
\end{corollary}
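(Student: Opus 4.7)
The plan is to derive Corollary~\ref{c:w-squared} directly from the first boundedness estimate~\eqref{e.first-boundedness} by a dyadic decomposition, exactly as Lemma~\ref{l:K-upper-bound-2} was obtained from Lemma~\ref{l:K-upper-bound}. The idea is that the extra weight $|w|^2$ on $B_r$ is easily absorbed into the sum because $2 - 2s > 0$, so the resulting geometric series in the dyadic parameter converges.

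First, I would decompose the ball into dyadic annuli:
\[
B_r = \bigsqcup_{n \geq 0} A_n, \qquad A_n := B_{r 2^{-n}} \setminus B_{r 2^{-n-1}}.
\]
On each $A_n$ we have the trivial bound $|w|^2 \leq (r 2^{-n})^2$. Next, noting that $z = (t,x,v) \in Q_1$ gives $v \in B_1 \subset B_2$, I translate back to the variable $v' = v + w$ so that $w \in A_n$ corresponds to $v' \in (v + A_n) \subset \R^3 \setminus B_{r 2^{-n-1}}(v)$. Applying the first boundedness condition~\eqref{e.first-boundedness} of Lemma~\ref{l:boundedness} with radius $\rho = r 2^{-n-1}$ (and the fact that, as inspection of that proof reveals, the constant $\Lambda$ is in fact bounded by $C \|f(\bar t, \bar x, \cdot)\|_{L^\infty_q(\R^3)}$ at each fixed $(\bar t, \bar x)$) yields
\[
\int_{A_n} \bar K_{f,z}(w) \, \dd w
 \leq \int_{\R^3 \setminus B_{r 2^{-n-1}}(v)} \bar K_f(t,x,v,v') \, \dd v'
 \leq C \|f(\bar t, \bar x, \cdot)\|_{L^\infty_q(\R^3)} (r 2^{-n-1})^{-2s}.
\]

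Combining the two estimates on each annulus and summing gives
\[
\int_{B_r} \bar K_{f,z}(w) |w|^2 \, \dd w
 \leq C \|f(\bar t, \bar x, \cdot)\|_{L^\infty_q(\R^3)} \sum_{n \geq 0} (r 2^{-n})^2 (r 2^{-n-1})^{-2s}
 = C' \|f(\bar t, \bar x, \cdot)\|_{L^\infty_q(\R^3)} r^{2-2s} \sum_{n \geq 0} 2^{-2n(1-s)},
\]
and since $s \in (0,1)$ the last geometric series converges, with value depending only on $s$. The constant $C$ then depends only on $\gamma$ and $s$, as claimed.

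There is no genuine obstacle here: the whole argument is a two-line dyadic summation once Lemma~\ref{l:boundedness} is in hand. The only small point to verify is that the constant in~\eqref{e.first-boundedness}, as established in Lemma~\ref{l:boundedness}, may be taken pointwise in $(\bar t, \bar x)$ and thus bounded by $C \|f(\bar t,\bar x,\cdot)\|_{L^\infty_q(\R^3)}$ rather than by the space-time $L^\infty_q$ norm; this is immediate from inspecting that proof, since every $f$-integral there is evaluated at a fixed $(\bar t, \bar x)$.
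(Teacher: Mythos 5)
Your proof is correct and follows essentially the same route as the paper, which states that the corollary follows from \eqref{e.first-boundedness} exactly as Lemma \ref{l:K-upper-bound-2} follows from Lemma \ref{l:K-upper-bound}, i.e.\ by the dyadic annular decomposition and geometric summation you carry out. Your side remark that the constant in Lemma \ref{l:boundedness} is pointwise in $(\bar t,\bar x)$ is also consistent with the paper, which explicitly notes that the $(\bar t,\bar x)$ dependence is suppressed because the bounds hold uniformly in $t$ and $x$.
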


\subsection{Cancellation conditions} 

Next, we establish two cancellation conditions for $\bar K_f$, which say that $\bar K_f(v,v')$ is not too far from being symmetric, on average.

As a technical tool in proving these lemmas, one needs the following ``modified principal value'' result, which allows one to change variables according to $v' \mapsto \bar v'$ without altering the cancellation involved in defining principal value integrals. This lemma is proven in \cite[Lemmas 5.14 and 5.16]{imbert2020smooth}, with an argument that does not use the sign of $\gamma+2s$. Therefore, the lemma remains valid in our context.
\begin{lemma}\label{l:modified}
Let $\gamma+2s<0$ and $\rho>0$, and let $T_0$ be defined as above.
\begin{enumerate}
\item[(a)] If $f:\R^3\to \R$ is such that $\vv^{\gamma+2s} D^2 f \in L^1(\R^3)$, then
\[ \lim_{\rho\to 0+} 	\int_{B_\rho \setminus T_0(B_\rho)} (K_f(\bar v,\bar v+w) - K_f(\bar v+w,\bar v)) \dd w = 0.\]
\item[(b)] If $f:\R^3\to \R$ is such that $\vv^{\gamma+2s}\nabla f \in L^1(\R^3)$, then
\[ \lim_{\rho\to 0+} \int_{B_\rho\setminus T_0(B_\rho)} w K_f(\bar v+w, \bar v) \dd w = 0.\]
\end{enumerate}
\end{lemma}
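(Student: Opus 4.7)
The plan is to reduce both statements to a Taylor expansion argument that is made effective by exploiting a symmetry of the annular region. Observe first that, since $T_0$ is a linear map, both $B_\rho$ and $T_0(B_\rho)$ are invariant under the reflection $w\mapsto -w$, and hence so is the set difference $B_\rho\setminus T_0(B_\rho)$. This is the only structural feature of the region we need.

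For part (a), I would use the Carleman representation \eqref{e.kernel} to write
\[
	K_f(\bar v,\bar v+w) - K_f(\bar v+w,\bar v) = |w|^{-3-2s}\int_{w^\perp}[f(\bar v+h)-f(\bar v+w+h)]|h|^{\gamma+2s+1}\tilde b(\cos\theta)\dd h.
\]
I would then symmetrize the $w$-integral using the reflection $w\mapsto -w$, which replaces the integrand above by its average with the same expression at $-w$. After this symmetrization the finite difference of $f$ is upgraded from first to second order: the bracket becomes $2f(\bar v+h)-f(\bar v+w+h)-f(\bar v-w+h)$, which by Taylor's theorem is controlled pointwise by $|w|^2 \sup_{|z|\leq |w|}|D^2 f(\bar v+h+z)|$. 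This gains a full power of $|w|$ relative to the naive estimate.

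Inserting this bound and applying Fubini in $(w,h)$, the integral over $B_\rho\setminus T_0(B_\rho)$ is dominated by
\[
	\int_{B_\rho\setminus T_0(B_\rho)} |w|^{-1-2s}\dd w \cdot \int_{\R^3}|D^2 f(\bar v+y)|\,|y|^{\gamma+2s+1}\,\mathrm{(weight)}\dd y,
\]
after the usual change of coordinates $(w,h)\mapsto y = h+\text{projection onto }w^\perp$, cf.\ the manipulation leading to \eqref{e.Kf-Er}. The hypothesis $\vv^{\gamma+2s}D^2 f\in L^1$, combined with $|y|^{\gamma+2s+1} = |y|\cdot|y|^{\gamma+2s}\leq \vv_y^{\gamma+2s}$ up to local factors, ensures that the $y$-integral is finite uniformly in $\rho$, while the $w$-integral over the shell vanishes as $\rho\to 0$ (the factor $|w|^{-1-2s}$ is integrable in three dimensions and the shell shrinks to a set of measure zero). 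Part (b) is analogous but one order easier: after symmetrization in $w$ the bracket becomes $f(\bar v+w+h) - f(\bar v-w+h)$, which by the mean value theorem is controlled by $2|w|\sup|\nabla f|$, producing an integrand of order $|w|^{-1-2s}$ (the extra $|w|$ in the integrand of part (b) cancels one power of $|w|$ in the kernel bound) that again vanishes on the shrinking shell under the weaker hypothesis $\vv^{\gamma+2s}\nabla f\in L^1$.

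The hard part will be keeping the book-keeping clean in the range $\gamma+2s$ close to $-3$, where the $|h|^{\gamma+2s+1}$ factor in the Carleman kernel is almost non-integrable on the plane $w^\perp$. The resolution, as in \cite[Lemmas 5.14 and 5.16]{imbert2020smooth}, is to carry out the Taylor expansion and Fubini reduction carefully so that derivatives of $f$ absorb the singularity (through the $\vv^{\gamma+2s}$ weight in the hypothesis), rather than relying on naive pointwise bounds. Since this entire argument never uses the sign of $\gamma+2s$---it only uses the symmetry of the region, which is preserved by our definition \eqref{e.T0-def}---the proofs in \cite{imbert2020smooth} transport verbatim to our setting, and the verification reduces to checking the symmetry claim above.
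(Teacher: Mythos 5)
Your proposal is correct and follows essentially the same route as the paper: the paper does not reprove this lemma but cites \cite[Lemmas 5.14 and 5.16]{imbert2020smooth}, observing that the argument there is independent of the sign of $\gamma+2s$ and that $T_0$ may replace $T_0^+$ (the paper phrases this as a rescaling of $\rho$, you phrase it as the $w\mapsto -w$ symmetry of $B_\rho\setminus T_0(B_\rho)$, which is indeed the structural property the cited proof uses). Your symmetrization/Taylor order-counting matches the cited mechanism; only the throwaway bound $|y|^{\gamma+2s+1}\lesssim \langle y\rangle^{\gamma+2s}$ ``up to local factors'' is not literally correct at either end of the range of $|y|$, but this is immaterial since you ultimately defer the careful bookkeeping to the cited lemmas.
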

This lemma is proven for $T_0^+$ rather than $T_0$ in \cite{imbert2020smooth}. However, $T_0$ and $T_0^+$ can easily be interchanged here, by rescaling the parameter $\rho$.

Next, we prove the first cancellation condition, following the strategy of \cite[Lemma 5.15]{imbert2020smooth}:
\begin{lemma}[First cancellation condition]\label{l:first-cancellation}
Fix $q>3+\gamma$.  Suppose that $f\in L^\infty_q(\R^3)$ and $\vv^{\gamma+2s} D_v^2 f \in L^1(\R^3)$. Then the kernel $\bar K_f$ satisfies
\[
\begin{split}
 \left| {\rm p.v.} \int_{\R^3} (\bar K_f(v,v') - \bar K_f(v',v)) \dd v'\right| &\leq C \|f\|_{L^\infty_q(\R^3)},\end{split}
\]
where $C>0$ is universal. 
\end{lemma}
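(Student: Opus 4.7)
The plan is to reduce the cancellation bound for $\bar K_f$ to the classical cancellation identity for the (unchanged-of-variables) Boltzmann kernel $K_f$, keeping careful track of the factors of $|v_0|$ and the principal value. I would assume $|v_0|>2$ throughout (the case $|v_0|\le 2$ is handled by the trivial $\mathcal T_0 z=z_0\circ z$ and known facts about $K_f$).

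First, I would change variables. Writing $\bar v=v_0+T_0 v$, $\bar v'=v_0+T_0 v'$, and $u=\bar v'-\bar v = T_0(v'-v)$, one has $\dd v' = |\det T_0|^{-1}\dd u = |v_0|^{-3(\gamma+2s)/(2s)}\dd u$. Using the definition~\eqref{e.barKf} of $\bar K_f$ in the case $\gamma+2s<0$, both integrands pick up a factor $|v_0|^{2+3\gamma/(2s)}$, and combining with the Jacobian yields
\[
\int_{\R^3\setminus B_\rho(v)}\!\!(\bar K_f(v,v')-\bar K_f(v',v))\dd v'
= |v_0|^{-\gamma-2s}\!\int_{\R^3\setminus T_0(B_\rho)}\!\!(K_f(\bar v,\bar v+u)-K_f(\bar v+u,\bar v))\dd u.
\]
The integration domain on the right is the complement of the \emph{ellipsoid} $T_0(B_\rho)$, not the ball $B_\rho$, so some care is needed when passing to the principal value.

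Next, I would invoke Lemma~\ref{l:modified}(a), whose hypothesis is satisfied by assumption ($\vv^{\gamma+2s}D_v^2 f\in L^1$), to show
\[
\lim_{\rho\to 0^+}\int_{B_\rho\setminus T_0(B_\rho)}(K_f(\bar v,\bar v+u)-K_f(\bar v+u,\bar v))\dd u=0,
\]
so that the ellipsoidal principal value agrees with the standard ball-based principal value in the $u$ variable. Hence
\[
\mathrm{p.v.}\!\int(\bar K_f(v,v')-\bar K_f(v',v))\dd v' = |v_0|^{-\gamma-2s}\,\mathrm{p.v.}\!\int(K_f(\bar v,\bar v+u)-K_f(\bar v+u,\bar v))\dd u.
\]

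Finally, I would use the classical cancellation identity for the Boltzmann kernel. Comparing the Carleman representation $Q(f,g)(v)=\mathrm{p.v.}\!\int[g(v')K_f(v,v')-g(v)K_f(v',v)]\dd v'$ with the decomposition $Q=Q_{\rm s}+Q_{\rm ns}$ and Lemma~\ref{l:Q2} gives the pointwise identity
\[
\mathrm{p.v.}\!\int_{\R^3}(K_f(w,w')-K_f(w',w))\dd w' = C\!\int_{\R^3} f(w+u)|u|^\gamma\dd u,
\]
for any $w$, which is essentially the Cancellation Lemma of Alexandre--Desvillettes--Villani--Wennberg. Applying this at $w=\bar v$ and using the convolution estimate (Lemma~\ref{l:convolution}), which is applicable because $q>3+\gamma$, yields
\[
\Big|\mathrm{p.v.}\!\int(K_f(\bar v,\bar v+u)-K_f(\bar v+u,\bar v))\dd u\Big|\lesssim \|f\|_{L^\infty_q(\R^3)}\langle\bar v\rangle^\gamma\lesssim \|f\|_{L^\infty_q(\R^3)}|v_0|^\gamma,
\]
since $|\bar v|\approx |v_0|$ for $|v_0|>2$ and $v\in B_2$ (using $|T_0 v|\le |v_0|^{(\gamma+2s)/(2s)}|v|\le |v|$). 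Multiplying by the prefactor $|v_0|^{-\gamma-2s}$ from Step 1 gives an overall bound of $|v_0|^{-2s}\|f\|_{L^\infty_q}\le\|f\|_{L^\infty_q}$, which is the desired $v_0$-independent estimate.

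The main subtlety in this argument is Step 2: the change of variables naturally produces a principal value over ellipsoids $T_0(B_\rho)$ with eccentricity that blows up as $|v_0|\to\infty$, and one must verify that the discrepancy with the standard ball-based principal value vanishes. This is exactly the role of Lemma~\ref{l:modified}(a), which was designed for this application; the hypothesis $\vv^{\gamma+2s}D_v^2 f\in L^1$ in the statement of the present lemma is included precisely to make that tool available. Everything else is bookkeeping with powers of $|v_0|$, which fortunately align to produce the harmless factor $|v_0|^{-2s}$.
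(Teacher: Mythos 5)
Your proposal follows the paper's own proof essentially step for step: change variables through $T_0$ to reduce $\bar K_f$ to $K_f$, invoke Lemma~\ref{l:modified}(a) to replace the principal value over the ellipsoids $T_0(B_\rho)$ by the standard ball-based one, and then conclude with the classical Cancellation Lemma for $K_f$ together with the convolution bound of Lemma~\ref{l:convolution} (the paper cites the cancellation statement from Imbert--Silvestre rather than rederiving it from $Q=Q_{\rm s}+Q_{\rm ns}$ and Lemma~\ref{l:Q2}, but that is the same ingredient).

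The one genuine flaw is your Jacobian bookkeeping. $T_0$ is not an isotropic dilation: it scales by $|v_0|^{\gamma/(2s)}$ in the $v_0$ direction and by $|v_0|^{(\gamma+2s)/(2s)}$ in the two transversal directions, so $\det T_0 = |v_0|^{2+3\gamma/(2s)}$, which is exactly the prefactor appearing in the definition \eqref{e.barKf}. Consequently the change of variables produces \emph{no} surviving power of $|v_0|$ (this is how the paper proceeds), whereas your stated Jacobian $|v_0|^{-3(\gamma+2s)/(2s)}$ would yield a prefactor $|v_0|^{-1}$, and the factor $|v_0|^{-\gamma-2s}$ you actually display follows from neither computation (it seems imported from Lemma~\ref{l:5-10}, where that power has a different, Carleman-type origin). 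The slip happens to be harmless here: the cancellation identity gives a bound $\lesssim \|f\|_{L^\infty_q}\langle \bar v\rangle^{\gamma}\lesssim \|f\|_{L^\infty_q}$ with $\gamma<0$, so any of the candidate prefactors ($1$, $|v_0|^{-1}$, or $|v_0|^{-\gamma-2s}$) still yields the claimed $v_0$-independent estimate. So the argument reaches the right conclusion by the same route as the paper, but the determinant of $T_0$ and the resulting prefactor should be corrected before this could be spliced into the text.
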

\begin{proof}
If $|v_0|\leq 2$, then the conclusion follows from the classical Cancellation Lemma, stated for example in \cite[Lemma 3.6]{imbert2016weak}. 

If $|v_0|>2$, then we write for any $v\in B_2$,
\[
\begin{split}
	{\rm p.v.}\int_{\R^3}& (\bar K_f(v,v') - \bar K_f(v',v)) \dd v'\\
	 &= |v_0|^{2+\frac{3\gamma}{2s}} {\rm p.v.}\int_{\R^3} (K_f(\bar v, \bar v') - K_f(\bar v',\bar v)) \dd v'\\
	 &=  |v_0|^{2+\frac{3\gamma}{2s}} \lim_{R\to 0+} \int_{\R^3\setminus B_{R}} (K_f(\bar v, \bar v +  T_0 v') - K_f(\bar v + T_0 v', \bar v)) \dd v'\\
	 &= \lim_{R\to 0+} \int_{\R^3\setminus T_0(B_R)} (K_f(\bar v, \bar v + w)  - K_f(\bar v+ w, \bar v)) \dd  w\\
	 &= \lim_{R\to 0+} \int_{\R^3\setminus B_{R}} (K_f(\bar v, \bar v + w)  - K_f(\bar v+ w, \bar v)) \dd w\\
	 &= {\rm p.v.} \int_{\R^3} (K_f(\bar v, \bar v + w)  - K_f(\bar v+ w, \bar v)) \dd w,
\end{split}
\]
where we have used Lemma \ref{l:modified}(a) with $\rho = R|v_0|^{\gamma/(2s)+1}$ in the next-to-last equality. Next, we apply \cite[Lemma 3.6]{imbert2016weak} and obtain, for $q> \gamma+3$
\[ \left| {\rm p.v.}\int_{\R^3} (\bar K_f(v,v') - \bar K_f(v',v)) \dd v' \right| \leq  C\left(\int_{\R^3} f(z) |\bar v - z|^\gamma \dd z\right)  \leq C \|f\|_{L^\infty_q(\R^3)}, \]
as desired.
\end{proof}

\begin{lemma}[Second cancellation condition]
Fix $s\in [\frac 1 2, 1)$.  Suppose that $f\in L^\infty_q(\R^3)$ with $q>3+2s$ and $\vv^{\gamma+2s} \nabla_v f \in L^1(\R^3)$. Then for all $r\in [0,\frac 1 4]$ and $v\in B_{7/4}$, there holds
\[ \left| {\rm p.v.} \int_{B_r(v)} (\bar K_f(v,v') - \bar K_f(v',v)) (v'-v) \dd v'\right| \leq \Lambda (1+r^{1-2s}),\]
with $\Lambda$ depending on $\|f\|_{L^\infty_q(\R^3)}$. 
\end{lemma}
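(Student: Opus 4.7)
The plan is to mirror the strategy used for the first cancellation condition in Lemma \ref{l:first-cancellation}: reduce the principal-value integral for $\bar K_f$ to one for $K_f$ by unwinding the change of variables $\mathcal T_0$, use the modified principal value result Lemma \ref{l:modified}(b) to replace the resulting ellipsoidal cutoff by a Euclidean one without disturbing the antisymmetric cancellation, and then invoke the classical second cancellation estimate for $K_f$ from \cite[Lemma 3.7]{imbert2016weak}, tracking the resulting powers of $|v_0|$.

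The case $|v_0|\leq 2$ is immediate because $\mathcal T_0$ reduces to a left translation, so $\bar K_f(v,v')=K_f(z_0\circ(0,0,v),z_0\circ(0,0,v'))$ and the conclusion follows from \cite[Lemma 3.7]{imbert2016weak} applied to $K_f$, with the convolutions $f\ast|\cdot|^\gamma$ and $f\ast|\cdot|^{\gamma+1}$ controlled by $\|f\|_{L^\infty_q(\R^3)}$ via Lemma \ref{l:convolution} (using $q>3+2s$ together with $\gamma<0$, which give $q>3+\gamma$ and $q>3+\gamma+1$). For $|v_0|>2$, the definition \eqref{e.barKf} and the change of variables $w=T_0(v'-v)=\bar v'-\bar v$ transform the integral; the prefactor $|v_0|^{2+3\gamma/(2s)}$ cancels exactly against the Jacobian $|\det T_0|^{-1}=|v_0|^{1-3(\gamma+2s)/(2s)}$, leaving
\[
I:=\mathrm{p.v.}\!\int_{B_r(v)}\!(\bar K_f(v,v')-\bar K_f(v',v))(v'-v)\,\dd v' = \mathrm{p.v.}\!\int_{T_0(B_r)}\!(K_f(\bar v,\bar v+w)-K_f(\bar v+w,\bar v))\,T_0^{-1}w\,\dd w.
\]
Two features distinguish this from the classical situation: the integrand now contains $T_0^{-1}w$ rather than $w$, and the cutoff is the ellipsoid $T_0(B_r)$ rather than a Euclidean ball. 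The first is handled by decomposing $w$ into components parallel and perpendicular to $v_0$ to obtain $|T_0^{-1}w|\lesssim|v_0|^{-\gamma/(2s)}|w|$. The second---the more delicate point---is resolved by Lemma \ref{l:modified}(b), whose hypothesis $\vv^{\gamma+2s}\nabla f\in L^1(\R^3)$ is built into our assumptions; it guarantees that the antisymmetric portion of $K_f$ integrated over the symmetric difference between the ellipsoidal and Euclidean cutoffs contributes vanishingly as the cutoff radius shrinks, so the principal value over $T_0(B_r)$ may be replaced by the principal value over a Euclidean ball $B_{\bar r}$ with $\bar r\approx r|v_0|^{(\gamma+2s)/(2s)}$.

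Once the cutoff is Euclidean, \cite[Lemma 3.7]{imbert2016weak} applied to $K_f$ bounds the resulting integral by $C(1+\bar r^{\,1-2s})$ times a combination of $(f\ast|\cdot|^{\gamma})(\bar v)$ and $(f\ast|\cdot|^{\gamma+1})(\bar v)$. These convolutions are then estimated via Lemma \ref{l:convolution} by $\|f\|_{L^\infty_q(\R^3)}\langle\bar v\rangle^{\gamma}$ and $\|f\|_{L^\infty_q(\R^3)}\langle\bar v\rangle^{\gamma+1}$, respectively, and $\langle\bar v\rangle\approx|v_0|$ by construction.

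The hard part will be the careful bookkeeping of the powers of $|v_0|$: the factor $|v_0|^{-\gamma/(2s)}$ coming from $|T_0^{-1}w|$, the rescaled radius $\bar r=r|v_0|^{(\gamma+2s)/(2s)}$, and the $|v_0|^{\gamma}$ and $|v_0|^{\gamma+1}$ decay from the convolution bounds must combine into a final estimate of the form $\Lambda(1+r^{1-2s})$ with $\Lambda\lesssim\|f\|_{L^\infty_q(\R^3)}$ independent of $v_0$. The hypotheses $s\geq 1/2$ (so $1-2s\leq 0$), $\gamma+2s<0$, and $r\leq 1/4$ should be precisely what is needed for this consolidation to go through, paralleling the analogous role these sign conditions played in \cite[Section 5]{imbert2020smooth}, where care in the exponent bookkeeping was likewise the central technical difficulty.
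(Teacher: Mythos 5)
Your overall reduction is fine: the case $|v_0|\leq 2$ via \cite[Lemma 3.7]{imbert2016weak}, the exact cancellation of the prefactor $|v_0|^{2+3\gamma/(2s)}$ against the Jacobian of $w=T_0(v'-v)$, and the appeal to Lemma \ref{l:modified}(b) to make the change of variables compatible with the principal value all match the paper. But there are two genuine gaps in what remains. First, Lemma \ref{l:modified}(b) only controls the \emph{infinitesimal} excision: it says the integral over $B_\rho\setminus T_0(B_\rho)$ vanishes as $\rho\to 0^+$, so the p.v.\ may be taken with shrinking balls or shrinking ellipsoids. It does not let you replace the \emph{outer} domain $T_0(B_r)$ by the Euclidean ball $B_{\bar r}$: the discrepancy between the ellipsoid and the ball at scale $\bar r$ is a region of definite size whose contribution must be estimated quantitatively. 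In the paper this is the term $I_2$, handled by an inequality quoted from the proof of \cite[Lemma 5.18]{imbert2020smooth} together with the preliminary claim \eqref{e.cancellation-claim} and the bound on $\bar\Lambda$ from the proof of Lemma \ref{l:boundedness}; your proposal has no estimate for it.

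Second, even granting the reduction to a ball, your plan — apply \cite[Lemma 3.7]{imbert2016weak} at the point $\bar v$ and bound the convolutions $f\ast|\cdot|^{\gamma}$, $f\ast|\cdot|^{\gamma+1}$ by $\|f\|_{L^\infty_q}|v_0|^{\gamma}$, $\|f\|_{L^\infty_q}|v_0|^{\gamma+1}$ — cannot produce a $v_0$-independent $\Lambda$. Pulling the anisotropic weight out costs $\|T_0^{-1}\|=|v_0|^{-\gamma/(2s)}$, and the rescaled radius is $\bar r=r|v_0|^{(\gamma+2s)/(2s)}$; with the convolution bound $|v_0|^{\gamma+1}$ the $\bar r^{\,1-2s}$ term then carries $|v_0|^{-\gamma/(2s)+\gamma+1+(\gamma+2s)(1-2s)/(2s)}=|v_0|^{2-2s}$, and the $O(1)$ term carries $|v_0|^{1+\gamma(2s-1)/(2s)}$, which equals $|v_0|$ at $s=\tfrac12$; both are unbounded in $v_0$. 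The paper closes the bookkeeping only because it does \emph{not} use the classical lemma as a black box: it uses the finer bound (from the proof of \cite[Lemma 5.18]{imbert2020smooth}) in which the p.v.\ cancellation appears as the factor $\min(1,\bar r^{\,2-2s}|z|^{2s-2})$ inside a $z$-integral against $f(\bar v'+z)|z|^{\gamma+1}$, and then the claim \eqref{e.cancellation-claim}, which exploits the $L^\infty_q$ decay of $f$ near $\bar v'\approx v_0$ (this is where $s\geq\tfrac12$, $q>3+2s$, and $r\leq 1$ enter) to get $\lesssim\|f\|_{L^\infty_q}|v_0|^{\gamma+2s-1}\bar r^{\,1-2s}$ — gaining exactly the factor $|v_0|^{2s-2}$ your route is missing and eliminating the $O(1)$ term, so that every power of $|v_0|$ cancels. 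That claim and the resulting exact cancellation are the substantive content of the lemma; deferring them as "bookkeeping that should go through" leaves the proof incomplete, and with the ingredients you chose it in fact does not go through.
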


\begin{proof}
This proof is similar to the proof of \cite[Lemma 5.18]{imbert2020smooth}. Here, we give a sketch of the argument and discuss the changes needed for our setting.

First, we claim that for $|v_0|\geq 2$, $u\in B_1(v_0)$, and $r\in (0,1)$, there holds 
\begin{equation}\label{e.cancellation-claim}
	\int_{\R^3} f(u+z) |z|^{\gamma+1} \min(1, r^{2-2s} |z|^{2s-2}) \dd z
		\lesssim \|f\|_{L^\infty_q(\R^3)} |v_0|^{\gamma+2s-1} r^{1-2s}.
\end{equation}
To show this, we divide the integral into $B_r$ and $\R^3\setminus B_r$, and write
\[ \int_{B_r} f(u+z) |z|^{\gamma+1} \dd z \leq \|f\|_{L^\infty_q(\R^3)} \int_{B_r} \langle u+ z\rangle^{-q} |z|^{\gamma+1} \dd z \lesssim \|f\|_{L^\infty_q(\R^3)} |v_0|^{-q} r^{\gamma+4}.\]
Since $s\in [\frac 1 2, 1)$ and $r\in (0,1)$, we have $r^{\gamma+4}\leq 1\leq r^{1-2s}$. Also, $q> 3 + 2s> \gamma+2s-1$, so $|v_0|^{-q} < |v_0|^{\gamma+2s-1}$.

Next, since $q> 3 + 2s > \gamma+2s+2$, 
\[
\begin{split}
 r^{2-2s}\int_{\R^3\setminus B_r} f(u+z) |z|^{\gamma+2s-1} \dd z &\leq r^{2-2s} \|f\|_{L^\infty_q(\R^3)} \int_{\R^3\setminus B_r} \langle u+z\rangle^{-q} |z|^{\gamma+2s-1} \dd z\\
 & \leq r^{2-2s} \|f\|_{L^\infty_q(\R^3)} \langle u\rangle^{\gamma+2s-1} \leq r^{1-2s} |v_0|^{\gamma+2s-1} \|f\|_{L^\infty_q(\R^3)}, 
 \end{split}
 \]
using $|u|\approx |v_0|$, $\gamma + 2s \leq 0$, and $1 \leq r^{-1}$. This establishes \eqref{e.cancellation-claim}.

Now, to prove the lemma, we may focus on the case $|v_0|>2$, by \cite[Lemma 3.7]{imbert2016weak}. By the symmetry property $K_f(\bar v, \bar v + \bar w) = K_f(\bar v, \bar v-\bar w)$ of $K_f$, one can easily show ${\rm p.v.} \int_{B_r(v)}  \bar K_f(v',v) (v'-v) \dd v' = 0$. Therefore, it suffices to bound the remaining term
\[
	{\rm p.v.} \int_{B_r(v)} \bar K_f(v,v') (v'-v) \dd v'.
\]
Using the definition \eqref{e.barKf} of $\bar K_f$ and changing variables according to $\bar w = T_0(v-v')$ (which is compatible with the principal value integral, by Lemma \ref{l:modified}(b)), this term equals
\begin{equation}\label{e.Er-int}
	{\rm p.v.} \int_{E_r} (T_0^{-1} \bar w) K_f(\bar v - \bar w, \bar v) \dd \bar w.
\end{equation}
With $\bar r = |v_0|^{\frac{\gamma+2s}{2s}} r$ as above, we decompose this integral as follows, using $E_r = T_0(B_r) = T_0^+(B_{\bar r})$:
\be
	\begin{split}
	{\rm p.v.} \int_{E_r} (T_0^{-1} \bar w) K_f(\bar v - \bar w, \bar v) \dd \bar w
		&\leq \Big|{\rm p.v.} \int_{B_{\bar r}} (T_0^{-1} \bar w) K_f(\bar v - \bar w, \bar v) \dd \bar w\Big|
		\\&\qquad + \Big| \int_{T_0^+(B_{\bar r}) \setminus B_{\bar r}} (T_0^{-1} \bar w) K_f(\bar v - \bar w, \bar v) \dd \bar w\Big|
		=: I_1 + I_2.
	\end{split}
\ee
For $I_2$, we use the following inequality from the proof of \cite[Lemma 5.18]{imbert2020smooth}, with $\bar r$ replacing $r$:
\[
\begin{split}
	 \Big| \int_{T_0^+(B_{\bar r}) \setminus B_{\bar r}} ((T_0^+)^{-1} \bar w) K_f(\bar v - \bar w, \bar v) \dd \bar w\Big|
	&\lesssim |v_0| \int_{\R^3} f(\bar v' + z) |z|^{\gamma+1} \min(1, \bar r^{2-2s}|z|^{2s-2}) \dd w \\
	&\quad + \int_{\R^3}f(\bar v' + z) \bar r^{1-2s} |z|^{\gamma +2s} (1+|v_0|^2 - (v_0\cdot z)^2/|z|^2 )^s \dd w.
\end{split}
\]
 The first term on the right is estimated using \eqref{e.cancellation-claim} with $r = \bar r$. The second term is estimated using our upper bound $\bar \Lambda \lesssim \|f\|_{L^\infty_q(\R^3)}$  from the proof of Lemma \ref{l:boundedness} with $\bar v'$ replacing $\bar v$, which is valid because $|\bar v|\approx |\bar v'|\approx |v_0|$. In all, we have
\[
	\left| \int_{T_0^+(B_{\bar r}) \setminus B_{\bar r}} ((T_0^+)^{-1} \bar w) K_f(\bar v - \bar w, \bar v) \dd \bar w\right|
		\lesssim  {\bar r}^{1-2s}|v_0|^{\gamma+2s}
		= |v_0|^{\frac{\gamma+2s}{2s}} r^{1-2s}.
\]
Since $T_0^{-1}w = |v_0|^{-\frac{\gamma+2s}{2s}} (T_0^+)^{-1} \bar w$, this implies $I_2 \lesssim r^{1-2s}$. For $I_1$, we use another calculation quoted from the proof of \cite[Lemma 5.18]{imbert2020smooth}, where $\bar r$ once again plays the role of $r$:
\[\begin{split}
	\left| \int_{B_{\bar r}} ((T_0^+)^{-1} \bar w) K_f(\bar v - \bar w, \bar v) \dd \bar w\right| 
		&\lesssim |v_0| \int_{\R^3} f(\bar v' + z) |z|^{\gamma+1} \min(1, \bar r^{2-2s}|z|^{2s-2}) \dd z ,
\end{split}
\]
and using \eqref{e.cancellation-claim} again, we conclude
\be
	I_1
		\lesssim |v_0|^{- \frac{\gamma+2s}{2s} + \gamma+2s}\bar r^{1-2s}
		= r^{1-2s},
\ee
which concludes the proof.
\end{proof}

\subsection{H\"older continuity}

In this subsection, we establish the H\"older continuity of the kernel $\bar K_f$. First, we have a lemma on the kinetic H\"older spaces and their relationship to the change of variables \eqref{e.cov-}.

\begin{lemma}\label{l:holder-cov}
Given $z_0\in \R^{7}$ and $F:\mathcal E_R(z_0) \to \R$, define $\bar F :Q_R \to \R$ by $\bar F(z) = F(\mathcal T_0(z))$. Then,
\[ \|\bar F\|_{C_\ell^\beta(Q_R)}  \lesssim \|F\|_{C_\ell^\beta(\mathcal E_R(z_0))} \leq |v_0|^{\bar c \beta} \|\bar F\|_{C^\beta_\ell(Q_R)}, \]
with $\bar c :=-\gamma/(2s)>0$.  
\end{lemma}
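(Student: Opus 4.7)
The proof reduces to a careful comparison of the distance $d_\ell$ before and after the change of variables. The key algebraic fact is that, in the regime $\gamma+2s<0$ with $|v_0|>2$, the linear map $T_0$ acts as multiplication by $|v_0|^{(\gamma+2s)/(2s)}$ on the subspace orthogonal to $v_0$ and by $|v_0|^{\gamma/(2s)}$ on the span of $v_0$ (computed from the definition $T_0(av_0+w)=|v_0|^{(\gamma+2s)/(2s)}(a e+w)$ with $e=v_0/|v_0|$). Since both exponents are negative, $T_0$ is a contraction with operator norm at most $1$, while $T_0^{-1}$ has operator norm $|v_0|^{-\gamma/(2s)}=|v_0|^{\bar c}$ (the smallest singular value of $T_0$ being $|v_0|^{\gamma/(2s)}$). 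When $|v_0|\le 2$, $\mathcal T_0$ is just a left translation by $z_0$, and both inequalities follow immediately from the left-invariance \eqref{e.left} of $d_\ell$, so we focus on $|v_0|>2$.

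The first step will be to establish the two-sided distance comparison
\begin{equation*}
d_\ell(\mathcal T_0 z_1,\mathcal T_0 z_2)\leq d_\ell(z_1,z_2)\leq |v_0|^{\bar c}\,d_\ell(\mathcal T_0 z_1,\mathcal T_0 z_2),\qquad z_1,z_2\in Q_R.
\end{equation*}
By left-invariance, $d_\ell(\mathcal T_0 z_1,\mathcal T_0 z_2)=d_\ell((t_1,T_0x_1,T_0v_1),(t_2,T_0x_2,T_0v_2))$, so one side of the inequality is tested against a trial $w=T_0 w_0$ and the other against $w_0=T_0^{-1}\tilde w$ in the infimum definition \eqref{e.dl}. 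Using $\|T_0\|_{\rm op}\le 1$, the choices $w=T_0 w_0$ give each component of the max inside the min no larger than the corresponding component for $z_1,z_2$, yielding the first inequality. Using $\|T_0^{-1}\|_{\rm op}\le|v_0|^{\bar c}$ and the fact that $1/(1+2s)\le 1$ (so both $|v_0|^{-\gamma/(2s)}$ and $|v_0|^{-\gamma/((2s)(1+2s))}$ are bounded by $|v_0|^{\bar c}$), the choice $w_0=T_0^{-1}\tilde w$ gives the second inequality.

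The second step will be to translate the distance comparison into a norm comparison. Since $\mathcal T_0$ is a bijection between $Q_R$ and $\mathcal E_R(z_0)$, the $L^\infty$ norms agree, so only the seminorms need work. Given $z'\in Q_R$ and an approximating polynomial $p_{\bar z'}$ of kinetic degree $<\beta$ for $F$ at $\bar z'=\mathcal T_0 z'$, set $\bar p_{z'}(z):=p_{\bar z'}(\mathcal T_0 z)$. I will verify that $\bar p_{z'}$ is still a polynomial in $z$ of kinetic degree $<\beta$: the substitutions $\bar t=t_0+t$ (kinetic degree $2s$), $\bar v=v_0+T_0 v$ (each component of kinetic degree $1$), and $\bar x=x_0+T_0 x+tv_0$ (kinetic degree $\max\{1+2s,2s\}=1+2s$) ensure that the kinetic degree of each substituted monomial is bounded by the original. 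Then the estimate
\begin{equation*}
|\bar F(z)-\bar p_{z'}(z)|=|F(\mathcal T_0z)-p_{\bar z'}(\mathcal T_0z)|\le [F]_{C^\beta_\ell(\mathcal E_R(z_0))}\,d_\ell(\mathcal T_0z,\mathcal T_0z')^\beta\le [F]_{C^\beta_\ell(\mathcal E_R(z_0))}\,d_\ell(z,z')^\beta
\end{equation*}
yields $[\bar F]_{C^\beta_\ell(Q_R)}\le [F]_{C^\beta_\ell(\mathcal E_R(z_0))}$. The reverse direction is analogous, running the construction through $\mathcal T_0^{-1}$ (which is also affine and sends kinetic polynomials to kinetic polynomials by the same degree count), and picking up the factor $|v_0|^{\bar c\beta}$ from the second distance inequality.

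The step I expect to require the most bookkeeping is the verification that $p\circ\mathcal T_0^{-1}$ is a polynomial of kinetic degree $<\beta$ when $p$ is—the coefficients of this transformed polynomial involve powers of $|v_0|^{\bar c}$ which we need not track since only the pointwise comparison enters the seminorm, but confirming the kinetic degree is preserved requires carefully expanding $T_0^{-1}$ on the $v_0$ and $v_0^\perp$ subspaces. The rest of the proof is a clean application of the left-invariance \eqref{e.left} and the operator norm bounds on $T_0$ and $T_0^{-1}$.
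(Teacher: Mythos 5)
Your proof is correct and follows essentially the same route as the paper: you establish the two-sided comparison $d_\ell(\mathcal T_0 z_1,\mathcal T_0 z_2)\le d_\ell(z_1,z_2)\le |v_0|^{\bar c}d_\ell(\mathcal T_0 z_1,\mathcal T_0 z_2)$ via the operator norms of $T_0$ and $T_0^{-1}$, and then transfer it to the seminorms by composing the approximating polynomials with $\mathcal T_0^{\pm 1}$, exactly as in the paper's argument (your explicit check that kinetic degree is not increased under the substitution is a detail the paper merely asserts).
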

\begin{proof}
The argument is the same as \cite[Lemma 5.19]{imbert2020smooth}, but the powers of $|v_0|$ are different because of the different definition of $\mathcal T_0$ (recall~\eqref{e.cov-}). 

We claim that for all $|v_0|>2$ and $z, z_1 \in \R^7$,
\begin{align}
d_\ell(\mathcal T^{-1}(z), \mathcal T^{-1}(z_1)) &\leq |v_0|^{-\frac \gamma {2s}} d_\ell(z,z_1),\label{e.1}\\
d_\ell(\mathcal T(z), \mathcal T(z_1)) &\leq d_\ell(z,z_1).\label{e.2}
\end{align}
For \eqref{e.1}, we use $\|T_0^{-1}\| = |v_0|^{1 - \frac{\gamma+2s}{2s}}$ and the definition \eqref{e.dl} of $d_\ell$ to write
\[\begin{split}
d_\ell(&\mathcal T^{-1}(z), \mathcal T^{-1}(z_1))
	\\&= \min_{w\in \R^3} \max\Big( |t-t_1|^\frac{1}{2s}, |T_0^{-1}(x- x_1 - (t-t_1) w)|^\frac{1}{1+2s},
	|T_0^{-1} (v-w)|,  |T_0^{-1} (v_1-w)| \Big)\\
	&\leq |v_0|^{-\frac{\gamma}{2s}}\min_{w\in \R^3} \max\left(|t-t_1|^\frac{1}{2s}, |x-x_1 - (t-t_1)w|^\frac{1}{1+2s},
		 |v-w|, |v_1-w|\right)\\
	&\leq |v_0|^{-\frac{\gamma}{2s}} d_\ell(z,z_1).
\end{split}\]
The proof of \eqref{e.2} is similar, so we omit it. With \eqref{e.1} and \eqref{e.2}, the left invariance of $d_\ell$ implies that for $\bar z = \mathcal T_0 z$ and $\bar z_1 = \mathcal T_0 z_1$,
\begin{equation}\label{e.3}
 d_\ell(\bar z, \bar z_1) \leq d_\ell(z,z_1) \lesssim |v_0|^{-\frac{\gamma}{2s}} d_\ell(\bar z, \bar z_1).\end{equation}
To conclude the proof, fix $z, z_1 \in Q_R$, and let $\bar z = \mathcal T_0 z$ and $\bar z_1 = \mathcal T_0 z_1$. Let $p$ be the polynomial expansion of $\bar F$ at the point $z_1$ of degree $\deg_k p < \beta$, such that $|\bar F(z) - p(z)|\leq [\bar F]_{C_\ell^\beta} d_\ell(z,z_1)^\beta$. Since $p\circ \mathcal T_0^{-1}$ is a polynomial of the same degree as $p$, there holds
\[ |F(\bar z) - (p\circ \mathcal T_0^{-1})(\bar z)| = |\bar F(z) - p(z)|\leq [\bar F]_{C_\ell^\beta} d_\ell(z,z_1)^\beta \leq [\bar F]_{C_\ell^\beta} d_\ell(\bar z, \bar z_1)^\beta |v_0|^{-\frac{\beta\gamma}{2s}},\]
from the second inequality in \eqref{e.3}. This implies the second inequality in the statement of the lemma. The first inequality in the lemma follows from the first inequality of \eqref{e.3} in a similar way.
\end{proof}

Next, we establish the H\"older regularity of the kernel $\bar K_f$. The following lemma extends \cite[Lemma 5.20]{imbert2020smooth}.

\begin{lemma}\label{l:cov-holder}
Assume $\gamma+2s < 0$. For any $f:[0,T]\times \R^3\times\R^3 \to \R$ such that $f\in C^\alpha_{\ell,q}$ with $q> 3 + \alpha/(1+2s)$, 
and for any $|v_0|>2$ and $r\in(0,1]$, let
\[ \bar K_{f,z}(w) := \bar K_f(t,x,v,v+w), \quad z = (t,x,v) \in Q_{2r}.\]
Then we have
\[
	\int_{B_\rho} |\bar K_{f,z_1}(w) - \bar K_{f,z_2}(w)| |w|^2 \dd w
		\leq \bar A_0 \rho^{2-2s} d_\ell(z_1,z_2)^{\alpha'}, \quad \rho > 0, z_1,z_2\in Q_{2r},
\]
with $\alpha' = \alpha \frac {2s} {1+2s}$ and
\[
	\bar A_0 \leq C |v_0|^{2+\frac{\alpha}{1+2s}} \|f\|_{C^\alpha_{\ell, q}},
\]
where 
 the constant $C$ depends on universal quantities, $\alpha$, $q$, and $T$, but is independent of $|v_0|$.
\end{lemma}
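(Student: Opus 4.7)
The plan is to mirror the strategy used in the proof of Lemma \ref{l:f-eps-kernel}: represent the difference $\bar K_{f,z_1}(w) - \bar K_{f,z_2}(w)$ as a single Carleman-type kernel acting on a finite difference of $f$, and then exploit the $C^\alpha_{\ell,q}$ regularity of $f$. Using \eqref{e.barKf} together with the formula \eqref{e.kernel}, I would first write
\[ \bar K_{f,z_1}(w) - \bar K_{f,z_2}(w) = \frac{|v_0|^{2+3\gamma/(2s)}}{|T_0w|^{3+2s}} \int_{(T_0w)^\perp} \big[f(\bar z_1 \circ (0,0,h)) - f(\bar z_2 \circ (0,0,h))\big] |h|^{\gamma+2s+1} \tilde b(\cos\theta)\, \dd h, \]
reducing the task to estimating the finite difference $\Delta(h) := f(\bar z_1 \circ (0,0,h)) - f(\bar z_2 \circ (0,0,h))$ and then carrying out the plane integral over $(T_0w)^\perp$ and the outer integral in $w$ over $B_\rho$.

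For $\Delta(h)$, I would invoke the $C^\alpha_{\ell,q}$ regularity of $f$. The right-translation property \eqref{e.right-translation}, combined with the contraction $d_\ell(\bar z_1, \bar z_2) \leq d_\ell(z_1,z_2)$ established in the proof of Lemma \ref{l:holder-cov}, yields
\[ d_\ell(\bar z_1 \circ (0,0,h), \bar z_2 \circ (0,0,h)) \leq d_\ell(z_1,z_2) + d_\ell(z_1,z_2)^{2s/(1+2s)} |h|^{1/(1+2s)}, \]
so that (using $|\bar v_1 + h| \approx |v_0 + h|$ since $|\bar v_1 - v_0| \leq |v_0|^{(\gamma+2s)/(2s)}\cdot 2 \ll |v_0|$)
\[ |\Delta(h)| \lesssim \|f\|_{C^\alpha_{\ell,q}} \langle v_0 + h\rangle^{-q}\Big( d_\ell(z_1,z_2)^\alpha + d_\ell(z_1,z_2)^{\alpha'} |h|^{\alpha/(1+2s)}\Big), \]
with $\alpha' = \alpha \cdot 2s/(1+2s)$. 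The second term naturally produces the factor $d_\ell(z_1,z_2)^{\alpha'}$ appearing on the right-hand side of the claimed estimate; the first term produces the larger exponent $d_\ell^\alpha$, which I would rewrite as $d_\ell^{\alpha'} \cdot d_\ell^{\alpha-\alpha'}$ and absorb the second factor into the implied constant via $d_\ell(z_1,z_2) \leq 4r \leq 4$. For values of $d_\ell$ larger than 1 the pointwise bound $\|f\|_{L^\infty_q}$ is used directly.

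Substituting the bound for $\Delta(h)$ into the Carleman formula, the inner plane integral over $(T_0w)^\perp$ is two-dimensional; estimating it by Lemma \ref{l:convolution} (restricted to the plane) requires the moment condition $q > 3 + \alpha/(1+2s)$ to ensure convergence of $\int \langle v_0+h\rangle^{-q} |h|^{\gamma+2s+1+\alpha/(1+2s)} \dd h$ at infinity, after accounting for $\gamma+2s<0$. The resulting pointwise estimate on $|\bar K_{f,z_1}(w) - \bar K_{f,z_2}(w)|$ should then be integrated against $|w|^2$ over $B_\rho$, applying the dyadic-annulus technique behind Lemma \ref{l:K-upper-bound-2} to extract the $\rho^{2-2s}$ factor. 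The main obstacle will be carefully tracking the powers of $|v_0|$ through the anisotropic transformation $T_0$, which scales by $|v_0|^{\gamma/(2s)}$ in the $v_0$-direction and by $|v_0|^{(\gamma+2s)/(2s)}$ perpendicular to $v_0$: these powers enter through the prefactor $|v_0|^{2+3\gamma/(2s)}$, through the singular denominator $|T_0 w|^{-3-2s}$, and through the shape of the integration plane $(T_0 w)^\perp$. A clean way to handle this bookkeeping is to work in coordinates aligned with $v_0$, as in the proof of Lemma \ref{l:5-10} and the ellipsoid analysis of \cite[Lemma 5.18]{imbert2020smooth}, so as to resolve the anisotropy of the plane integral explicitly before assembling the final bound $\bar A_0 \lesssim |v_0|^{2 + \alpha/(1+2s)} \|f\|_{C^\alpha_{\ell, q}}$.
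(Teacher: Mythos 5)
Your treatment of the finite difference is the same as the paper's: both estimate $|f(\bar z_1\circ(0,0,h))-f(\bar z_2\circ(0,0,h))|$ via the right-translation property \eqref{e.right-translation}, the contraction $d_\ell(\bar z_1,\bar z_2)\le d_\ell(z_1,z_2)$, and the identity $|\bar t_1-\bar t_2|=|t_1-t_2|\le d_\ell(z_1,z_2)^{2s}$, arriving at $\lesssim \|f\|_{C^\alpha_{\ell,q}}\langle v_0+h\rangle^{-q}\,d_\ell(z_1,z_2)^{\alpha'}\,(1+|h|^{\alpha/(1+2s)})$. The gap is in the second half, which you explicitly defer as ``the main obstacle'': integrating the Carleman representation directly in the $w$-variables of $\bar K_f$ is precisely where the content of the lemma lies, and the obvious way to execute your plan fails. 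If you bound the plane integral $\int_{(T_0w)^\perp}\langle v_0+h\rangle^{-q}|h|^{\gamma+2s+1+\alpha/(1+2s)}\dd h$ uniformly in the direction of $w$, the worst case (the plane passing within distance $O(1)$ of $-v_0$, which genuinely occurs when $T_0w\perp v_0$) forces a bound of order $|v_0|^{1+\gamma+2s+\alpha/(1+2s)}$; combining this with the prefactor $|v_0|^{2+3\gamma/(2s)}$, the crude estimate $|T_0w|\ge |v_0|^{\gamma/(2s)}|w|$, and $\int_{B_\rho}|w|^{-1-2s}\dd w\approx\rho^{2-2s}$ gives $\bar A_0\lesssim |v_0|^{3+2s+\alpha/(1+2s)}$, which overshoots the claimed $|v_0|^{2+\alpha/(1+2s)}$ by a factor $|v_0|^{1+2s}$. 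Recovering the stated power along your route requires quantifying that the set of directions of $w$ for which the plane comes near $-v_0$ is small (and tracking the anisotropy of $|T_0w|$), i.e.\ redoing the ellipsoid analysis of Lemma \ref{l:5-10} with the additional weight $|h|^{\alpha/(1+2s)}$; none of this is carried out in your sketch, and it is not a routine bookkeeping step.

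The paper's proof avoids this anisotropic analysis altogether. It changes variables $w\mapsto T_0^{-1}w$ in the outer integral so that the kernel reverts to $K_f$, uses the pointwise comparison $|K_{f,\bar z_1}(w)-K_{f,\bar z_2}(w)|\le K_{\Delta f,\bar z_1}(w)$ with $\Delta f(z)=|f(z)-f(\xi\circ z)|$ and $\xi=\bar z_2\circ\bar z_1^{-1}$, encloses the ellipsoid $T_0(B_\rho)$ in the ball of radius $\rho|v_0|^{(\gamma+2s)/(2s)}$, and then applies Lemma \ref{l:K-upper-bound-2}. This reduces the entire kernel estimate to the single three-dimensional convolution $\int_{\R^3}\Delta f(\bar t_1,\bar x_1,\bar v_1+u)\,|u|^{\gamma+2s}\dd u$, which your H\"older bound on $\Delta f$ together with Lemma \ref{l:convolution} (using $q>3+\alpha/(1+2s)$ and $\gamma+2s<0$) controls, and the powers of $|v_0|$ assemble directly to $|v_0|^{2+\alpha/(1+2s)}$. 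I would either restructure your argument along these lines or supply the full direction-dependent estimate of the plane integrals; as written, the decisive step is missing and its naive implementation gives the wrong $|v_0|$-dependence.
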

\begin{proof}
   Changing variables according to $w \mapsto T_0^{-1} w$,
\[ \begin{split}
	\int_{B_\rho} |\bar K_{f,z_1} (w) - \bar K_{f,z_2}(w)| |w|^2 \dd w
	&= |v_0|^{2+\frac{3\gamma}{2s}} \int_{B_\rho} |K_{f,\bar z_1}(T_0w) - K_{f,\bar z_2}(T_0w)| |w|^2 \dd w\\
	&= \int_{E_\rho} |K_{f,\bar z_1}(w) - K_{f,\bar z_2}( w)| |T_0^{-1} w|^2 \dd  w\\
	&\leq |v_0|^{-\frac{\gamma}{s}}\int_{B_{\rho|v_0|^{(\gamma+2s)/2s}}} |K_{f,\bar z_1}( w) - K_{f,\bar z_2}( w)| |w|^2 \dd w,
\end{split}\]
using $\|T_0^{-1}\| = |v_0|^{1 - \frac{\gamma+2s}{2s}}$ and $E_\rho \subset B_{\rho |v_0|^{(\gamma+2s)/2s}}$. Next, it can be shown (see the proof of \cite[Lemma 5.20]{imbert2020smooth}) from the definition \eqref{e.kernel} of $K_f$ that
\[ |K_{f,\bar z_1}(w) - K_{f,\bar z_2}(w)| \leq K_{\Delta f, \bar z_1}(w),\]
where, following the notation of~\cite{imbert2020smooth},
\[
	\Delta f(z) = |f(z) - f(\xi\circ z)|
\]
and $\xi = \bar z_2 \circ \bar z_1^{-1}$. Using Lemma \ref{l:K-upper-bound-2}, we now have
\begin{equation}\label{e.BrhoK}
	\begin{split}
	  \int_{B_\rho} |\bar K_{f,z_1} (w) - &\bar K_{f,z_2}(w)| |w|^2 \dd w
	  \lesssim |v_0|^{-\gamma/s} \int_{B_{\rho|v_0|^{(\gamma+2s)/2s}}} K_{\Delta f, \bar z_1}(w) |w|^2 \dd w \\
	  &\lesssim |v_0|^{-\gamma/s} \left(\int_{\R^3} \Delta f(\bar t_1,\bar x_1,\bar v_1+u) |u|^{\gamma+2s} \dd u\right) \left( \rho |v_0|^{\frac{\gamma+2s}{2s}}\right)^{2-2s}\\
	  &\lesssim |v_0|^{ 2 - \gamma - 2s}  \left(\int_{\R^3} \Delta f(\bar t_1,\bar x_1,\bar v_1 +u) |u|^{\gamma+2s} \dd u\right)  \rho^{2-2s}.
	  \end{split}
\end{equation}
To estimate $\Delta f(\bar t_1, \bar x_1, \bar v_1 +u)$ from above, using \eqref{e.right-translation}, one can show (see formula (5.23) in \cite{imbert2020smooth} or the analysis of \eqref{e.Brho} above) that
\[
	|f(\bar z_1 \circ (0,0,u)) - f(\bar z_2\circ (0,0,u))|
		\lesssim \Big(d_\ell(\bar z_1, \bar z_2) + |\bar t_1 - \bar t_2|^\frac{1}{1+2s}|u|^{1/(1+2s)}\Big)^\alpha \langle\bar v_1 +u\rangle^{-q} \|f\|_{C^\alpha_{\ell, q}},
\]
with $q := q' + \alpha/(1+2s)$. 
Therefore,
\[
	\begin{split}
	\Delta f(\bar t_1, \bar x_1, \bar v_1 + u) 
	&\leq \langle \bar v_1 + u\rangle^{-q} \|f\|_{C^\alpha_{\ell, q}} \left( d_\ell(\bar z_1 , \bar z_2) + |\bar t_1 - \bar t_2|^\frac{1}{1+2s} |u|^\frac{1}{1+2s}\right)^\alpha\\
	&\lesssim \langle \bar v_1 + u\rangle^{-q} \|f\|_{C^\alpha_{\ell, q}} \Big(d_\ell(z_1 , z_2)^\alpha + |t_1 - t_2|^{\frac{\alpha}{1+2s} }|u|^\frac{\alpha}{1+2s}\Big)\\
	&\leq  \langle \bar v_1 + u\rangle^{-q} |u|^\frac{\alpha}{1+2s} \|f\|_{C^\alpha_{\ell,\tilde q}} d_\ell(z_1,z_2)^{\alpha'} ,
\end{split}
\]
since $d_\ell(\bar z_1, \bar z_2) \leq d_\ell(z_1,z_2)$ and $|\bar t_1 - \bar t_2| = |t_1-t_2|\leq d_\ell(z_1,z_2)^{2s}$. Returning to \eqref{e.BrhoK}, we have
\[
\begin{split}
	  \int_{B_\rho} |\bar K_{f,z_1} (w) - \bar K_{f,z_2}(w)| |w|^2 \dd w &\lesssim |v_0|^{2-\gamma-2s} \|f\|_{C^\alpha_{\ell,q}} d_\ell(z_1,z_2)^{\alpha'}\left(\int_{\R^3} |u|^{\gamma+2s+\frac{\alpha}{1+2s}} \langle v_1 + u\rangle^{-q} \dd u\right) \rho^{2-2s}\\
	  &\lesssim |v_0|^{2+\frac{\alpha}{1+2s}}\|f\|_{C^\alpha_{\ell,q}} d_\ell(z_1,z_2)^{\alpha'} \rho^{2-2s}
\end{split}
\]
since $q>3 + \alpha/(1+2s)$ (recall $\gamma+2s<0$). We have used the convolution estimate from \Cref{l:convolution} and the fact that $|\bar v_1|\approx |v_0|$.
\end{proof}

\section{Technical lemmas}\label{s:lemmas}

In this appendix, we collect some technical lemmas. First, we have an estimate for convolutions with functions $v\mapsto |v|^p$. We state it without proof.
\begin{lemma}\label{l:convolution}
For any $p>-3$ and $f\in L^\infty_q(\R^3)$ with $q>3 + p$, there holds
\[
	\int_{\R^3} f(v+w) |w|^p \dd w \leq C\|f\|_{L^\infty_q(\R^3)}
		\vv^{p + (3-q)_+}
		\cdot
		\begin{cases}
			1
				&\quad\text{ if } q \neq 3,\\
			1 + \log \vv
				&\quad\text{ if } q = 3.
		\end{cases}
\]
for a constant $C$ depending on $p$ and $q$.
\end{lemma}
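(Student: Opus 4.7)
The inequality is a classical weighted convolution estimate, and I would prove it by partitioning $\R^3$ into regions governed by the relative sizes of $|w|$ and $|v+w|$. First I would dispose of the case $|v|\leq 1$, where the bound reduces to a uniform estimate: $p>-3$ ensures local integrability of $|w|^p$ near the origin, while $q>p+3$ gives enough decay at infinity to bound the integral by a universal constant times $\|f\|_{L^\infty_q(\R^3)}$.

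For $|v|\geq 1$, the plan is to split $\R^3 = A\cup B\cup D$ with
\[
	A = \{|w|\leq |v|/2\},\qquad B = \{|v+w|\leq |v|/2\},\qquad D = \R^3\setminus(A\cup B).
\]
On $A$ the point $v+w$ has size $\gtrsim |v|$, so $f(v+w)\lesssim \|f\|_{L^\infty_q}\vv^{-q}$, and integrating $|w|^p$ over a ball of radius $|v|/2$ produces a factor $\vv^{p+3}$; the contribution is then $\lesssim \|f\|_{L^\infty_q}\vv^{p+3-q}$, which is bounded by $\|f\|_{L^\infty_q}\vv^{p+(3-q)_+}$. On $D$ both $|w|$ and $|v+w|$ are at least $|v|/2$, so the elementary inequality $|w|^p\lesssim \vv^p$ (when $p\leq 0$) or $|w|^p\lesssim \vv^p + |v+w|^p$ (when $p>0$), combined with the substitution $u=v+w$, reduces this region to integrals $\int_{\R^3}\langle u\rangle^{-q}(\vv^p + |u|^p)\dd u$ that converge by $q>p+3$; this contributes $\lesssim \|f\|_{L^\infty_q}\vv^p$.

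The three-case behavior of the weight in the statement will come entirely from region $B$, where $|w|\in[|v|/2,3|v|/2]$ so $|w|^p\approx \vv^p$, and after substituting $u=v+w$ the estimate on $B$ reduces to
\[
	\vv^p \|f\|_{L^\infty_q}\int_{|u|\leq |v|/2} \langle u\rangle^{-q}\dd u.
\]
This remaining integral is bounded by a universal constant when $q>3$, by a constant multiple of $1+\log\vv$ when $q=3$, and by a constant multiple of $\vv^{3-q}$ when $q<3$, which will produce exactly the dichotomy in the conclusion. There is no conceptual obstacle: the only care required is bookkeeping across the three decay regimes $q>3$, $q=3$, and $q<3$, and in particular checking that the logarithm arises only in the borderline case $q=3$.
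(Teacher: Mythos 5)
The paper states this lemma without proof, so there is nothing to compare against; judged on its own terms, your decomposition is the natural one, and the treatment of the case $|v|\leq 1$, of region $A$, and of region $B$ (which correctly produces the three-case weight, including the logarithm at $q=3$) is sound. However, the treatment of region $D$ has a genuine gap in exactly the parameter range the lemma is designed to cover: when $p\in(-3,0)$ the hypothesis only forces $q>3+p$, so $q\leq 3$ is allowed (and is used in the paper, e.g.\ in Lemma \ref{l:Q-polynomial} with decay exponent only slightly above $3+\gamma$). In that range your reduction of the $D$-contribution to $\int_{\R^3}\langle u\rangle^{-q}\bigl(\vv^p+|u|^p\bigr)\dd u$ fails: the term $\vv^p\int_{\R^3}\langle u\rangle^{-q}\dd u$ diverges for every $q\leq 3$, so the claim that these integrals ``converge by $q>p+3$'' is false, and indeed the asserted conclusion that $D$ contributes $\lesssim\vv^p$ is itself wrong for $q<3$: the sub-region $|v+w|\geq 2|v|$ alone contributes on the order of $\vv^{p+3-q}\gg\vv^p$.

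The fix is routine but needed: on $D$ do not discard the constraint $|w|\geq|v|/2$ after bounding $|w|^p$. Split $D$ at $|v+w|=2|v|$. On $D\cap\{|v+w|\leq 2|v|\}$ one has $\langle v+w\rangle^{-q}\lesssim\vv^{-q}$, $|w|^p\lesssim\vv^p$ (for either sign of $p$, since $|v|/2\leq|w|\leq 3|v|$), and the measure is $\lesssim\vv^3$, giving $\vv^{p+3-q}$. On $D\cap\{|v+w|>2|v|\}$ one has $|w|\approx|v+w|$, so the integrand is $\approx\langle v+w\rangle^{-q}|v+w|^p$, which integrates to $\lesssim\vv^{p+3-q}$ using $q>p+3$. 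Both pieces are $\lesssim\vv^{p+(3-q)_+}$ (with no extra logarithm, even at $q=3$), so the lemma's bound is recovered; your original argument is already correct on $D$ when $q>3$, in particular whenever $p\geq 0$.
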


The following interpolation lemma allows us to trade regularity for decay. We also omit the proof of this lemma, which is standard.

\begin{lemma}\label{l:moment-interpolation}
	Suppose that $\phi:\R^6\to \R$ is such that $\phi\in L^\infty_{q_1}(\R^6)$ and $\phi\in C^\alpha_{\ell,q_2}(\R^6)$, for some $\alpha\in (0,1)$ and $q_1 \geq q_2 \geq 0$.  If $\beta \in (0,\alpha)$ and $m\in [q_2, q_1]$ are such that
	\[
		m
			\leq q_1 \left(1 - \frac{\beta}{\alpha}\right) + q_2 \frac{\beta}{\alpha},
	\]
	then
	\[
		\|\phi\|_{C^{\beta}_{\ell,m}(\R^6)}
			\lesssim [\phi]_{C^\alpha_{\ell,q_2}(\R^6)}^\frac{\beta}{\alpha} \|\phi\|_{L^{\infty}_{q_1}(\R^6)}^{1-\frac{\beta}{\alpha}}.
	\]
\end{lemma}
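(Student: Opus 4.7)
The plan is to prove the estimate by a classical Gagliardo--Nirenberg-type interpolation between $L^\infty$ and $C^\alpha_\ell$, carried out locally on kinetic cylinders and then globalized using the definition of the weighted semi-norms (Definition~\ref{d:C-alpha-q}, adapted to $\R^6$). Concretely, it suffices to control $\langle v_0\rangle^m [\phi]_{C^\beta_\ell(Q_r(z_0))}$ uniformly in $r \in (0,1]$ and $z_0 = (0,x_0,v_0)$; the $L^\infty_m$ piece of the norm is controlled by $\|\phi\|_{L^\infty_{q_1}}$ via $m \leq q_1$ and can be absorbed into the product on the right-hand side by an elementary Young's inequality.

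At the level of a single unweighted cylinder $Q_r$ with $r \leq 1$, I would use the two trivial pointwise bounds
\[
	|\phi(z_1) - \phi(z_2)|
		\leq [\phi]_{C^\alpha_\ell(Q_r)} \, d_\ell(z_1,z_2)^\alpha
	\qquad\text{and}\qquad
	|\phi(z_1) - \phi(z_2)|
		\leq 2\|\phi\|_{L^\infty(Q_r)},
\]
and combine them through the elementary inequality $\min(A,B) \leq A^{\beta/\alpha} B^{1-\beta/\alpha}$ (valid for $A,B \geq 0$ and $\beta/\alpha \in (0,1)$) to obtain the local interpolation
\[
	[\phi]_{C^\beta_\ell(Q_r)}
		\lesssim [\phi]_{C^\alpha_\ell(Q_r)}^{\beta/\alpha} \, \|\phi\|_{L^\infty(Q_r)}^{1-\beta/\alpha}.
\]

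To upgrade this to the velocity-weighted version, I would use that on any cylinder $Q_r(z_0)$ with $r \leq 1$ the weight $\langle v\rangle$ is comparable to $\langle v_0\rangle$, so the definitions of the weighted norms yield
\[
	\|\phi\|_{L^\infty(Q_r(z_0))} \lesssim \langle v_0\rangle^{-q_1}\|\phi\|_{L^\infty_{q_1}(\R^6)},
	\qquad
	[\phi]_{C^\alpha_\ell(Q_r(z_0))} \lesssim \langle v_0\rangle^{-q_2}[\phi]_{C^\alpha_{\ell,q_2}(\R^6)}.
\]
Substituting these into the local interpolation and multiplying through by $\langle v_0\rangle^m$ produces the prefactor $\langle v_0\rangle^{m - q_1(1-\beta/\alpha) - q_2 \beta/\alpha}$, which is $\leq 1$ precisely by the hypothesis on $m$. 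Taking the supremum over $(z_0, r)$ yields the claimed estimate on the semi-norm.

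Since this is standard interpolation, I do not expect a real obstacle; the only slightly delicate point is the exponent arithmetic that the two velocity weights $q_1$ and $q_2$ combine correctly with the regularity ratio $\beta/\alpha$ to produce exactly the target weight $m$, which is the content of the moment hypothesis $m \leq q_1(1-\beta/\alpha) + q_2 \beta/\alpha$.
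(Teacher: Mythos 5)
The paper gives no proof here (it is explicitly omitted as ``standard''), so your write-up stands on its own. Its core is correct and is surely the intended argument: on each cylinder $Q_r(z_0)$ with $r\le 1$ you combine $|\phi(z_1)-\phi(z_2)|\le [\phi]_{C^\alpha_\ell(Q_r(z_0))}d_\ell(z_1,z_2)^\alpha$ with $|\phi(z_1)-\phi(z_2)|\le 2\|\phi\|_{L^\infty(Q_r(z_0))}$ via $\min(A,B)\le A^{\beta/\alpha}B^{1-\beta/\alpha}$, use $\vv\approx\langle v_0\rangle$ on $Q_r(z_0)$ to convert to the weighted quantities, and the hypothesis $m\le q_1(1-\beta/\alpha)+q_2\beta/\alpha$ makes the leftover power of $\langle v_0\rangle$ nonpositive; taking the supremum gives $[\phi]_{C^\beta_{\ell,m}(\R^6)}\lesssim[\phi]_{C^\alpha_{\ell,q_2}(\R^6)}^{\beta/\alpha}\|\phi\|_{L^\infty_{q_1}(\R^6)}^{1-\beta/\alpha}$. (Working with plain differences is legitimate because $\phi$ depends only on $(x,v)$ and $\beta<\alpha<1$, so all approximating polynomials are constants.)

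The gap is your treatment of the $L^\infty_m$ piece of the norm. Young's inequality bounds a product by a sum, not a sum by a product, so the additive term $\|\phi\|_{L^\infty_m}\le\|\phi\|_{L^\infty_{q_1}}$ cannot be ``absorbed'' into $[\phi]_{C^\alpha_{\ell,q_2}}^{\beta/\alpha}\|\phi\|_{L^\infty_{q_1}}^{1-\beta/\alpha}$: that would require $\|\phi\|_{L^\infty_{q_1}}\lesssim[\phi]_{C^\alpha_{\ell,q_2}}$, which is false in general. Indeed, with only the \emph{seminorm} in the first factor the zeroth-order part of the stated bound genuinely fails: take $\phi\equiv 1$ with $q_1=q_2=m=0$ (allowed by the hypotheses), or, to see it is not a degenerate-weight artifact, $\phi(x,v)=\eta(\lambda v)$ for a fixed bump $\eta$ and $\lambda\to 0$, for which $\|\phi\|_{L^\infty_m}$ outgrows the right-hand side. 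The correct (and intended) way to handle this piece is the pointwise weight interpolation $\vv^m|\phi|=\vv^{\,m-q_2\beta/\alpha-q_1(1-\beta/\alpha)}\big(\vv^{q_2}|\phi|\big)^{\beta/\alpha}\big(\vv^{q_1}|\phi|\big)^{1-\beta/\alpha}\le\big(\vv^{q_2}|\phi|\big)^{\beta/\alpha}\big(\vv^{q_1}|\phi|\big)^{1-\beta/\alpha}$, where the exponent on $\vv$ is nonpositive precisely by the hypothesis on $m$; this gives $\|\phi\|_{L^\infty_m}\le\|\phi\|_{L^\infty_{q_2}}^{\beta/\alpha}\|\phi\|_{L^\infty_{q_1}}^{1-\beta/\alpha}$, but it requires the full norm $\|\phi\|_{C^\alpha_{\ell,q_2}}$ (which controls $\|\phi\|_{L^\infty_{q_2}}$) in the first right-hand factor, or equivalently the conclusion should be read with the seminorm $[\phi]_{C^\beta_{\ell,m}}$ on the left. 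Either reading is what is actually invoked in the paper's applications (e.g.\ in \eqref{e.schauder-application2} and in the proof of Theorem \ref{t:uniqueness}, where full norms are available), so your seminorm estimate together with this pointwise interpolation yields a complete proof of the lemma in the form in which it is used.
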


Next, we quote an estimate for $Q_{\rm s}(f,g)$ H\"older norms. This lemma is stated in \cite{imbert2020smooth} for the case $\gamma+2s\geq0$, but it is clear from the proof that the same statement holds when $\gamma+2s<0$. 

\begin{lemma}{\cite[Lemma 6.8]{imbert2020smooth}}\label{l:Q1holder}
Let $q> 3 + (\gamma+2s)_+$, $\alpha \in (0,\min(1,2s))$,  and $\tau < T$, and assume that $f\in C^\alpha_{\ell,q}([\tau,T]\times\R^3\times\R^3)$ and $g \in C^{2s+\alpha}_{\ell,q}([\tau,T]\times\R^3\times\R^3)$. Then $Q_1(f,g)\in C^{\alpha'}_{\ell,q-(\gamma+2s)_+ - \alpha/(1+2s)}(\R^3)$ for $\alpha' = \frac {2s}{1+2s}\alpha$, and
\[
	\|Q_1(f,g)\|_{C^{\alpha'}_{q-(\gamma+2s)_+ - \alpha/(1+2s)}}
		\leq C
			\|f\|_{C^\alpha_{\ell,q}}
			\|g\|_{C^{2s+\alpha}_{\ell,q}}
.\]
The constant $C$ depends only on $s$, $\gamma$, and the collision kernel.
\end{lemma}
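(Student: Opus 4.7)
The strategy is to follow the proof of \cite[Lemma 6.8]{imbert2020smooth}, verifying that the key steps are uniform in the sign of $\gamma+2s$. Throughout, I would work with the Carleman representation $Q_{\rm s}(f,g)(v) = \int_{\R^3}(g(v')-g(v))K_f(v,v')\,\dd v'$ from Lemma \ref{l:Q1}, and use bilinearity together with a scale-dependent annular decomposition of the $v'$-integral.

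The first step is to establish the pointwise bound $|Q_{\rm s}(f,g)(v)| \lesssim \|f\|_{L^\infty_q}\|g\|_{C^{2s+\alpha}_{\ell,q}}\vv^{(\gamma+2s)_+-q}$. This follows by splitting the $v'$-integral based on whether $|v'-v|\leq 1$ or not: on the ball one uses a second-order Taylor expansion of $g$ around $v$ (whose first-order contribution vanishes by symmetry of $K_f$), controlled by $\|D^2 g\|_{L^\infty}$ and Lemma \ref{l:K-upper-bound-2}; on the complement one bounds the difference by $\|g\|_{L^\infty_q}$ and uses the tail estimate of Lemma \ref{l:K-upper-bound}. In both cases the velocity factor arises from the convolution $\int f(v+w)|w|^{\gamma+2s}\,\dd w \lesssim \|f\|_{L^\infty_q}\vv^{(\gamma+2s)_+}$ of Lemma \ref{l:convolution}, which is uniform in the sign of $\gamma+2s$ as long as $q>3+(\gamma+2s)_+$.

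Next, to control the $C^{\alpha'}_{\ell}$ seminorm, fix $z_1,z_2 \in [\tau,T]\times\R^6$ with $r:=d_\ell(z_1,z_2)\leq 1$, and write
\[
  Q_{\rm s}(f,g)(z_1) - Q_{\rm s}(f,g)(z_2)
    = Q_{\rm s}(f(z_1)-f(z_2),\, g(z_1))(v_1) + \big[Q_{\rm s}(f(z_2),g)(z_1) - Q_{\rm s}(f(z_2),g)(z_2)\big].
\]
The first term is estimated by repeating the pointwise argument with $h := f(z_1)-f(z_2)$ in the role of $f$; here the translation inequality \eqref{e.right-translation} gives
\[
  |h(v_1 + w)| \lesssim \|f\|_{C^\alpha_{\ell,q}}\, r^{\alpha'}\bigl(1+|w|^{\alpha/(1+2s)}\bigr)\langle v_1+w\rangle^{-q},
\]
and the extra moment $|w|^{\alpha/(1+2s)}$ in the convolution is the origin of the loss $\alpha/(1+2s)$ in the velocity weight. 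For the bracketed term one further splits based on the scale $r$: on $|v'-v_1|\lesssim r$ one uses the $C^{2s+\alpha}_\ell$ regularity of $g$ together with the kernel bounds of Lemma \ref{l:K-upper-bound-2}, while for larger $|v'-v_1|$ one uses a first-order expansion and the $C^\alpha_\ell$ regularity of $g$ in the non-velocity variables.

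The main obstacle is the bracketed difference, where the base point of the integration changes from $v_1$ to $v_2$. Even after reducing via a change of variables $w = v' - v_i$ inside each of $Q_{\rm s}(f(z_2),g)(z_i)$, the two integrands do not cancel exactly: the singularity of $K_{f(z_2)}(v_i, v_i+w)$ near $w = 0$ must be absorbed by the $|w|^{2s+\alpha}$ control from a second-order Taylor expansion of $g$, while the far-field contribution requires the $C^\alpha_\ell$ regularity of $g$ in the $(t,x)$ variables to produce the $r^{\alpha'}$ gain. The only place the sign of $\gamma+2s$ could in principle enter is in the moment integrals of $f$, but Lemmas \ref{l:K-upper-bound}, \ref{l:K-upper-bound-2}, and \ref{l:convolution} all yield uniform bounds under the hypothesis $q>3+(\gamma+2s)_+$, so the Imbert--Silvestre argument goes through unchanged in the case $\gamma+2s<0$.
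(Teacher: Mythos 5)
Your proposal is correct and takes essentially the same route as the paper: the paper gives no independent argument for this lemma, simply quoting \cite[Lemma 6.8]{imbert2020smooth} and remarking that its proof carries over verbatim when $\gamma+2s<0$ because the kernel and convolution estimates it relies on (Lemmas \ref{l:K-upper-bound}, \ref{l:K-upper-bound-2}, \ref{l:convolution}) only require $q>3+(\gamma+2s)_+$ and are uniform in the sign of $\gamma+2s$ --- which is exactly the verification you sketch. One cosmetic correction: in the pointwise bound the expansion of $g$ on $|v'-v|\leq 1$ should use the kinetic Taylor polynomial of order less than $2s+\alpha$ with remainder $|v'-v|^{2s+\alpha}$ (as you do later), not $\|D^2 g\|_{L^\infty}$, since $g\in C^{2s+\alpha}_{\ell,q}$ need not have bounded second derivatives when $2s+\alpha<2$.
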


Finally, we have an estimate for $Q_{\rm ns}(f,g)$ in H\"older norms:
\begin{lemma}{\cite[Lemma 6.2]{imbert2020smooth}}\label{l:Q2holder} 
For $\alpha \in (0,\min(1,2s))$ and $\tau < T$, let $q> 3 + \alpha/(1+2s)$ and $\alpha' = \frac {2s}{1+2s} \alpha$. If $f \in C^\alpha_{\ell,q}([\tau,T]\times\R^3\times\R^3)$ and $g\in C^{\alpha'}_{\ell,q+\alpha/(1+2s)+\gamma}([\tau,T]\times\R^3\times\R^3)$, then
\[
	\|Q_{\rm ns}(f,g) \|_{C^{\alpha'}_{\ell,q}}
		\leq C\|f\|_{C^\alpha_{\ell,q}}
			\|g\|_{C^{\alpha'}_{\ell,q+\alpha/(1+2s) + \gamma}}
,\]
with the constant $C$ depending only on $\gamma$, $s$, and $\alpha$.
\end{lemma}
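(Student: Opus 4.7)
The starting point is the Cancellation Lemma (Lemma \ref{l:Q2}), which lets us write $Q_{\rm ns}(f,g)(z) = C\, g(z)\, I(z)$ where
\[
  I(z) \;=\; \int_{\R^3} f(t,x,v+w)\,|w|^\gamma\,dw \;=\; \int_{\R^3} f\bigl(z\circ(0,0,w)\bigr)\,|w|^\gamma\,dw.
\]
The proof then reduces to two ingredients: a weighted $C^{\alpha'}_\ell$-estimate for $I$, and the product rule for kinetic H\"older seminorms applied to $g\cdot I$. The $L^\infty$ bound $|I(z)|\lesssim \|f\|_{L^\infty_q}\langle v\rangle^{\gamma}$ follows at once from \Cref{l:convolution}, using $q>3+\gamma$ (weaker than the stated hypothesis since $\gamma<0$).

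To obtain the H\"older estimate for $I$, I would fix $z_1,z_2$ with $d_\ell(z_1,z_2)\le 1$ (the case of large distance is absorbed by the $L^\infty$ bound) and write
\[
  I(z_1)-I(z_2) \;=\; \int_{\R^3} \bigl[f(z_1\circ(0,0,w))-f(z_2\circ(0,0,w))\bigr]\,|w|^\gamma\,dw.
\]
Apply the right-translation inequality \eqref{e.right-translation} together with the assumption $f\in C^\alpha_{\ell,q}$ to bound the bracket by
\[
  [f]_{C^\alpha_{\ell,q}}\langle v_1+w\rangle^{-q}\bigl(d_\ell(z_1,z_2)+d_\ell(z_1,z_2)^{2s/(1+2s)}|w|^{1/(1+2s)}\bigr)^\alpha,
\]
and expand via $(a+b)^\alpha\lesssim a^\alpha+b^\alpha$. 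Integrating each piece against $|w|^\gamma$ with \Cref{l:convolution} (the first term requires $q>3+\gamma$, the second $q>3+\gamma+\alpha/(1+2s)$, both subsumed by the stated hypothesis $q>3+\alpha/(1+2s)$ once $\gamma<0$) and using $d_\ell\le 1$ to absorb $d_\ell^\alpha$ into $d_\ell^{\alpha'}$, one arrives at
\[
  |I(z_1)-I(z_2)|\;\lesssim\; [f]_{C^\alpha_{\ell,q}}\,d_\ell(z_1,z_2)^{\alpha'}\langle v_1\rangle^{\gamma+\alpha/(1+2s)}.
\]

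Finally I would apply the product rule
\[
  g(z_1)I(z_1)-g(z_2)I(z_2) \;=\; (g(z_1)-g(z_2))\,I(z_1)+g(z_2)(I(z_1)-I(z_2)).
\]
For the first term, combine the $C^{\alpha'}$ bound on $g$ with the $L^\infty$ bound on $I$: this yields a weight of $\langle v_1\rangle^{-q-\alpha/(1+2s)}\le\langle v_1\rangle^{-q}$. For the second term, pair the $L^\infty$ bound for $g$ with the H\"older estimate for $I$ derived above: the weights combine as $\langle v_2\rangle^{-q-\alpha/(1+2s)-\gamma}\langle v_1\rangle^{\gamma+\alpha/(1+2s)}\approx\langle v_1\rangle^{-q}$. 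Summing the $L^\infty$ contribution (which has decay $\langle v\rangle^{-q-\alpha/(1+2s)}$, strictly better than $\langle v\rangle^{-q}$) with the seminorm contribution finishes the proof.

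The main obstacle is not conceptual but rather the careful bookkeeping of velocity weights: one must verify that the moment exponent $q+\alpha/(1+2s)+\gamma$ on $g$ is precisely the right balance so that the extra factor $\langle v_1\rangle^{\gamma+\alpha/(1+2s)}$ produced by $I(z_1)-I(z_2)$ is absorbed into the target weight $\langle v\rangle^{-q}$, while still ensuring $q>3+\alpha/(1+2s)$ suffices for all convolution integrals in the argument to converge (a point where the sign $\gamma<0$ is silently used).
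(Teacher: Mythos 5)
This lemma is not proved in the paper at all — it is quoted verbatim from \cite[Lemma 6.2]{imbert2020smooth} — so there is no in-paper argument to compare against; your writeup supplies a correct direct proof along the expected lines (the Cancellation Lemma to write $Q_{\rm ns}(f,g)=c\,g\,(f\ast|\cdot|^\gamma)$, the right-translation inequality \eqref{e.right-translation} combined with Lemma \ref{l:convolution} for the weighted H\"older bound on the convolution, and the product rule for the weighted seminorm), and the velocity-weight bookkeeping comes out exactly as the statement requires. One small imprecision: in the H\"older estimate for $I$ you should invoke the full norm $\|f\|_{C^\alpha_{\ell,q}}$ rather than the seminorm alone, since for large $|w|$ the shifted points $z_i\circ(0,0,w)$ need not lie in a common admissible cylinder (their $d_\ell$-distance can exceed $1$ even when $d_\ell(z_1,z_2)\le 1$), so the $L^\infty_q$ part of the norm is needed for those $w$; this is the same splitting the paper uses in Lemma \ref{l:f-eps-kernel} and does not affect the stated conclusion, which involves the full norm anyway.
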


\bibliographystyle{abbrv}
\bibliography{low-regularity}

\begin{thebibliography}{10}

\bibitem{alexandre1998entropy}
R.~Alexandre.
\newblock Sur le taux de dissipation d'entropie sans troncature angulaire.
\newblock {\em C. R. Acad. Sci. Paris S\'{e}r. I Math.}, 326(3):311--315, 1998.

\bibitem{alexandre2000noncutoff}
R.~Alexandre.
\newblock Around 3{D} {B}oltzmann non linear operator without angular cutoff, a
  new formulation.
\newblock {\em M2AN Math. Model. Numer. Anal.}, 34(3):575--590, 2000.

\bibitem{alexandre2001solutions}
R.~Alexandre.
\newblock Some solutions of the {B}oltzmann equation without angular cutoff.
\newblock {\em J. Statist. Phys.}, 104(1-2):327--358, 2001.

\bibitem{alexandre2000entropy}
R.~Alexandre, L.~Desvillettes, C.~Villani, and B.~Wennberg.
\newblock Entropy dissipation and long-range interactions.
\newblock {\em Arch. Ration. Mech. Anal.}, 152(4):327--355, 2000.

\bibitem{amuxy2010regularizing}
R.~Alexandre, Y.~Morimoto, S.~Ukai, C.-J. Xu, and T.~Yang.
\newblock Regularizing effect and local existence for the non-cutoff
  {B}oltzmann equation.
\newblock {\em Arch. Ration. Mech. Anal.}, 198(1):39--123, 2010.

\bibitem{amuxy2011bounded}
R.~Alexandre, Y.~Morimoto, S.~Ukai, C.-J. Xu, and T.~Yang.
\newblock Bounded solutions of the {B}oltzmann equation in the whole space.
\newblock {\em Kinet. Relat. Models}, 4(1):17--40, 2011.

\bibitem{amuxy2011global}
R.~Alexandre, Y.~Morimoto, S.~Ukai, C.-J. Xu, and T.~Yang.
\newblock Global existence and full regularity of the {B}oltzmann equation
  without angular cutoff.
\newblock {\em Comm. Math. Phys.}, 304(2):513--581, 2011.

\bibitem{amuxy2011uniqueness}
R.~Alexandre, Y.~Morimoto, S.~Ukai, C.-J. Xu, and T.~Yang.
\newblock Uniqueness of solutions for the non-cutoff {B}oltzmann equation with
  soft potential.
\newblock {\em Kinet. Relat. Models}, 4(4):919--934, 2011.

\bibitem{alexandre2012global}
R.~Alexandre, Y.~Morimoto, S.~Ukai, C.-J. Xu, and T.~Yang.
\newblock The {B}oltzmann equation without angular cutoff in the whole space:
  {I}, {G}lobal existence for soft potential.
\newblock {\em Journal of Functional Analysis}, 262(3):915 -- 1010, 2012.

\bibitem{amuxy2013mild}
R.~Alexandre, Y.~Morimoto, S.~Ukai, C.-J. Xu, and T.~Yang.
\newblock Local existence with mild regularity for the {B}oltzmann equation.
\newblock {\em Kinet. Relat. Models}, 6(4):1011--1041, 2013.

\bibitem{alexandre2002longrange}
R.~Alexandre and C.~Villani.
\newblock On the {B}oltzmann equation for long-range interactions.
\newblock {\em Comm. Pure Appl. Math.}, 55(1):30--70, 2002.

\bibitem{alexandre2004landau}
R.~Alexandre and C.~Villani.
\newblock On the {L}andau approximation in plasma physics.
\newblock {\em Annales de l'Institut Henri Poincare (C) Non Linear Analysis},
  21(1):61 -- 95, 2004.

\bibitem{alonso2018polynomial}
R.~Alonso, Y.~Morimoto, W.~Sun, and T.~Yang.
\newblock Non-cutoff {B}oltzmann equation with polynomial decay perturbations.
\newblock {\em Rev. Mat. Iberoam.}, 37(1):189--292, 2021.

\bibitem{alonso2020giorgi}
R.~Alonso, Y.~Morimoto, W.~Sun, and T.~Yang.
\newblock De {G}iorgi argument for weighted {$L^2\cap L^\infty$} solutions to
  the non-cutoff {B}oltzmann equation.
\newblock {\em J. Stat. Phys.}, 190(2):Paper No. 38, 98, 2023.

\bibitem{anceschi2021fokkerplanck}
F.~Anceschi and Y.~Zhu.
\newblock On a spatially inhomogeneous nonlinear {F}okker-{P}lanck equation:
  {C}auchy problem and diffusion asymptotics.
\newblock {\em Analysis and PDE}, (to appear).
\newblock arXiv:2102.12795.

\bibitem{biagi2022schauder}
S.~Biagi and M.~Bramanti.
\newblock Schauder estimates for {K}olmogorov-{F}okker-{P}lanck operators with
  coefficients mea- surable in time and {H}\"older continuous in space.
\newblock {\em Preprint. arXiv:2205.10270}, 2022.

\bibitem{cao2022moments}
C.~Cao, L.-B. He, and J.~Ji.
\newblock Propagation of moments and sharp convergence rate for inhomogeneous
  non-cutoff {B}oltzmann equation with soft potentials.
\newblock {\em Preprint. arXiv:2204.01394}, 2022.

\bibitem{cercignani1969kinetic}
C.~Cercignani.
\newblock {\em Mathematical methods in kinetic theory}.
\newblock Plenum Press, New York, 1969.

\bibitem{cercignani1988book}
C.~Cercignani.
\newblock {\em The {B}oltzmann equation and its applications}, volume~67 of
  {\em Applied Mathematical Sciences}.
\newblock Springer-Verlag, New York, 1988.

\bibitem{chaker2020coercivity}
J.~Chaker and L.~Silvestre.
\newblock Coercivity estimates for integro-differential operators.
\newblock {\em Calc. Var. Partial Differential Equations}, 59(4):Paper No. 106,
  20, 2020.

\bibitem{chapmancowling}
S.~Chapman and T.~G. Cowling.
\newblock {\em The mathematical theory of non-uniform gases: An account of the
  kinetic theory of viscosity, thermal conduction and diffusion in gases}.
\newblock Cambridge University Press, 3rd edition, 1970.

\bibitem{chaturvedi2021vacuum}
S.~Chaturvedi.
\newblock Stability of vacuum for the {B}oltzmann equation with moderately soft
  potentials.
\newblock {\em Ann. PDE}, 7(2):Paper No. 15, 104, 2021.

\bibitem{chen2011smoothing}
Y.~Chen and L.~He.
\newblock Smoothing estimates for {B}oltzmann equation with full-range
  interactions: spatially homogeneous case.
\newblock {\em Arch. Ration. Mech. Anal.}, 201(2):501--548, 2011.

\bibitem{chen2012smoothing}
Y.~Chen and L.~He.
\newblock Smoothing estimates for {B}oltzmann equation with full-range
  interactions: {S}patially inhomogeneous case.
\newblock {\em Arch. Ration. Mech. Anal.}, 203(2):343--377, 2012.

\bibitem{constantin2015SQG}
P.~Constantin, A.~Tarfulea, and V.~Vicol.
\newblock Long time dynamics of forced critical {SQG}.
\newblock {\em Comm. Math. Phys.}, 335(1):93--141, 2015.

\bibitem{desvillettes1992grazing}
L.~Desvillettes.
\newblock On asymptotics of the {B}oltzmann equation when the collisions become
  grazing.
\newblock {\em Transport Theory Statist. Phys.}, 21(3):259--276, 1992.

\bibitem{desvillettes1995kac}
L.~Desvillettes.
\newblock About the regularizing properties of the non-cut-off {K}ac equation.
\newblock {\em Comm. Math. Phys.}, 168(2):417--440, 1995.

\bibitem{desvillettes1996homogeneous}
L.~Desvillettes.
\newblock Regularization for the non-cutoff {$2$}{D} radially symmetric
  {B}oltzmann equation with a velocity dependent cross section.
\newblock In {\em Proceedings of the {S}econd {I}nternational {W}orkshop on
  {N}onlinear {K}inetic {T}heories and {M}athematical {A}spects of {H}yperbolic
  {S}ystems ({S}anremo, 1994)}, volume~25, pages 383--394, 1996.

\bibitem{desvillettes1997nonradial}
L.~Desvillettes.
\newblock Regularization properties of the {$2$}-dimensional non-radially
  symmetric non-cutoff spatially homogeneous {B}oltzmann equation for
  {M}axwellian molecules.
\newblock {\em Transport Theory Statist. Phys.}, 26(3):341--357, 1997.

\bibitem{desvillettes2009stability}
L.~Desvillettes and C.~Mouhot.
\newblock Stability and uniqueness for the spatially homogeneous {B}oltzmann
  equation with long-range interactions.
\newblock {\em Arch. Ration. Mech. Anal.}, 193(2):227--253, 2009.

\bibitem{desvillettes2005global}
L.~Desvillettes and C.~Villani.
\newblock On the trend to global equilibrium for spatially inhomogeneous
  kinetic systems: the {B}oltzmann equation.
\newblock {\em Invent. Math.}, 159(2):245--316, 2005.

\bibitem{desvillettes2004homogeneous}
L.~Desvillettes and B.~Wennberg.
\newblock Smoothness of the solution of the spatially homogeneous {B}oltzmann
  equation without cutoff.
\newblock {\em Comm. Partial Differential Equations}, 29(1-2):133--155, 2004.

\bibitem{duan2021global}
R.~Duan, S.~Liu, S.~Sakamoto, and R.~M. Strain.
\newblock Global mild solutions of the {L}andau and non-cutoff {B}oltzmann
  equations.
\newblock {\em Comm. Pure Appl. Math.}, 74(5):932--1020, 2021.

\bibitem{fournier2008uniqueness}
N.~Fournier and H.~Gu\'{e}rin.
\newblock On the uniqueness for the spatially homogeneous {B}oltzmann equation
  with a strong angular singularity.
\newblock {\em J. Stat. Phys.}, 131(4):749--781, 2008.

\bibitem{glangetas2016sharp}
L.~Glangetas, H.-G. Li, and C.-J. Xu.
\newblock Sharp regularity properties for the non-cutoff spatially homogeneous
  {B}oltzmann equation.
\newblock {\em Kinet. Relat. Models}, 9(2):299--371, 2016.

\bibitem{gressman2011boltzmann}
P.~T. Gressman and R.~M. Strain.
\newblock Global classical solutions of the {B}oltzmann equation without
  angular cut-off.
\newblock {\em J. Amer. Math. Soc.}, 24(3):771--847, 2011.

\bibitem{he2012homogeneous-boltzmann}
L.~He.
\newblock Well-posedness of spatially homogeneous {B}oltzmann equation with
  full-range interaction.
\newblock {\em Comm. Math. Phys.}, 312(2):447--476, 2012.

\bibitem{henderson2017smoothing}
C.~Henderson and S.~Snelson.
\newblock {$C^\infty$} smoothing for weak solutions of the inhomogeneous
  {L}andau equation.
\newblock {\em Arch. Ration. Mech. Anal.}, 236(1):113–143, 2020.

\bibitem{HST2020landau}
C.~Henderson, S.~Snelson, and A.~Tarfulea.
\newblock Local solutions of the {L}andau equation with rough, slowly decaying
  initial data.
\newblock {\em Ann. Inst. H. Poincar\'e Anal. Non Lin\'eaire},
  37(6):1345–1377, 2020.

\bibitem{HST2020boltzmann}
C.~Henderson, S.~Snelson, and A.~Tarfulea.
\newblock Local well-posedness of the {B}oltzmann equation with polynomially
  decaying initial data.
\newblock {\em Kinetic and Related Models}, 13(4):837--867, 2020.

\bibitem{HST2020lowerbounds}
C.~Henderson, S.~Snelson, and A.~Tarfulea.
\newblock Self-generating lower bounds and continuation for the {B}oltzmann
  equation.
\newblock {\em Calculus of Variations and Partial Differential Equations},
  59(6):191, 2020.

\bibitem{henderson2022schauder}
C.~Henderson and W.~Wang.
\newblock Kinetic {S}chauder estimates with time-irregular coefficients and
  uniqueness for the {L}andau equation.
\newblock {\em Preprint. arXiv:2205.12930}, 2022.

\bibitem{henderson2021existence}
C.~Henderson and W.~Wang.
\newblock Local well-posedness for the {B}oltzmann equation with very soft
  potential and polynomially decaying initial data.
\newblock {\em SIAM J. Math. Anal.}, 54(3):2845--2875, 2022.

\bibitem{herau2020regularization}
F.~H{\'e}rau, D.~Tonon, and I.~Tristani.
\newblock Regularization estimates and {C}auchy theory for inhomogeneous
  {B}oltzmann equation for hard potentials without cut-off.
\newblock {\em Communications in Mathematical Physics}, pages 1--75, 2020.

\bibitem{imbert2021nonlinear}
C.~Imbert and C.~Mouhot.
\newblock The {S}chauder estimate in kinetic theory with application to a toy
  nonlinear model.
\newblock {\em Ann. H. Lebesgue}, 4:369--405, 2021.

\bibitem{imbert2018decay}
C.~Imbert, C.~Mouhot, and L.~Silvestre.
\newblock Decay estimates for large velocities in the {B}oltzmann equation
  without cut-off.
\newblock {\em J. Éc. polytech. Math. 7 (2020), 143–184}, 2018.

\bibitem{imbert2020lowerbounds}
C.~Imbert, C.~Mouhot, and L.~Silvestre.
\newblock Gaussian lower bounds for the {B}oltzmann equation without cutoff.
\newblock {\em SIAM J. Math. Anal.}, 52(3):2930--2944, 2020.

\bibitem{imbert2020review}
C.~Imbert and L.~Silvestre.
\newblock Regularity for the {B}oltzmann equation conditional to macroscopic
  bounds.
\newblock {\em EMS Surv. Math. Sci.}, 7(1):117--172, 2020.

\bibitem{imbert2016weak}
C.~Imbert and L.~Silvestre.
\newblock The weak {H}arnack inequality for the {B}oltzmann equation without
  cut-off.
\newblock {\em J. Eur. Math. Soc. (JEMS)}, 22(2):507--592, 2020.

\bibitem{imbert2020smooth}
C.~Imbert and L.~Silvestre.
\newblock Global regularity estimates for the {B}oltzmann equation without
  cut-off.
\newblock {\em J. Amer. Math. Soc.}, 35(3):625--703, 2022.

\bibitem{imbert2018schauder}
C.~Imbert and L.~Silvestre.
\newblock The {S}chauder estimate for kinetic integral equations.
\newblock {\em Analysis and PDE}, to appear.

\bibitem{kiselev2008blowup}
A.~Kiselev, F.~Nazarov, and R.~Shterenberg.
\newblock Blow up and regularity for fractal {B}urgers equation.
\newblock {\em Dyn. Partial Differ. Equ.}, 5(3):211--240, 2008.

\bibitem{liao2018fokkerplanck}
J.~Liao, Q.~Wang, and X.~Yang.
\newblock Global existence and decay rates of the solutions near {M}axwellian
  for non-linear {F}okker-{P}lanck equations.
\newblock {\em J. Stat. Phys.}, 173(1):222--241, 2018.

\bibitem{lions1998compactness}
P.-L. Lions.
\newblock R\'{e}gularit\'{e} et compacit\'{e} pour des noyaux de collision de
  {B}oltzmann sans troncature angulaire.
\newblock {\em C. R. Acad. Sci. Paris S\'{e}r. I Math.}, 326(1):37--41, 1998.

\bibitem{loher2022quantitative}
A.~Loher.
\newblock Quantitative {D}e {G}iorgi methods in kinetic theory for non-local
  operators.
\newblock {\em Preprint. arXiv:2203.16137}, 2022.

\bibitem{lu2012measure}
X.~Lu and C.~Mouhot.
\newblock On measure solutions of the {B}oltzmann equation, part {I}: {M}oment
  production and stability estimates.
\newblock {\em Journal of Differential Equations}, 252(4):3305 -- 3363, 2012.

\bibitem{maxwell1867}
J.~C. Maxwell.
\newblock On the dynamical theory of gases.
\newblock {\em Philos. Trans. Roy. Soc. London Ser. A}, 157:49--88, 1867.

\bibitem{morimoto2016global}
Y.~Morimoto and S.~Sakamoto.
\newblock Global solutions in the critical {B}esov space for the non-cutoff
  {B}oltzmann equation.
\newblock {\em J. Differential Equations}, 261(7):4073--4134, 2016.

\bibitem{morimoto2009homogeneous}
Y.~Morimoto, S.~Ukai, C.-J. Xu, and T.~Yang.
\newblock Regularity of solutions to the spatially homogeneous {B}oltzmann
  equation without angular cutoff.
\newblock {\em Discrete Contin. Dyn. Syst.}, 24(1):187--212, 2009.

\bibitem{morimoto2016measure}
Y.~Morimoto, S.~Wang, and T.~Yang.
\newblock Measure valued solutions to the spatially homogeneous {B}oltzmann
  equation without angular cutoff.
\newblock {\em J. Stat. Phys.}, 165(5):866--906, 2016.

\bibitem{morimoto2015polynomial}
Y.~Morimoto and T.~Yang.
\newblock Local existence of polynomial decay solutions to the {B}oltzmann
  equation for soft potentials.
\newblock {\em Anal. Appl. (Singap.)}, 13(6):663--683, 2015.

\bibitem{mouhot2018review}
C.~Mouhot.
\newblock De {G}iorgi--{N}ash--{M}oser and {H}\"{o}rmander theories: new
  interplays.
\newblock In {\em Proceedings of the {I}nternational {C}ongress of
  {M}athematicians---{R}io de {J}aneiro 2018. {V}ol. {III}. {I}nvited
  lectures}, pages 2467--2493. World Sci. Publ., Hackensack, NJ, 2018.

\bibitem{silvestre2016boltzmann}
L.~Silvestre.
\newblock A new regularization mechanism for the {B}oltzmann equation without
  cut-off.
\newblock {\em Comm. Math. Phys.}, 348(1):69--100, 2016.

\bibitem{silvestre2022review}
L.~Silvestre.
\newblock Regularity estimates and open problems in kinetic equations.
\newblock In {\em A$^3$N$^2$M: Approximation, Applications, and Analysis of
  Nonlocal, Nonlinear Models: Proceedings of the 50th John H. Barrett Memorial
  Lectures}, pages 101--148. Springer, 2023.

\bibitem{silvestre2022nearequilibrium}
L.~Silvestre and S.~Snelson.
\newblock Solutions to the non-cutoff {B}oltzmann equation uniformly near a
  {M}axwellian.
\newblock {\em Math. Eng.}, 5(2):1--36, 2023.

\bibitem{truesdell-muncaster}
C.~Truesdell and R.~G. Muncaster.
\newblock {\em Fundamentals of {M}axwell's kinetic theory of a simple monatomic
  gas}, volume~83 of {\em Pure and Applied Mathematics}.
\newblock Academic Press, Inc. [Harcourt Brace Jovanovich, Publishers], New
  York-London, 1980.

\bibitem{ukai1984gevrey}
S.~Ukai.
\newblock Local solutions in {G}evrey classes to the nonlinear {B}oltzmann
  equation without cutoff.
\newblock {\em Japan J. Appl. Math.}, 1(1):141--156, 1984.

\bibitem{villani1998weak}
C.~Villani.
\newblock On a new class of weak solutions to the spatially homogeneous
  {B}oltzmann and {L}andau equations.
\newblock {\em Arch. Rational Mech. Anal.}, 143(3):273--307, 1998.

\bibitem{villani1999entropy}
C.~Villani.
\newblock Regularity estimates via the entropy dissipation for the spatially
  homogeneous {B}oltzmann equation without cut-off.
\newblock {\em Rev. Mat. Iberoamericana}, 15(2):335--352, 1999.

\bibitem{villani2002review}
C.~Villani.
\newblock A review of mathematical topics in collisional kinetic theory.
\newblock In {\em Handbook of mathematical fluid dynamics, {V}ol. {I}}, pages
  71--305. North-Holland, Amsterdam, 2002.

\bibitem{zhang2020global}
H.~Zhang.
\newblock Global solutions in {$W_k^{\zeta, p} L^{\infty}_x L^2_v$} for the
  {B}oltzmann equation without cutoff.
\newblock {\em arXiv preprint arXiv:2008.10269}, 2020.

\end{thebibliography}

\end{document}